  \let\original@@tocwrite=\@tocwrite
  \newif\ifAHVflag
  \def\jlreq@uniqtoken{\jlreq@uniqtoken}
  \def\jlreq@endmark{\jlreq@endmark}
  \long\def\jlreq@getfirsttoken#1#{\jlreq@getfirsttoken@#1\bgroup\jlreq@endmark}
  \long\def\jlreq@getfirsttoken@#1#2\jlreq@endmark#3\jlreq@endmark{#1}
  \renewcommand{\@tocwrite}[2]{%
    \begingroup
      \AHVflagfalse % true if section is not empty
      \@ifempty{#2}{}{%
        \expandafter\expandafter\expandafter\ifx\jlreq@getfirsttoken#2\jlreq@uniqtoken{}\jlreq@endmark\Sectionformat\expandafter\@firstoftwo\else\expandafter\@secondoftwo\fi
        {%
          \def\Sectionformat##1##2{\@ifempty{##1}{}{\AHVflagtrue}}%
          #2
        }{\AHVflagtrue}%
      }%
      \def\@tempa{}%
      \ifAHVflag\def\@tempa{\original@@tocwrite{#1}{#2}}\fi
    \expandafter\endgroup
    \@tempa
  }%
\newcommand{\Hom}{\operatorname{Hom}}
\newcommand{\Ind}{\operatorname{Ind}}
\newcommand{\ind}{\operatorname{ind}}
\newcommand{\Ker}{\operatorname{Ker}}
\newcommand{\End}{\operatorname{End}}
\newcommand{\Sp}{\operatorname{Sp}}
\newcommand{\Res}{\operatorname{Res}}
\newcommand{\Mod}{\operatorname{Mod}}
\newcommand{\Lie}{\operatorname{Lie}}
\DeclareMathOperator{\tr}{tr}
\DeclareMathOperator{\Aut}{Aut}
  \DeclareMathOperator{\Irr}{Irr}
\renewcommand{\)}{\textup{)}}
\theoremstyle{plain} 
\newtheorem{theorem}{Theorem}[section]
\newtheorem{corollary}[theorem]{Corollary}
\newtheorem{lemma}[theorem]{Lemma}
\newtheorem{proposition}[theorem]{Proposition}
\newtheorem{proposition-definition}[theorem]{Proposition-Definition}
\theoremstyle{definition}
\newtheorem{definition}[theorem]{Definition}
\theoremstyle{remark}
\newtheorem{remark}[theorem]{Remark}
\newtheorem{example}[theorem]{Example}
\newtheorem{notation}[theorem]{Notation}
\numberwithin{equation}{section}
\title{Representations of a reductive $p$-adic group in characteristic distinct from $p$}
\author{Guy Henniart, Marie-France Vign\'eras}
\begin{document} 

\maketitle

\begin{abstract} We investigate the irreducible cuspidal $C$-representations of a reductive $p$-adic group $G$ over a field $C$ of characteristic different from $p$. 
 In all known cases, such a representation is the  compactly induced representation $\ind_J^G \lambda$ from  a smooth $C$-representation $\lambda$ of a compact modulo centre subgroup $J$ of $G$. When $C$ is algebraically closed, for many groups $G$, a list of pairs $(J,\lambda)$   has been produced, such that any irreducible cuspidal $C$-representation of $G$ has the form $\ind_J^G \lambda$, for a pair $(J,\lambda)$  unique up to conjugation. We verify that those lists are stable under the action of field automorphisms of $C$,  and we produce similar lists  when $C$ is no longer assumed algebraically closed. Our other main result concerns supercuspidality. This notion makes sense for the irreducible cuspidal $C$-representations of $G$, but also for the  representations $\lambda$ above, which involve representations of finite reductive groups. In most cases  we prove that  $\ind_J^G\lambda$ is supercuspidal if and only if $\lambda$ is supercuspidal. 
  \end{abstract}

\setcounter{tocdepth}{2}  
\tableofcontents

\section{Introduction}\label{s:0}
Let  $F$ be a non-archimedean local field with finite residue field of characteristic $p$,  $\underline G$  a connected reductive linear group defined over $F$ and  $C$ a field of characteristic $c$.  We are interested in irreducible smooth
$C$-representations of  $G= \underline G(F)$.  

  Most of the literature  supposes that the coefficient field $C$ is the field  $\mathbb C$ of complex numbers, but  the study of congruences of automorphic  forms and  the modularity conjectures of Galois representations use representations over number fields or finite fields.  

 We concentrate here on the case  $c\neq p$,  which we always
assume, and for which our basic reference is \cite{V96}. With no further assumption on $C$, we investigate the irreducible 
$C$-representations of $G$; we concentrate on the cuspidal ones since every irreducible smooth $C$-representation of $G$ embeds  in  a representation  parabolically induced  from an irreducible cuspidal $C$-representation of
some Levi subgroup of $G$.
 
 When $C$ is algebraically closed, all known irreducible cuspidal $C$-representations of $G$  are  compactly induced from an open  subgroup, compact modulo the centre,  and one conjectures that it is the case for any $G$. 
  In this paper we extend many known results to a general coefficient field $C$, as we now explain. For us  a cuspidal $C$-type in $G$ is a pair $(J, \lambda)$   where $J\subset G$ is an open compact modulo the centre subgroup  and $\lambda$ is  an isomorphism class of  smooth  $C$-representations  of $J$ such that  the representation $\ind_J^G\lambda$ of  $G$  compactly induced  from  $(J,\lambda)$ is irreducible, hence (as we show) cuspidal.
 A set  $\mathfrak X$ of 
cuspidal $C$-types in $G$ is said to satisfy exhaustion if all irreducible cuspidal $C$-representations  are of this form, unicity if $\ind_J^G\lambda$ determines $(J,\lambda)$ modulo $G$-conjugation, intertwining if the endomorphism $C$-algebras  of $\lambda$ and $ \ind_J^G\lambda$ are isomorphic 
(that condition is automatic when $C$ is algebraically closed), $
H$-stability for a subgroup $H\subset \Aut(C)$ if for
$(J, \lambda) \in\mathfrak X$ and $\sigma \in  H$, $(J, \sigma(\lambda))$ is also in $\mathfrak X$.
When $C $ is algebraically closed, and for many of our groups $G$, a list of cuspidal $C$-types
$(J, \lambda)$ has been produced, which satisfies   exhaustion (sometimes only   for level $0$ representations)  and often unicity. 
For those lists we verify $\Aut(C)$-stability,  which allows us to
produce similar lists when $C$ is no longer assumed algebraically closed:

 \begin{theorem}\label{thm:01} Let $C$ be a field of characteristic  $c \neq p$ and  $C^a$  an algebraic closure of $C$. 
 
1)  Any irreducible cuspidal $C$-representation of $G$ of level $0$ is induced from a type in a list of cuspidal types in $G$ satisfying  intertwining, unicity, and $\Aut(C)$-stability \footnote{Those types are called of level $0$}.
 
2)  Any irreducible cuspidal $C$-representation of $G$   is induced from a type in a list $\mathfrak Y$ of cuspidal types in $G$ satisfying  intertwining and unicity, if  $G$ admits  a set $\mathfrak X^a$ of  cuspidal $C^a$-types  satisfying  unicity, exhaustion, and  $\Aut_C(C^a)$-stability.
 If $\mathfrak X^a$ satisfies  $\Aut(C^a)$-stability, then $\mathfrak Y$ satisfies  $\Aut(C)$-stability.

\end{theorem}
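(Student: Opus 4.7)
The plan is to perform Galois descent from an algebraic closure $C^a$ of $C$, deducing the desired statements over $C$ from known statements over $C^a$. The key tool is the standard dictionary, valid for any locally profinite group $\Gamma$: isomorphism classes of irreducible smooth $C$-representations of $\Gamma$ correspond bijectively to $\Aut_C(C^a)$-orbits of irreducible smooth $C^a$-representations of $\Gamma$, via $\pi \leftrightarrow [\tau]$ with $\pi \otimes_C C^a \cong \bigoplus_{\sigma \in \Aut_C(C^a)/\mathrm{Stab}(\tau)} \sigma \tau$. Coupled with the fact that compact induction commutes with scalar extension, this dictionary is the engine of the argument.

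For part (2), I would fix $(J, \lambda^a) \in \mathfrak X^a$ and set $S := \mathrm{Stab}_{\Aut_C(C^a)}(\lambda^a)$. The orbit of $\lambda^a$ lies in $\mathfrak X^a$ by hypothesis and is finite (the central character of $\lambda^a$ takes values in a finitely generated, hence finite, algebraic extension of $C$, and admissibility modulo the centre controls the rest). Descent on $J$ produces an irreducible smooth $C$-representation $\lambda$ with $\lambda \otimes_C C^a \cong \bigoplus_\sigma \sigma \lambda^a$; let $\mathfrak Y$ be the collection of such $(J, \lambda)$. Base-changing the induced representation gives $(\ind_J^G \lambda) \otimes_C C^a \cong \bigoplus_\sigma \ind_J^G \sigma \lambda^a$. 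The crucial point is that $\mathrm{Stab}_{\Aut_C(C^a)}(\ind_J^G \lambda^a) = S$: irreducibility of $\ind_J^G \lambda^a$ over the algebraically closed $C^a$ forces, via Mackey's intertwining formula, the equality $N_G(\lambda^a) = J$; since $\Aut_C(C^a)$ acts trivially on $G$, any coincidence $g \lambda^a \cong \sigma \lambda^a$ with $g \in N_G(J)$ then forces $g \in J$ and hence $\sigma \in S$. Thus the summands of $(\ind_J^G \lambda) \otimes_C C^a$ are pairwise non-isomorphic and form a single $\Aut_C(C^a)$-orbit, so $\ind_J^G \lambda$ is irreducible and $(J, \lambda)$ is a cuspidal $C$-type. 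Exhaustion is immediate via the descent dictionary on $G$ together with exhaustion of $\mathfrak X^a$. Unicity reduces, after base change and Krull--Schmidt, to unicity of $\mathfrak X^a$: this produces $g \in G$ with $g \lambda^a$ in the $\Aut_C(C^a)$-orbit of $\lambda'^a$, from which $g \lambda \cong \lambda'$ over $C$ via the descent bijection on $J$. Intertwining follows from $\End_C(\lambda) \otimes_C C^a \cong \prod_\sigma \End_{C^a}(\sigma \lambda^a) \cong (C^a)^{[\Aut_C(C^a):S]}$ and the analogous calculation for $\ind_J^G \lambda$, both descending to the fixed field $K := (C^a)^S$. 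Finally, $\Aut(C)$-stability of $\mathfrak Y$ under the stronger hypothesis follows by lifting $\sigma \in \Aut(C)$ to $\Aut(C^a)$ and invoking $\Aut(C^a)$-stability of $\mathfrak X^a$.

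Part (1) will be the application of the construction in part (2) to the known level-$0$ list $\mathfrak X^a$ over $C^a$ (constructed by Morris, Vign\'eras, and others); its $\Aut(C^a)$-stability is verified by direct inspection of the explicit level-$0$ construction, where $\lambda^a$ is inflated from an irreducible cuspidal representation of a finite reductive group and the defining conditions visibly survive automorphisms of $C^a$. The main technical obstacle is the equality $\mathrm{Stab}(\lambda^a) = \mathrm{Stab}(\ind_J^G \lambda^a)$, on which the whole argument hinges: a priori the second could be strictly larger, in which case $\ind_J^G \lambda$ would fail to be irreducible and the two endomorphism algebras would not match; the resolution is the Mackey-based argument sketched above. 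A secondary obstacle is the careful setup of Galois descent for the smooth (infinite-dimensional) representation $\lambda^a$ of the locally profinite group $J$, which is compact only modulo the centre, and for which finiteness of the $\Aut_C(C^a)$-orbit must be argued via admissibility and the central character.
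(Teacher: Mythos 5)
Your proposal correctly identifies the central engine — Galois descent via the decomposition theorem — and correctly reduces the matter to showing that $\mathrm{Stab}_{\Aut_C(C^a)}(\ind_J^G\lambda^a)$ equals $S := \mathrm{Stab}_{\Aut_C(C^a)}(\lambda^a)$. But your ``Mackey-based argument'' for that equality has a genuine gap, and the paper's construction is designed precisely to work around it.

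You argue: irreducibility of $\ind_J^G\lambda^a$ over $C^a$ (together with the intertwining property, which does hold automatically over an algebraically closed field) forces $N_G(J,\lambda^a)=J$; then ``any coincidence $g\lambda^a\cong\sigma\lambda^a$ with $g\in N_G(J)$ forces $g\in J$ and hence $\sigma\in S$.'' The first step is correct. The second step is not: $N_G(J,\lambda^a)=J$ controls only the case $\sigma=1$. It is perfectly possible to have $g\in N_G(J)\setminus J$ and a \emph{nontrivial} $\sigma\in\Aut_C(C^a)$ with ${}^g\lambda^a\cong\sigma\lambda^a$. In that case $\sigma\,(\ind_J^G\lambda^a)=\ind_J^G(\sigma\lambda^a)\cong\ind_J^G\lambda^a$ even though $\sigma\lambda^a\not\cong\lambda^a$: the stabilizer of the induced representation is strictly larger than $S$. (Concretely, if $[N_G(J):J]=2$ and $\lambda^a$ is defined over a quadratic extension $C'/C$ with Galois conjugate $\bar\lambda^a$ satisfying ${}^g\lambda^a\cong\bar\lambda^a$, then unicity and $\Aut_C(C^a)$-stability of $\mathfrak X^a$ are both compatible with this, yet $\ind_J^G\lambda^a\cong\ind_J^G\bar\lambda^a$ while $\bar\lambda^a\not\cong\lambda^a$.) When this happens, the descended $\End_{C[J]}\lambda$ is a proper subalgebra of $\End_{C[G]}\ind_J^G\lambda$, so intertwining for $\mathfrak Y$ fails.

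This is exactly why the paper does \emph{not} descend $\mathfrak X^a$ directly. It first replaces each $(J,\lambda^a)$ by $(K,\ind_J^K\lambda^a)$ with $K=N_G(J)$ — for which one must prove $K$ is open and compact modulo centre (Proposition~\ref{prop:cusptype2}, via the Landvogt compactification of the building) — and shows (Proposition~\ref{prop:II.55}) that this new set $\mathfrak Y^a$ inherits unicity, exhaustion and $\Aut_C(C^a)$-stability from $\mathfrak X^a$ and, crucially, acquires $\Aut_C(C^a)$-\emph{unicity}: $\sigma(\ind_K^G\rho^a)\cong\ind_K^G\rho^a$ implies $\sigma\rho^a\cong\rho^a$. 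Only then does the descent of Proposition~\ref{prop:compactind} / Proposition~\ref{prop:CatoC} apply. In the quadratic example above, after passing to $K$, the two conjugate constituents $\lambda^a,\bar\lambda^a$ are absorbed into a single $\rho^a=\ind_J^K\lambda^a$, which is indeed fixed by the nontrivial $\sigma$, restoring the matching of stabilizers. Your argument implicitly presupposes $J=N_G(J)$, which happens to hold in the level-$0$ case (so part (1) of your sketch is fine), but is not part of the hypothesis in part (2).
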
 

  To construct $\mathfrak Y$   from  $\mathfrak X^a$, we replace  each $(J,\lambda^{a})\in \mathfrak X^a$ by   $(K,\rho^a)$ where $K$ is the  $G$-normalizer  of $J$ and $\rho^a =ind_J^K \lambda^{a}$. 
We prove that $K$ is  open and compact modulo the centre (Proposition \ref{prop:cusptype2}) and that the   set $\mathfrak Y^a$   of $C^a$-types $(K,\rho^a)$ in $G$ associated to $\mathfrak  X^a$
satisfies  the same properties as $\mathfrak X^a$ (Proposition \ref{prop:II.55}).
The new set  $\mathfrak Y^a$  has the advantage that it satisfies $\Aut_C(C^a)$-unicity: if $\sigma\in \Aut_C(C^a)$ then $\sigma(\ind_K^G(\rho))\simeq \ind_K^G(\rho)$ implies $\sigma(\rho)\simeq \rho$. 
   We obtain $\mathfrak Y$ by replacing   $(K,\rho^a)\in \mathfrak Y^a$ by $(K,\rho)$ where $\rho$ is an  irreducible smooth $C$-representation of $K $ such that $\rho^a$ is $\rho$-isotypic (isomorphic to a direct sum of representations isomorphic to $\rho$) as a $C$-representation of $K$; this relies on 
  the decomposition theorem  of  $C^a\otimes_C \pi$ for a simple module $\pi$ over a $C$-algebra, with an endomorphism ring of finite $C$-dimension  \cite{HV19}. We recall that decomposition theorem in Section \ref{s:1}.

\bigskip Applying our method to the list of  cuspidal $C^a$-types in $G$   constructed  by  Bushnell-Kutzko \cite{BK93}, Moy-Prasad \cite{MP96}, Morris\cite{M99}, Weissmann \cite{W19}, Minguez-S\'echerre \cite{MS14}, Cui \cite{C19}, \cite{C20}, Kurinczuk-Skodlerack-Stevens \cite{KSS},    Skodlerack \cite{Sk20}, Yu-Fintzen \cite{F2}, we obtain:

\begin{theorem}\label{thm:02}  Let $C$ be a field of characteristic  $c \neq p$.

1) Any irreducible cuspidal $C$-representation of $G$ of level $0$ is compactly induced, and $G$ admits  a list of  level $0$  cuspidal $C$-types in $G$ satisfying  intertwining, unicity, exhaustion, and $\Aut(C)$-stability.

2) Any  irreducible cuspidal $C$-representation of $G$ is compactly induced, and $G$ admits  a list of  cuspidal $C$-types  satisfiying    intertwining, unicity, exhaustion, and  $\Aut(C)$-stability, in the following cases:

 the semimple rank of $G$ is $\leq 1$ (except for unicity, which is not known for all $G$  of rank $1$), 
 
  $G=SL(n,F)$,

$G=GL(n,D)$ for a central division algebra $D$ of finite dimension over $F$, 
  
  $G$ is a classical group (a unitary,symplectic or special orthogonal group as in \cite{KSS}) and $p\neq 2$,
 
 $G$  a quaternionic form of a classical group as above.
 
 $G$ is a moderately ramified connected reductive group and $p$ not dividing the order of the absolute Weyl group.
  \end{theorem}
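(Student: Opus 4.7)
The plan is to reduce Theorem \ref{thm:02} to Theorem \ref{thm:01}. For each assertion we cite a construction in the literature of a set $\mathfrak X^a$ of cuspidal $C^a$-types in $G$ that satisfies exhaustion and unicity (over the algebraically closed field $C^a$), verify $\Aut(C^a)$-stability of $\mathfrak X^a$, and then invoke Theorem \ref{thm:01}(2) to produce $\mathfrak Y$. Compact induction from an open subgroup compact modulo the centre is then automatic: every irreducible cuspidal $C$-representation $\pi$ of $G$ has some irreducible constituent $\pi^a$ of $C^a\otimes_C\pi$ which is of the form $\ind_J^G\lambda^a$; by the construction described after Theorem \ref{thm:01}, $\pi$ itself is compactly induced from a type $(K,\rho)$ with $K$ open, compact modulo the centre.

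For part (1) the argument is uniform in $G$. The known lists of level $0$ cuspidal $C^a$-types (Morris, Moy--Prasad, with completeness now in Fintzen's work) consist of pairs $(J,\lambda^a)$ where $J$ is the stabilizer in $G$ of a facet $x$ of the Bruhat--Tits building of $G$ and $\lambda^a$ is the inflation of an irreducible cuspidal $C^a$-representation $\pi^a$ of the finite reductive quotient $\mathsf G_x$, whose $G_x^+$-isotypic component is nonzero. For $\sigma\in \Aut(C^a)$, the representation $\sigma(\pi^a)$ is again irreducible cuspidal for $\mathsf G_x$, and its inflation gives another pair in the same list; hence $\Aut(C^a)$-stability. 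Theorem \ref{thm:01}(1) then yields the desired list.

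For part (2) the verification of $\Aut(C^a)$-stability is done case by case using the structure of the cited constructions. In all cases the types $(J,\lambda^a)$ have $\lambda^a \cong \kappa^a\otimes \tau^a$ where $\kappa^a$ is built from semisimple characters, Heisenberg extensions, and Howe factorizations at deeper levels of the Moy--Prasad filtration, while $\tau^a$ is inflated from an irreducible cuspidal representation of a certain finite reductive quotient. Two observations suffice: (a) the building-theoretic data (the point $x$, the filtration groups, the tamely ramified twisted Levi sequences) are intrinsic to $G$ and hence fixed by $\sigma$; (b) the character-theoretic data, namely the semisimple characters and the finite-group cuspidal $\tau^a$, are permuted by $\sigma$ within the family allowed by the construction (this uses that for a finite reductive group $\mathsf H$, $\sigma$ permutes the isomorphism classes of irreducible cuspidal $C^a$-representations of $\mathsf H$, and that the valued character groups defining semisimple strata are permuted by $\sigma$). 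Combining (a) and (b), $\sigma(\lambda^a)$ is again a type in the list up to $G$-conjugation.

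The main obstacle is the last item of part (2), the moderately ramified case built on the Yu--Fintzen construction, because of the intricate layered structure of the type: one must track compatibility of $\sigma$ with Yu's cuspidal data, i.e., a tower of twisted Levi subgroups together with characters of prescribed depths and a depth-zero cuspidal datum at the bottom. Since $\sigma$ commutes with inflation, with Heisenberg--Weil lifts (whose construction here is canonical up to the choice of a character of a finite group, itself permuted by $\sigma$), and with the depth-zero finite-group ingredient, the whole Yu--Fintzen cuspidal datum for $\sigma(\lambda^a)$ is obtained from that of $\lambda^a$ by applying $\sigma$ to its character components; exhaustion of the Yu--Fintzen list (proved by Fintzen) then gives $\sigma(\lambda^a)\in\mathfrak X^a$. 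In the other cases (semisimple rank $\leq 1$, $\SL(n,F)$, $\GL(n,D)$, classical and quaternionic forms with $p\neq 2$) the same verification is simpler, being a direct inspection of the Bushnell--Kutzko, Sécherre--Stevens, or Kurinczuk--Skodlerack--Stevens construction. Once $\Aut(C^a)$-stability is in hand, Theorem \ref{thm:01}(2) delivers a list $\mathfrak Y$ with the four required properties.
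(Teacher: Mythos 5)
Your proposal follows the paper's route: for level $0$, identify the Moy--Prasad/Morris list, verify $\Aut(C^a)$-stability (this is Theorem~\ref{thm:level0}); for positive level, take the cited lists, verify $\Aut(C^a)$-stability case by case, and feed the result into Theorem~\ref{thm:01}(2) via Theorem~\ref{thm:cusptype}. Two points are glossed over, though neither invalidates the plan. First, your uniform picture $\lambda^a\cong\kappa^a\otimes\tau^a$ does not apply to $SL(n,F)$, whose types (Bushnell--Kutzko, Cui) are built by induction from the projective normalizer of a $GL_n$-type restricted to $SL_n$; the paper explicitly notes this case does not conform to the common pattern of \S\ref{ss:3.4} and checks $\Aut(C^a)$-stability there by a separate argument. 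Second, your observation (b) ``the semisimple/very special characters are permuted by $\sigma$'' hides the actual work: the construction depends on a choice of additive character $\psi:F\to C^*$, and $\sigma$ sends $\psi$ to $\psi^{\xi_\sigma}$ with $\xi_\sigma\in O_F^*$, so one must verify that changing $\psi$ to $\psi^\xi$ preserves the list of very special characters (the paper does this via identities such as $\mathfrak C(\mathfrak A,\beta,\psi^a)=\mathfrak C(\mathfrak A,a\beta,\psi)$ for Bushnell--Kutzko and its successors, and by tracing genericity through the $\psi$-dependent identification $\hat{\mathfrak g}\simeq\mathfrak g^*$ in the Yu setting); this is precisely what makes property (a) of \S\ref{ss:3.4} non-trivial, and a proof should not treat it as automatic.
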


Theorem \ref{thm:01} applies rather generally. Indeed we show that if $C^a$ and $C'^a$ are two
algebraically closed fields with the same characteristic $c\neq p$ and $G$ admits a set  of $C^a$-types 
satisfying  unicity, exhaustion, and $\Aut(C^a)$-stability, then $G$ also admits a list of $C'^a$-types satisfying the same  properties.

\bigskip 
Our other main results concern supercuspidality. An irreducible smooth $C$-representation $\pi$ of $G$ is supercuspidal  if it is not a subquotient of a representation parabolically induced from a proper Levi subgroup of $G$ \cite{V96}. This notion of supercuspidality also makes sense for finite reductive groups. The explicit cuspidal $C^a$-types $(J, \lambda)$ considered above involve cuspidal $C^a$-representations of finite reductive groups. More precisely $J$ has two normal  open subgroups $J^1\subset J^0$ and the quotient $J^0/J^1$ is naturally a finite reductive group. The restriction of $\lambda$ to $J^0$ is constructed as a tensor product of an irreducible $C^a$-representation $\kappa$ of  $J^0$, which we call here a preferred extension (see  \S \ref{ss:3.4}  for detail), and a $C^a$-representation $\rho$ of $J^0$  trivial on $J^1$, inflated from a cuspidal representation of $J^0/J^1$. We say accordingly that $\lambda$ is supercuspidal if the irreducible components of $\rho$ are inflated from supercuspidal representations of the finite reductive group $J^0/J^1$. For a cuspidal $C$-type $(J,\lambda) $ obtained via Theorem \ref{thm:01}, we say that $\lambda$ is supercuspidal if the irreducible components of $C^a \otimes_ C \lambda$  (which are cuspidal $C^a$-types) are supercuspidal.

\begin{theorem}\label{thm:03}  

 Let $(J, \lambda)$ be a  cuspidal $C$-type in $G$. Then  $\lambda$ is supercuspidal if and only if   $\ind_J^G \lambda$ is supercuspidal, in the following cases:

-  $(J, \lambda)$ has level $0$,

 -  $C$ is algebraically closed, $(J,\lambda)$ is   in the list of cuspidal $C$-types of $G=GL(n,F)$ or  $G$ is a classical group and $p$ is odd, or $G$  splits over a tame Galois extension of $F$ and  $p$ is odd and does not divide the order of the absolute Weyl group of $G$,  
 constructed   by Bushnell-Kutzko [12],  Minguez-S\'echerre [38],  or  Kurinczuk-Skodlerack-Stevens [34], or  Yu \cite{Y01}, Fintzen \cite{F2} \footnote{This case is conditional on the verification of the second adjunction by Dat}.  
 \end{theorem}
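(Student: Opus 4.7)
The plan is to reduce to the algebraically closed case and then argue via a Jacquet module computation that transfers the question from $G$ to the finite reductive quotient $J^0/J^1$. For an irreducible smooth $C$-representation $\pi$ of $G$, the property of being supercuspidal is preserved under the passage $\pi \mapsto C^a \otimes_C \pi$: parabolic induction and Jacquet functors commute with scalar extension, and the decomposition theorem recalled in Section \ref{s:1} implies that every irreducible component of $C^a \otimes_C \pi$ has the same supercuspidal status as $\pi$. The analogous fact holds for the cuspidal $C$-representations of the finite reductive group $J^0/J^1$. Combined with the construction of $\mathfrak Y$ from $\mathfrak Y^a$ in Theorem \ref{thm:01}, this reduces the statement to the case $C = C^a$.

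In the level $0$ case, $J^0 = P_x$ is a parahoric, $J^1 = P_x^+$ its pro-$p$ radical, and $\lambda$ is inflated from a representation $\bar\lambda$ of $J/J^1$ whose restriction to $J^0/J^1$ is isotypic of type a single cuspidal representation $\rho_0$. For a proper parabolic $P = MU$ of $G$, the Iwahori decomposition of $J^0$ along $P$ together with the geometric/Mackey argument à la Casselman (valid over $C$ by \cite{V96}) expresses $r_P^G(\ind_J^G \lambda)$ in terms of the $(J^0 \cap U)$-coinvariants of $\bar\lambda$, which coincide with the Jacquet module of $\rho_0$ relative to the image of $P \cap J^0$ in $J^0/J^1$. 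As $P$ ranges over proper standard parabolics of $G$ compatible with $x$, the corresponding finite parabolics exhaust the proper parabolics of $J^0/J^1$, and the desired equivalence follows.

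For the positive-depth cases, each of the Bushnell--Kutzko, Minguez--Séche\-rre, Kurinczuk--Skodlerack--Stevens, and Yu--Fintzen constructions gives $\lambda|_{J^0} = \kappa \otimes \tilde\rho$, where $\kappa$ is a preferred extension attached to the underlying semisimple stratum and $\tilde\rho$ is inflated from a cuspidal representation $\rho_0$ of $J^0/J^1$. Tensoring with $\kappa$ sets up a Morita-style equivalence between a full subcategory of $\Mod_C(G)$ generated by $(J,\lambda)$-isotypic vectors and the analogous category for a level $0$ type $(J^\circ, \tilde\rho)$ in a twisted Levi subgroup $G' \subset G$, in which $\rho_0$ lives. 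The point is that this equivalence, by design, intertwines parabolic induction and Jacquet functors for any Levi $M' \subset G'$ with those for the corresponding Levi $M$ of $G$; applying the level $0$ result inside $G'$ then identifies supercuspidality of $\ind_J^G\lambda$ with supercuspidality of $\rho_0$, hence of $\lambda$.

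The main obstacle lies in that last step: the compatibility of the cover-theoretic Morita equivalence with Jacquet functors in the modular setting rests on second adjointness for $G$ over $C$, which controls Jacquet modules of compactly induced representations of infinite length. This is precisely the input currently being established by Dat on which, as the authors note, the Yu--Fintzen case of the theorem is conditional. Once second adjointness is available, the remaining verifications are essentially bookkeeping transfers of the classical complex-coefficient arguments through the Hecke-algebra and cover formalism, together with the scalar-extension reduction from the first paragraph.
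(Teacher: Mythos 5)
Your approach differs fundamentally from the paper's, and as written it contains a genuine gap in the level~$0$ argument. You propose to compute $r_P^G(\ind_J^G\lambda)$ via Iwahori/Mackey decompositions and relate it to the Jacquet module of $\rho_0$ in the finite reductive quotient. But since $\rho_0$ is cuspidal by hypothesis (a cuspidal type requires that already), its finite Jacquet modules vanish, and correspondingly $r_P^G(\ind_J^G\lambda) = 0$ for \emph{every} cuspidal type $\lambda$, supercuspidal or not; this is Proposition~\ref{prop:cusptype}(iii) and holds unconditionally. The computation therefore cannot separate supercuspidal from merely cuspidal. Supercuspidality of $\pi$ is about $\pi$ not occurring as a \emph{subquotient} of $\Ind_P^G\sigma$, which is strictly stronger than vanishing of $(\pi)_N$ and is not visible in the Jacquet module of $\pi$ itself.

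The paper's mechanism is different and precisely designed to capture this stronger condition: injective hulls in the category $\Mod_C(G,\omega)$ of representations with fixed central behaviour. The key input is Hiss's criterion (Lemma~\ref{le:Hiss}) and its generalization Proposition~\ref{prop:scI}: $\pi$ is supercuspidal if and only if its injective hull $I_{\pi,\omega}$ is cuspidal (under second adjunction, or automatically in level~$0$). The paper then shows (Propositions~\ref{prop:Ilambda},~\ref{prop:Ilambdapositive}) that $I_{\lambda,\omega}$ is a finite-dimensional projective indecomposable module over a \emph{symmetric} algebra, with socle equal to cosocle, and that compact induction $\ind_J^G$ carries $I_{\lambda,\omega}$ to an injective hull of $\pi$ with an isomorphism of lattices of subrepresentations (Theorems~\ref{thm:sc01},~\ref{thm:sc01positive}). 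Cuspidality of $I_{\pi,\omega}$ is thereby reduced to cuspidality of $e_\kappa(I_{\lambda,\omega})$ over $J^0/J^1$, which is exactly supercuspidality of $\lambda$. Note also that the forward direction ($\lambda$ supercuspidal $\Rightarrow$ $\pi$ supercuspidal) in positive level is established in the paper \emph{without} second adjunction, via the simple observation (Lemma~\ref{ex:sc}) that an irreducible quotient of a projective cuspidal representation is supercuspidal; your account attributes the whole theorem to second adjunction, which overstates its role. Your ``Morita-style equivalence'' paragraph for positive level is in the spirit of Bushnell--Kutzko covers and would require substantial work to justify in the modular setting; the paper circumvents this entirely by the injective-hull bookkeeping above.

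Your opening reduction to $C=C^a$ via the Decomposition Theorem and Proposition~\ref{prop:c} is correct and is used in the paper, though for the positive-level cases the theorem already assumes $C$ algebraically closed.
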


To prove  Theorem \ref{thm:03} 
we use injective hulls. Indeed if $\pi$ is an irreducible $C$-representation  of a finite reductive group, and $I_\pi $ is an injective hull of $\pi$, then $\pi$ is supercuspidal if and only if $I_\pi$ is cuspidal. In practice we work with representations whose restriction to a maximal  torsion-free subgroup $Z^\sharp$  of the centre $Z$ of $G$ is a multiple of a fixed irreducible $C$-representation $\omega$. In that setting we show that  
if  an injective hull $I_{\lambda, \omega}$ of $\lambda$ is cuspidal  then  $\ind_J^G I_{\lambda, \omega}$ is cuspidal  (with the same length as $I_{\lambda, \omega}$) and is an injective hull  and a projective cover of $ \pi=\ind_J^G \lambda$.   In the reverse direction we show that if $I_{\pi, \omega}$ is cuspidal then $ I_{\lambda, \omega}$ is supercuspidal. 
When the second adjointness holds, then $\pi$ is supercuspidal if and only if $I_{\pi, \omega}$ is cuspidal, and we get  (Theorems \ref{thm:sct0} and  \ref{thm:sct0positive}):
  
 For  a cuspidal $C$-type  $ (J,\lambda)$   in $G$ of level $0$ or satisfying   the properties (i) to (vi) of \S \ref{ss:3.4}, if $\lambda$ is supercuspidal then $\pi=\ind_J^G\lambda $ is.  The converse is true if $ (J,\lambda)$   has level $0$, or if   $ (J,\lambda)$ satisfies also the  property  (vii) of \S \ref{ss:3.4} and $(G,C)$ satisfies the second adjunction.

 This result implies Theorem \ref{thm:03}. Indeed Dat proved the second adjointness for level $0$ representations and  for the groups $G=GL(n,F)$, the classical groups of Stevens, and the moderately ramified groups of  Fintzen-Yu. We check  that  the properties (i) to (vii) of \S \ref{ss:3.4} are satisfied  by the cuspidal $C^a$-types in $G$ when $G$ is
 $GL(n,D)$, a classical group as in \cite{KSS} and $p\neq 2$, a quaternionic form of such a classical group, a moderately ramified connected reductive group and $p$ not dividing the order of the absolute Weyl group,  constructed by Bushnell-Kutzko [12],  Minguez-S\'echerre [38],  Kurinczuk-Skodlerack-Stevens [34], Skodlerak  \cite{Sk17}, \cite{Sk20}, Yu \cite{Y01}.

\bigskip  The layout of the paper is the following.  In section \ref{s:1} we recall  useful consequences  
of the Decomposition Theorem of \cite{HV19}: when $V$ is a simple module over a unital $C$-algebra $A$ with 
finite-dimensional commutant, it describes the submodule structure of  $C^a \otimes_C\pi$
as a module over $C^a \otimes_C A$. Section \ref{s:2}  collects facts about various functors
on the category of smooth $C$-representations of a locally profinite group $G$; also,  we derive from section  \ref{s:1} a procedure
to produce cuspidal $C$-types in $G$ from cuspidal $C^a$-types. In section  \ref{s:3}, we first show  that all irreducible smooth $C$-representations of $G=\underline G(F)$, are admissible,
and that our procedure applies to such $G$;  moreover we show that good sets of cuspidal types can be transferred from one
algebraically closed field to another of the same characteristic.  Then we show   $\Aut (C^a)$-stability for
the level $0$ cuspidal $C^a$-types, yielding corresponding lists of $C$-types;  the more technical case of positive level cuspidal $C^a$-types is treated at the end of section \ref{s:pos}.
In section \ref{s:4}, we investigate the notion of supercuspidality and we prove that a level $0$ cuspidal $C$-type $(J, \lambda)$ is supercuspidal if and only if $\ind_J^G \lambda$ is supercuspidal.
Section \ref{s:pos}  is devoted to the explicit $C^a$-types of Theorem \ref{thm:02}. We show   the properties (i) to (vi) of \S \ref{ss:3.4} and $\Aut(C^a)$-stability for them, and  the property (vii)  for some of them to get Theorem \ref{thm:03}.

\bigskip 
Part of this paper was presented  in Carthage at the conference organized by Ahmed Abbes in June 2019. It is a pleasure to thank Muic, Fintzen, Aubert, S\'echerre, Stevens, Abbes and the participants of the Carthage conference and  of IMJ-PRG for their contribution or  stimulating interest.

\bigskip Notation: Throughout the paper $p$ is a prime number, $C$ is a field with characteristic $c$ different from $p$ (unless mentioned otherwise), and $C^a$ is an algebraic closure of $C$. All $C$-algebras are assumed to be associative, $\Aut(C)$ is the group of field automorphisms of $C$ and  $\Aut_C(C^a)$ is  the group of automorphisms of $C^a$ fixing $C$.
 In section \ref{s:2},
$G$ is a locally profinite group, most often with a compact open subgroup of
pro-order invertible in $C$, and $Z $ is a closed central subgroup of $G$. We write $\Mod_C (G)$ for the category of smooth $C$-representations of  $G$ and $\Irr_C(G) \subset \Mod_C(G)$ for the family of   irreducible  representations.
In sections \ref{s:3} to  \ref{s:pos}, $G= \underline G(F)$, where $F$ is a non-archimedean  local field
with finite residue characteristic $p$,  $ \underline G$ is a connected reductive $F$-group and $Z$ is
the centre of $G$ of maximal compact subgroup $Z^0$ and  $Z^\sharp $   a finitely generated torsion-free subgroup. As usual $O_F$ is the ring of integers of $F$, $P_F$ the maximal ideal of $O_F$, $O_F^*$ the group of units of $O_F$, and $k_F=O_F/P_F$ the residual field.

\section{The Decomposition Theorem}\label{s:1}

  Let $A$ be a unital $C$-algebra and $V$  a simple $A$-module such that $D= End_A(V)$ has finite dimension over $C$. The Decomposition Theorem (\cite{HV19}, Theorem1.1) analyzes the structure of 
$C' \otimes_C V$ as a module over $C' \otimes_C A$ when  $C'$ is any normal extension of 
$C$ containing a maximal subfield of $D$. 
Its lattice of submodules is isomorphic
to the lattice of right ideals in the Artinian ring $C' \otimes_C D$; in particular
$C' \otimes_C V$ has finite length.
We shall mostly use the following consequences, drawn in \cite{HV19}, when $C'=C^a$. 

\begin{theorem}\label{thm:DT} A) Let  $V$ be a simple  $A$-module with   $\dim_C(End_A(V))$ finite. Then, the $C^a \otimes_C  A$-module $C^a\otimes_C  V$  has finite length.
A simple   subquotient of $C^a\otimes_C  V$ is also isomorphic
to a submodule, and to a quotient; it is absolutely simple, and defined
over a finite extension of $C$.
The isomorphism classes of simple subquotients form a finite orbit
under $\Aut_C(C^a)$.

B) An absolutely simple $C^a \otimes_C  A$-module $W$ which is defined
over a finite extension of $C$  is a subquotient of $C^a \otimes_C V$, for some simple
$A$-module $V$ with $\dim_C(End_A(V))$ finite. The $A$-module $V$ is determined up to isomorphism
by the property that $W$ is $V$-isotypic\footnote{ $W$ is a direct sum of modiules isomorphic to $V$} as an $A$-module.
\end{theorem}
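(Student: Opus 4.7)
The plan is to derive both parts as consequences of the main Decomposition Theorem of \cite{HV19}, which provides an order-preserving bijection between the lattice of $(C^a \otimes_C A)$-submodules of $C^a \otimes_C V$ and the lattice of right ideals of the Artinian $C^a$-algebra $R := C^a \otimes_C D$. Under this bijection, simple $(C^a \otimes_C A)$-subquotients of $C^a \otimes_C V$ correspond to simple factors in a composition series of $R$ as a right module over itself, so the entire analysis reduces to understanding the structure of $R$.

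To describe $R$, let $K = Z(D)$ and let $K^{\mathrm{sep}}/C$ be the maximal separable subextension. Then $K \otimes_C C^a$ is a finite product of local Artinian $C^a$-algebras $L_\sigma$ with residue field $C^a$, indexed by the $C$-embeddings $\sigma \colon K^{\mathrm{sep}} \hookrightarrow C^a$; tensoring $D$ over $K$ gives $R \cong \prod_\sigma M_n(L_\sigma)$ with $n^2 = [D:K]$, since $C^a$ splits every central simple $K$-algebra and any Azumaya algebra over a local ring with split residue class algebra is itself split.

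Part (A) then drops out factor by factor. Finite length of $C^a \otimes_C V$ reflects finite $C^a$-dimensionality of $R$. In each factor $M_n(L_\sigma)$ the unique simple right module appears in the socle of the regular module (via the top nonzero power of the maximal ideal) and again as a quotient (modulo that ideal), so every simple right $R$-module occurs both as a submodule and as a quotient of $R$; transporting via the lattice isomorphism yields the corresponding assertion for $C^a \otimes_C V$. Absolute simplicity of each simple $(C^a \otimes_C A)$-subquotient follows because its commutant is the residue field $C^a$ of the relevant $L_\sigma$, and being defined over a finite extension of $C$ is built into the description since each embedding $\sigma$ factors through a finite subextension. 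Finally $\Aut_C(C^a)$ permutes the (finitely many) embeddings $\sigma$, and hence the set of isomorphism classes of simple subquotients, with finite orbits.

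For Part (B), write $W = C^a \otimes_E W_E$ with $W_E$ simple over $E \otimes_C A$ for some finite $E/C$. Since $W_E$ is cyclic over $E \otimes_C A$ it is finitely generated as $A$-module, hence admits a simple $A$-quotient $V$; the centraliser $E$ permutes the $A$-composition factors of $W_E$, and the absolute simplicity of $W$ forces transitivity on isomorphism classes, so $W_E$ and therefore $W$ is $V$-isotypic as $A$-module. That $\End_A(V)$ is finite-dimensional over $C$ and that $W$ occurs as a simple subquotient of $C^a \otimes_C V$ both follow by applying Part (A) to this $V$ and identifying $W$ with one of its simple constituents. Uniqueness of $V$ is then immediate, since any simple $A$-subquotient of a $V$-isotypic module is isomorphic to $V$. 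The principal technical subtlety is the inseparable case, where the rings $L_\sigma$ carry nilpotents and Azumaya splitting requires Hensel-type lifting from the residue field; once the decomposition $R \cong \prod_\sigma M_n(L_\sigma)$ is in hand, the remaining deductions from the \cite{HV19} lattice isomorphism are mechanical.
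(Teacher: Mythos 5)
The paper does not prove this theorem itself but cites \cite{HV19}; your proof of Part~(A), via the lattice isomorphism with right ideals of $R=C^a\otimes_C D$ and the block decomposition $R\cong\prod_\sigma M_n(L_\sigma)$ obtained from the centre $K$ of $D$ together with Azumaya rigidity over the Henselian local factors $L_\sigma$, is the natural route and its conclusions are correct. Two small caveats: the theorem asserts a \emph{single} $\Aut_C(C^a)$-orbit, so you should say explicitly that $\Aut_C(C^a)$ acts \emph{transitively} on the finitely many $C$-embeddings of $K^{\mathrm{sep}}$ into $C^a$, not merely that the orbits are finite; and transporting ``absolutely simple'' and ``defined over a finite extension of $C$'' from the right-ideal side to subquotients of $C^a\otimes_C V$ uses the functorial realisation of the lattice isomorphism (via $\Hom_A(V,-)$), not just the abstract lattice bijection, so that commutants and fields of definition can actually be tracked.

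Part~(B) has two genuine gaps. First, the argument that $W_E$ is $V$-isotypic as an $A$-module does not work as written: ``$E$ permutes the $A$-composition factors of $W_E$'' carries no force (an $A$-automorphism of $W_E$ preserves the multiset of composition factors by Jordan--H\"older, to the extent that these are even defined without a finite-length hypothesis), and even if all composition factors were isomorphic to $V$ this would not make $W_E$ a \emph{direct sum} of copies of $V$ (compare $\Z/p^2$ over $\Z$). The correct step is an adjunction: a surjection $W_E\twoheadrightarrow V$ of $A$-modules corresponds, via
$\Hom_A(\Res W_E,V)\simeq\Hom_{E\otimes_C A}(W_E,\Hom_A(E\otimes_C A,V))$ and $\Hom_A(E\otimes_C A,V)\simeq E\otimes_C V$,
to a nonzero, hence injective, $E\otimes_C A$-map $W_E\hookrightarrow E\otimes_C V\cong V^{[E:C]}$, and $V$-isotypy of $W_E$ is then automatic since it is a submodule of an isotypic semisimple $A$-module. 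Second, and more serious, you derive $\dim_C\End_A(V)<\infty$ ``by applying Part~(A) to this $V$'', but Part~(A) has that finiteness as a \emph{hypothesis}: the reasoning is circular. An independent argument is required; for example, once $W_E\cong V^k$ is known, $M:=\Hom_A(V,W_E)\cong D^k$ (with $D=\End_A(V)$) is a simple left $E\otimes_C D^{\op}$-module with endomorphism ring $E$; since $[E:C]<\infty$ the ring $E\otimes_C D^{\op}$ has finite length as a $D^{\op}$-module and is therefore Artinian, so $\dim_E M<\infty$, and combining this with $M\cong D^k$ yields $\dim_C D<\infty$. Without some such argument the hypothesis of Part~(A) is unavailable and your proof of Part~(B) does not close.
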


 The theorem implies that   the map sending $V$ to the set of irreducible subquotients of $C^a\otimes_C V$ induces a bijection  from the set of isomorphism classes of simple $A$-modules  with endomorphism ring of finite $C$-dimension to the set of orbits under $\Aut_C(C^a)$ of absolutely simple $C^a\otimes_C A$-modules defined over a finite extension of $C$.

\bigskip For  any extension  $C'/C$ we put $A_{C'}= C'\otimes_C A$. An  $A_{C'} $-module isomorphic to  $ C'\otimes_{C}V$ for
an $A$-module $V$,  is said to be defined over  $ C$;
 if $V$ is simple,  then
 $C'\otimes_C \End_{A} V\simeq \End_{A_{C'}}C'\otimes_{C}V$ (\cite{HV19}, Rem.II.2). An   $A$-module $V$ is said to be absolutely simple  if  the $A_{C'}$-module $C'\otimes_C V$ is simple for any extension $C'/C$.

  As a consequence of Theorem \ref{thm:DT}, a simple $A$-module $V$ with $\End_A(V)=C$ is absolutely simple. The converse is true (\cite{HV19}, Rem.II.3); in particular a simple $A_{C^a}$-module of finite $C^a$-dimension is absolutely simple.

 \begin{lemma}\label{le:DT}  (i) Let  $V$ be a simple  $A$-module with $D=\End_A(V)$ of finite $C$-dimension and let $C'/C$ be  an extension   contained in $D$.  Then   $C'\otimes_C V$ is a simple  $A_{C'}$-module if and only if  $C'=C$.
 
  (ii) An $A$-module $V$ such that  $D= End_A(V)$ has  finite $C$-dimension is 
  absolutely simple  if and only if $C'\otimes_CV$ is a simple $C'\otimes_CA$-module for all finite extensions $C'/C$.  
   \end{lemma}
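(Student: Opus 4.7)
The plan is to establish (i) directly by constructing an explicit $A_{C'}$-linear surjection whose kernel can be shown nonzero when $C' \neq C$, and then to deduce (ii) from (i) together with the characterization, stated just before the lemma, that an $A$-module $V$ with $\dim_C \End_A V < \infty$ is absolutely simple if and only if $\End_A V = C$.

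For (i), if $C' = C$ then $C' \otimes_C V = V$ is simple by hypothesis. For the converse I would use that $C' \subset D = \End_A V$ acts on $V$ by $A$-module endomorphisms, and define the evaluation map
\[
\phi \colon C' \otimes_C V \to V, \qquad c' \otimes v \mapsto c' \cdot v,
\]
where the right-hand side denotes the image of $v$ under $c' \in D$. A direct check using that the $D$-action commutes with the $A$-action shows $\phi$ is $A_{C'}$-linear, and it is surjective since $\phi(1 \otimes v) = v$. The key step is to produce a nonzero element of $\ker \phi$ when $C' \neq C$: given $c' \in C' \setminus C$ and any nonzero $v \in V$, the element $x = c' \otimes v - 1 \otimes (c' v)$ clearly lies in $\ker \phi$. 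To see that $x \neq 0$, fix a $C$-basis $\{e_i\}$ of $V$, write $v = \sum a_i e_i$ and $c' v = \sum b_i e_i$ with $a_i, b_i \in C$; in the corresponding $C'$-basis $\{1 \otimes e_i\}$ of $C' \otimes_C V$ one computes $x = \sum (c' a_i - b_i)(1 \otimes e_i)$, and vanishing would force $c' = b_{i_0}/a_{i_0} \in C$ for any index $i_0$ with $a_{i_0} \neq 0$, contradicting $c' \notin C$. Hence $\ker \phi$ is a proper nonzero $A_{C'}$-submodule, so $C' \otimes_C V$ is not simple.

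For (ii), the forward direction is immediate from the definition. For the converse, assume $C' \otimes_C V$ is simple for every finite extension $C'/C$; I will show $D = C$, which by the characterization recalled above gives absolute simplicity. Suppose on the contrary that $D \neq C$, and pick $d \in D \setminus C$. Since $\dim_C D$ is finite and $D$ is a division ring by Schur's lemma, the commutative subring $C[d]$ is a finite-dimensional integral domain, hence a field $C(d) \subset D$ with $[C(d):C] > 1$. Applying (i) to the finite extension $C' = C(d)$ yields that $C(d) \otimes_C V$ is not simple, contradicting the hypothesis.

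The main technical obstacle is the nonvanishing of $x$ in (i), which must not presuppose $V$ is finite-dimensional over $C$; the basis-expansion argument reduces the question to a one-variable linear identity over $C$ and handles the general case uniformly.
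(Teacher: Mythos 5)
Your proof is correct, but it takes a genuinely different route from the paper for part (i). The paper uses the isomorphism $C'\otimes_C D \simeq \End_{A_{C'}}(C'\otimes_C V)$ (cited from \cite{HV19}, valid since $V$ is simple) and then argues structurally: if $C'\otimes_C V$ were simple, $C'\otimes_C D$ would be a finite-dimensional division $C'$-algebra, forcing the subalgebra $C'\otimes_C C'$ to be a field, which happens only when $C'=C$. You instead build an explicit witness to non-simplicity: the evaluation map $\phi\colon C'\otimes_C V\to V$, $c'\otimes v\mapsto c'\cdot v$, is a surjective $A_{C'}$-homomorphism, and the element $c'\otimes v - 1\otimes(c'v)$ is a nonzero kernel vector whenever $c'\notin C$, exhibited by the basis computation. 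Your approach is more elementary and self-contained (it avoids citing the endomorphism-ring base-change isomorphism), and it gives a concrete proper nonzero submodule; the paper's is shorter given the citation and ties directly into the surrounding discussion of $C'\otimes_C C'$. For part (ii), both you and the paper deduce it from (i) plus the remark that $\End_A V = C$ characterizes absolute simplicity when $\dim_C\End_A V<\infty$; you simply make explicit the step of extracting a proper subfield $C(d)\subset D$ from an element $d\in D\setminus C$, which the paper leaves implicit. One small point worth noting: you correctly observe (and rely on the fact) that $C$ is central in $D$, which is what makes $\phi$ well-defined as a map out of the tensor product over $C$; that hypothesis is guaranteed because $A$ is a $C$-algebra and the scalar action of $C$ on $V$ commutes with every $A$-endomorphism.
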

      \begin{proof}  (ii) is a consequence of (i) by Theorem \ref{thm:DT}. We prove (i).
          If $C'\otimes_CV$ is a simple $C'\otimes_CA$-module, then  $C'\otimes_C D\simeq \End_{C'\otimes_C A}( C'\otimes_CV)$ is a division $C'$-algebra of finite  dimension containing  $C'\otimes_C C'  $.  An  integral $C'$-algebra of finite dimension  is a field, so $C'\otimes_C C'$ is a field. But $C'\otimes_C C'$ is a field if and only if $C'=C$ (the multiplication $x\otimes y\mapsto xy$ is a quotient map $C'\otimes_C C'\to C'$ hence is an isomorphism, and $C'=C$).
\end{proof}

\begin{remark}\label{r:DT2} Let    $V$ be  a simple $A$-module  with endomorphism ring  $D=End_{A}V$ and $B\subset A$  a central subalgebra (containing the unit).
Let $E$ be the image of $B$ in $\End_C(V)$. Then $E$ lies in the centre of $D$.
In the special case where $V$ is a finitely generated $B$-module, then
$E$ is a field and $V$ is a finite dimensional $E$-vector space 
(\cite{Bki-A8},  3.3, Corollary 2 of Proposition 3); that applies in particular when $B=A$,
in which case $\dim_E(V)=1$. In general, at least $E$ is  integral.

Assume now that $D$ has finite $C$-dimension. Then $E$ is a 
commutative finite dimensional $C$-algebra, and being an integral domain it is
necessarily a field, hence is a finite extension of $C$. The algebra $B$ acts on $V $ via its quotient
field $E$, which is a simple $B$-module, and $V$ as a $B$-module, is $E$-isotypic.
In the case that $A=C[G]$ for a group $G$, and $B= C[ Z]$ where $Z\subset G$ is  
a central subgroup,  we see that 
$V$ is a simple $E[ G]$-module, $Z$ acting   by an  homomorphism $Z\to  E^*$.
\end{remark} 

  We now
  give a kind of converse to  Theorem  \ref{thm:DT}, which will be used in Proposition \ref{prop:compactind}.

\begin{proposition}\label{prop:DT} 
Let $V$ be an $A$-module such that $ \End_A(V)$ is a division algebra. 
Assume that the $C^a \otimes_C A$-module
$C^a\otimes_CV$ has finite length  and that all its simple  subquotients
are absolutely simple,   and their isomorphism classes form an   orbit
under $\Aut_C(C^a)$. 
Then $V$ is simple and $ \End_A(V)$ has finite dimension over $C$.
\end{proposition}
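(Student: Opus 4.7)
The plan is to show first that $V$ is simple, and then that $D=\End_A(V)$ is finite-dimensional over $C$. The one non-formal step is to force all $A$-composition factors of $V$ to be mutually isomorphic; both conclusions will follow by routine arguments afterwards.

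Scalar extension $C^a\otimes_C-$ is exact and conservative, so strict chains of $A$-submodules in $V$ remain strict after tensoring; hence $V$ has finite $A$-length. Pick any composition factor $V_i=V_i''/V_i'$ of $V$. For each $\sigma\in\Aut_C(C^a)$, the $\sigma$-semilinear automorphism $\sigma\otimes\id_V$ of $C^a\otimes_C V$ carries $C^a\otimes_C A$-submodules to $C^a\otimes_C A$-submodules and preserves both $C^a\otimes_C V_i'$ and $C^a\otimes_C V_i''$. Thus $\Aut_C(C^a)$ acts compatibly on $C^a\otimes_C V_i$, and the isomorphism classes of its simple $C^a\otimes_C A$-subquotients form a nonempty $\Aut_C(C^a)$-stable subset of the orbit of simple subquotients of $C^a\otimes_C V$; by transitivity this subset equals the full orbit. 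Now $V_i$ being simple, $C^a\otimes_C V_i\simeq V_i^{(I)}$ as an $A$-module (for $I$ any $C$-basis of $C^a$), so it is $A$-semisimple of isotype $V_i$, and any of its $A$-subquotients inherits that isotypicity. Hence every element $W$ of the orbit, viewed as an $A$-module, is $V_i$-isotypic \emph{for every} composition factor $V_i$ of $V$. Fixing one such $W$, all these $V_i$ must coincide, with a single class $V_1$.

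Suppose now $V$ has length $\ge 2$. All composition factors being $\simeq V_1$, the semisimple top $V/\rad(V)$ supplies a surjection $p:V\twoheadrightarrow V_1$ and the socle an inclusion $j:V_1\hookrightarrow V$; then $j\circ p\in\End_A(V)$ is nonzero with image of length one, hence not invertible, contradicting the hypothesis that $\End_A(V)$ is a division algebra. So $V$ is simple. For finite-dimensionality of $D$, the natural $C^a$-algebra map $C^a\otimes_C D\to\End_{C^a\otimes_C A}(C^a\otimes_C V)$ is injective (by $C$-linear independence of a basis of $C^a/C$), so it suffices to bound the right side. A standard induction on length shows that if $X,Y$ are finite-length $C^a\otimes_C A$-modules whose composition factors are absolutely simple, then $\Hom(X,Y)$ is finite-dimensional over $C^a$: the base case $X$ absolutely simple uses $\End(X)=C^a$ combined with finiteness of the $X$-multiplicity in $\Soc(Y)$, while the inductive step uses left-exactness of $\Hom(-,Y)$ on a short exact sequence in $X$.

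The main obstacle lies in the second paragraph: marrying orbit transitivity of $\Aut_C(C^a)$ to the $A$-isotypicity of $C^a\otimes_C V_i$, so as to force the $V_i$ into a single class. Once that is done, the division-algebra property finishes simplicity, and finite-dimensionality of $D$ reduces to a dimension count over $C^a$.
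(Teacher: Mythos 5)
Your proposal is correct and follows essentially the same route as the paper's proof: both establish that every composition factor of $V$ must be isotypic for a single simple $A$-module by playing the $\Aut_C(C^a)$-orbit transitivity against the $A$-isotypicity of $C^a\otimes_C V_i$, then use the division-algebra hypothesis to collapse $V$ to a simple module, and finally deduce $\dim_C \End_A(V)<\infty$ from the finite length and absolute simplicity over $C^a$. The only cosmetic difference is that in the last step you invoke injectivity of $C^a\otimes_C D\to\End_{C^a\otimes_C A}(C^a\otimes_C V)$ where the paper uses the corresponding isomorphism, but both yield the required bound.
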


\begin{proof}  Let $U$ be a simple $A$-subquotient of $V$. As  an $A$-module, $C^a \otimes_C U$ is a direct sum of modules isomorphic to $U$, hence each simple   subquotient of $C^a\otimes_CU$   is,  as  an $A$-module,  a direct sum of modules isomorphic to   $U$.
Since the isomorphism classes of the  simple  subquotients of $C^a\otimes_CV$ form an orbit
under $\Aut_C(C^a)$, they are also the   simple subquotients of $C^a\otimes_CU$.
As an $A$-module, $C^a \otimes_C V$ is a direct sum of modules isomorphic to $V$. Therefore the simple $A$-subquotients of $V$ are isomorphic to $U$. Since the $C^a\otimes_C A$-module
$C^a\otimes_CV$ has finite length, the $A$-module $V$
has finite length too.  As $U$ occurs as an $A$-submodule and a $A$-quotient of $V$, there exists   an  $A$-endomorphism of $V$  of image $U$; as $\End_A(V)$ is a division algebra, any non-zero $A$-endomorphism of $V$ is surjective. Therefore $V=U$ is simple. Since $C^a\otimes_CV$ has finite length,  and that all its simple  subquotients
are absolutely simple,   
$\End _{C^a\otimes A}( C^a\otimes_C  V)$ has finite dimension over $C^a$.
This is also the dimension of $ \End_A(V)$ over $C$. 
 \end{proof}

 \section{Smooth $C$-representations of locally profinite groups}\label{s:2}
 
 Let $G$ be a locally profinite group, $Z$ a closed central subgroup of $G$, and $C$ a field. 
 
\begin{definition}\label{def:almostfinite} We say that   $Z$ is almost finitely generated when $Z/Z^0$ is finitely generated
for some open compact subgroup $Z^0\subset Z$. This property does not depend on the choice of
$Z^0$.
\end{definition}
A $C$-representation $V$ of $G$ is called smooth if every vector in $V$ has open stabilizer in $G$,
and $V$ is called admissible if moreover the subspace $V^J$ of $J$-invariant vectors of $V$ has finite dimension
for any open subgroup $J$ of $ G$. Note that a $C$-representation $V$ of $G$   generated by $V^J$ for some  $J$  is smooth (as $gv$ is fixed by $gJg^{-1}$ for $g\in G, v\in V$).  We write $\Mod_C (G)$ for the category of smooth $C$-representations of  $G$ and $\Irr_C(G)$ for the family of   irreducible  smooth $C$-representations of $G$.

A homomorphism  $\chi: G \to  C^*$ is called a  $C$-character of $G$. 
The $C$-characters $\chi$  of $G$ act on the  $C$-representations of $G$, respecting irreducibility : if $(\pi, V)$ is a $C$-representation of $G$, then
$g\mapsto \chi(g)\pi(g)$ for $g\in G$, gives a $C$-representation of $G$  on $V$, written $\chi \pi$ and called the twist of $\pi$ by $\chi$. That action is compatible with morphisms of
 representations, so we also get an action, written in the same way, on isomorphism classes of $C[G]$-modules. 
 The smooth characters, i.e. with open kernel,  act on the smooth representations of $G$ and on their isomorphism classes.

 \subsection{Invariants under an open subgroup} \label{ss:2.1}

 Let  $J \subset G$ be an open subgroup. The    functor 
$V \to V^J $ from $C$-representations of $G$ to $C$-vector spaces is left exact, and exact if $J$ is compact
and has pro-order invertible in $C$.
If $V$ is irreducible, we  get a ring homomorphism $D=End_{C [G]}(V) \to
\End_C(V^J)$ which is injective if $V^J \neq 0$, because $D$ is a division algebra;
in particular  if $\dim_C(V^J)$ is finite, so is $\dim_C(D)$, and we 
can apply section \ref{s:1} to $V$. In that case $Z$ acts via a quotient field of $C[Z]$, finite over 
$C$ (Remark \ref{r:DT2}), so $Z$ acts via a character if $C$ is algebraically closed.
We conclude that we can apply section \ref{s:1} to an irreducible admissible $C$-representation of $G$.

In fact the functor $V \to V^J $ gives a functor   from $C$-representations of  $G$ to modules over the Hecke $C$-algebra
$H_C(G,J)$ of $J $ in $G$  (in order to get left modules, $H_C(G,J)$ is defined as  the opposite of the $C$-algebra $\End_{C[G]} (C[G/J])$).
 
The following is well-known when $C=\mathbb C$ is the field of complex numbers, and the proofs
in (\cite{BH06}1.4.3 Proof of Proposition (2)) carry over to any field $C$.

\begin{theorem} \label{thm:invariant} Let $J \subset G$ be an open compact subgroup  with pro-order invertible in $C$.

(i) If $V$ is an irreducible $C$-representation of $G$ with $V^J\neq 0$, then $V$ is smooth and $V^J$ is a
simple $H_C(G,J)$-module.

(ii) Let $M$ be a simple $H_C(G,J)$-module. Then  the $C [  G]$-module   $X_M= C [G/J ]\otimes_{H_C(G,J)}M$ is smooth, 
  has a unique largest submodule $X'_M$ not intersecting
$1 \otimes M$, and the quotient $Y_M=X_M/X'_M$ is an irreducible  smooth $C$-representation  of $G$.
The map sending $m \in M $ to the image in $Y_M$ of  $1 \otimes  m$ gives an isomorphism $M\to Y_M^J$
of $H_C(G,J)$-modules.

(iii) If $V $is an irreducible $C$-representation of $G$ such that $V^J \neq 0$, then taking $M=V^J$,
the natural map  $X_M\to  V$ induces an isomorphism  $Y_M \to V.$
\end{theorem}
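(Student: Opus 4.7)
The plan is to follow the standard proof for $C=\mathbb{C}$ in Bushnell–Henniart (1.4.3), whose core input is the idempotent $e_J \in C_c(G)$ attached to $J$ (the characteristic function of $J$, divided by $\vol(J)$). Since the pro-order of $J$ is invertible in $C$, that idempotent lies in the convolution algebra $C_c(G)$, acts on any smooth $C$-representation $V$ of $G$ as the projector onto $V^J$, and satisfies $e_J C_c(G)\,e_J = H_C(G,J)$. In particular, the functor $V \mapsto V^J = e_J V$ on $\Mod_C(G)$ is exact and $C$-linear, hence commutes with arbitrary sums and with the tensor product over $H_C(G,J)$.

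For (i), if $V$ is irreducible with $V^J\neq 0$, then $C[G]\cdot V^J$ is a nonzero subrepresentation, so equals $V$; every vector of $V$ is a $C$-linear combination of translates of $J$-fixed vectors and thus has open stabiliser, proving smoothness. For the simplicity of $V^J$: a nonzero $H_C(G,J)$-submodule $N \subset V^J$ generates a nonzero subrepresentation $C[G]\cdot N = V$, whence $V^J = e_J V = e_J C[G] e_J \cdot N = H_C(G,J)\cdot N = N$.

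For (ii), the representation $X_M = C[G/J] \otimes_{H_C(G,J)} M$ is $G$-generated by $1\otimes M$, whose elements are $J$-fixed, so $X_M$ is smooth. The key computation is
\[
(X_M)^J \;=\; e_J X_M \;=\; \bigl(e_J C[G/J]\bigr) \otimes_{H_C(G,J)} M \;\cong\; H_C(G,J) \otimes_{H_C(G,J)} M \;=\; M,
\]
using the $(H_C(G,J),H_C(G,J))$-bimodule identification $e_J C[G/J] \cong C[J\backslash G/J] = H_C(G,J)$; the inverse map is $m\mapsto 1\otimes m$, and one checks $H_C(G,J)$-linearity directly from the tensor relation. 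Since $V\mapsto V^J$ is exact and $C$-linear, the sum $X'_M$ of all subrepresentations $W$ with $W^J=0$ still satisfies $(X'_M)^J=0$, giving the unique largest submodule not meeting $1\otimes M$. Then $Y_M = X_M/X'_M$ has $Y_M^J \cong M$ by exactness. For irreducibility, let $U\subseteq Y_M$ be a nonzero subrepresentation; exactness forces $U^J\neq 0$ (otherwise the preimage $\tilde U\subset X_M$ would have $\tilde U^J = 0$, hence $\tilde U\subseteq X'_M$), so $U^J\subseteq M$ is a nonzero $H_C(G,J)$-submodule, equal to $M$ by simplicity; since $Y_M$ is $C[G]$-generated by $M$, we conclude $U = Y_M$.

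For (iii), the canonical $G$-equivariant surjection $X_M \twoheadrightarrow V$ extending $1\otimes m\mapsto m$ restricts on $J$-invariants to the identity of $M$, so its kernel has vanishing $J$-invariants and is contained in $X'_M$; since both $V = X_M/\ker$ and $Y_M = X_M/X'_M$ are irreducible, the induced surjection $V\twoheadrightarrow Y_M$ is an isomorphism. The only place where the pro-order hypothesis intervenes is the existence of $e_J$ with the listed properties; thereafter the argument is formal, and this is the main (indeed only) point to verify when transporting the complex-case proof to general $C$.
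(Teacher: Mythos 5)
Your proposal is correct and takes the same route as the paper, which simply cites Bushnell--Henniart 1.4.3 and observes that the complex-case proof carries over verbatim once the idempotent $e_J$ exists; your write-up is a faithful transcription of that argument. The only small point to streamline in (iii): after noting $\ker(X_M\to V)\subseteq X'_M$, observe also that the image of $X'_M$ in $V$ has trivial $J$-invariants and is hence zero (as $V$ is irreducible with $V^J\neq 0$), so $X'_M\subseteq\ker$ and the two submodules coincide, giving the isomorphism $Y_M\to V$ directly rather than by comparing irreducible quotients.
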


That theorem gives an explicit  bijection between isomorphism classes of irreducible smooth  $C$-representations
of $G$ with non-zero $J$-invariants, and isomorphism classes of simple $H_C(G,J)$-modules.

\begin{corollary}\label{cor:invariant1} Let $V\in \Irr_C(G)$ with $V^J \neq 0$. Then the 
natural map   $\End_{C [ G]} (V) \to \End_{H_C(G,J)}(V^J)$ is an isomorphism.
\end{corollary}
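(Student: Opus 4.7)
The plan is to verify that the natural map $\alpha:\End_{C[G]}(V)\to\End_{H_C(G,J)}(V^J)$, $\phi\mapsto\phi|_{V^J}$, is both injective and surjective. Injectivity is immediate from Schur's Lemma: since $V$ is irreducible, $\End_{C[G]}(V)$ is a division algebra, so any nonzero $\phi$ is an automorphism of $V$ and restricts to an automorphism of $V^J$; as $V^J\neq 0$, this forces $\phi|_{V^J}\neq 0$.

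For surjectivity I would exploit the tensor construction of Theorem~\ref{thm:invariant}. Setting $M=V^J$ and $X_M=C[G/J]\otimes_{H_C(G,J)}M$, the standard adjunction between the functors $C[G/J]\otimes_{H_C(G,J)}-$ and $(-)^J$ yields a natural bijection $\Hom_{C[G]}(X_M,V)\cong\Hom_{H_C(G,J)}(M,V^J)$. Given $\psi\in\End_{H_C(G,J)}(V^J)$, let $\tilde\psi:X_M\to V$ be the $G$-morphism corresponding to $\psi$, characterized by $\tilde\psi(1\otimes m)=\psi(m)$. By Theorem~\ref{thm:invariant}(i), $V^J$ is simple as an $H_C(G,J)$-module, and a second appeal to Schur's Lemma shows that $\psi$ is either zero, in which case $\phi=0$ does the job, or invertible.

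Assuming $\psi$ is invertible, $\ker\tilde\psi$ is a $G$-submodule of $X_M$ meeting $1\otimes M$ only in zero, so the maximality property defining $X'_M$ in Theorem~\ref{thm:invariant}(ii) gives $\ker\tilde\psi\subseteq X'_M$. On the other hand, the image of $\tilde\psi$ is a $G$-submodule of the irreducible $V$ containing the nonzero $\psi(M)$, hence equals $V$, so $X_M/\ker\tilde\psi$ is irreducible. Comparing with the simple quotient $X_M/X'_M=Y_M\cong V$ from parts (ii) and (iii) forces $\ker\tilde\psi=X'_M$, so that $\tilde\psi$ descends to an endomorphism $\phi\in\End_{C[G]}(V)$; the identifications $M\cong Y_M^J\cong V^J$ of (ii) and (iii) then yield $\phi|_{V^J}=\psi$.

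The only delicate point I foresee is promoting the easy inclusion $\ker\tilde\psi\subseteq X'_M$ to the equality needed for $\tilde\psi$ to descend to $V$. The simplicity of the Hecke module $V^J$ established in Theorem~\ref{thm:invariant}(i) is precisely what makes this possible: via Schur it forces $\psi$ to be invertible, hence $X_M/\ker\tilde\psi$ to be simple, which rules out strict inclusion. Everything else is formal bookkeeping with the adjunction and the canonical identifications already supplied by Theorem~\ref{thm:invariant}.
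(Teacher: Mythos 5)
Your argument is correct and follows essentially the same route as the paper: both proofs exploit the module $X_M=C[G/J]\otimes_{H_C(G,J)}V^J$ and the canonical submodule $X'_M$ from Theorem~\ref{thm:invariant}, together with the simplicity of $V^J$ over the Hecke algebra. The paper lifts $a\in\End_{H_C(G,J)}(V^J)$ to the endomorphism $\mathrm{id}\otimes a$ of $X_M$ and observes it preserves $X'_M$, whereas you form the map $\tilde\psi:X_M\to V$ by the tensor--hom adjunction and identify $\ker\tilde\psi$ with $X'_M$; these are two descriptions of the same construction, and both rest on the same Schur--plus--maximality mechanism, so the proposal is a faithful (slightly more spelled-out) version of the paper's proof.
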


That result was already established in (\cite{Mu19}, Theorem 4.1) when $C=\mathbb Q$ is the field of rational numbers.

\begin{proof} We already remarked that the map is injective. Let $a \in \End_{H_C(G,J)}(V^J)$.
Then $a$ induces an endomorphism of the  $C [G]$-module $X_M$ where $M=V^J$, which preserves $X'_M$
hence induces  $b\in \End_{C[G]}(V)$ by (iii) of the theorem; by construction $b$  induces
$a$ on $M=V^J$.
\end{proof}

\begin{remark}\label{rem:invariant1}Let $V\in \Irr_C(G) $  with  $V^J\neq 0$ and $\dim_C(V^J)$  finite. By the corollary we can
apply section \ref{s:1} to the  $C [G]$-module $ V$ and also to
the $H_C(G,J)$-module $V^J$; we get parallel results, in particular
the map $W\to W^J$ gives an isomorphism
of the lattice of subrepresentations of $C^a \otimes_C V$ onto the lattice
of $H_C(G,J)$-submodules of $(C^a\otimes_C V)^J$.
\end{remark}

 Let us consider an extension $C'/C$. If  $V$ is a $C [G]$-module, the inclusion  
$C' \otimes_C V^J \to  C'  \otimes_C V$ induces an isomorphism $C' \otimes_C V^ J\to (C' \otimes_C V)^J$;
it is an isomorphism of $H_{C'}(G,J)$-modules. Clearly if $V'$ is an irreducible $C' [G]$-module defined over $C$
with $ V'^J \neq 0$, then the $H_{C'}(G,J)$-module $V'^J$ is also defined
over $C$.   Conversely:

\begin{corollary}\label{cor:invariant2} Let $C'/C$ be an extension. Let $V'\in \Irr_{C'}(G)$   with $ V'^J \neq 0$. If    the $H_{C'}(G,J)$-module $ V'^J $ is  defined over $C$, then
 $V' $ is defined over $C$.
\end{corollary}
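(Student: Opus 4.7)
The plan is to identify $V'$ with the base change of an irreducible $C$-representation produced by Theorem \ref{thm:invariant}. Write $V'^J \cong C' \otimes_C M$ as $H_{C'}(G,J)$-modules, for some $H_C(G,J)$-module $M$. Since $V'^J$ is simple by Theorem \ref{thm:invariant}(i) and the functor $C' \otimes_C -$ is exact, $M$ is a simple $H_C(G,J)$-module. By Theorem \ref{thm:invariant}(ii) there is an irreducible smooth $C$-representation $Y_M$ of $G$ with $Y_M^J \cong M$, and, by the construction, $Y_M$ is generated as a $C[G]$-module by $M$. Set $V := C' \otimes_C Y_M$, a smooth $C'[G]$-representation; using a $C$-basis of $C'$ one checks immediately that $V^J = C' \otimes_C M \cong V'^J$ as $H_{C'}(G,J)$-modules.

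The heart of the argument is to show that $V$ is irreducible as a $C'[G]$-module; I would do this by a two-sided squeeze that exploits the fact that $V$ lives naturally over two coefficient rings. On one side, a choice of $C$-basis $(e_\alpha)$ of $C'$ yields a $C[G]$-decomposition $V = \bigoplus_\alpha (e_\alpha \otimes Y_M)$ in which every summand is $C[G]$-isomorphic to the simple module $Y_M$; hence $V$ is $Y_M$-isotypic as a $C[G]$-module, and so is any nonzero $C[G]$-submodule. In particular, any nonzero $C'[G]$-subrepresentation $W \subseteq V$ contains a copy of $Y_M$, so $W^J \supseteq M \neq 0$. On the other side, $V$ is generated by $V^J$ as a $C'[G]$-module (because $Y_M$ is generated by $M$ over $C[G]$), and $V^J \cong V'^J$ is simple over $H_{C'}(G,J)$; hence for any proper $C'[G]$-subrepresentation $W \subsetneq V$, the submodule $W^J$ is a proper $H_{C'}(G,J)$-submodule of $V^J$ and therefore vanishes. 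These two observations are incompatible for a nonzero proper $W$, so $V$ is irreducible.

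Once $V$ is irreducible, both $V$ and $V'$ are objects of $\Irr_{C'}(G)$ whose $J$-invariants are isomorphic to $V'^J$ as $H_{C'}(G,J)$-modules; the bijection from Theorem \ref{thm:invariant}, applied via part (iii), gives $V \cong V'$, so $V' \cong C' \otimes_C Y_M$ is defined over $C$. The subtle step is the irreducibility of $V$: there is no a priori reason a $C'[G]$-subrepresentation should have nonzero $J$-fixed vectors, but the $Y_M$-isotypic structure of $V$ over the smaller coefficient field $C$ forces such fixed vectors to appear as soon as the subrepresentation is nonzero.
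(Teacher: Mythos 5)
Your proof is correct and follows the same basic route as the paper: take the simple $H_C(G,J)$-module $M$ with $C'\otimes_C M\simeq V'^J$, pass to the irreducible $C$-representation $Y_M$ from Theorem~\ref{thm:invariant}(ii), and identify $C'\otimes_C Y_M$ with $V'$ via part (iii). The one place where you go beyond the paper's text is the ``squeeze'' establishing that $C'\otimes_C Y_M$ is irreducible as a $C'[G]$-module. The paper's proof moves directly from $(C'\otimes_C Y_M)^J\simeq V'^J$ to $C'\otimes_C Y_M\simeq V'$ ``by (iii)'', but (iii) as stated applies to an \emph{irreducible} representation, so the irreducibility of $C'\otimes_C Y_M$ is being taken for granted there. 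Your two-sided argument --- on one side, $C'\otimes_C Y_M$ is $Y_M$-isotypic over $C[G]$, so every nonzero $C'[G]$-subrepresentation has nonzero $J$-invariants; on the other side, $C'\otimes_C Y_M$ is generated by its simple $H_{C'}(G,J)$-module of $J$-invariants, so a proper subrepresentation has trivial $J$-invariants --- is exactly the content needed to close this gap, and it is the only real work in the statement. So: same strategy as the paper, but yours supplies the justification that the paper's terse appeal to (iii) leaves implicit.
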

\begin{proof}Let $M $ be an $H_C(G,J)$-module such that $C' \otimes_C M \simeq  V'^J$. Then
$M$ is necessarily simple, because $V'^J$ is (by (i) of the theorem). Consider the irreducible
$C$-representation $Y_M$ of $G$  of $J$-invariants  isomorphic to $M$; then  $(C' \otimes_C Y_M )^J \simeq V'^J$ and  by (iii) of the theorem, $C'  \otimes_C Y_M \simeq 
 V' $  hence $V' $ is  defined over $C$.
\end{proof}

 \subsection{ Irreducible $C$-representations of $G$ with finite dimension
} \label{ss:2.2}

In this subsection,  we assume that  $G/Z$ is compact. 

\begin{proposition}\label{prop:finitedim}
Let $V$ be a finitely generated smooth $C$-representation
 of $G$. Then $V$ is trivial on an open subgroup. If  $V$ is irreducible and $Z$ is almost finitely generated (Definition   \ref{def:almostfinite}), 
then $\dim_C(V)$ is finite.
\end{proposition}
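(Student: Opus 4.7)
My plan is to split the statement into its two assertions and treat them sequentially.

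For the first assertion, I would choose generators $v_1,\dots,v_n$ of $V$ with open stabilizers $J_1,\dots,J_n$, set $J=\bigcap_i J_i$ (still open, and fixing every $v_i$), and observe that, being central, $Z$ normalizes $J$. Hence $N_G(J)\supset JZ$; the open subgroup $JZ$ contains $Z$ and so has finite index in $G$, because its image in the compact quotient $G/Z$ is open. Consequently $J$ has only finitely many $G$-conjugates, and their intersection $N$ is an open normal subgroup of $G$. Since $N\subset J$, $N$ fixes every $v_i$; being normal it also fixes every $gv_i$, so $N$ acts trivially on $V$.

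For the second assertion, an irreducible $V$ is cyclic over $C[G]$, hence finitely generated, so by the first part it factors through the discrete group $\bar G=G/N$ for some open normal $N\subset G$. Writing $\bar Z:=Z/(Z\cap N)$, a central subgroup of $\bar G$, the quotient $\bar G/\bar Z=G/NZ$ is finite since $NZ/Z$ is open in the compact group $G/Z$. To see that $\bar Z$ is finitely generated abelian, I would invoke the almost finite generation hypothesis to pick an open compact $Z^0\subset Z$ with $Z/Z^0$ finitely generated, note that $Z^0/(Z^0\cap N)$ is finite ($Z^0\cap N$ being open in the compact $Z^0$), and observe that $\bar Z$ is an extension of the finitely generated abelian group $Z/(Z^0(Z\cap N))$ (a quotient of $Z/Z^0$) by the finite group $Z^0/(Z^0\cap N)$.

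For the finiteness of $\dim_C V$, I would then remark that $C[\bar G]$ is free of finite rank $[\bar G:\bar Z]$ over the finitely generated commutative subalgebra $C[\bar Z]$, and that $V$, being cyclic over $C[\bar G]$, is finitely generated as a $C[\bar Z]$-module. Applying Remark \ref{r:DT2} with $A=C[\bar G]$ and $B=C[\bar Z]$ then identifies the image $E$ of $C[\bar Z]$ in $\End_C V$ as a field over which $V$ is finite-dimensional; and since $E$ is a finitely generated commutative $C$-algebra which is a field, Zariski's form of the Nullstellensatz yields $[E:C]<\infty$, giving $\dim_C V=\dim_E V\cdot [E:C]<\infty$. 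The whole argument is a chain of small reductions, and the delicate points I expect to need care with are the topological bookkeeping ensuring $\bar Z$ is finitely generated, and the cleanly packaged upgrade ``$V$ finitely generated over a central subalgebra'' $\Rightarrow$ ``$V$ finite-dimensional over $C$'' obtained by combining Remark \ref{r:DT2} with Zariski's lemma.
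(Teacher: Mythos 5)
Your proof is correct and follows essentially the same route as the paper: compactness of $G/Z$ gives an open subgroup acting trivially, and then $V$ is finitely generated over the group algebra of a central finitely generated abelian group, so Remark~\ref{r:DT2} produces a field $E$ with $\dim_E V<\infty$ and finite generation of $E$ over $C$ forces $[E:C]<\infty$. The only (welcome) difference is that you spell out explicitly the passage to $\bar G/\bar Z$ and the appeal to Zariski's lemma, whereas the paper compresses this into the remark that ``any quotient field of $C[Z]$ has finite $C$-dimension'' (which, as stated, implicitly uses smoothness to reduce to a finitely generated quotient of $C[Z]$, exactly the step you make precise).
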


 The second assertion will be generalized  (Proposition \ref{prop:Zcompact}). 
 
\begin{proof} Let $ S$ be a finite set   generating $V$. For $v\in V$, the  $G$-stabilizer $J_v $ of   $v$  is an open subgroup of $G$; 
for $g \in G$,  $J_{gv}=gJ_vg^{-1}$ and depends only on $gZJ_v$.  So, because
$G/Z$ is compact, there are only finitely many open subgroups $J_{gv}$ for $g\in G, v\in S$.  Their intersection
is therefore an open subgroup of $G$ acting trivially on $V$. Moreover $V$ is a finitely generated
module over $C [Z]$ (as $G/Z$ is compact and  $V$ is a finitely generated $C[G]$-module with an open subgroup of $G$ acting trivially).  If $Z$ is almost finitely generated,
then any quotient field of $C[  Z]$ has finite $C$-dimension, 
and the second assertion is a
consequence of Remark \ref{r:DT2}.\end{proof}  

\begin{corollary}\label{cor:finitedim} When  $Z$ is almost finitely generated,  any $V\in \Irr_{C^a}(G)$
  has finite dimension, is absolutely irreducible and  is defined
over a finite extension of $C$.
\end{corollary}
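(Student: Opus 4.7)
The plan is to apply Proposition \ref{prop:finitedim} directly with the coefficient field $C^a$ and then carry out an elementary descent of scalars. Given $V\in \Irr_{C^a}(G)$, the proposition (in which the underlying field is arbitrary) yields at once that $V$ has finite $C^a$-dimension and is trivial on some open subgroup $J\subset G$. Consequently $\End_{C^a[G]}(V)$ is a finite-dimensional division algebra over the algebraically closed field $C^a$, hence equals $C^a$; by the consequence of Theorem \ref{thm:DT} recorded immediately after its statement, this forces $V$ to be absolutely simple.

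It remains to descend $V$ to a finite extension of $C$. The key step is to show that $G$ acts on $V$ through a \emph{finitely generated} discrete quotient, after which a basis argument and the algebraicity of $C^a/C$ finish the proof. To this end, I would exploit the exact sequence
\[
1 \longrightarrow ZJ/J \longrightarrow G/J \longrightarrow G/ZJ \longrightarrow 1.
\]
The quotient $G/ZJ$ is simultaneously a continuous image of the compact group $G/Z$ and discrete (since $ZJ$ is open in $G$), hence finite. The kernel identifies with $Z/(Z\cap J)$; to control it, fix a compact open subgroup $Z^0\subset Z$. Since $Z^0$ is compact and $Z^0\cap J$ is open in $Z^0$, the intersection $Z^0\cap J$ has finite index in $Z^0$, so $Z/(Z^0\cap J)$ is an extension of the finitely generated group $Z/Z^0$ by the finite group $Z^0/(Z^0\cap J)$, hence finitely generated; its quotient $Z/(Z\cap J)$ is then finitely generated too. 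Therefore $G/J$ is finitely generated.

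Now choose a $C^a$-basis of $V$ and a finite generating set of $G/J$. The action of those generators is given by finitely many matrices with entries in $C^a$; these entries generate a subfield $E\subset C^a$ over $C$, and since $C^a$ is algebraic over $C$, the extension $E/C$ is finite. The $E$-span of the chosen basis is stable under the generators, hence under all of $G$, providing an $E$-form of $V$. The main obstacle in the argument is the finite generation of $Z/(Z\cap J)$, which genuinely uses both the almost finite generation of $Z$ and the compactness of $Z^0$; every other step is either a direct appeal to Proposition \ref{prop:finitedim} or a formal application of Schur's lemma.
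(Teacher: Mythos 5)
Your proposal is correct, and it follows the same overall template as the paper but implements the descent step via a different reduction. Both proofs begin identically (apply Proposition~\ref{prop:finitedim} over $C^a$ to get finite dimension and triviality on an open subgroup $J$, then use $\End_{C^a[G]}(V)=C^a$ together with the remark after Theorem~\ref{thm:DT} for absolute irreducibility), and both ultimately conclude by observing that finitely many matrix entries in $C^a$ generate a finite extension of $C$ since $C^a/C$ is algebraic. Where you diverge is in how you arrange to have only finitely many matrices to worry about: the paper first invokes Schur's lemma to note that $Z$ acts through a character $\omega$ whose values generate a finite extension $E$ (using that $Z$ is almost finitely generated), then separately treats the finite quotient $G/ZJ$ via coset representatives $g_i$, and lands in $EC'$. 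You instead show directly that the discrete group $G/J$ is finitely generated, by analysing the extension $1\to Z/(Z\cap J)\to G/J\to G/ZJ\to 1$, and then descend the matrices of a finite generating set all at once. Your route is a bit more uniform in that it avoids singling out the central character and needs Schur only once (for absolute irreducibility), at the modest cost of an extra step showing $Z/(Z\cap J)$ is finitely generated. One minor thing to spell out: when you say the $E$-span is stable under the generators hence under all of $G$, you should either remark that the inverse of a matrix with entries in $E$ again has entries in $E$, or observe that on a finite-dimensional $E$-space an injective $E$-linear endomorphism is automatically bijective, so stability under $g_i$ gives stability under $g_i^{-1}$ for free.
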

 \begin{proof} $\End_{C^a[G]}(V) =C^a$ and   $Z$ acts on $V$ via a
character. The values of that character generate a finite
 extension $E$  of $C$ in $C^a$ (since $Z$ is almost finitely generated). On the other hand an open subgroup of $G$ acts trivially on $V$ so we may assume that $G/Z$ is finite;  taking representatives $g_i$ for $G/Z$, the matrix coefficients
of the action of the $g_i'$s on a basis of $V$ generate a finite extension $C'$ of $C$ in $C^a$, and we see that
$V$ is defined over $EC'$.
\end{proof}

 \subsection{$Z$-compactness
} \label{ss:2.3}
In this subsection, we assume that $G$ contains an open compact subgroup with pro-order invertible in $C$.

For each such subgroup $J\subset G$, we then have a canonical projector $e_J$, which acts on any smooth $C$-representation
$V$ of $G$,  it is $J$-equivariant and has image $e_JV=V^J$. 

A smooth $C$-representation $V$ of $G$ is  called $Z$-compact (\cite{V96} I.7.3 and 7.11) if for all small enough
open compact subgroups $J \subset G$, and all $v \in V$, the support of the  function $g \to e_Jgv $ is  $Z$-compact (i.e. compact modulo $Z$). 
 When $Z$ is trivial, we say compact instead of $Z$-compact.
It is clear that a subrepresentation of a  $Z$-compact smooth $C$-representation of $G$
is $Z$-compact, and a quotient representation is also.

It is known that a compact finitely generated smooth $C$-representation of $G$ is admissible (\cite{V96} I.7.4).
Let us analyze the situation in general. Let $V\in \Mod_C(G)$ and $J  $ an open compact
subgroup of $G$ with pro-order invertible in $C$. Let $v\in V$ and $V(v) $  the subrepresentation of $V$ 
 generated by $v$.  Then the vector space $V(v)^J$ is generated by the $e_Jgv, g \in G$. If $V $ is $Z$-compact, the function $g \to 
  e_Jgv$ vanishes outside a finite number of double cosets  $JgZJ_v$, where $J_v \subset G$ is the $G$-stabilizer 
of $v$. In particular $V(v)^J$ is a finitely generated $C[Z]$-module. More generally if $V$ is $Z$-compact and $W$ is a finitely
generated subrepresentation of $V$, then $W^J$ is finitely generated over $C [Z]$.
If $C [Z]$ acts on $W$ via a quotient $A$ with $\dim_C(A)$ finite, then $W^J$ is finite dimensional.

\begin{proposition} \label{prop:Zcompact} Assume that $Z$ is almost finitely generated. Then any $Z$-compact   $V\in \Irr_C(G)$  is admissible.
\end{proposition}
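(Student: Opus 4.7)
The plan is to reduce admissibility to finiteness of $V^J$ over $C$ for one (hence any suitable) open compact subgroup $J$, combining the finite generation of $V^J$ as a $C[Z]$-module (noted in the paragraph preceding the statement) with a Zariski's-lemma argument to bound the field through which $Z$ acts.

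First I would fix a non-zero $v\in V$ so that $V=V(v)$ by irreducibility, and choose an open compact $J\subset G$ small enough that (a) its pro-order is invertible in $C$, (b) the $Z$-compactness condition applies to $J$, and (c) $V^J\neq 0$. The discussion preceding the statement then produces finitely many $g_1,\dots,g_n\in G$ with $V^J=\sum_i C[Z]\,e_Jg_iv$, so $V^J$ is finitely generated as a $C[Z]$-module. Since $V^J$ is moreover a simple $H_C(G,J)$-module by Theorem~\ref{thm:invariant}(i), on which $C[Z]$ acts through a central subalgebra, the special case of Remark~\ref{r:DT2} applies: the image $E$ of $C[Z]$ in $\End_C(V^J)$ is a field and $\dim_E V^J$ is finite.

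The main step, where the almost-finite-generation of $Z$ enters, is to prove $[E:C]<\infty$. Using Corollary~\ref{cor:invariant1} to identify $\End_{C[G]}(V)$ with $\End_{H_C(G,J)}(V^J)$, the action of $Z$ on $V$ is given by a smooth homomorphism $\alpha\colon Z\to E^{\times}$; because $E$ is a field, $\alpha(z)w=w$ for any non-zero $w\in V^J$ forces $\alpha(z)=1$, so $\ker\alpha$ is open in $Z$. Choosing a compact open $Z^0\subset Z$ with $Z/Z^0$ finitely generated, $Z^0\cap\ker\alpha$ has finite index in the compact group $Z^0$, hence $Z/\ker\alpha$ is a finitely generated abelian group. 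Therefore $E$ is a quotient of the finitely generated commutative $C$-algebra $C[Z/\ker\alpha]$, and Zariski's lemma forces $[E:C]<\infty$. Combined with $\dim_E V^J<\infty$, this yields $\dim_C V^J<\infty$. For any open subgroup $H\subseteq G$, $J\cap H$ is an open compact subgroup of $G$ contained in $J$, so the same argument (subgroups of a ``small enough'' $J$ remain small enough) gives $\dim_C V^{J\cap H}<\infty$, and since $V^H\subseteq V^{J\cap H}$, admissibility follows.

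The substantial obstacle is the Zariski-lemma step: Remark~\ref{r:DT2} by itself produces only a field $E$ with no a priori bound on its $C$-dimension, and securing that bound requires first extracting from the structure of $V$ that $Z$ already acts through a finitely generated abelian quotient~--- which is exactly what the almost-finite-generation hypothesis on $Z$ supplies.
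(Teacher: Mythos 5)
Your proof is correct and follows essentially the same route as the paper: fix $v$ and a small enough open compact $J$ with $V^J\neq 0$, deduce from $Z$-compactness that $V^J$ is finitely generated over $C[Z]$, then apply the argument of Proposition~\ref{prop:finitedim} (via Remark~\ref{r:DT2}) to the simple $H_C(G,J)$-module $V^J$, using almost finite generation of $Z$ to get $[E:C]<\infty$. The paper's proof is terse ("reasoning as for Proposition~\ref{prop:finitedim}"), and you have usefully unpacked the step it leaves implicit — namely that smoothness makes $\ker\alpha$ open, so $Z/\ker\alpha$ is finitely generated and Zariski's lemma bounds $E$; note, though, that one does not need Corollary~\ref{cor:invariant1} for this: smoothness of $V$ itself already shows each $w\in V^J$ is fixed by an open subgroup of $Z$, which is all that is required.
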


When $G/Z$ is compact, all smooth $C$-representations of $G$ are $Z$-compact, so the
proposition does generalize the last assertion of  Proposition \ref{prop:finitedim}.

\begin{proof} Choose a non-zero vector $v \in V$ and $J \subset G$ a compact
open subgroup  with pro-order invertible in $C$ fixing $v.$ By  the above, the simple
$H_C(G,J)$-module $V^J$ is finitely generated over $C  [Z]$. Since $Z$ is almost
 finitely generated, reasoning as for Proposition \ref{prop:finitedim} gives that $V^J$ has finite dimension.  
\end{proof}   

\begin{remark} \label{r:Zcompact1}
1) Assume that $Z$ is almost finitely generated. If there exists   a  
$Z$-compact irreducible smooth representation $V$ of $G $ of finite dimension over $C$, we claim that $G/Z $ is compact.  Indeed, since $V$ is smooth of finite dimension, an open  normal subgroup in $G$ acts trivially on $V$, and dividing by this subgroup we may assume $G $ discrete.
Taking now $J$ trivial in the property of  $Z$-compactness, we see that that   $gv\neq 0$   only for $g$ in  a finite number of $Z$-cosets; but that implies
that $G/Z$ is finite.

 2) Assume that $V\in \Irr_C(G)$ is  $Z$-compact. 
Then the image $A$  of  $C  [Z]$   in the division algebra $D=\End_ {C  [G]}(V)$  is
an integral domain, so has a fraction field $E \subset D$.
Since $D$ stabilizes $V^J$,  $A$ and  $E $  stabilizes $V^J$ too. By the above,
$V^J$ is finitely generated over $A$, so $\dim_{E}(V^J)$ is   finite.
If $\dim_C A$ (or equivalenly $\dim_C E$) is  finite, then $\dim_C(V^J)$ is   finite.  

 3) Let $C'$ be an extension of $C$  and let $V \in \Mod_C(G)$. Then
 $C' \otimes_CV \in \Mod_{C'}(G)$, and   $V$ is $Z$-compact
if and only if  $C' \otimes_CV $ is $Z$-compact.
\end{remark}

 The   space of linear forms $L:V\to C$   invariant
 under an open subgroup of $G$ with the natural action ($gL (gv)=L(v)$ for $v\in V,g\in G$) of $G$ is a smooth representation  $V^\vee \in \Mod_C(G)$ called the {\it contragredient} of $V$  (\cite{V96} I.7.1). 
 As
 $G$ contains a compact open subgroup of pro-order invertible in $C$,  the contragredient functor $V\mapsto V^\vee:\Mod_C(G)\to \Mod_C(G)$ is  exact  (\cite{V96} 4.18 Proposition (i)); the three properties: $V$ admissible, $V^\vee$ admissible, the natural map $V\to(V^\vee)^\vee$  is bijective, are   equivalent, and when $V$ is admissible then $V$ is irreducible if and only if $V^\vee$ is (\cite{V96} I.4.18 Proposition (iii) and (v)). 
A smooth {\it coefficient} of $V\in \Mod_C(G)$ is a function $g\to L(gv)$ from $G$ to $C$ for $v\in V, L\in V^\vee$.

\begin{proposition} \label{prop:Zcompact2}
Let $V\in \Mod_C(G)$. Then $V$ is $Z$-compact if and only if  the support of  any smooth
coefficient  of $V$ is  $Z$-compact.
\end{proposition}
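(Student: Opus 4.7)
The plan is to prove both directions via the identity $L(gv) = L_0(e_J gv)$, valid whenever $J$ is a compact open subgroup of $G$ with pro-order invertible in $C$, $L \in V^\vee$ is $J$-invariant, and $L_0 \in (V^J)^*$ is the restriction of $L$ to $V^J$. Indeed, the projector $e_J$ gives $V = V^J \oplus \ker e_J$, and any $J$-invariant linear form vanishes on $\ker e_J$, so $L$ agrees with $L_0 \circ e_J$.

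For the forward direction, take a smooth coefficient $f(g) = L(gv)$. Since $V^\vee$ is smooth, $L$ has open stabilizer, so we may choose a compact open subgroup $J$ with pro-order invertible in $C$, fixing $L$ and small enough for $g \mapsto e_J gv$ to have $Z$-compact support. The identity then gives $\supp(f) \subset \supp(g \mapsto e_J gv)$, hence $\supp(f)$ is $Z$-compact.

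For the converse, assume every smooth coefficient of $V$ has $Z$-compact support; fix $v \in V$ and a compact open subgroup $J$ with pro-order invertible in $C$ that fixes $v$. Suppose for contradiction there exist $g_n \in G$ ($n \geq 1$) with $w_n := e_J g_n v \neq 0$ and the cosets $Z g_n J$ pairwise distinct. If we can find $L_0 \in (V^J)^*$ non-zero on infinitely many $w_n$, then $L(x) := L_0(e_J x)$ defines a $J$-invariant $L \in V^\vee$, and the coefficient $g \mapsto L(gv) = L_0(w_n)$ at $g = g_n$ takes non-zero values at infinitely many pairwise distinct cosets $Z g_n J$, contradicting the hypothesis.

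The crux, and the step I expect to be the main obstacle, is the construction of such an $L_0$, since the $w_n$ are not assumed linearly independent. I would argue by cases: if the $w_n$ span an infinite-dimensional subspace of $V^J$, extract a linearly independent subsequence $\{w_{n_k}\}$ and prescribe $L_0(w_{n_k}) = 1$, extending by zero on a complement; if their span has finite dimension $k$, induct on $k$ by choosing a hyperplane $H$ in the span and observing that either infinitely many $w_n$ lie in $H$ (apply induction) or infinitely many lie outside $H$ (take $L_0$ vanishing on $H$ and non-zero off $H$). This case-splitting bypasses the subtlety that over a countably infinite $C$ a finite-dimensional space may be exhausted by countably many proper subspaces, which would block a naive ``one $L_0$ works for all $n$'' approach.
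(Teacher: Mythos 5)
Your proof is correct and takes essentially the same route as the paper's: both rest on the identity $L(gv)=L_0(e_Jgv)$ for $J$-invariant $L$, reducing the converse to the linear-algebra fact that an infinite family of nonzero vectors admits a linear form nonzero on infinitely many of them. You establish that fact via a case split on the dimension of the span with an induction, while the paper extracts a maximal linearly independent subfamily and then uses a dual basis of its finite-dimensional span; the substance is the same.
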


\begin{proof}  That   is already established  with compact instead of  $Z$-compact  in \cite{V96} I.7.3 Proposition c). It is clear that if $V$ is $Z$-compact then any
  smooth coefficient of $V$ is $Z$-compact. Let us  prove the converse.
Fix $ v \in V$. To prove that the support of  the function $g\to e_Jgv$ is $Z$-compact  for
all compact open subgroups $J\subset G$ with pro-order invertible in $C$, we may as well assume that
$J $ fixes $v$ and that $v$ generates $V$. For each double coset $x=JhZJ, h\in G $, let $V(x)\subset V^J$  the subspace  generated by the $e_Jhzv$
for $ z\in Z$. Then $ V^J$ is the sum of the $V(x)$, because $v$ generates $V$.
Let $\mathcal X$ be the set of cosets $x $ such that $V(x)\neq  0$. The goal is to show
that $\mathcal X$ is finite. By the hypothesis on coefficients,  
any linear form on $V^J $ (which can be uniquely extended to a linear form on $V$ fixed by $J$)
vanishes outside a finite number of  subspaces $V(x)$. For each $x\in \mathcal X$, let us choose
a non-zero vector $ v_x \in V(x)$. Extract from the family $v_x, x\in \mathcal X$, a maximal linearly
independent subfamily $v_x, x \in \mathcal Y$, with  $\mathcal Y \subset \mathcal X$. There is a linear form on $V^J $ taking value $1$
at each $v_x$ for $ x \in \mathcal Y$, which implies that $\mathcal Y$ is finite by our hypothesis   on coefficients, so the family     $v_x, x \in \mathcal X$, generates
a finite dimensional subspace $W \subset V^J$. Choose a basis of $W^*=\Hom_C (W,C)$; each element of that basis vanishes
on  $v_x$ for all $ x\in \mathcal X$, except for finitely many, so the $v_x$ are $0$ except finitely many which finally shows
that $\mathcal X$ is finite, as desired.
\end{proof}

\begin{remark}\label{re:contra} Let $V\in \Mod_C(G)$ admissible.
From  Proposition \ref{prop:Zcompact2},  $V$ is $Z$-compact if and only if $V^\vee$ is. If $V$ is also irreducible, then $V$ is $Z$-compact if and only if  the support of  {\it some} smooth
coefficient  of $V$ is  $Z$-compact.   \end{remark}

 \subsection{Compact induction} \label{ss:2.4} 

In the setting of the introduction, all known constructions of cuspidal  irreducible
$C$-representations of  $\underline G(F)$ are for $C$ algebraically closed and are obtained via compact induction.
We now investigate the situation without assuming $C$ algebraically closed.

  Let $J \subset G$ be a subgroup. The  functor $C[G]\otimes_{C[J]} - $ from $C[ J ]$-modules to $C  [G]$-modules
  is exact (because $C[G]$ is a free $C[ J ]$-module)
and faithful; it is left adjoint to the restriction functor $\Res_J^G$. It is  obviously compatible with scalar extension through a field extension $C'/C$;
in particular, it is compatible with the action of $\Aut(C)$ and with the action of  $C$-characters of $G$ on $C[G]$-modules  (in the sense that   if $\chi$ is a  $C$-character of $G$ and  $\rho$ a  $C[J]$-module, then $\chi  (C[G]\otimes_{C[J]} \rho )\simeq  C[G]\otimes_{C[J]}\chi|_J \rho $ ).

  We now assume that $J $ is open. In that case the previous functor restricts to a functor $\Mod_C(J) \to \Mod_C(G)$;  we rather use the isomorphic functor
of compact induction (\cite{V96}, I.5.7) denoted  $\ind_J^G:\Mod_C(J) \to \Mod_C(G)$ while the smooth induction from $J$ to $G$ is denoted by   $\Ind_J^G:\Mod_C(J) \to \Mod_C(G)$  (\cite{V96} I.5.1).
 
 If $V\in \Mod_C(J)$, we thus get a ring homomorphism  \begin{equation}\label{eq:compactind}\End_{C[ J ]}(V)\to
\End_{C[ G ]}(\ind_J^G V)
\end{equation} which is injective by faithfulness. It is rarely surjective, though, even 
when $V$ is irreducible. By adjunction   $\End_{C[ G ]}(\ind_J^GV) \simeq  
\Hom_{C[ J ]} ( V, \Res_J^G\ind_J^GV)$, and  $\Res_J^G\ind_J^G V$
decomposes as a direct sum 
over double cosets $JgJ $ of the representation of $ J $ on the space $\ind_J^{JgJ}V$ of functions in $\ind_J^GV$ with support in $JgJ$, 
\begin{equation}\label{eq:compactind2}\Res_J^G\ind_J^G V\   = \ \oplus_{JgJ  }\,  \ind_J^{JgJ}V.
\end{equation}
 The trivial coset $J$ yields a representation of $J$ naturally
isomorphic to $V,$ and accounts for the embedding \eqref{eq:compactind}. The embedding is an isomorphism if and only if  no non-trivial
coset contributes. Note  that $\ind_J^GV$ can be admissible only if $V$ is and finitely many cosets contribute.
 
 Let us analyze a more general situation. Let $J'$ be another open subgroup of $G$ and $V' \in \Mod_C(J')$. By adjunction
 $$\Hom_{C[ G ]}( \ind_{J'}^G(V'), \ind_J^G(V)) \simeq  \Hom_{C[ J' ]} ( V', \Res_{J'}^G\ind_J^GV), \quad \Res_{J'}^G\ind_J^G V= \oplus_{JgJ'  }\,  \ind_J^{JgJ'}V.$$Consequently $\Hom_{C[ J' ]} ( V', \Res_{J'}^G\ind_J^GV)$ sits between the direct sum and the direct product of  the  
  $\Hom_{C[ J' ]} ( V',\ind_J^{JgJ'}V)$. More precisely, it  is made out of the collections of $\phi_{JgJ'}\in \Hom_{C[ J' ]} ( V',\ind_J^{JgJ'}V)$ such that for $v'\in V'$, $\phi_{JgJ'}(v')=0$ except for a finite number of double cosets $JgJ'$ in $G$ \footnote{The reader should be aware of slightly incorrect statements in (\cite{V96} I.8.3 Preuve (i) (ii), and \cite{KS} Remark 2.1)}.
Note that we have an isomorphism  
\begin{equation} \label{eq:compactind3} \Hom_{C[ J' ]} ( V',\ind_J^{JgJ'}V)\to  \Hom_{C[ J'  \cap g^{-1} Jg]} ( V',{}^gV)\end{equation} which  associates to $\phi$  the map $v'\mapsto \phi(v')(1)$, where  ${}^gV$ is the  representation of $g^{-1} Jg$ on $V$ via $(g^{-1}hg, v)\mapsto hv$.

\bigskip  Let us recall what intertwining  means. 
Let $G$ be a group and $H,K$ subgroups of $G$. Let $\rho$ be a $C$-representation of $H$ on a space $V$, and  $\tau$ a 
$C$-representation of $K$ on a space $W$. For $g\in G$, 
a map $\Phi\in \Hom_C(V,W)$  such that $\tau (k)\circ \Phi= \Phi \circ  \rho(g^{-1}kg)$  for $k\in K\cap gHg^{-1}$, is called a $g$-intertwiner of $\rho$ with $\tau$. The space $I(g,\rho,\tau)$ of  $g$-intertwiners of $\rho$ with $\tau$ is 
\begin{equation} \label{eq:compactind4} \Hom_{C[gHg^{-1}\cap K]}(V^g,W) = \Hom_{C[H\cap g^{-1}Kg]}(V,{}^gW), 
\end{equation} 
  where $V^g={}^{g^{-1} }V$ is the $g$-conjugate of $V$: the representation of $gHg^{-1}$ on $V$ via  $(ghg^{-1}, v)\mapsto hv$.  
    We say that $g$ intertwines $\rho$ with $\tau$  if  $I(g,\rho,\tau)\neq 0$; this is equivalent to saying that the set $KgH$ supports a  non-zero function $f: G\to \Hom_C(V,W)$  such that $f(kgh)=\tau(k) f(g) \rho(h)$ for $k\in K, h\in H$. Indeed, the map $f\mapsto \Phi=f(g)$ is an isomorphism from the space of such functions to the space of $g$-interwiners.
When  $g$ interwines $\rho$ with $\rho$, we simply say that $g$ intertwines $\rho$. The set of $g\in G$ which interwines $\rho$ is called  the $G$-intertwining of $\rho$. The $G$-normalizer of $\rho$ is the $K$-intertwining of $\rho$ where $K$ is the $G$-normalizer of $H$.

 An immediate but important remark is that the action of $\Aut(C)$
preserves intertwining.  Indeed, let $\sigma\in \Aut(C)$. Then $\sigma(V)=C\otimes_\sigma V$ identifies with $V$ by $1\otimes v$ corresponding to $v$, the action of $c\in C$ on $1\otimes v$ corresponding to the action of $\sigma(c)^{-1}$ on $v$ (as $c\otimes v=1 \otimes \sigma^{-1}(c)v$).
Clearly, a $g$-intertwiner   of $\rho$ with $\tau$ identifies with a $g$-intertwiner  of $\sigma(\rho)$ with $\sigma(\tau)$,
\begin{equation} \label{eq:compactind5} I(g, \rho, \tau) \simeq I(g, \sigma(\rho), \sigma(\tau)).
\end{equation}

  Recall that $\ind_J^G V $ is contained in the smooth induced representation  $ \Ind_J^G V$ and that  $( \ind_J^G V )^\vee$ is naturally
isomorphic to $\Ind_J^G( V^\vee)$.

\begin{remark}\label{re:compactind} Assuming that $G$ has a compact open subgroup
of pro-order invertible in $C$, let us briefly tackle the issue of admissibility. 

a) If $\ind_J^G V $ is admissible, so is $V$,
because $J \subset G$ is open and $V \subset \Res_J^G \ind_J^G V$.

b) Assume $ V $  admissible (so $V^\vee$ is admissible and $(V^\vee)^\vee\simeq  V$). Adapting the reasoning in \cite{B90} (see also
\cite{V96}, I.2.8), one proves that the following conditions are equivalent:
(i) $\Ind_J^G V$ is admissible
(ii) $\ind_J^G V$ is admissible
(iii) $\ind_J^G V=\Ind_J^G V$.
If those conditions are satisfied for $V$, they are also satisfied for $V^\vee$. In particular
$\ind_J^G (V^\vee)= \Ind_J^G (V^\vee)$ and all smooth coefficients of $\ind_J^G V$ have support
contained in a finite number of cosets $Jg$; consequently their support is
$Z$-compact if $J $ contains $Z$ with $J/Z$ compact, and then  $\ind_J^G V$ is  $Z$-compact.
\end{remark}

Our main interest goes to cases where $\ind_J^G V$ is irreducible, which can only happen when $V$ is.
Let us assume then that $V$ is irreducible.  Let $N_G(J,V)$ be the $G$-normalizer  of $(J, V)$, made out of the elements in the $G$-normalizer of $J$ which
transform $V$ into an isomorphic representation of $J$. In general the intermediate induction $\ind_J^ {N_G(J,V)}V$ is not irreducible.
So to ensure that  $\ind_J^GV$ be irreducible, it is better to assume that $J$ contains the centre of $G$.  For $g\in N_G(J,V) $ the coset $gJ=JgJ$ contributes
to $\End_{C  [G]}( \ind_J^GV)$; if the embedding   $\End_{C [J]} V \to \End_{C[G]}(\ind_J^GV)$ \eqref{eq:compactind} is an isomorphism, 
 then $ N_G(J,V)=J$. 

For $V$ irreducible and $V^a$   an irreducible subquotient of $C^a \otimes_CV$, we  derive information on $\ind_J^GV$ from  $\ind_J^GV^a$  using section \ref{s:1} when $\End_{C [J]} V$ has
finite $C$-dimension.

\begin{proposition}\label{prop:compactind} Let $V\in \Irr_C(J)$ such that $\End_{C [J]} V$ has
finite dimension. Let $V^a\in  \Irr_{C^a}(J)$ be a subquotient of $C^a \otimes_CV$.
The following two conditions are equivalent: 

(i) The embedding   $\End_{C [J]} V \to \End_{C[G]}(\ind_J^GV)$ \eqref{eq:compactind} is an isomorphism.

(ii) The embedding  $\Hom_{C^a[J]}(V^a, \sigma (V^a))\to
 \Hom_{C^a[G]}( \ind_J^G V^a, \ind_J^G \sigma( V^a))$ is an isomorphism, for any $\sigma \in \Aut_C(C^a)$.

Assume that $\ind_J^G V^a $ is absolutely irreducible and that  $\ind_J^G \sigma (V^a ) \simeq \ind_J^G V^a$  only if $\sigma (V^a)  \simeq  V^a$ for $\sigma \in \Aut_C(C^a)$. Then (i) holds true and $\ind_J^G V$ is irreducible.
\end{proposition}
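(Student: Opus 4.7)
The plan is to translate both (i) and (ii) into vanishing statements for $g$-intertwining spaces, and then pass between them using that $V^a$ is $V$-isotypic as a $C[J]$-module (Theorem \ref{thm:DT}(B)) and that $C^a\otimes_CV$ has a finite $C^a[J]$-composition series whose factors lie in the $\Aut_C(C^a)$-orbit of $V^a$ (Theorem \ref{thm:DT}(A)). Writing $H_g=J\cap g^{-1}Jg$, the Mackey-type decomposition \eqref{eq:compactind2} combined with the identification \eqref{eq:compactind3} shows that (i) amounts to the vanishing of $I_g(V):=\Hom_{C[H_g]}(V,{}^gV)$ for every $g\notin J$, and that (ii) amounts to the vanishing of $I_g(V^a,\sigma(V^a)):=\Hom_{C^a[H_g]}(V^a,{}^g\sigma(V^a))$ for every such $g$ and every $\sigma\in\Aut_C(C^a)$.

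I prove the equivalence fibrewise in $g$, by contrapositive in each direction. First, if $\Phi\in I_g(V^a,\sigma(V^a))$ is nonzero, then since $\sigma\in\Aut_C(C^a)$ fixes $C$ the $C[J]$-module underlying $\sigma(V^a)$ is literally $V^a$, and by Theorem \ref{thm:DT}(B) one can identify $V^a\simeq\bigoplus V$ and $\sigma(V^a)\simeq\bigoplus V$ as $C[J]$-modules. Restricting to $H_g$, the $C^a$-linear (hence $C$-linear) map $\Phi$ becomes a nonzero $C[H_g]$-map $\bigoplus V|_{H_g}\to\bigoplus{}^gV|_{H_g}$, and any nonzero coordinate component lies in $\Hom_{C[H_g]}(V,{}^gV)=I_g(V)$. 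Conversely, given $0\neq\Phi\in I_g(V)$, the scalar extension $\Phi^a\colon C^a\otimes_CV\to{}^g(C^a\otimes_CV)$ is a nonzero $C^a[H_g]$-map. Fixing a $C^a[J]$-composition series $0=V_0\subset\cdots\subset V_n=C^a\otimes_CV$ with $V_i/V_{i-1}\simeq\sigma_i(V^a)$, d\'evissage on the target (the smallest $i$ with $\mathrm{Im}(\Phi^a)\subset{}^gV_i$) yields a nonzero $C^a[H_g]$-map $C^a\otimes_CV\to{}^g\sigma_i(V^a)$, and d\'evissage on the source (the smallest $j$ such that this map is nonzero on $V_j$) factors it through a nonzero $C^a[H_g]$-map $\sigma_j(V^a)\to{}^g\sigma_i(V^a)$. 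By the $\Aut(C)$-equivariance \eqref{eq:compactind5}, this is a nonzero element of $I_g(V^a,\sigma_j^{-1}\sigma_i(V^a))$.

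For the final assertion, assume $\ind_J^GV^a$ is absolutely irreducible and that $\ind_J^G\sigma(V^a)\simeq\ind_J^GV^a$ implies $\sigma(V^a)\simeq V^a$. I verify (ii) by cases: when $\sigma(V^a)\simeq V^a$ both $\Hom$ spaces in (ii) equal $C^a$ and the natural embedding sends $\id$ to $\id$, hence is an isomorphism; when $\sigma(V^a)\not\simeq V^a$ the $C^a[G]$-module $\ind_J^G\sigma(V^a)\simeq\sigma(\ind_J^GV^a)$ is absolutely irreducible and distinct from $\ind_J^GV^a$, so both spaces vanish. By the equivalence just proved, (i) holds, so $\End_{C[G]}(\ind_J^GV)=\End_{C[J]}(V)$ is a division algebra. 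Applying Proposition \ref{prop:DT} to the $C[G]$-module $\ind_J^GV$: the $C^a[G]$-module $C^a\otimes_C\ind_J^GV\simeq\ind_J^G(C^a\otimes_CV)$ has finite length, its simple subquotients $\ind_J^G\sigma_i(V^a)$ are absolutely simple and form a single $\Aut_C(C^a)$-orbit, hence $\ind_J^GV$ is simple. The main technical care is in the second d\'evissage step: the composition series of $C^a\otimes_CV$ is finite only as a $C^a[J]$-filtration, not necessarily as a $C^a[H_g]$-filtration, so the argument must remain within the $C^a[J]$-filtration while using only the left-exactness of $\Hom_{C^a[H_g]}$ in each variable applied to the successive short exact sequences.
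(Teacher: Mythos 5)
Your proof is correct and follows essentially the same strategy as the paper's: reduce conditions (i) and (ii) to vanishing of intertwining spaces over the double cosets $JgJ\neq J$, pass between $C$ and $C^a$ via the structure of $C^a\otimes_C V$ as a $C^a[J]$-module supplied by Theorem \ref{thm:DT}, and finish the last assertion with Proposition \ref{prop:DT}. The only organizational differences are that you translate one step further down to $\Hom_{C[H_g]}(V,{}^gV)$ via \eqref{eq:compactind3} rather than working with $\Hom_{C[J]}(V,\ind_J^{JgJ}V)$, and that in the direction (i) $\Rightarrow$ (ii) you run an explicit two-sided d\'evissage through a fixed $C^a[J]$-composition series, whereas the paper instead invokes the stronger fact from Theorem \ref{thm:DT}(A) that each simple subquotient of $C^a\otimes_C V$ occurs both as a submodule and as a quotient, which shortens the d\'evissage; and conversely for (ii) $\Rightarrow$ (i) you argue via $V$-isotypy and matrix components where the paper simply uses the isomorphism $\Hom_{C^a[J]}(C^a\otimes_C V, \ind_J^{JgJ}(C^a\otimes_C V))\simeq C^a\otimes_C\Hom_{C[J]}(V,\ind_J^{JgJ}V)$. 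These are minor variations; the substance, including the verification of (ii) in the final assertion by splitting into the cases $\sigma(V^a)\simeq V^a$ and $\sigma(V^a)\not\simeq V^a$ and the appeal to Proposition \ref{prop:DT}, matches the paper.
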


 \begin{proof} Condition (i) means that $\Hom_{C[J]}(V, \ind_J^{JgJ} V) =0$  for any non-trivial coset $JgJ$.
Similarly, condition (ii) means that $\Hom_{C^a[J]}(V^a, \ind_J^{JgJ} \sigma (V^a) )=0$  for any non-trivial coset $JgJ$, and any $\sigma \in \Aut_C(C^a)$.

Let us fix $g \in G$. Because $V$ is irreducible, $\Hom_{C^a[J]}(C^a\otimes_C V, \ind_J^{JgJ} (C^a\otimes_C  V))\simeq 
  C^a\otimes_C  \Hom_{C[J]}(V, \ind_J^{JgJ} V)$.  From  Theorem \ref{thm:DT} the irreducible subquotients
of $C^a \otimes_C V$ have the form $\sigma(V^a)$, for $\sigma \in \Aut_C(C^a)$, and each of them is (isomorphic to)
a subrepresentation  of $C^a \otimes_C V$, and also a quotient.  We deduce  the  equivalence of the four properties:

(1)  $\Hom_{C[J]}(V, \ind_J^{JgJ} V) \neq 0$, 
 
(2)  $\Hom_{C^a[J]}(C^a\otimes_C V, \ind_J^{JgJ} (C^a\otimes_C  V))\neq 0$,  
 
 (3)   there exist $\sigma,  \sigma' \in 
\Aut_C(C^a)$ such that $\Hom_{C^a[J]}(\sigma (V^a), \ind_J^{JgJ} \sigma' (V^a))\neq  0$,

  (4) there exists $\tau\in \Aut_C(C^a)$ such that $\Hom_{C^a[J]}(V^a, \ind_J^{JgJ} \tau (V^a))\neq  0$.

\noindent Therefore, conditions  (i) and (ii) are equivalent.   
  
  Assume now that $\ind_J^G V^a$ is absolutely irreducible and that 
$\ind_J^G \sigma (V^a) \simeq  \ind_J^G V^a$ only if $\sigma (V^a ) \simeq V^a$  for $\sigma \in \Aut_C(C^a)$.
By   the decomposition
theorem \ref{thm:DT}  there is a finite normal extension $ C'$ of $C$ such that  $C' \otimes_C V $ achieves the length  
of $C^a \otimes_C V$ and such $V^a$ is defined over $C'$, so $\ind_J^G V^a $ is also defined over $C'.$
By assumption $\ind_J^G V^a$ is absolutely irreducible so all the irreducible subquotients of
$\ind_J^G (C^a \otimes_C V)$ which are its $Aut_C(C^a)$-conjugates, are absolutely irreducible as well.
It follows also that the length of $\ind_J^G (C^a \otimes_C V)$ is the same as that of $C^a \otimes_C V$ which
by the decomposition
theorem \ref{thm:DT} is finite.
The other part of the assumption implies that condition (ii) is satisfied, hence also condition (i).
Consequently we can apply  Proposition \ref{prop:DT} to $\ind_J^G V$, and we get that it is simple.
\end{proof}

\begin{remark}\label{re:compactind3} Assume that all conditions in the proposition are satisfied. Then applying the decomposition
theorem \ref{thm:DT}  to $V$ or  $\ind_J^G V$ gives parallel results. In particular compact induction from $J$ to $G$
gives an isomorphism from the lattice of subrepresentations of $C^a \otimes_C V$ to the lattice of
subrepresentations of $C^a \otimes_C \ind_J^G V$.
\end{remark}

 \subsection{$C$-types
} \label{ss:2.5}

\begin{definition}\label{def:type}  A  $C$-type in $G$ is a  pair $(J,V)$ where  $J\subset G$ is an open subgroup  and  $V $ an
isomorphism class of irreducible smooth   $C$-representations of $J$ such that $ \ind_J^GV$ is irreducible;  it  is called $Z$-compact if $Z\subset J$ and $J/Z$ is compact; it is said to have finite dimension if $\dim_CV$ is finite, to be defined over a subfield $C'$ of $C$ if $V$ is defined over $C'$.
 \end{definition}

Warning: That is not the usual definition of types  when $G=\underline G(F)$ \cite{BK98}.   It is simply a convenient one for us; we can take $J=G$, in particular.

\bigskip The group $\Aut(C)$ acts on the set of $C$-types in $G$ by its action on the component $V$ of the pair. 
Also, $G$ acts on that set by conjugation. The two actions respect $Z$-compact types.
 
When  $(J,V)$ is a $C$-type in $G$ and  $J' $ is a subgroup of $G$ containing $J$, the transitivity of the compact induction $\ind_J^G V \simeq  \ind_{J'}^G(\ind_J^{J' }V)$ shows that $\ind_J^{J'} V$ is irreducible,  so $(J', \ind_J^{J' }V)$ is a $C$-type in $G$, which is $Z$-compact if $(J,V)$ is  and $J$ has finite index in $J'$.

\begin{definition}\label{def:type2} A $C$-type $(J,V)$  in $G$ is said to satisfy intertwining if the   homomorphism $\End_{C[J]} V\to \End_{C[G]}( \ind_J^G V)$ \eqref{eq:compactind}
 is  an isomorphism.
\end{definition}

\begin{definition}  \label{def:type3} 
 Let  $\mathfrak X$ be a set of $C$-types in $G$.  The set $\mathfrak X$  satisfies intertwining if each element of $\mathfrak X $ does, it satisfies
unicity if for   $ (J, V), (J',\lambda')\in \mathfrak X$   such that  $ \ind_{J}^GV \simeq  \ind_{J'}^GV'$, there is $g\in G$ conjugating $(J,\lambda)$ to $(J',\lambda')$.

Let  $\mathcal Z$ be a set of isomorphism classes of irreducible $Z$-compact smooth $C$-representations of $G$.
The set $\mathfrak X$ satisfies $\mathcal Z$-exhaustion if for $(J,V) \in  \mathfrak X$, the isomorphism class of
$\ind_J^G V$ is in $\mathcal Z$  and any element of $\mathcal Z$  has that form.

Let $\sigma \in  \Aut(C)$.  The set $\mathfrak X$ is   $\sigma$-stable if for $(J,V) \in \mathfrak X$, then
 $ (J, \sigma (V))$ is also in $\mathfrak X$; $\mathfrak X$ is said to satisfy   $\sigma$-unicity when  moreover  $\ind_J^G V  \simeq \ind_J^G \sigma (V)$ implies $V\simeq  \sigma V$.

Let $H\subset  \Aut(C)$ be a subgroup. The set $\mathfrak X$ is    $H$-stable if it  is $\sigma$-stable  for any $\sigma \in H$; it satisfies $H$-unicity if  it satisfies $\sigma$-unicity for any $\sigma \in H$. \end{definition}

\begin{proposition}\label{prop:II.55}  Let   $\mathfrak X$  be  a set of  $C$-types in $G$.  
 Let $\mathfrak X'$ denote the set of $C$-types $(J', V')$, where
$J' $ is the $G$-normalizer of $J$ and $V'$  the isomorphism class of $\ind_J^{J' }V$. Let $\mathcal Z$ and $H$ be as in Definition  \ref{def:type3}. 

If $\mathfrak X$ satisfies intertwining (resp. unicity, resp. $\mathcal Z$-exhaustion, $H$-stability),
then so does $\mathfrak X'$.   If $\mathfrak X$ satisfies unicity and $H$-stability, then $\mathfrak X'$ satisfies $H$-unicity.
 \end{proposition}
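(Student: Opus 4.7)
The plan is to verify each property of $\mathfrak{X}'$ by reducing to the corresponding property of $\mathfrak{X}$, using two elementary tools: transitivity of compact induction $\ind_{J'}^G \ind_J^{J'} \simeq \ind_J^G$ (which, as noted just before the statement, already ensures each $(J', V') = (N_G(J), \ind_J^{J'} V)$ is a bona fide $C$-type in $G$), together with the compatibilities $g\, N_G(J)\, g^{-1} = N_G(gJg^{-1})$ and $\sigma(\ind_J^{J'} V) \simeq \ind_J^{J'} \sigma(V)$ for $g \in G$ and $\sigma \in \Aut(C)$.

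For \emph{intertwining} of $(J', V') \in \mathfrak{X}'$, I would factor the embedding \eqref{eq:compactind} as
\[
\End_{C[J]}(V) \hookrightarrow \End_{C[J']}(V') \hookrightarrow \End_{C[G]}(\ind_{J'}^G V') \simeq \End_{C[G]}(\ind_J^G V),
\]
the composite being the intertwining embedding for $(J,V)$. Since that composite is an isomorphism by hypothesis, both factors must be, giving intertwining for $(J', V')$. For \emph{unicity}, given $(J', V'), (K', W') \in \mathfrak{X}'$ arising from $(J, V), (K, W) \in \mathfrak{X}$ with $\ind_{J'}^G V' \simeq \ind_{K'}^G W'$, transitivity produces $\ind_J^G V \simeq \ind_K^G W$, so unicity of $\mathfrak{X}$ yields $g \in G$ with $gJg^{-1} = K$ and $gV \simeq W$; then $gJ'g^{-1} = N_G(K) = K'$ and $gV' \simeq \ind_K^{K'}(gV) \simeq W'$. \emph{$\mathcal{Z}$-exhaustion} is immediate since $\ind_{J'}^G V' \simeq \ind_J^G V$ in both directions. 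For \emph{$H$-stability}, the identity $\sigma(V') \simeq \ind_J^{J'} \sigma(V)$ transports $(J, \sigma(V)) \in \mathfrak{X}$ to $(J', \sigma(V')) \in \mathfrak{X}'$.

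For \emph{$H$-unicity}, assuming $\mathfrak{X}$ satisfies both unicity and $H$-stability, suppose $\sigma \in H$ and $(J', V') \in \mathfrak{X}'$ satisfy $\ind_{J'}^G V' \simeq \ind_{J'}^G \sigma(V')$. Transitivity gives $\ind_J^G V \simeq \ind_J^G \sigma(V)$, and unicity applied to $(J, V), (J, \sigma(V)) \in \mathfrak{X}$ produces $g \in G$ with $gJg^{-1} = J$ and $gV \simeq \sigma(V)$. The condition $gJg^{-1} = J$ forces $g \in N_G(J) = J'$, and conjugation by an element of $J'$ preserves the isomorphism class of any smooth representation of $J'$; hence
\[
\sigma(V') \simeq \ind_J^{J'} \sigma(V) \simeq \ind_J^{J'}(gV) \simeq g(\ind_J^{J'} V) \simeq V'.
\]

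I do not expect any real obstacle; the argument is a systematic unwinding of the definitions. The only slightly delicate step is the $H$-unicity part, where one must remember to extract from unicity of $\mathfrak{X}$ that the conjugating element actually lies in $N_G(J) = J'$ before invoking triviality of $J'$-inner conjugation on isomorphism classes.
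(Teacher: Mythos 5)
Your proof is correct and follows essentially the same route as the paper's: factoring the intertwining embedding through $\End_{C[J']}(V')$, using transitivity of induction for exhaustion, compatibility with conjugation for unicity, compatibility with $\Aut(C)$ for $H$-stability, and for $H$-unicity applying unicity of $\mathfrak X$ (possible because $H$-stability puts $(J,\sigma(V))$ in $\mathfrak X$) to find $g\in N_G(J)=J'$ and then using triviality of $J'$-inner conjugation on representations of $J'$. Your last chain of isomorphisms $\sigma(V')\simeq\ind_J^{J'}\sigma(V)\simeq\ind_J^{J'}(gV)\simeq g(\ind_J^{J'}V)\simeq V'$ spells out explicitly the step the paper leaves implicit, which is a welcome clarification rather than a deviation.
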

 \begin{proof} The  composite
of the natural maps  $ \End_{C[ J]} V \to End_{C  [ J' ] }\ind_J^{J' }V  \to \End_{C[   G] }\ind_J^G V$ is  \eqref{eq:compactind}. The assertion for intertwining follows.
The assertion for unicity comes from the fact that if $g \in G$ conjugates $(J,V)$ to $(J_1,V_1) $ then it also conjugates
$(J', \ind_J^{J'} V) $ to $((J_1)', \ind_{J_1}^{(J_1)' }V)$. The assertion  for  $\mathcal Z$-exhaustion comes from transitivity of induction.
The assertion for $H$-stability is due to the fact that compact induction is
compatible with the action of $\Aut(C)$. Let us assume that $\mathcal  X$ satisfies unicity and $H$-stability, and let
$(J,V) \in \mathfrak X$ and $\sigma \in H$ be such that  $\ind_J^G \sigma (V )\simeq \ind_J^G V$. By unicity there is $g \in 
G$ conjugating $(J,V)$  to $(J, \sigma (V))$. But then $g$ is in the $G$-normalizer $J' $ of $J$    so $\ind_J^{J'} V  \simeq  \ind_J^{J' }\sigma( V)$.
 \end{proof}

  \subsection{From  $C^a$-types to  $C$-types}\label{ss:2.6} 
  
   When $G$ is a reductive group with centre $Z$ as in the introduction, many lists of   $C^a$-types $(J,V)$ are known. Our purpose is to produce $C$-types from $C^a$-types.
We now describe a general procedure using Proposition \ref{prop:compactind}.

We start from a set $\mathfrak Y^a $ of  $C^a$-types $(J,V^a)$ in $G$ such that:

a)   Each  $V^a$ is absolutely irreducible
and defined over a finite extension of $C$ (by Corollary  \ref{cor:finitedim} that is automatic if  the types   in $\mathfrak Y^a $ are $Z$-compact).

b)  $\mathfrak Y^a$ satisfies intertwining, unicity, $\Aut_C(C^a)$-stability and $\Aut_C(C^a)$-unicity.

We let  $\mathcal Z^a$ be the set of isomorphism classes of the $C$-representations
$\ind_J^G V^a$  for $(J,V^a) \in \mathfrak Y^a$. Those representations are absolutely irreducible (because $ V^a $ is and $\mathfrak Y^a$
satisfies intertwining), defined over
a finite extension of $C$ (because $ V^a $ is), and  $\mathcal Z^a$ is stable under $\Aut_C(C^a)$.

Let $(J, V^a)\in \mathfrak Y^a$; there is a unique isomorphism class $V$ of smooth irreducible $C$-representations of $J$
such that $V^a$ is a subquotient of $C^a\otimes_C V$ (Theorem \ref{thm:DT}), and $\End_{C[ J ]}V$ has
finite dimension over $C$.

\begin{lemma} \label{le:CatoC} The pair $(J,V)$ is a $C$-type in $G$  satisfying intertwining.\end{lemma}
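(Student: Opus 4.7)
The plan is to apply the last paragraph of Proposition \ref{prop:compactind} to $V$ together with its chosen absolutely simple subquotient $V^a$ of $C^a\otimes_C V$. The existence and uniqueness of $V$ as a simple $C[J]$-module with $\End_{C[J]}V$ of finite $C$-dimension is Theorem \ref{thm:DT}(B), which applies because, by hypothesis (a), $V^a$ is absolutely irreducible and defined over a finite extension of $C$. Smoothness of $V$ comes for free: for $v\in V$, its $J$-stabilizer equals that of $1\otimes v\in C^a\otimes_C V$, which is open since $C^a\otimes_C V$ is $V^a$-isotypic as a $C[J]$-module and $V^a$ is smooth. Hence $V$ is an irreducible smooth $C$-representation of $J$ and the hypotheses of Proposition \ref{prop:compactind} are in place.

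To invoke the sufficient criterion at the end of that proposition, I need to check two things. First, $\ind_J^G V^a$ should be absolutely irreducible. It is irreducible because $(J,V^a)$ is a $C^a$-type by assumption. Combining the intertwining property of $\mathfrak Y^a$ (condition (b)) with the absolute irreducibility of $V^a$ (condition (a)), one gets
\[
\End_{C^a[G]}(\ind_J^G V^a)\;\simeq\;\End_{C^a[J]}V^a\;=\;C^a.
\]
Since $C^a$ has no non-trivial algebraic extension, Lemma \ref{le:DT}(ii) (applied over the base field $C^a$) forces $\ind_J^G V^a$ to be absolutely simple.

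Second, for every $\sigma\in\Aut_C(C^a)$, I need $\ind_J^G \sigma(V^a)\simeq \ind_J^G V^a$ to imply $\sigma(V^a)\simeq V^a$. This is exactly the $\Aut_C(C^a)$-unicity of $\mathfrak Y^a$ assumed in condition (b); here one uses, implicitly, that $(J,\sigma(V^a))\in\mathfrak Y^a$ thanks to $\Aut_C(C^a)$-stability, so that the unicity statement may be applied.

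With both conditions verified, Proposition \ref{prop:compactind} gives that $\ind_J^G V$ is irreducible, so $(J,V)$ is a $C$-type in $G$, and at the same time delivers condition (i) of that proposition, which is precisely the intertwining property for $(J,V)$. The argument is essentially bookkeeping; the one step requiring a moment of care is the absolute irreducibility of $\ind_J^G V^a$, where one should recall that irreducibility over an algebraically closed field with scalar endomorphism ring is enough to conclude absolute irreducibility via Lemma \ref{le:DT}, without needing any finite-dimensionality.
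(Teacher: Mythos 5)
Your proof is correct and follows the same core strategy as the paper: both apply the sufficient criterion at the end of Proposition \ref{prop:compactind}, and the substantive content is reading off its two hypotheses from conditions (a) and (b) on $\mathfrak Y^a$. The paper's proof is terser only because it cites absolute irreducibility of $\ind_J^G V^a$ as already established in the discussion at the start of \S\ref{ss:2.6} (where it is observed that intertwining plus absolute irreducibility of $V^a$ gives $\End_{C^a[G]}(\ind_J^G V^a)=C^a$), whereas you re-derive it on the spot; your route through Lemma~\ref{le:DT}(ii) is valid, though the remark just after Theorem~\ref{thm:DT} (``a simple $A$-module with $\End_A(V)=C$ is absolutely simple'') gives the conclusion more directly once $\End=C^a$ is in hand.

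One small wording issue in your smoothness paragraph: $C^a\otimes_C V$ is not ``$V^a$-isotypic as a $C[J]$-module'' --- as a $C[J]$-module it is $V$-isotypic, and as a $C^a[J]$-module its simple subquotients are the $\Aut_C(C^a)$-conjugates of $V^a$. The cleanest version of the argument you intend is: by Theorem~\ref{thm:DT}(B), $V^a$ restricted to a $C[J]$-module is $V$-isotypic, so $V$ is isomorphic to a $C[J]$-submodule of $V^a$; since $V^a$ is smooth and stabilizers are unchanged by restriction of scalars, $V$ is smooth. This is a detail the paper does not spell out (it simply asserts $V$ is a smooth irreducible $C$-representation), so making it explicit is a genuine, if minor, improvement.
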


\begin{proof} We see that  $\ind_J^G \sigma (V^a) \simeq \ind_J^G V^a $ only if $\sigma (V^a) \simeq  V^a$ for $\sigma \in \Aut_C(C^a)$, since $\mathfrak Y^a$ satisfies unicity,
$\Aut_C(C^a)$-stability and $\Aut_C(C^a)$-unicity.
Proposition \ref{prop:compactind}  gives the result.
\end{proof}

From the set $\mathfrak Y^a$  of  $C^a$-types $(J,V^a)$ in $G$  we therefore get a set $\mathfrak  Y$ of $C$-types $(J,V)$ in $G$, 
satisfying intertwining. Note that  $\mathcal Z^a $ is the set
of isomorphism classes of irreducible subquotients of $C^a\otimes_C W$ for $W $  in the set $\mathcal Z$ of isomorphim classes of
 $\ind_J^G V\in \Irr_C(G)$  for $(J,V)\in \mathfrak  Y$.   
\begin{proposition} \label{prop:CatoC}The set $\mathfrak Y$ of $C$-types in $G$ satisfies intertwining and unicity. If $\mathfrak Y^a$ satisfies $\Aut (C^a)$-stability
and $\Aut (C^a)$-unicity, then $\mathfrak Y $ satisfies $\Aut(C)$-stability and $\Aut(C)$-unicity.
\end{proposition}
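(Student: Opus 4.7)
Intertwining is already provided by Lemma~\ref{le:CatoC}, so my task reduces to establishing unicity and, under the additional hypothesis, $\Aut(C)$-stability together with $\Aut(C)$-unicity. The engine throughout will be Theorem~\ref{thm:DT}, which provides a bijection between isomorphism classes of simple $C[J]$-modules with finite-dimensional endomorphism ring and $\Aut_C(C^a)$-orbits of absolutely simple $C^a[J]$-modules defined over finite extensions of $C$, combined with Remark~\ref{re:compactind3}, which transports this picture through compact induction so that the simple subquotients of $C^a\otimes_C\ind_J^G V$ are exactly the $\ind_J^G\sigma(V^a)$ for $\sigma\in\Aut_C(C^a)$. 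I will also use repeatedly that compact induction commutes with scalar extension and with the action of field automorphisms.

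For unicity, given $(J,V),(J',V')\in\mathfrak Y$ with $\ind_J^G V\simeq\ind_{J'}^G V'$, I base-change to $C^a$ and match irreducible subquotients to extract an isomorphism $\ind_J^G V^a\simeq\ind_{J'}^G\sigma(V'^a)$ for some $\sigma\in\Aut_C(C^a)$. Then $\Aut_C(C^a)$-stability places $(J',\sigma(V'^a))$ in $\mathfrak Y^a$, and unicity of $\mathfrak Y^a$ produces $g\in G$ with $gJg^{-1}=J'$ and $gV^a\simeq\sigma(V'^a)$ as $C^a[J']$-modules. Since $\sigma$ fixes $C$, the modules $\sigma(V'^a)$ and $V'^a$ lie in the same $\Aut_C(C^a)$-orbit, so the orbits associated by Theorem~\ref{thm:DT}(B) to $gV$ and $V'$ coincide, and therefore $gV\simeq V'$, as required.

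Now assume $\mathfrak Y^a$ satisfies $\Aut(C^a)$-stability and $\Aut(C^a)$-unicity. For $\Aut(C)$-stability, given $\tau\in\Aut(C)$, I extend $\tau$ to some $\tilde\tau\in\Aut(C^a)$ (using that $C^a/C$ is algebraic) and appeal to the natural $C^a[J]$-linear identification $C^a\otimes_C\tau(V)\congto\tilde\tau(C^a\otimes_C V)$ sending $c^a\otimes v$ to $\tilde\tau^{-1}(c^a)\otimes v$; this exhibits $\tilde\tau(V^a)$ as a subquotient of $C^a\otimes_C\tau(V)$, so by Theorem~\ref{thm:DT}(B) the $C[J]$-module attached to $\tilde\tau(V^a)$ is $\tau(V)$, and $(J,\tilde\tau(V^a))\in\mathfrak Y^a$ yields $(J,\tau(V))\in\mathfrak Y$. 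For $\Aut(C)$-unicity, the same base-change and subquotient-matching argument combined with the identification above produces $\ind_J^G V^a\simeq\ind_J^G\rho(V^a)$ for some $\rho\in\Aut(C^a)$ restricting to $\tau$ on $C$; $\Aut(C^a)$-unicity of $\mathfrak Y^a$ then gives $V^a\simeq\rho(V^a)$, which via Theorem~\ref{thm:DT}(B) translates into $V\simeq\tau(V)$. The main technical point I anticipate is this naturality of the Theorem~\ref{thm:DT}(B) correspondence with respect to field automorphisms; once that bookkeeping is in place, every step becomes a direct application of the tools already assembled in \S\ref{s:1} and \S\ref{ss:2.4}--\S\ref{ss:2.6}.
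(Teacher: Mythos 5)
Your proof is correct and follows essentially the same route as the paper's: base change to $C^a$, match irreducible subquotients via Theorem \ref{thm:DT} and Remark \ref{re:compactind3}, appeal to unicity and $\Aut_C(C^a)$-stability of $\mathfrak Y^a$, and descend back to $C$ using the bijection of Theorem \ref{thm:DT}(B); for the $\Aut(C)$ statements, extend $\tau$ to $\tilde\tau\in\Aut(C^a)$ and use the identification $C^a\otimes_C\tau(V)\simeq\tilde\tau(C^a\otimes_C V)$. You are somewhat more explicit than the paper about two bookkeeping points (that $\Aut_C(C^a)$-stability is what places $(J',\sigma(V'^a))$ in $\mathfrak Y^a$ before applying unicity, and the explicit form of the identification $c^a\otimes v\mapsto\tilde\tau^{-1}(c^a)\otimes v$), but the underlying argument is the same.
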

\begin{proof} That the set $\mathfrak Y$ satisfies intertwining comes from the lemma. 

Let $(J,V)$ and $(J',V')$ in $\mathfrak Y $ be such that $\ind_J^G V$ and $\ind_{J'}^G V'$ are isomorphic. 
Let $V^a$ be the class of an irreducible subquotient of $C^a\otimes_C V$, and choose
similarly $V'^a$. They belong to $\mathfrak  Y^a $, and since $\ind_J^G V\simeq \ind_{J'}^G V' $, we have
$\ind_J^G V^a\simeq \ind_{J'}^G \sigma (V'^a)$  for some $\sigma\in \Aut_C(C^a)$.
Since $\mathfrak  Y^a$ satisfies unicity, $V^a $ and $\sigma (V'^a)$ are conjugate by some $g \in G$,
so that $V$ and $V'$ are also conjugate by $g$.
That proves that $\mathfrak Y$ satisfies unicity.

Assume now that $\mathfrak Y^a$ satisfies $\Aut(C^a)$-stability and $\Aut (C^a)$-unicity. Let $(J,V) \in  \mathfrak  Y$ and $\tau \in \Aut(C)$. There is an extension of $\tau$ to an automorphism $\tau^a$ of $C^a$
(\cite{Bki-A5}  \S4, No 3,Corollary 2 of Theorem2).
If $\iota:C\to C^a$ is the embedding, then $\iota \circ  \tau= \tau^a \circ  \iota$ and 
$$C^a \otimes_{C  } \tau (V) = C^a \otimes_{ \iota \circ \tau}V \simeq \tau^a(C^a \otimes_CV). $$
If $V^a$ is the class of an irreducible
subquotient of $C^a \otimes_C V$, then $(J, V^a)  \in  \mathfrak Y^a$, and because $\mathfrak Y^a $ satisfies $\Aut(C^a)$-stability,
$(J,\tau ^a (V^a ))$ is also in $\mathfrak Y^a$, and it follows that  $(J,\tau (V\)) \in \mathfrak Y$, proving that $\mathfrak Y$ satisfies $\Aut(C)$-stability.

Let now  $\tau \in \Aut(C)$ be such that $\ind_J^G V\simeq \ind_J^G \tau (V)$. Choosing $V^a$  and $\tau^a $ as before, we have 
 see that $\ind_J^G V^a$ and $\ind_J^G \tau^a(V^a)$
are conjugate under $\Aut_C(C^a)$, and changing $\tau^a$ we may assume that
they are isomorphic. But $\mathfrak Y^a$ satisfies $\Aut(C^a)$-unicity, so $V^a=\tau^a(V^a)$,
and it follows that $V= \tau (V)$. That proves that $\mathfrak Y$ satisfies $\Aut(C)$-unicity.
\end{proof}

\section{Cuspidal types in reductive groups} \label{s:3} 

 In this section,  $F $ is a non-Archimedean  local field with finite residual characteristic $p$, $C$ is a field of characteristic $c\neq p$ of algebraic closure $C^a$,
  $\underline G$ is  a connected reductive $F$-group of centre $\underline Z$ with  rank $n_Z$ (the dimension of a  maximal $F$-split subtorus),  $G=\underline G(F)$
 and $Z^\flat$ is a closed subgroup of $Z=\underline Z(F)$ with compact quotient $Z/Z^\flat$ (for example $Z^\flat=Z$).
    
 The group $G$
contains an  open pro-$p$ subgroup;  the compact subgroups of $G$ generate
an open and normal  subgroup $G^0$, and the  quotient $G/G^0$ a finitely generated free abelian group. The maximal compact subgroup $Z^0$
 of $Z$ is   $Z \cap G^0$;  the injective homomorphism $Z/Z^0\to G/G^0$ has finite cokernel.
 Similarly, the  maximal compact subgroup  $Z^{\flat 0}$ of  $Z^\flat$ is  $Z^\flat \cap G^0$,  and the injective homomorphism $Z^\flat/Z^{\flat 0}\to G/G^0$ has finite cokernel (as   $Z/Z^\flat$ is compact).   
 The  groups $Z$ and  $Z^\flat$  are  almost finitely generated (Definition   \ref{def:almostfinite}) and  are the product  of their maximal compact subgroup by a  (non unique) subgroup  isomorphic to 
 $\mathbb Z^{n_Z}$. We can choose $Z^{\flat}$ isomorphic to $\mathbb Z^{n_Z}$ (we can choose $Z^{\flat}$ trivial if and only if $n_Z=0$).  
 
  For a parabolic subgroup $\underline P$ of $\underline G$ with Levi decomposition   $\underline P=\underline M \underline N$ of rational points $P=MN$ (we will say that $P$ is a parabolic subgroup of $G$), the  (unnormalized) parabolic induction $$\Ind_P^G:\Mod_C(M)\to \Mod_C(G)$$  is faithful and exact, with a left adjoint  the $N$-coinvariant functor $(-)_N$  and  a right adjoint $R_P^G$  \cite{V13}. This implies that $(-)_N$ is right exact and  $R_P^G$ is left exact.
  
  As $c\neq p$, $(-)_N$ is exact (\cite{V96} I.4.10) and   the   second adjunction conjecture  says that $R_P^G$  is equivalent to  
 $ \delta_P(-)_N$   where  $\delta_P$  is the modulus of $P$.  The equivalence is a celebrated result of Bernstein when $C$ is the complex field, and a theorem of Dat (\cite{D09} Proposition 6.3, Theorem 9.2) when $G$ is  $ GL(n,F)$  or is 
a  classical $p$-adic group and  $p\neq 2$;  it is also true for any $G$, for the restriction of the functors to the 
subcategories of level $0$ representations (see below \S \ref{ss:3.3}  for that notion).

When  $G$  is the group of points of a connected reductive group over a finite field of characteristic $p$ (we will say simply that $G$ is a finite reductive group in characteristic $p$), 
the parabolic induction $\Ind_P^G$ is faithful and exact, of left adjoint  $(-)_N$ and right adjoint the $N$-invariant functor $(-)^N$ (\cite{OV} Proposition3.1). As $c\neq p$, 
 there exists an idempotent $e\in C[N]$ such that $eV=V^N$ for $V\in \Mod_C(G)$   (\cite{V96} 4.9 Proposition a) and $R_P^G$ is equivalent to
 $ (-)_N$ (this is the form of the second adjunction).
 
   \subsection{Review on cuspidal representations}\label{ss:c}   The definitions and results in this subsection are valid also for finite reductive groups.
 
 \begin{definition} (\cite{V96}  2.2, 2.3) \label{def:cusp} A representation  $\tau\in \Mod_C(G)$  is called cuspidal if  $\tau_N=0$   for all proper parabolic subgroups $P=MN $ of $G$.
  \end{definition}
  
  Equivalently by adjunction, $\tau$  is  cuspidal    if  $\Hom_{C[G]}(\tau, \Ind_P^G \rho) =0$ for all proper parabolic subgroups $P=MN$ of $ G$  and $\rho\in \Mod_C(M)$. 
  
  \begin{remark}  \label{re:cirr} When $\tau$ is irreducible one can restrict to $\rho$ irreducible:  $\tau\in \Irr_C(G)$ is cuspidal if and only if $\Hom_{C[G]}(\tau, \Ind_P^G \rho) =0$ for all proper parabolic subgroups $P=MN$ of $ G$  and $\rho\in \Irr_C(M)$ (\cite{V96}  II.2.4).
  \end{remark}
  
 \begin{remark}  \label{re:Nv} We note that $\tau_N=0$ if and only if  for any element $v\in \tau$ there exists an open compact subgroup $N_v$ of $N$ such that $e_{N_v} v=0$ where $e_{N_v}:\tau\to \tau^{N_v}$ is the projection on the $N_v$-invariants (\cite{V96} I.4.6). This remark will be used  in Proposition \ref{prop:cnirr}.
\end{remark} 
   
\begin{remark} \label{re:rightcusp} What we call cuspidal here is called  left cuspidal  in \cite{AHV19} Definition   6.3, (2.3),   because there is a symmetric notion, which we call   right cuspidal, where one asks $R_P^G\tau=0$,  for all proper parabolic subgroups $P=MN $ of $G$, or equivalently by adjunction $\Hom_{C[G]}(\Ind_P^G \rho, \tau) =0$,   for all proper parabolic subgroups $P=MN $  and $\rho\in \Mod_C(M)$.  Of course when  the second adjunction holds true, in particular for level $0$ representations, these notions are equivalent.   Without assuming  the second adjunction,  we will  verify in Proposition \ref{pro:cusp} (iii)  their equivalence for admissible representations.
\end{remark}

 We are lead naturally to the notion of supercuspidality for  semi-simple representations. 

\begin{definition} \label{def:sc} (\cite{V96}, II.2.5)  An irreducible smooth $C$-representation
  of $G$   is called supercuspidal   if it is not isomorphic to a subquotient of   $\Ind_P^G \rho$ for all proper parabolic subgroups $P=MN \subset G$ 
and $\rho\in \Mod_C(M)$. 
 
 A semi-simple  smooth $C$-representation $\pi$ of $ G$  is supercuspidal if each  irreducible component of $\pi$ is supercuspidal.
\end{definition} 

Clearly supercuspidal implies (left) cuspidal and right cuspidal; the converse is not true (when $C$ is the prime field of characteristic $\ell$ dividing $p+1$, the principal series of $GL(2,\mathbb Q_p)$  induced by the trivial $C$-character of the diagonal torus has a cuspidal non-supercuspidal subquotient). 

\begin{lemma}\label{ex:sc} An irreducible quotient $\pi$ of a projective cuspidal representation $V$ is supercuspidal. 
\end{lemma}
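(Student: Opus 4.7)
The plan is to argue by contradiction using projectivity of $V$ together with the cuspidality condition via Frobenius reciprocity. Suppose $\pi$ is not supercuspidal. Then, by Definition \ref{def:sc}, there exist a proper parabolic subgroup $P=MN$ of $G$ and a smooth $C$-representation $\rho$ of $M$ such that $\pi$ is isomorphic to a subquotient of $\Ind_P^G \rho$; that is, there exist subrepresentations $W_1 \subset W_2 \subset \Ind_P^G \rho$ with $W_2/W_1 \cong \pi$.

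Next I would use projectivity of $V$ to produce a nonzero homomorphism from $V$ into $\Ind_P^G \rho$. Since $\pi$ is an irreducible quotient of $V$, we have a surjection $V \twoheadrightarrow \pi$, and composing with the identification $\pi \cong W_2/W_1$ gives a nonzero map $V \to W_2/W_1$. Because $V$ is projective in $\Mod_C(G)$, this map lifts through the surjection $W_2 \twoheadrightarrow W_2/W_1$ to a homomorphism $\varphi \colon V \to W_2$; the lift $\varphi$ is necessarily nonzero, since its composition with $W_2 \twoheadrightarrow W_2/W_1$ equals the original surjection onto $\pi$. Composing with the inclusion $W_2 \hookrightarrow \Ind_P^G \rho$ gives a nonzero element of $\Hom_{C[G]}(V, \Ind_P^G \rho)$.

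To conclude, I would invoke Frobenius reciprocity: since $(-)_N$ is the left adjoint of $\Ind_P^G$, there is a natural isomorphism
\[
\Hom_{C[G]}(V, \Ind_P^G \rho) \;\simeq\; \Hom_{C[M]}(V_N, \rho).
\]
By hypothesis $V$ is cuspidal, so $V_N = 0$ by Definition \ref{def:cusp}, and the right-hand side vanishes. This contradicts the nonvanishing of the left-hand side produced in the previous step, so no such $P$ and $\rho$ can exist, and $\pi$ is supercuspidal.

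I do not anticipate a serious obstacle here: the argument is a formal combination of the lifting property for projective objects and Frobenius reciprocity applied to $\Ind_P^G$ and $(-)_N$, both of which are available in $\Mod_C(G)$ for $c \ne p$ (the adjunction holds in general, and exactness of $(-)_N$ is not even needed). The only point that deserves care is that the definition of supercuspidality uses the unnormalized parabolic induction $\Ind_P^G$ and that we are free to take $\rho$ arbitrary in $\Mod_C(M)$ (not necessarily irreducible), which is exactly what allows the lifting of $V \to \pi$ to land in a genuine $C[G]$-subrepresentation of an induced representation.
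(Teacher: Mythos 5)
Your proof is correct and is essentially the paper's argument: both use projectivity of $V$ to lift the surjection $V\twoheadrightarrow\pi$ to a nonzero map into $\Ind_P^G\rho$ and then derive a contradiction from cuspidality of $V$. The only cosmetic difference is that the paper phrases the final step by noting that the image of the lift would be a nonzero cuspidal subrepresentation of $\Ind_P^G\rho$, whereas you apply Frobenius reciprocity (the adjunction $(-)_N\dashv\Ind_P^G$) directly to $V$; these are the same observation, since the paper's "does not contain a cuspidal representation" is exactly the vanishing $\Hom_{C[G]}(V,\Ind_P^G\rho)\simeq\Hom_{C[M]}(V_N,\rho)=0$.
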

This will be used in the proof of Proposition \ref{prop:cnirr}.  We will see that  all  known irreducible supercuspidal representations are  of this form (Theorems \ref{thm:sc01},  \ref{thm:sc0}, \ref{thm:sc01positive}).
\begin{proof} If $\pi$ is a subquotient of a parabolically induced representation $\Ind_P^G \rho$ for a proper parabolic subgroup $P=MN$ of $G$ and a smooth $C$-representation $\rho$ of $M$, then $\Ind_P^G \rho$ contains a subrepresentation $W$ of quotient  $\pi$. As $V$ is projective of quotient $
\pi$, there is a non-zero $C[G]$-map $V\to W$.  Any quotient of a cuspidal representation is cuspidal and $\Ind_P^G \rho$ does not contains a cuspidal representation. Hence, a contradiction.
\end{proof}
\begin{remark}\label{re:scKn}  One does not need to consider all $\rho\in \Mod_C(M)$  in Definition \ref{def:sc};  it suffices to  take  $\rho =C[K_n  \backslash M]$  for all $K_n$ in  some decreasing
sequence of pro-$p$-open subgroups   of $M$  with trivial intersection. Indeed, $\Mod_C(M)$ is an abelian  Grothendieck  category  with generator $\oplus_n C[K_n\backslash M]$   (\cite{V13} Lemma 3.2), and 
  the functor $\Ind_P^G$ is  exact  and  commutes with direct sums (it has a right adjoint and \cite{KS06} remark after Proposition2.2.10).
\end{remark}
  
\begin{remark}\label{re:scirr} In the definition of supercuspidality of  $\pi\in \Irr_C(G)$,   one can suppose $\rho$ irreducible when 
 $G$ satisfies the second adjunction or  when $\pi$  has level $0$ (a theorem of Dat \cite{D18} Theorem 1.1), or when $G$ is finite  (as $C[M]$ has finite length). Such an  assumption is  forgotten but   used in the proof in  (\cite{V96} II.2.6).  
\end{remark}

\begin{remark}   Assume only  for this remark that $c=p$. When $G/Z$ is not compact, the trivial representation    is right cuspidal and not left cuspidal \cite{AHV19}; an irreducible representation may be  not admissible. A definition of supercuspidality  (\cite{AHHV} for $C$  algebraically closed,  \cite{AHV19} in general) is given  for  admissible irreducible 
$C$-representations $\pi$ of $G$: $\pi$   is called supercuspidal if it is not isomorphic to a subquotient of   $\Ind_P^G \rho$ for all proper parabolic subgroups $P=MN \subset G$ 
and all  admissible irreducible $C$-representations $\rho$ of $M$.  
 \end{remark}

\begin{proposition} \label{pro:cusp} 
(i)  $(\Ind_P^G\rho)^\vee\simeq  \Ind_P^G(\delta_P \rho^\vee)$ for all  parabolic subgroups $P=MN$  of $G$ and   $\rho\in \Mod_C(M)$.

(ii) Let  $\tau \in \Mod_C(G)$. Then $\tau$ is $Z$-compact $\Leftrightarrow \tau$  is cuspidal $\Leftrightarrow \tau^\vee $ is right cuspidal. 

(iii) If $\tau $ is admissible, then 

  $ (\tau _N)^\vee \simeq (\tau^\vee)_{N'} $ for   $P'=MN'$     opposite to $P$ with respect to $M$.   

$\tau$ is cuspidal $\Leftrightarrow \tau$ is  right cuspidal.

(iv) Let $\pi \in \Irr_C(G) $  and  $\chi:G\to C^*$ a smooth $C$-character of $G$. Then 

$\pi \subset \Ind_P^G \rho$ for  some  $P=MN$ and $ \rho\in \Irr_C(M)$ cuspidal.  

$\pi$
is admissible. 

$ \pi$  is cuspidal   $\Leftrightarrow \pi^\vee$  is cuspidal
 $\Leftrightarrow  \chi \pi$  is cuspidal.  
 
$\pi$  is supercuspidal   $\Leftrightarrow \pi^\vee$  is supercuspidal $\Leftrightarrow \chi \pi$  is supercuspidal.

(v)  Any irreducible smooth $C^a$-representation $\pi$ of $G$ is absolutely irreducible.
  
\end{proposition}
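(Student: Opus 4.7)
For (i), I would construct the $G$-invariant pairing
\[
\langle f,f'\rangle=\int_{K/(K\cap P)}\langle f(k),f'(k)\rangle\,dk
\]
on $\Ind_P^G\rho \times \Ind_P^G(\delta_P\rho^\vee)$, where $K\subset G$ is a compact open subgroup with $G=KP$; the modulus $\delta_P$ is precisely the Jacobian that makes the form $G$-invariant under the identification $G/P\simeq K/(K\cap P)$. Non-degeneracy on smooth vectors and matching of $J$-invariants across a cofinal family of open compact subgroups $J$ shows this pairing induces the asserted isomorphism $\Ind_P^G(\delta_P\rho^\vee)\congto(\Ind_P^G\rho)^\vee$.

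For (ii), the equivalence $\tau$ cuspidal $\Leftrightarrow$ $\tau$ is $Z$-compact for smooth $\tau$ is classical, due to Jacquet and extended to $c\neq p$ in \cite{V96} (II.2.7 combined with Proposition~\ref{prop:Zcompact2}). For $\tau$ cuspidal $\Leftrightarrow$ $\tau^\vee$ right cuspidal, I use (i) together with the canonical isomorphism $\Hom_G(V,W^\vee)\simeq \Hom_G(W,V^\vee)$ valid for all smooth $V,W$ (a $G$-invariant bilinear form smooth in one variable is automatically smooth in the other, because the open stabilizer of a vector fixes the corresponding functional). Then $\tau^\vee$ right cuspidal translates via (i) into $\Hom_G(\tau,\Ind_P^G(\delta_P\sigma^\vee))=0$ for all $\sigma\in\Mod_C(M)$ and proper $P$; the forward direction is immediate, and for the converse taking $\sigma=(\delta_P^{-1}\rho)^\vee$ gives $\delta_P\sigma^\vee=\rho^{\vee\vee}$, and the canonical embedding $\rho\hookrightarrow\rho^{\vee\vee}$ (valid for any smooth $\rho$) together with exactness of $\Ind_P^G$ yields $\Hom_G(\tau,\Ind_P^G\rho)=0$, i.e.\ cuspidality of $\tau$.

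For (iii), the isomorphism $(\tau_N)^\vee\simeq (\tau^\vee)_{N'}$ is Casselman's duality theorem for admissible representations (\cite{V96} II.3.3), proved by filtering $\tau$ via the Bruhat stratification of $G/P$ and matching graded pieces dually using admissibility. Combined with (ii) and the reflexivity $(\tau^\vee)^\vee\simeq\tau$ for admissible $\tau$, this yields $\tau_N=0 \Leftrightarrow (\tau^\vee)_{N'}=0 \Leftrightarrow \tau^\vee$ cuspidal $\Leftrightarrow \tau$ right cuspidal, the last equivalence applying (ii) to $\tau^\vee$.

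For (iv), I argue by induction on the semisimple rank of $G$: if $\pi$ is cuspidal, take $P=G$, $\rho=\pi$, and (ii) together with Proposition~\ref{prop:Zcompact} gives admissibility; otherwise some proper $P=MN$ has $\pi_N\neq 0$, and since $\pi$ is finitely generated so is $\pi_N$, which therefore admits an irreducible quotient $\sigma$. The adjoint map $\pi\to\Ind_P^G\sigma$ is nonzero, hence an embedding by irreducibility of $\pi$. The inductive hypothesis applied to $\sigma$ on $M$ then gives $\sigma\hookrightarrow \Ind_{Q\cap M}^M\rho$ with $\rho$ irreducible cuspidal on a proper Levi, and transitivity of parabolic induction completes the embedding; admissibility of $\pi$ follows since parabolic induction preserves admissibility for $c\neq p$. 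Invariance of cuspidality under $(-)^\vee$ uses (iii), and under $\chi$ uses $(\chi\pi)_N=(\chi|_M)\pi_N$; invariance of supercuspidality under $(-)^\vee$ follows from exactness of $(-)^\vee$, (i), and $\pi^{\vee\vee}\simeq \pi$, and under $\chi$ from $\chi\Ind_P^G\sigma=\Ind_P^G(\chi|_M\sigma)$. Finally for (v), admissibility of $\pi\in\Irr_{C^a}(G)$ gives $\pi^J$ nonzero and finite-dimensional for $J$ the stabilizer of any nonzero vector; Corollary~\ref{cor:invariant1} yields $D:=\End_{C^a[G]}(\pi)\hookrightarrow \End_{C^a}(\pi^J)$, so $D$ is a finite-dimensional division algebra over the algebraically closed $C^a$, hence $D=C^a$, and $\pi$ is absolutely irreducible by the remark following Lemma~\ref{le:DT}. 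The main obstacle is (iii): Casselman's duality at $c\neq p$ demands careful bookkeeping across the Bruhat filtration and essential use of admissibility, but once (i) and (iii) are in hand the remaining assertions reduce to formal manipulations.
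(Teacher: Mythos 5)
Your proof is correct and follows essentially the same route as the paper's. The paper dispatches (i), most of (iii), and most of (iv) by citing \cite{V96}; you supply proof sketches for those items, but the cited facts are the same and your sketches are accurate. Where the paper gives its own argument, you match it: for (ii) both proofs rest on the adjunction $\Hom_G(\Ind_P^G\rho,\tau^\vee)\simeq\Hom_G(\tau,(\Ind_P^G\rho)^\vee)$ together with (i) (you handle the converse direction via $\rho\hookrightarrow\rho^{\vee\vee}$ and exactness of $\Ind_P^G$, the paper via faithfulness of the contragredient — a cosmetic difference); for the cuspidal$\Leftrightarrow$right cuspidal part of (iii) both use Casselman duality and reflexivity of admissibles; for the last line of (iv) both use exactness of contragredient, (i), and reflexivity; and for (v) both use admissibility to deduce $\End_{C^a[G]}(\pi)$ is finite-dimensional over $C^a$, hence equals $C^a$. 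One tiny slip: the absolute irreducibility consequence is in the remarks following Theorem~\ref{thm:DT}, not Lemma~\ref{le:DT}, but this does not affect the argument.
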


\begin{proof} For   (i), (iii) except for  $\tau$  cuspidal $\Leftrightarrow \tau$   right cuspidal,  and for (iv) except for  the last line, we refer to  (\cite{V96} I.4.18 (iii), I.5.11, II.2.1 (vi), II.2.4, II.2.7,  II.2.8 where the proof does not use    the assumption that $C$ is algebraically closed, II.2.9).

We have  $\tau$ cuspidal    $\Leftrightarrow\tau^\vee$ right cuspidal (hence (ii)) because  ((i) and   (\cite{V96} I.4.13)) 
$$\Hom_{C[G]}( \Ind_P^G \rho ,\tau ^\vee) \simeq \Hom_{C[G]}(\tau, ( \Ind_P^G \rho)^\vee )\simeq 
\Hom_{C[G]}(\tau, \Ind_P^G( \delta_P\rho^\vee))$$
and $\Hom_{C[G]}(\tau, \Ind_P^G \rho)\neq  0$  implies  $\Hom_{C[G]}(( \Ind_P^G \rho)^\vee,\tau ^\vee) \neq 0$.
    
If  $\tau$ is admissible, we deduce  $\tau $ cuspidal $\Leftrightarrow \tau$  right cuspidal (ending the proof of (iii)). Indeeed,   $\tau \simeq (\tau^\vee)^\vee$ and $\tau$ is cuspidal if and only if $\tau^\vee$ is cuspidal as  $ (\tau _N)^\vee \simeq (\tau^\vee)_{N'} $.  

We  prove the last line of (iv). As $\chi \Ind_P^G\rho \simeq  \Ind_P^G(\rho \chi|_M)$, it is clear that $\pi$ (super)cuspidal $\Leftrightarrow \chi \pi$ (super)cuspidal. If $\pi$ is non supercuspidal,  it is a quotient of a subrepresentation $ Y $ of   $\Ind_P^G \rho$  for some proper parabolic subgroup   $P=MN $ of $G$ and $\rho\in \Mod_C(M)$; taking the contragredient which is exact,   $Y^\vee$ is a quotient of $(\Ind_P^G \rho)^\vee$ containing $\pi^\vee$, and as $(\Ind_P^G \rho)^\vee\simeq \Ind_P^G( \delta_P  \rho^\vee)$ we see that $\pi^\vee$ is non supercuspidal.
 As $(\pi^\vee )^\vee \simeq \pi$,  the reverse is true.  
 
(v)  The  intertwining algebra of $\pi$  is a finite extension of $C^a$, hence is equal to  $C^a$,  because 
 $\pi$ is admissible  by (iv).  Therefore $\pi$  is absolutely irreducible  (remarks following Theorem \ref{thm:DT}).
\end{proof}

\begin{proposition}\label{prop:c} Let  $C'/C$ be an extension. Let  $\pi\in \Irr_C(G)$ and $\pi' \in \Irr_{C'}(G)$ an irreducible  subquotient of $C'\otimes_C \pi$. Then,
 $\pi$  is supercuspidal if and only if $\pi'$ is; similarly for cuspidal.  \end{proposition}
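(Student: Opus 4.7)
The plan is to reduce both equivalences to scalar-extension compatibility of parabolic induction and of the $N$-coinvariant functor, combined with the Decomposition Theorem of Section~\ref{s:1}. Since $\pi$ is admissible by Proposition~\ref{pro:cusp}(iv), the commutant $\End_{C[G]}(\pi)$ embeds into $\End_C(\pi^J)$ for any open $J$ with $\pi^J\neq 0$ and hence has finite $C$-dimension, so Theorem~\ref{thm:DT} applies. Taking an algebraic closure $C'^a$ of $C'$ containing $C^a$, every absolutely irreducible subquotient $\pi_0$ of $C'^a\otimes_{C'}\pi'$ is also one of the finitely many absolutely irreducible subquotients of $C^a\otimes_C\pi$, and these form a single $\Aut_C(C^a)$-orbit. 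Since Galois conjugation commutes with $\Ind_P^G$ and with $(-)_N$, it preserves both cuspidality and supercuspidality, so it suffices to establish the equivalence between $\pi$ and one fixed such $\pi_0$.

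For cuspidality, the functor $(-)_N$ is exact (as $c\neq p$) and commutes with scalar extension. If $\pi_N=0$, then $(\pi_0)_N$ is a subquotient of $(C^a\otimes_C\pi)_N=C^a\otimes_C\pi_N=0$. Conversely, if $\pi_0$ is cuspidal, so are all its $\Aut_C(C^a)$-conjugates, and thus every Jordan--H\"older factor of the finite-length module $C^a\otimes_C\pi$ is cuspidal; applying $(-)_N$ gives $(C^a\otimes_C\pi)_N=0$, whence $\pi_N=0$.

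For supercuspidality I invoke Remark~\ref{re:scKn}: $\pi$ is supercuspidal if and only if it is not a $C[G]$-subquotient of $V_{P,n}:=\Ind_P^G C[K_n\backslash M]$ for any proper parabolic $P=MN$ and any~$n$, and analogously for $\pi_0$ via $C^a\otimes_C V_{P,n}\simeq\Ind_P^G C^a[K_n\backslash M]$. One direction is formal: a containment $\pi\le V_{P,n}$ gives $\pi_0\le C^a\otimes_C\pi\le C^a\otimes_C V_{P,n}$. For the reverse, I rely on the following lemma: \emph{if a simple $C[G]$-module $\sigma$ is a subquotient of $\bigoplus_I V$, then $\sigma$ is a subquotient of $V$}. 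Granting this, suppose $\pi_0\le C^a\otimes_C V_{P,n}$. Since $\pi_0$ is $\pi$-isotypic as a $C[G]$-module by Theorem~\ref{thm:DT}(B), $\pi$ is a $C[G]$-subquotient of $C^a\otimes_C V_{P,n}\simeq\bigoplus_I V_{P,n}$ (with $I$ a $C$-basis of $C^a$), and the lemma yields $\pi\le V_{P,n}$.

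To prove the lemma, write $\sigma\simeq A/B$ with $B\le A\le\bigoplus_I V$, pick $x\in A$ with $x+B\neq 0$, and observe that the $C[G]$-subrepresentation generated by $x$ lies in the finite sub-sum $\bigoplus_{I_0}V$ indexed by the support $I_0$ of $x$; its image in $\sigma$ is nonzero, hence equals $\sigma$ by simplicity. This reduces the problem to the case $|I_0|<\infty$, handled by induction on $|I_0|$ via the two-summand case. The latter is a short diagram chase: for $\sigma$ a simple subquotient $A/B$ of $V_1\oplus V_2$, the projection onto $V_2$ yields a short exact sequence $0\to(A\cap V_1)/(B\cap V_1)\to A/B\to p_2(A)/p_2(B)\to 0$ whose extremes are subquotients of $V_1$ and $V_2$ respectively, so simplicity forces one of them to equal $\sigma$. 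This elementary devissage is the only mildly non-formal step; everything else is formal bookkeeping with scalar extension and the Galois action from Theorem~\ref{thm:DT}.
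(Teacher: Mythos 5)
Your proof is correct, and it is worth comparing it with the paper's own argument because the two differ in how they treat cuspidality.

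For the supercuspidal equivalence you take essentially the same route as the paper: the forward direction is the exactness of scalar extension composed with $\Ind_P^G$, the reverse direction invokes Remark~\ref{re:scKn}, the fact that $\pi'$ is $\pi$-isotypic as a $C[G]$-module, and the observation that $C'\otimes_C V_{P,n}$ restricts to a direct sum of copies of $V_{P,n}$. The one genuine service you render is that you \emph{state and prove} the devissage lemma (a simple $C[G]$-subquotient of $\bigoplus_I V$ is already a subquotient of $V$) that the paper uses silently; your reduction to a finitely supported generator and then a two-summand diagram chase via the projection $p_2$ is exactly right, and making this explicit is an improvement in rigor.

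For cuspidality the paper takes a detour: it repeats the supercuspidal argument with ``subquotient'' replaced by ``quotient'' to show that right cuspidality transfers, and then converts to cuspidality using Proposition~\ref{pro:cusp}(iii), which requires knowing that left and right cuspidality agree for admissible irreducibles. You instead argue directly with the Jacquet functor $(-)_N$: it is exact, compatible with scalar extension, and commutes with direct sums, so $\pi_N=0$ transfers in both directions. This is cleaner and does not depend on the left-cuspidal/right-cuspidal equivalence. Two small remarks. First, your preliminary reduction to $C'=C^a$ is slightly imprecise as stated (an absolutely irreducible subquotient $\pi_0$ of $C'^a\otimes_{C'}\pi'$ lives over $C'^a$, so it is not literally a subquotient of $C^a\otimes_C\pi$; the correct statement is that it is the base change to $C'^a$ of such a subquotient), but this is harmless and the argument goes through. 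Second, the reduction to $C^a$ is in fact unnecessary for the cuspidal part: one direction works for arbitrary $C'/C$ because $\pi'_N$ is a subquotient of $C'\otimes_C\pi_N$, and for the converse one can argue directly that $\pi'_N=0$ forces $\pi_N=0$, since $\pi'$ as a $C[G]$-module is $\pi$-isotypic and $(-)_N$ commutes with direct sums, so $\pi'_N$ is $\pi_N$-isotypic. The Galois-orbit/Jordan--H\"older argument you use is valid but heavier than needed for this part.
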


\begin{proof}   a) Let $P=MN$ be a parabolic subgroup of $G$ and $\rho\in \Mod_C(M)$.  If $\pi $ is a  subquotient of $\Ind_P^G \rho$
 then 
$\pi'$ is a  subquotient of $\Ind_P^G (C'\otimes _C\rho)$ as $\Ind_P^G$ commutes with scalar extension. 
Therefore $\pi'$  supercuspidal implies $\pi$ supercuspidal.

b) Conversely, if $\pi'$ is a  subquotient of $\Ind_P^G(C'[K_n\backslash M)]$ (Remark \ref{re:scKn}) then $\pi$ is a subquotient of $\Ind_P^G(C[K_n\backslash M])$ because, as $C$-representations,  
$\pi'$ is $\pi$-isotypic and $\Ind_P^G(C'[K_n\backslash M])$ is isomorphic to a  direct sum of $\Ind_P^G(C[K_n\backslash M])$. Therefore  $\pi$ supercuspidal implies $\pi'$ supercuspidal. 

c) Similarly for  quotient and right cuspidal, replacing subquotient and supercuspidal in a) and b). Hence  $\pi$  is right cuspidal if and only if $\pi'$ is.  As    right cuspidal = cuspidal for an irreducible representation (Proposition \ref{pro:cusp}), $\pi$  is  cuspidal if and only if $\pi'$ is. \end{proof}

   The next proposition is used  in the  proof of  Proposition \ref{prop:d}.
 
\begin{proposition}\label{induction} 
The scalar extension and the parabolic induction respect  finite length.
\end{proposition}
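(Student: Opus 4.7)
\emph{Plan.} The proposition asserts two independent preservations of finite length — under scalar extension, and under parabolic induction — which I address in turn.

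\emph{Scalar extension.} Since $V\mapsto C'\otimes_C V$ is exact, it suffices to prove that for $\pi\in\Irr_C(G)$ and any field extension $C'/C$, the $C'[G]$-module $C'\otimes_C\pi$ has finite length. By Proposition~\ref{pro:cusp}(iv), $\pi$ is admissible; choose an open compact subgroup $J\subset G$ of pro-order invertible in $C$ with $\pi^J\neq 0$. The injection $\End_{C[G]}(\pi)\hookrightarrow\End_C(\pi^J)$ shows $\dim_C\End_{C[G]}(\pi)<\infty$, so Theorem~\ref{thm:DT} applies: $C^a\otimes_C\pi$ has finite length, with absolutely simple composition factors. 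Now pick an algebraically closed field $\Omega$ containing both $C'$ and $C^a$. Faithful flatness of $\Omega/C'$ gives an injection
\[
C'\otimes_C\pi\ \hookrightarrow\ \Omega\otimes_C\pi\ \cong\ \Omega\otimes_{C^a}(C^a\otimes_C\pi),
\]
and the right-hand side still has finite length, because each absolutely simple composition factor of $C^a\otimes_C\pi$ remains simple after base change to $\Omega$ (the very definition of absolute simplicity). A submodule of a finite-length module has finite length, so $C'\otimes_C\pi$ does too.

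\emph{Parabolic induction.} Exactness of $\Ind_P^G$ reduces the claim to $\rho\in\Irr_C(M)$, and by Proposition~\ref{pro:cusp}(iv) such a $\rho$ is admissible. The induced representation $\pi:=\Ind_P^G\rho$ is again admissible: compactness of $G/P$ makes the double coset space $J\backslash G/P$ finite for any open compact $J\subset G$, and evaluation at a system of representatives $g_1,\ldots,g_r$ identifies $\pi^J$ with $\bigoplus_{i=1}^r\rho^{g_i^{-1}Jg_i\cap P}$, each summand finite-dimensional by admissibility of $\rho$. Finite length of $\pi$ then follows from the classical theorem that parabolic induction preserves finite length of smooth representations of a reductive $p$-adic group — a statement established in the $c\neq p$ setting in \cite{V96}, via the noetherianity of $\Mod_C(G)$ (equivalently, the fact that a finitely generated admissible smooth $C$-representation of $G$ has finite length).

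\emph{Main obstacle.} The scalar-extension half is essentially formal once one combines the Decomposition Theorem of Section~\ref{s:1} with admissibility of irreducible smooth $C$-representations of $G$. The real content lies in the parabolic-induction half: passing from admissibility of $\Ind_P^G\rho$ to its finite length relies on deeper finiteness properties of $\Mod_C(G)$ — namely the noetherian property, a nontrivial input from the general theory of smooth representations of reductive $p$-adic groups in the modular setting.
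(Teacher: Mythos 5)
Your treatment of scalar extension is in the same spirit as what the paper does, namely invoking the Decomposition Theorem to get finite length over $C^a$ and then descending, but the paper simply cites \cite{HV19}~Corollary~I.2 for the bound $\mathrm{length}_{C'}(C'\otimes_C\pi)\leq\mathrm{length}_{C^a}(C^a\otimes_C\pi)$, whereas you try to reprove it. There is a small but real gap in your final step: you embed $C'\otimes_C\pi$ into $\Omega\otimes_C\pi$ and then invoke ``a submodule of a finite-length module has finite length,'' but $\Omega\otimes_C\pi$ has finite length as an $\Omega[G]$-module, not as a $C'[G]$-module, so this principle does not apply as stated (the simplest illustration: a finite-dimensional $\Omega$-vector space has infinite $C'$-length when $[\Omega:C']=\infty$). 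The correct descent is that any strict chain of $C'[G]$-submodules of $C'\otimes_C\pi$ base-changes, by faithful flatness of $\Omega/C'$, to a strict chain of $\Omega[G]$-submodules of $\Omega\otimes_C\pi$, which bounds the $C'[G]$-length by the (finite) $\Omega[G]$-length. This is fixable, but as written the inference is wrong.

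Your treatment of parabolic induction takes a genuinely different route from the paper, and the difference matters. You establish admissibility of $\Ind_P^G\rho$ via Mackey theory (a correct and useful observation, unused in the paper) and then appeal to the statement ``finitely generated $+$ admissible $\Rightarrow$ finite length,'' citing \cite{V96} and noetherianity of $\Mod_C(G)$, for general $C$. But \cite{V96}~II.5.13, which is the relevant result, is stated for $C$ \emph{algebraically closed}; the paper is explicit about this, and that is precisely why the paper's proof first extends scalars to $C^a$, applies the algebraically-closed result to conclude that $C^a\otimes_C\ind_P^G(\sigma)\simeq\ind_P^G(C^a\otimes_C\sigma)$ has finite $C^a[G]$-length, and only then descends (``a fortiori'') to $C$ via faithful flatness. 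Your citation to a general-$C$ version of noetherianity or of the finite-length theorem is not available in \cite{V96}; you would still need the same scalar-extension-and-descent step that you (partially) performed in the first half and that the paper carries out. In short: the scalar extension half is essentially the paper's route with one misphrased descent; the parabolic induction half rests on a citation that overstates what \cite{V96} provides and thereby silently bypasses the descent to $C^a$ that is the real content of the paper's argument.
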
 
\begin{proof} Let  $C'/C$ be an extension and $\pi\in \Irr_C(G)$.  Let $P=MN$ a parabolic subgroup of $G$ and $\sigma\in \Irr_C(M)$.  We show that the  lengths of $C'\otimes_C \pi$ and of $\ind_P^G\sigma$ are finite.
The scalar extension and the parabolic induction being exact, this implies the proposition.

a)    The dimension of $ \End_{C[G]}\pi $ is finite as $\pi$ is admissible (Proposition \ref{pro:cusp} (iv)). The length of $C'\otimes_C \pi$ is finite bounded by the length of $C^a\otimes_C\pi$ by \cite{HV19} Corollary I.2.
  
b) It is already known  that the length of $\ind_P^G\sigma$ is  finite when $C$ is algebraically closed (\cite{V96} 5.13).   The parabolic induction commutes with scalar extension, and by a) $C^a\otimes_C \sigma$ has finite length. 
Hence the length of
$\ind_P^G(C^a\otimes_C \sigma)\simeq C^a\otimes_C \ind_P^G(\sigma)$ is  finite. A fortiori  the length of $\ind_P^G(\sigma)$ is  finite. 
   \end{proof}

\subsection{Cuspidal type} \label{ss:3.1}

Compared to the   case  of a general locally profinite group,   $Z$-compact $C$-types  in $G$ (Definition \ref{def:type}) and irreducible smooth $C$-representations of $G$ have peculiar features:

\begin{proposition}  \label{prop:cusptype} (i) A $Z$-compact $C$-type in $G$ has finite dimension.

(ii) A $Z$-compact $C^a$-type in $G$ is defined over a finite extension of $C$.

(iii) If  $(J,V)$ is a $Z$-compact $C$-type in $G$, the  representation
$\ind_J^G V\in \Irr_C(G)$  is cuspidal, and $\ind_J^G V= \Ind_J^G V$.

(iv) A $Z$-compact $C^a$-type $(J,V) $ in $G$ is absolutely irreducible, and so
is $\ind_J^G V$. In particular $(J,V)$ satisfies intertwining.
\end{proposition}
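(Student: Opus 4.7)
For parts \textup{(i)} and \textup{(ii)}, I would apply the machinery of Section \ref{ss:2.2} to the subgroup $J$ in place of the ambient group: the centre $Z$ of $G$ lies in $J$ (by $Z$-compactness), is central in $J$, and is almost finitely generated as recorded at the start of Section \ref{s:3}, while $J/Z$ is compact. Since $V$ is irreducible and hence finitely generated, Proposition \ref{prop:finitedim} forces $\dim_C V<\infty$, giving \textup{(i)}. Passing to $C^a$-coefficients, Corollary \ref{cor:finitedim} applied to the same $J$ yields that $V$ is absolutely irreducible, of finite $C^a$-dimension, and defined over a finite extension of $C$, which is \textup{(ii)}.

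For \textup{(iii)}, the representation $V$ is admissible by \textup{(i)}. The representation $\pi=\ind_J^G V$ is irreducible by the very definition of a $C$-type, and Proposition \ref{pro:cusp}\,(iv), which asserts that every irreducible smooth $C$-representation of $G$ is admissible, gives that $\pi$ is admissible. With both $V$ and $\pi$ admissible, the equivalence in Remark \ref{re:compactind}\,(b) yields $\ind_J^G V=\Ind_J^G V$; moreover the concluding sentence of that remark, valid since $Z\subset J$ with $J/Z$ compact, ensures that $\pi$ is $Z$-compact. Finally, Proposition \ref{pro:cusp}\,(ii) converts $Z$-compactness into cuspidality, completing \textup{(iii)}.

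For \textup{(iv)}, absolute irreducibility of $V$ is already contained in \textup{(ii)}, so $\End_{C^a[J]}(V)=C^a$. By the definition of $C^a$-type, $\pi=\ind_J^G V$ belongs to $\Irr_{C^a}(G)$, and Proposition \ref{pro:cusp}\,(v) gives its absolute irreducibility, so $\End_{C^a[G]}(\pi)=C^a$ as well. The embedding \eqref{eq:compactind} is therefore a nonzero map between two one-dimensional $C^a$-spaces, hence an isomorphism, which is precisely the intertwining condition for $(J,V)$. The only genuinely substantive step in the plan is the admissibility of $\pi$ invoked in \textup{(iii)}: rather than attempting a direct bare-hands analysis of $(\ind_J^G V)^K$, I would simply rely on the nontrivial fact recorded in Proposition \ref{pro:cusp}\,(iv) that all irreducible smooth $C$-representations of a connected reductive $p$-adic group are admissible.
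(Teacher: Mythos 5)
Your proposal is correct and follows essentially the same route as the paper: Proposition~\ref{prop:finitedim} for (i), Corollary~\ref{cor:finitedim} for (ii), admissibility of $\ind_J^G V$ (via Proposition~\ref{pro:cusp}\,(iv)) combined with Remark~\ref{re:compactind}\,(b) and Proposition~\ref{pro:cusp}\,(ii) for (iii), and the one-dimensional commutant argument for (iv). The only cosmetic difference is in (iii), where the paper deduces $Z$-compactness from Remark~\ref{re:contra} (using that a coefficient of $V$ extended by zero is a $Z$-compactly supported coefficient of $\ind_J^G V$) and then separately gets $\ind_J^G V=\Ind_J^G V$ from Remark~\ref{re:compactind}\,(b), whereas you extract both conclusions directly from Remark~\ref{re:compactind}\,(b); both routes hinge on the same admissibility fact.
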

\begin{proof}  (i)  Proposition \ref{prop:finitedim}.

(ii) Corollary \ref{cor:finitedim}.

 (iii) Any  coefficient of $V $ extended by $0$ is a coefficient of $\ind_J^GV $ with $Z$-compact support. By Remark \ref{re:contra},  $\ind_J^GV\in \Irr_C(G)$ is $Z$-compact. By  Proposition \ref{pro:cusp} (ii),  $\ind_J^GV$  is cuspidal. By Remark \ref {re:compactind} b), $\ind_J^GV\simeq \Ind_J^GV$.

  (iv)  When   $(J,V)$ is a $Z$-compact $C^a$-type   in $G$, then $V$ is irreducible  of finite $C^a$-dimension  (Proposition \ref{prop:finitedim}), so
$V$ is absolutely  irreducible; the irreducible representation $\ind_J^G V$ is admissible  (Proposition \ref{pro:cusp} (iv)) so
 is absolutely irreducible.  The commutant of an absolutely irreducible $C^a$-representation is $C^a$.
\end{proof}

\begin{definition} \label{def:ctype} A $Z$-compact $C$-type in $G$ (Definition  \ref{def:type})    is  called a cuspidal $C$-type in $G$. 
\end{definition}
The definition is motivated by Proposition \ref{prop:cusptype} (iv). 
A cuspidal $C$-type in $G$ satisfies intertwining when $C$ is  algebraically closed (Proposition \ref{prop:cusptype} (iii)), but not in general.  

\begin{example} Take $C=\mathbb R$ and a quaternion division algebra $D$ over $\mathbb Q_3$. We construct an example of $\mathbb R$-type $(J,\lambda)$ in $D^*$ which does not satisfy intertwining.

Let $\mathbb H$ be the Hamilton quaternion algebra and $U$ the quaternion subgroup of order $8$, generated by the order $4$ elements $i,j, ij=-ji$. The left multiplication by $U$ on $\mathbb H$ gives an irreducible 
$\mathbb R$-representation $\rho$ of $U$. If $T$ is the subgroup of $U$ generated by $i$, and $\tau$ the $2$-dimensional representation of $T$ on $\mathbb R [i]$ then $\rho =\ind_T^U \tau$. The commutant of $\tau$ is  $\mathbb R [i]$ and the commutant of $\rho$ is $\mathbb H$. 

We show that $U$ is a quotient of $D^*$; then we take the  inverse image  $J$ of  $T  $ in $D^* $ and the inflation $\lambda$ of $\tau$ to $J$. The type $(J,\lambda)$ in $D^*$ does not satisfy intertwinining.
 The quotient $U_D/U_D^1$ of the unit group by its first congruence subgroup is cyclic of order $8$; choose a generator $\zeta$. Let   $\omega$  be the image of a uniformizer of $D$ in $D^*/U_D^1$. The  group morphism $D^*/U_D^1\to U$  sending $\zeta$ to $i$ and $\omega$ to $ j$ is surjective.
\end{example}

\begin{proposition}\label{prop:cusptype2}  Let $J\subset G$ be an open subgroup containing $Z$ with $J/Z$ compact. Then the $G$-normalizer $J'$ of $J $ 
is open in $G $ and $ J'/Z$ is compact.
\end{proposition}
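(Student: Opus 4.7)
The openness of $J'$ is immediate: since $J$ is open, $J' \supset J$ is a union of $J$-cosets, hence open. (Moreover $J'$ is closed in $G$, as the normalizer of the closed subgroup $J$ in a topological group.) For the compactness of $J'/Z$, the plan is to reduce to the standard fact from Bruhat--Tits theory that for any compact open subgroup $K$ of the reductive $p$-adic group $G$, the quotient $N_G(K)/Z$ is compact.

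To that end, set $K = J \cap G^0$. Then $K$ is open in $G$ as an intersection of open subgroups, and I claim that $K$ is compact. Since $Z \subset J$, we have $K \cap Z = G^0 \cap Z = Z^0$ and $KZ = J \cap G^0 Z$; since $G^0 Z$ is open, hence closed, in $G$, the subgroup $KZ$ is closed in $J$, so $KZ/Z$ is a closed subgroup of the compact group $J/Z$, and is therefore compact. The short exact sequence
$$1 \to Z^0 \to K \to KZ/Z \to 1,$$
with compact kernel and compact quotient, then forces $K$ to be compact.

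Next, $J'$ normalizes $K$: it normalizes $J$ by definition, and it normalizes $G^0$ because the latter is a normal subgroup of $G$; hence it normalizes $J \cap G^0 = K$, so $J' \subset N_G(K)$. Invoking the Bruhat--Tits fact quoted above (whose justification is that $K$ fixes pointwise a nonempty bounded subset of the reduced Bruhat--Tits building of $G$, which $N_G(K)$ permutes, and the stabilizer in $G$ of such a subset is compact modulo $Z$), we conclude that $N_G(K)/Z$ is compact. Since $J'$ is closed in $G$ and contained in $N_G(K)$, the image $J'/Z$ is a closed subgroup of the compact group $N_G(K)/Z$, and is therefore compact. The essential ingredient is the Bruhat--Tits input on normalizers of compact open subgroups; the other steps are routine topological manipulations valid in any locally profinite group.
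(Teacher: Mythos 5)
Your proof is correct and takes a genuinely different route from the paper's. You reduce to $K = J \cap G^0$, a compact open subgroup of $G$: the verification that $K$ is compact via the exact sequence $1 \to Z^0 \to K \to KZ/Z \to 1$, with both ends compact, is sound, and so is the observation that $J'$ normalizes $K$ and hence sits inside $N_G(K)$. You then invoke the standard Bruhat--Tits fact that $N_G(K)/Z$ is compact for $K$ compact open. The paper instead works directly with the image $f(J)\subset G^{ad}$ of $J$ under $f\colon G\to G^{ad}$, which is compact but \emph{not} necessarily open in $G^{ad}$ (a subtlety the paper flags with the $SL_2/PGL_2$ example in residual characteristic $2$); because of this it must prove a dedicated lemma (Lemma~\ref{le:proof}), using the Landvogt compactification, that the fixed-point set $\mathcal B^{f(J)}$ is compact and non-empty, and then argues as you do via the Bruhat--Tits fixed-point theorem applied to the normalizer. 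Your reduction to a genuine compact open subgroup of $G$ is a real simplification: it avoids the ``$f(J)$ may not be open'' issue entirely and turns the geometric input into a citable standard statement. The one place your sketch glosses over the substance is the parenthetical justification: for $N_G(K)$ to permute a bounded set fixed pointwise by $K$, the natural choice is the \emph{full} fixed-point set $\mathcal B^K$, and its boundedness is exactly the non-trivial point that the paper's Lemma~\ref{le:proof} establishes (an open $K$ cannot lie in a proper parabolic, hence cannot fix a point at infinity, so the closed convex set $\mathcal B^K$ is bounded; the paper reaches the same conclusion via the Landvogt compactification). With a precise reference for that boundedness in hand, your argument is complete.
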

 
As a consequence, if $(J,V)$ is a cuspidal $C$-type in $G$ 
then the pair $(J', \ind_J^{J' }V)$ is a cuspidal $C$-type in $G$.
Applying the procedure of \S \ref{ss:2.5} and  \S \ref{ss:2.6}, we immediately get:

\begin{theorem}\label{thm:cusptype} Let $\mathfrak X^a$ be a set of cuspidal $C^a$-types $(J,V^a)$ in $G$, satisfying unicity, $\Aut_C(C^a)$-stability
and $\ Aut_C(C^a)$-unicity. 

 Let 
 $\mathfrak Y^a$  denote  the set  of cuspidal $C^a$-types in $G$ of the form ($J', \ind_J^{J' }V^a)$, where $(J,V)\in \mathfrak X^a$ and $J' $ is the $G$-normalizer of $J$, and let  $\mathfrak Y$ denote  the set of cuspidal $C$-types $(J',V')$ in $G$ obtained by applying the decomposition theorem \ref{thm:DT}
to $\mathfrak Y^a$,

Let $\mathcal Z^a$ denote the set of isomorphism classes of $\ind_J^G V^a$ for $(J,V^a) \in \mathfrak X^a$ and  let $\mathcal Z$  denote the set of isomorphism classes of $C$-representations of $G$ obtained by applying
the decomposition theorem \ref{thm:DT} to $\mathcal Z^a$.

Then 

(i) $\mathcal Z^a$ is the set of isomorphism classes of $\ind_{J'}^G V'^a$ for $(J',V'^a) \in \mathfrak Y^a$ and  $\mathcal Z$ is the set of isomorphism classes of $\ind_{J'}^G V'$
for $(J',V') \in  \mathfrak Y$. 

(ii) The set  $\mathfrak Y$ satisfies unicity and intertwining.

(iii) If moreover $\mathfrak X^a$ satisfies $\Aut(C^a)$-stability and $\Aut(C^a)$-unicity, then $\mathfrak Y$ satisfies $\Aut(C)$-stability
and $\Aut(C)$-unicity.
\end{theorem}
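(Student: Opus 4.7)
The overall strategy is to combine the two main results already established in the paper: Proposition \ref{prop:II.55}, which shows that inducing to the normalizer preserves the key properties (intertwining, unicity, stability, and upgrades unicity$+$stability to stability$+$unicity), and Proposition \ref{prop:CatoC}, which, given a set of $C^a$-types with the right hypotheses, produces via Theorem \ref{thm:DT} a corresponding set of $C$-types satisfying intertwining and unicity. The theorem is essentially the concatenation of these, specialised to cuspidal types in $G=\underline G(F)$.

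First I would verify that $\mathfrak Y^a$ really consists of cuspidal $C^a$-types. By Proposition \ref{prop:cusptype2} the $G$-normalizer $J'$ of $J$ is open with $J'/Z$ compact; by transitivity of compact induction, $\ind_{J'}^G\ind_J^{J'}V^a\simeq \ind_J^G V^a$ is irreducible, and by Proposition \ref{prop:cusptype}(iv) every $Z$-compact $C^a$-type automatically satisfies intertwining. Hence $\mathfrak Y^a$ is a set of cuspidal $C^a$-types, and the first half of (i), $\mathcal Z^a=\{\ind_{J'}^G V'^a \mid (J',V'^a)\in\mathfrak Y^a\}$, is immediate from transitivity. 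Applying Proposition \ref{prop:II.55} to $\mathfrak X^a$ with $H=\Aut_C(C^a)$ yields that $\mathfrak Y^a$ satisfies intertwining (also automatic here), unicity, $\Aut_C(C^a)$-stability, and (combining unicity and $\Aut_C(C^a)$-stability of $\mathfrak X^a$) $\Aut_C(C^a)$-unicity.

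Next I would pass from $\mathfrak Y^a$ to $\mathfrak Y$ using the machinery of \S\ref{ss:2.6}. By Proposition \ref{prop:cusptype}(i)(ii) each $V'^a$ in a cuspidal $C^a$-type has finite dimension and is defined over a finite extension of $C$, so hypotheses (a) and (b) of \S\ref{ss:2.6} are met by $\mathfrak Y^a$; Lemma \ref{le:CatoC} then produces, for each $(J',V'^a)\in\mathfrak Y^a$, a well-defined pair $(J',V')$ which is a cuspidal $C$-type in $G$ satisfying intertwining. This gives the second half of (i): for $(J',V')\in\mathfrak Y$ the induced representation $\ind_{J'}^G V'$ is irreducible, and scalar extension to $C^a$ commutes with compact induction, so $V'^a$ being a subquotient of $C^a\otimes_C V'$ forces $\ind_{J'}^G V'^a$ to be a subquotient of $C^a\otimes_C\ind_{J'}^G V'$; since $\ind_{J'}^G V'^a\in\mathcal Z^a$, Theorem \ref{thm:DT} identifies $\ind_{J'}^G V'$ with the corresponding element of $\mathcal Z$. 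Proposition \ref{prop:CatoC} directly yields (ii), and yields (iii) given the upgraded hypotheses on $\mathfrak X^a$ (via Proposition \ref{prop:II.55} applied with $H=\Aut(C^a)$, which first transfers $\Aut(C^a)$-stability and $\Aut(C^a)$-unicity from $\mathfrak X^a$ to $\mathfrak Y^a$).

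The main subtlety is checking that the two constructions interlock cleanly, i.e.\ that the decomposition-theoretic pairs $(J',V')$ one writes down for $\mathfrak Y$ are genuinely indexed in the same way as $\mathfrak Y^a$ modulo $\Aut_C(C^a)$, and that $\mathcal Z$ exactly matches inductions from $\mathfrak Y$. This is where Remark \ref{re:compactind3} is essential: under the intertwining property satisfied by $\mathfrak Y$, compact induction gives a lattice isomorphism between subrepresentations of $C^a\otimes_C V'$ and of $C^a\otimes_C \ind_{J'}^G V'$. This compatibility ensures that the orbit of irreducible subquotients of $C^a\otimes_C V'$ matches the $\Aut_C(C^a)$-orbit of subquotients of $C^a\otimes_C \ind_{J'}^G V'$, so the parametrisation by $\mathfrak Y^a / \Aut_C(C^a)$ on one side and $\mathcal Z^a/\Aut_C(C^a)$ on the other are consistent, completing (i) and closing the argument.
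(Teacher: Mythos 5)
Your proof is correct and takes exactly the route the paper intends: the paper presents Theorem~\ref{thm:cusptype} as an immediate consequence of Proposition~\ref{prop:cusptype2}, Proposition~\ref{prop:II.55}, and the procedure of \S\ref{ss:2.6} (Lemma~\ref{le:CatoC} and Proposition~\ref{prop:CatoC}), which is precisely the chain you articulate. Your observations that intertwining holds automatically by Proposition~\ref{prop:cusptype}(iv), that hypotheses (a) and (b) of \S\ref{ss:2.6} are met by Corollary~\ref{cor:finitedim}, and that the two halves of (i) follow from transitivity of induction and compatibility of compact induction with scalar extension, fill in the details the paper leaves implicit; your final paragraph on the lattice compatibility via Remark~\ref{re:compactind3} is sound, though it is already covered by the remark at the end of \S\ref{ss:2.6} describing $\mathcal Z^a$ in terms of $\mathcal Z$.
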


The key to the proof of Proposition \ref{prop:cusptype2} is the next lemma. Let $\underline G^{ad}$  be the adjoint group of $\underline G$,   $f:  G\to G^{ad}=\underline G^{ad}(F)$ the natural group homomorphism  
with kernel $Z$ and   $\mathcal B = \mathcal B(G^{ad}) $ the Bruhat-Tits building of $G^{ad}$.
 Let  $J\subset G$ be an open subgroup containing $Z$ such  that $J/Z$ is compact. The $G$-normalizer $J'$ of $J$ which contains $J$
is open, but  $f(J) \subset G^{ad}$ might not be open  (when the characteristic of $ F$ is $2$,   the image of $SL(2,F)$ in  $PGL(2,F)$ is not  open because $1$ is not open in $F^*/(F^*)^2$).

\begin{lemma}\label{le:proof} The subset $ \mathcal B^ {f(J)} \subset \mathcal B $ of fixed points of $f(J) $ is  compact and non-empty.
\end{lemma}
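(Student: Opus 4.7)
The plan is as follows. First, since $Z \subset \ker f$ and $Z \subset J$, the image $f(J)$ is the continuous image of the compact quotient $J/Z$, so $f(J)$ is a compact subgroup of $G^{ad}$. As $\mathcal{B}$ is a complete CAT(0) space on which $G^{ad}$ acts by isometries, the Bruhat--Tits fixed point theorem immediately yields $\mathcal{B}^{f(J)} \neq \emptyset$. It is also clear that $\mathcal{B}^{f(J)}$ is closed (by continuity of the action) and convex (the fixed point set of a group of isometries of a CAT(0) space is convex), so it remains only to prove that it is bounded.

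For boundedness I would argue by contradiction. Suppose $\mathcal{B}^{f(J)}$ is unbounded, and fix a basepoint $x_0 \in \mathcal{B}^{f(J)}$. Using convexity of $\mathcal{B}^{f(J)}$ together with the properness of the CAT(0) space $\mathcal{B}$, a standard limiting argument applied to geodesic segments from $x_0$ to fixed points escaping to infinity produces a geodesic ray $r$ issued from $x_0$ and lying entirely in $\mathcal{B}^{f(J)}$. Its endpoint $\xi \in \partial_\infty \mathcal{B}$ is then fixed by $f(J)$. By the general theory of the spherical building at infinity of a Euclidean building, the stabilizer of $\xi$ in $G^{ad}$ is a proper parabolic subgroup $\underline{P}^{ad}(F) \subsetneq G^{ad}$; pulling back under $f$ yields a proper parabolic subgroup $\underline{P} \subsetneq \underline{G}$ (necessarily containing $\underline{Z}$) with $J \subset \underline{P}(F)$.

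The contradiction is then topological: $\underline{P}$ is a Zariski-closed subgroup of $\underline{G}$ of strictly smaller dimension, so $\underline{P}(F)$ is nowhere dense in $G$ for the $F$-analytic topology and cannot contain the open subgroup $J$. Hence $\mathcal{B}^{f(J)}$ must be bounded, and being closed in a proper CAT(0) space it is compact. The main obstacle is really an appeal to external Bruhat--Tits machinery, namely the identification of the stabilizer of a point at infinity with a proper parabolic subgroup of $G^{ad}$, together with the fact that $\mathcal{B}$ is a proper CAT(0) space in which unbounded closed convex subsets contain geodesic rays; both are standard but must be invoked cleanly. Everything else reduces to the openness of $J$ and the codimension of proper parabolics.
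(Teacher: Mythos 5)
Your proof is correct, and it shares the two essential ingredients with the paper's argument: the Bruhat--Tits fixed point theorem for non-emptiness, and the observation that the open subgroup $J$ cannot lie in a proper parabolic subgroup $\underline P(F)$ because the latter is an $F$-analytic submanifold of strictly smaller dimension. The difference is in how one passes from that observation to compactness of the fixed-point set. The paper works with the Landvogt polyhedral compactification $\overline{\mathcal B}$ of $\mathcal B$: the fixed-point set $\overline{\mathcal B}^{f(J)}$ is closed, hence compact, and the cited results of Landvogt show that a boundary fixed point forces $f(J)$ into a proper parabolic, so that $\overline{\mathcal B}^{f(J)}=\mathcal B^{f(J)}$. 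You instead work with the CAT(0) visual boundary $\partial_\infty\mathcal B$: if $\mathcal B^{f(J)}$ were unbounded, properness and convexity would produce an $f(J)$-fixed geodesic ray, hence an $f(J)$-fixed point $\xi\in\partial_\infty\mathcal B$, whose $G^{ad}$-stabilizer is a proper parabolic. Both routes lean on external building-theoretic input (identifying stabilizers at infinity with parabolics), but yours substitutes standard CAT(0)/Arzel\`a--Ascoli arguments for Landvogt's specific lemmas; that is a legitimate trade and arguably more self-contained for a reader comfortable with CAT(0) geometry. One small point to make explicit in a final write-up: that the ray $r$ lies in $\mathcal B^{f(J)}$ (by closedness and convexity) already forces $g\cdot r(t)=r(t)$ pointwise for $g\in f(J)$, so the endpoint $\xi$ is fixed; and that $f^{-1}(\underline P^{ad}(F))=\underline P(F)$ for the parabolic $\underline P$ of $\underline G$ corresponding to $\underline P^{ad}$ under the central isogeny $\underline G\to\underline G^{ad}$.
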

\begin{proof}  The subgroup $f(J) \subset G^{ad}$ being  compact,  the set $ \mathcal B^{f(J)}$ is not empty because
any orbit of  $f(J)$ in $ \mathcal B$ is bounded hence contains a point fixed by its $G^{ad}$-stabilizer (\cite{BTI} 3.2.4). Let $\overline{\mathcal B}$ be the Landvogt compactification of $\mathcal B$. The  action of $f(J)$ on $\overline{\mathcal B}$ is continuous (\cite{L96} 14.31)
so the subset $\overline{\mathcal B} ^{f(J)}\subset \overline{\mathcal B}$ of fixed points of $f(J)$ is  closed, hence compact.  The open subgroup $J \subset G$
  is not contained in any proper parabolic subgroup of $G $ (which is $F$-analytic of dimension less than the dimension of $G$ and $J$). It follows that  $f(J)$ is not contained
in any proper parabolic subgroup of $G^{ad}$. This implies  $\overline{\mathcal B} ^{f(J)}= \mathcal B^{f(J)}$ (\cite{L96} (14.4 i),(12.4), (12.3), (2.4)  and the notations (0.20) and (0.15)). 
 Hence the lemma.
  \end{proof}

We finally prove Proposition \ref{prop:cusptype2}. It is clear that $J'$ contains $Z$ and is open as it contains $ J$. We prove that $J'$ is $Z$-compact. The  $G^{ad}$-normalizer  $K$ of  $f(J)$  is a closed subgroup of $G^{ad}$ stabilizing  $\mathcal B^ {f(J)}$, and  $\mathcal B^ {f(J)}$  is a  compact non-empty subset of $  \mathcal B$ by the lemma.
A non-empty compact subset of $\mathcal B$ is bounded in the metric space  $\mathcal B$ (\cite{BTI}, 2.5.1);  there exists $x \in \mathcal B^ {f(J)}$ fixed by  $K $   (\cite{BTI}, 3.2.4). By (\cite{BTI}, 3.3.1), the $G^{ad}$-stabilizer  
of $x$  is open and compact; as it contains $K$, we deduce that $K $  is compact. Hence   $f^{-1}(K)$  is $Z$-compact as well as  the closed subgroup $J' \subset  f^{-1}(K)$.

\subsection{Fields of the same characteristic} \label{ss:3.2}

For some groups $G$, a good set of types as in Theorem \ref{thm:cusptype} has been produced only for  $\mathbb C$.
  It is likely that all the arguments can be adapted to obtain such a set of types for all algebraically closed fields  of characteristic $0$, 
but that needs verification. Let $C, C'$ be two fields of the same characteristic $c$ and algebraic closure $C^a, C'^a$. We write $C_0 $ for the prime subfield of $C $ and $C_0^a$ for its algebraic closure in $C^a$.
Here we show directly, using twists by unramified characters, that a good set of $C^a$-types in $G$ gives rise to a good set of $C'^a$-types in $G$.

%Let  $(J,V)$ be a $Z$-compact $C$-type (Definition \ref{def:cusptype}).  If $Z$ acts in $V$ via a character of finite order, then the image of $J/Z$ is finite, and $V$ is defined over a finite cyclotomicextension $C' \subset C $ of $C_0 $. In that case $\ind_J^G V$ is also defined over $C'$. More generally:

\bigskip   The centre $Z$ of $G$ acts on 
any irreducible smooth $C^a$-representation $\pi^a$ of $G$  by a  $C^a$-character $\omega^a $, called its central character,  because $\pi^a$ is admissible (Proposition \ref{pro:cusp}). 

\begin{theorem}\label{thm:c}  Let $\pi^a$ be an irreducible cuspidal $C^a$-representation of $G$. 
Assume that the central character  $\omega^a$ of $\pi^a$ has finite order. Then $\pi^a$ is defined over a finite extension of $C_0$ in $C^a$.
 \end{theorem}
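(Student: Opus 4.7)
Since $\pi^a$ is cuspidal, Proposition \ref{pro:cusp}(iv) gives that it is admissible. Choose an open pro-$p$ subgroup $K \subset G$ (its pro-order is invertible in $C^a$ since $c \neq p$) small enough that $V := (\pi^a)^K$ is nonzero and $\omega^a$ is trivial on $K \cap Z$. Then $V$ is a finite-dimensional $C^a$-vector space and, by Theorem \ref{thm:invariant}, a simple module over the Hecke algebra $H := H_{C^a}(G, K)$; by Corollary \ref{cor:invariant2}, $\pi^a$ is defined over a subfield $E \subset C^a$ if and only if the $H$-module $V$ is defined over $E$. Let $E_0 \subset C^a$ be the subfield generated over the prime field $C_0$ by the values of $\omega^a$; since $\omega^a$ has finite order, $E_0/C_0$ is a finite (cyclotomic) extension.

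For any $\sigma \in \Aut_{E_0}(C^a)$, the twist $\sigma(\pi^a)$ is again cuspidal with central character $\omega^a$. My strategy is to show that the $\Aut_{E_0}(C^a)$-orbit of the isomorphism class of $\pi^a$ is finite. Given this, the stabilizer of $\pi^a$ in $\Aut_{E_0}(C^a)$ is open of finite index in the Krull topology, hence fixes pointwise a finite subextension $E'$ of $C^a/E_0$; by Galois descent for the finite-dimensional absolutely simple $H$-module $V$ (the only obstruction is a Brauer class in $\operatorname{Br}(E')$, which is torsion and hence killed by a further finite extension), $V$ descends to a finite extension of $E'$, which yields the theorem.

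The main obstacle is therefore showing that the set of isomorphism classes of irreducible cuspidal $C^a$-representations of $G$ with central character $\omega^a$ is finite. I would approach this via a Bernstein-type decomposition: cuspidals with a given central character lie in finitely many inertial equivalence classes, and within each class they form a torsor under the group of unramified $C^a$-characters of $G$ trivial on $Z$, which is the Pontryagin dual of the finite quotient $G/G^0 Z$ (finite because $Z/Z^0 \hookrightarrow G/G^0$ has finite cokernel). This decomposition is classical for $C = \mathbb{C}$; in arbitrary characteristic $c \neq p$ it relies on work of Dat together with the type theory developed later in the paper.

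As a complete alternative, whenever $\pi^a = \ind_J^G \lambda^a$ is compactly induced from a cuspidal $C^a$-type with $J/Z$ compact (as in every case of Theorem \ref{thm:02}), the finite-dimensional representation $\lambda^a$ has central character of finite order, so the eigenvalues of $\lambda^a(j)$ for $j \in J$ are products of roots of unity and lie in a finite extension of $C_0$; consequently the character of $\lambda^a$ takes values in such an extension, and standard character-theoretic descent (together with torsion-ness of the Brauer group) shows that $\lambda^a$ is defined over a finite extension of $C_0$. Since compact induction is compatible with scalar extension, the same conclusion holds for $\pi^a$.
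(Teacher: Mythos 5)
Your reduction, via Corollary \ref{cor:invariant2}, to descending the simple finite-dimensional $H_{C^a}(G,J)$-module $(\pi^a)^J$ matches the opening of the paper's proof. But from there your route diverges, and the step you single out as ``the main obstacle'' is in fact false: for $G=\GL_2(\mathbb Q_p)$ and $\omega^a$ a fixed finite-order character of $\mathbb Q_p^*$, there are \emph{infinitely} many isomorphism classes of irreducible supercuspidal $C^a$-representations with central character $\omega^a$ (one for each suitable admissible pair $(E/F,\chi)$ at arbitrarily high conductor, say). The number of cuspidal inertial classes is infinite, and imposing the central character condition leaves infinitely many classes. So the set you want to be finite is not, and the Galois-orbit argument has no foothold. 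Note also that the orbit finiteness you actually need (finiteness of the $\Aut_{E_0}(C^a)$-orbit of $\pi^a$ alone) is essentially equivalent, after the descent machinery, to the conclusion of the theorem; so without a direct handle on it this step begs the question.

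Your ``complete alternative'' is circular. The point of Theorem \ref{thm:c} and Corollary \ref{cor:bc} (in \S\ref{ss:3.2}) is precisely to transfer lists of cuspidal $C^a$-types from one algebraically closed field to another, because for many groups such lists were only ever established over $\mathbb C$. You therefore cannot invoke the existence of a cuspidal $C^a$-type $(J,\lambda^a)$ inducing $\pi^a$ over an arbitrary $C^a$: that existence is one of the things the theorem is being used to prove, and in any case is not available for general $G$.

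The paper's actual argument is entirely direct and uses cuspidality in a different (and essential) way. Since $\pi^a$ is cuspidal, it is $Z$-compact (\cite{V96} II.2.7), so only finitely many double cosets $JgZJ$ give nonzero Hecke operators $\pi^a(JgJ)$ on $(\pi^a)^J$, and $\pi^a(JgzJ)=\omega^a(z)\pi^a(JgJ)$ with $\omega^a(z)$ in the finite cyclotomic extension $K$ of $C_0$ generated by the values of $\omega^a$. Hence these operators span a finite-dimensional $K$-subalgebra of $\End_{C^a}((\pi^a)^J)$; their eigenvalues are therefore algebraic over $K$ and generate a further finite extension $L/K$, over which all the characteristic polynomials are defined. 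Choosing a $C^a$-basis of $(\pi^a)^J$ and taking a residue field $E$ of the finitely generated $L$-algebra spanned by the matrix entries (finite over $L$ by Zariski's lemma), one gets an $H_E(G,J)$-module with the same characteristic polynomials; Brauer--Nesbitt and simplicity of $(\pi^a)^J$ then force $(\pi^a)^J\simeq C^a\otimes_E E^n$, and Corollary \ref{cor:invariant2} finishes. No finiteness of the set of cuspidals, and no separate Brauer-group argument, is needed.
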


\begin{proof} Because   $\omega^a$  has finite order by
assumption,  the subfield  $K \subset C^a$ generated by the values of $\omega^a$ is a finite cyclotomic extension of $C_0$.
 Choose an open compact subgroup $ J\subset G$  such that $(\pi^a)^J\neq 0 $. By Corollary \ref{cor:invariant2},  
it is enough to prove that the $H_{C^a}(G,J)$-module $(\pi^a)^J$ is defined over a finite extension of $C_0$. We know  that $\dim_{C^a} (\pi^a)^J=n$ is finite by admissibility.
 Moreover by $Z$-compactness  of  cuspidal representations (\cite{V96} II.2.7) we know that the coefficients vanish outside a finite
union of cosets $JgZJ$, and that there are only finitely many cosets $JgZJ$ which give non-zero operators $\pi^a(JgzJ) \in 
\End_{C^a}((\pi^a)^J), z\in Z$; 
 those operators generate a finite dimensional $K$-subalgebra of $\End_{C^a}((\pi^a)^J)$,
because $\pi^a(JgzJ)=\omega^a(z)\pi^a(JgJ)$.
Each such operator satisfies a (non-trivial) polynomial equation with coefficients in $K$. The finiteness
  up to $K^*$ shows that  the eigenvalues of the  $\pi^a(JgJ)$
taken together generate a finite extension $L/K$ in $C^a$; the characteristic polynomials of the $\pi^a(JgJ)$ all have their coefficients in $L$.
Choose a $C^a$-basis of $(\pi^a)^J$.  The coefficients of the  $\pi^a(JgJ) \in \End_{C^a}((\pi^a)^J) $ in the chosen basis generate a  finitely generated $L$-subalgebra $A\subset C^a$, and we get  an $L$-algebra homomorphism $\tau: H_L(G,J)\to  M(n,A)$.   Any quotient field $E$ of $A $  is a finite extension of $L$, and we get an $L$-algebra homomorphism $\overline \tau : H_L(G,J) \to M(n,E)$. 
Thus $\overline \tau$ gives an $H_E(G,J)$-module structure on $E^n$. Choose an extension  $E\to C^a$ of the embedding $L\to C^a$ (\cite{Bki-A5}  \S4, No 3,Corollaty 2 of Theorem 2).  That gives an $H_{C^a}(G,J)$-module
structure on $C^a \otimes_E E^n$. Let us prove that the two $H_{C^a}(G;J)$-modules $C^a \otimes_E E^n$ and $(\pi^a)^J$
are isomorphic, which shows that  $(\pi^a)^J$ is defined over $E$. Indeed for $h\in H_L(G,J) $, the characteristic polynomial of $\overline \tau(h)$   is the image in $E  [T]$  of     the characteristic polynomial of $\tau(h)$  which is 
the  characteristic polynomial   of $h  $ acting on $(\pi^a)^J$   and  has  coefficients in $L$. By (\cite{Bki-A8} \S20, No 6, Corollary 1  of Theorem 2), the two $H_{C^a}(G,J)$-modules $(C^a)^n$ and $(\pi^a)^J$ have isomorphic 
semisimplifications, but the second one is simple already, so  $(C^a)^n$ and $(\pi^a)^J$ are isomorphic.
 \end{proof}

 \begin{corollary}\label{cor:bc} Base change from $C_0^a$ to $C^a $ yields a bijection between isomorphism classes of irreducible cuspidal
$C_0^a$-representations of $G$, with central character of finite order, and isomorphism classes of irreducible cuspidal
$C^a$-representations of $G$, with central character of finite order. 
\end{corollary}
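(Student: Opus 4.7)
The plan is to check that $\pi_{0} \mapsto C^{a}\otimes_{C_{0}^{a}}\pi_{0}$ is a well-defined map between the two sets of isomorphism classes and then to verify surjectivity and injectivity separately. Well-definedness follows quickly from three observations: since $C_{0}^{a}$ is algebraically closed, Proposition~\ref{pro:cusp}(v) shows that $\pi_{0}$ is absolutely irreducible, so $C^{a}\otimes_{C_{0}^{a}}\pi_{0}$ is still irreducible; Proposition~\ref{prop:c} transports cuspidality along the scalar extension $C_{0}^{a}\subset C^{a}$; and the central character is preserved, hence still of finite order. The same observations show the map factors through isomorphism classes.

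For surjectivity, let $\pi^{a}$ be an irreducible cuspidal $C^{a}$-representation of $G$ with central character of finite order. Theorem~\ref{thm:c} produces a finite extension $E$ of $C_{0}$ inside $C^{a}$ and an $E$-form $\pi_{E}$ with $\pi^{a}\simeq C^{a}\otimes_{E}\pi_{E}$. Since $E/C_{0}$ is algebraic, $E\subset C_{0}^{a}$, so one may set $\pi_{0}:=C_{0}^{a}\otimes_{E}\pi_{E}$; by transitivity of tensor products, $C^{a}\otimes_{C_{0}^{a}}\pi_{0}\simeq \pi^{a}$. Irreducibility of $\pi_{0}$ is automatic: any proper $C_{0}^{a}$-subrepresentation would tensor up to a proper $C^{a}$-subrepresentation of the irreducible $\pi^{a}$. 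Cuspidality of $\pi_{0}$ follows from Proposition~\ref{prop:c}, and the central character is unchanged.

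For injectivity, suppose $\pi_{0},\pi_{0}'$ are two such $C_{0}^{a}$-representations with $C^{a}\otimes_{C_{0}^{a}}\pi_{0}\simeq C^{a}\otimes_{C_{0}^{a}}\pi_{0}'$. Pick an open compact subgroup $J\subset G$ with pro-order invertible in $C$ such that $\pi_{0}^{J}\neq 0$; by admissibility (Proposition~\ref{pro:cusp}(iv)) both $\pi_{0}^{J}$ and $\pi_{0}'^{J}$ are finite-dimensional, and by Theorem~\ref{thm:invariant}(i) they are simple $H_{C_{0}^{a}}(G,J)$-modules. Corollary~\ref{cor:invariant1} together with Proposition~\ref{pro:cusp}(v) identifies their endomorphism algebras with $C_{0}^{a}$, so they are absolutely simple. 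Since $J$-invariants commute with scalar extension, one gets an $H_{C^{a}}(G,J)$-module isomorphism $C^{a}\otimes_{C_{0}^{a}}\pi_{0}^{J}\simeq C^{a}\otimes_{C_{0}^{a}}\pi_{0}'^{J}$. Because Hom on finite-dimensional modules commutes with flat base change,
\[
\Hom_{H_{C_{0}^{a}}(G,J)}(\pi_{0}^{J},\pi_{0}'^{J})\otimes_{C_{0}^{a}}C^{a}\simeq \Hom_{H_{C^{a}}(G,J)}(C^{a}\otimes\pi_{0}^{J},C^{a}\otimes\pi_{0}'^{J})\simeq C^{a};
\]
hence there is a nonzero map $\pi_{0}^{J}\to \pi_{0}'^{J}$, necessarily an isomorphism as both are simple. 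Theorem~\ref{thm:invariant}(iii) then promotes this to $\pi_{0}\simeq \pi_{0}'$.

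The substantive work has already been done in Theorem~\ref{thm:c}; granted that descent result, the only mildly delicate step is injectivity, where one must be careful to argue via absolutely simple Hecke modules rather than directly on the representations. Everything else amounts to transitivity of scalar extension, preservation of irreducibility under further scalar extension of an absolutely irreducible representation, and the fact (Proposition~\ref{prop:c}) that cuspidality is invariant under base change.
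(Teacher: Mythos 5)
Your proposal is correct, and since the paper states Corollary~\ref{cor:bc} without proof, treating it as immediate from Theorem~\ref{thm:c}, your write-up is a reasonable fleshing-out of the same route. One small remark on the injectivity step: your Hom-base-change argument is fine (the slight subtlety is that one should first replace $H_{C_0^a}(G,J)$ by its finite-dimensional image in $\End_{C_0^a}(\pi_0^J\oplus\pi_0'^J)$ before invoking base change of $\Hom$, since the Hecke algebra itself need not be Noetherian), but it can be short-circuited by the Decomposition Theorem (Theorem~\ref{thm:DT}~B), which the paper uses throughout: since $\pi^a\simeq C^a\otimes_{C_0^a}\pi_0\simeq C^a\otimes_{C_0^a}\pi_0'$ is, as a $C_0^a[G]$-module, simultaneously $\pi_0$-isotypic and $\pi_0'$-isotypic, the uniqueness clause there gives $\pi_0\simeq\pi_0'$ directly. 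Either way your argument is sound, and the surjectivity step via $E\subset C_0^a$ and transitivity of scalar extension is exactly the intended use of Theorem~\ref{thm:c}.
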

That bijection is clearly compatible with the action of
finite order  smooth characters, which have values in $C_0^a$. We will remove (in Corollary \ref{cor:d2} (iii)) the restriction in Corollary \ref{cor:bc}.

If $C'$ is another algebraically closed field  of characteristic $c$ then its prime field $C'_0$ is isomorphic to $C_0$, and so  are the algebraic closures $C_0^a$ in $C^a$ and $(C'_0)^a$ in $C'^a$. Consequently, 
any isomorphism $C_0^a\to (C'_0)^a$ will yield a bijection 
between isomorphism classes of irreducible cuspidal $C^a$-representations of $G$, with central character of finite order
and isomorphism classes of irreducible cuspidal $C'^a$-representations of $G$, with central character of finite order.

 \bigskip  A character of $G$  is said to be unramified if it is trivial on the subgroup $G^0$ generated by the compact subgroups of $G$. 
We will remove the restriction that the central character has finite order by twisting by  unramified $C^a$-characters of $G$. 

We choose    a  subgroup $Z^\sharp$ of  $Z$ isomorphic to $\mathbb Z^{n_Z}$ such that $Z $ is the product of $Z^\sharp$ and of  its maximal compact  subgroup $Z^{0}$.

%  Let us now turn to all cuspidal repesentations. We choose    a finitely generated torsion-free subgroup $Z^\sharp\subset Z$ such that $Z\simeq Z^\sharp \times Z^0$ where $Z^0$ is the maximal compact subgroup of $Z$.

\begin{proposition} \label{prop:Zsharp}  Let $\omega:Z\to (C^a)^*$ be a smooth  character. Then there is an unramified   character $\chi : G \to (C^a)^*$ such
 $\chi|_Z \, 
 \omega$ is trivial on $Z^\sharp$,  so has finite order.
 \end{proposition}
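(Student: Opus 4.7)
The plan is to exploit the decomposition $Z = Z^\sharp \times Z^0$ together with the fact that $Z/Z^0 \to G/G^0$ has finite cokernel, to transfer $\omega^{-1}|_{Z^\sharp}$ into an unramified character of $G$ using the divisibility of $(C^a)^*$.

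First I would verify that the natural map $Z^\sharp \to G/G^0$ is injective with image of finite index. Injectivity follows from $Z^\sharp \cap G^0 = Z^\sharp \cap (Z \cap G^0) = Z^\sharp \cap Z^0 = \{1\}$, using that $Z$ is the internal direct product of $Z^\sharp$ and $Z^0$. Finiteness of the index is immediate from the stated fact that $Z/Z^0 \to G/G^0$ has finite cokernel, because $Z/Z^0$ is canonically identified with $Z^\sharp$. Denote by $A$ the finitely generated free abelian group $G/G^0$ and by $A'$ the image of $Z^\sharp$ in $A$.

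Next, I would extend the character $\omega^{-1}|_{Z^\sharp}$, viewed via the isomorphism $Z^\sharp \congto A'$ as a character $A' \to (C^a)^*$, to a character $\chi' : A \to (C^a)^*$. This is possible because $(C^a)^*$ is a divisible abelian group, hence an injective object in the category of abelian groups, so $\Hom(-, (C^a)^*)$ is exact and the restriction map $\Hom(A, (C^a)^*) \to \Hom(A', (C^a)^*)$ is surjective. Composing $\chi'$ with the projection $G \to G/G^0 = A$ produces $\chi : G \to (C^a)^*$; since $G^0$ is open in $G$, the character $\chi$ is smooth, and it is unramified by construction.

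Finally I would check the stated property. By construction $\chi|_{Z^\sharp} = \omega^{-1}|_{Z^\sharp}$, so the smooth character $\chi|_Z \cdot \omega$ is trivial on $Z^\sharp$ and therefore factors through $Z/Z^\sharp \simeq Z^0$, which is compact. A smooth character of a compact group has open, hence finite-index, kernel, so its image is finite, giving the desired conclusion. I do not foresee a real obstacle here; the only subtlety to double-check is that $(C^a)^*$ is genuinely injective as an abstract abelian group (which reduces to divisibility by Baer's criterion) and that smoothness is automatic from unramifiedness because $G^0$ is open.
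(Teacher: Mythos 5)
Your proof is correct and follows essentially the same route as the paper: use that $Z^\sharp \hookrightarrow G/G^0$ has finite-index image, extend $\omega^{\mp 1}|_{Z^\sharp}$ to an unramified character of $G$ via divisibility (= injectivity as an abelian group) of $(C^a)^*$, and observe the twisted character is trivial on $Z^\sharp$ and hence of finite order since $Z/Z^\sharp\simeq Z^0$ is compact. The paper simply compresses the extension step into one sentence and concludes with the (slightly looser) remark that $Z/Z^0Z^\sharp$ is finite; your spelling out of the Baer/injectivity point and of smoothness via openness of $G^0$ is the same argument written out more explicitly.
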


\begin{proof}  Since $Z/Z^{0}  \subset G/G^0$  has finite index, $\omega|_{Z^\sharp}$ can be extended to a $C^a$-character of $G$  trivial on $G^0$. Calling $\chi$ the inverse
of that character, $\chi|_Z \,  \omega :Z\to (C^a)^*$ is trivial on $Z^\sharp$ and coincides with $\omega$ on $Z^0$, so has finite order as $Z/Z^0 Z^\sharp$ is finite.
\end{proof}

\begin{corollary} \label{cor:Zsharp} Let $\pi$ be an irreducible smooth $C^a$-representation of $G$. There is an unramified $C^a$-character $\chi$
of $G$ such that  the central character of 
$\chi \pi$ has finite order. \end {corollary}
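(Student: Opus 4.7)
The plan is to reduce the corollary directly to Proposition \ref{prop:Zsharp}. First I would observe that since $\pi$ is irreducible, it is admissible by Proposition \ref{pro:cusp}(iv), and being absolutely irreducible (part (v) of the same proposition, or because $C^a$ is algebraically closed), its commutant is $C^a$. By Schur's lemma applied to the central subgroup $Z \subset G$, the action of each $z \in Z$ on $\pi$ is scalar, giving a homomorphism $\omega : Z \to (C^a)^*$. Smoothness of $\omega$ follows from smoothness of $\pi$: any non-zero vector $v \in \pi$ has an open stabilizer $J_v$, and if $z \in Z \cap J_v$ then $\omega(z)v = zv = v$, so $\omega(z) = 1$, hence $\omega$ is trivial on the open subgroup $Z \cap J_v$ of $Z$.

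Having established that $\pi$ has a smooth central character $\omega : Z \to (C^a)^*$, I would apply Proposition \ref{prop:Zsharp} to $\omega$ to produce an unramified character $\chi : G \to (C^a)^*$ with the property that $\chi|_Z \cdot \omega$ is trivial on $Z^\sharp$. Since $Z^\sharp \simeq \mathbb Z^{n_Z}$ is torsion-free and $Z^0$ is compact, $Z^\sharp \cap Z^0$ is trivial, so $Z = Z^0 \times Z^\sharp$; therefore $\chi|_Z \cdot \omega$ is determined by its restriction to the compact group $Z^0$, on which any smooth $C^a$-character has finite image (its kernel is open of finite index). The central character of the twisted representation $\chi \pi$ is visibly $\chi|_Z \cdot \omega$, which thus has finite order. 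No obstacle arises; the entire content is packaged in Proposition \ref{prop:Zsharp}, and the only verification needed beyond citing it is that $\pi$ genuinely admits a smooth central character, which is immediate from admissibility.
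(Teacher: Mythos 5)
Your proof is correct and takes essentially the same route as the paper: the corollary is stated immediately after Proposition \ref{prop:Zsharp} with no separate argument, and the implicit proof is exactly what you spell out — admissibility (via Proposition \ref{pro:cusp}) gives a smooth central character, to which Proposition \ref{prop:Zsharp} applies. One small imprecision: you write ``$Z^\sharp \cap Z^0$ is trivial, so $Z = Z^0 \times Z^\sharp$,'' but trivial intersection alone does not give $Z = Z^0 Z^\sharp$; rather, $Z = Z^0 \times Z^\sharp$ holds because the paper \emph{chose} $Z^\sharp$ with exactly that property just before Proposition \ref{prop:Zsharp}. With that noted, the conclusion that $\chi|_Z\,\omega$ has finite order is immediate, since it is trivial on $Z^\sharp$ and, being smooth on the compact group $Z^0$, has open hence finite-index kernel there.
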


\begin{remark}\label{re:Zsharp}
  If $\pi$ already has a central character of finite order and  $\chi$  is an unramified character such that
the central character of $\chi \pi$ has finite order, then  $\chi |_Z$ has finite order, and so has  $\chi $ because
 $Z/Z^0$ has finite index in $G/G^0$. So in the corollary, the character $\chi $ is well-defined up to finite order unramified characters.
 \end{remark}

 Note  that twisting by unramified characters  preserves irreducibility, intertwining and cuspidality.
Stability under twisting by unramified characters is easy to verify
in all the known explicit constructions of types. 

\begin{proposition} \label{prop:d} Any irreducible smooth $C^a$-representation $\pi^a$ of $G$ is defined over a finite extension of $C$.
\end{proposition}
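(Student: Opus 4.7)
The plan is to reduce to the cuspidal case, then twist by an unramified character to reach finite-order central character, apply Theorem \ref{thm:c}, and keep careful track of fields throughout. By Proposition \ref{pro:cusp}(iv), $\pi^a$ embeds in $\Ind_P^G \rho^a$ for some parabolic $P=MN$ of $G$ and some irreducible cuspidal $\rho^a \in \Irr_{C^a}(M)$. Thus it suffices to (a) show $\rho^a$ is defined over a finite extension of $C$, and (b) descend $\pi^a$ itself from $\Ind_P^G \rho^a$.

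For (a), apply Corollary \ref{cor:Zsharp} (to the Levi $M$) to produce an unramified $C^a$-character $\chi$ of $M$ such that the central character of $\chi \rho^a$ has finite order. Theorem \ref{thm:c} then gives that $\chi \rho^a$ is defined over some finite extension $K$ of the prime field $C_0$, and in particular over the compositum $KC \subset C^a$, which is finite over $C$. The key additional observation is that $\chi$ itself is defined over a finite extension $E$ of $C$: being unramified, $\chi$ factors through $M/M^0$, a finitely generated free abelian group, so it is determined by finitely many values in $(C^a)^*$, each algebraic over $C$. Untwisting now yields $\rho^a = \chi^{-1}(\chi \rho^a)$ defined over $E \cdot KC$, a finite extension of $C$.

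For (b), write $\rho^a \cong C^a \otimes_L \rho_L$ with $L/C$ finite and $\rho_L \in \Irr_L(M)$. Since parabolic induction commutes with scalar extension, $\Ind_P^G \rho^a \cong C^a \otimes_L \Ind_P^G \rho_L$, and $\pi^a$ is an irreducible subquotient of the right-hand side. By Proposition \ref{induction} the $L[G]$-module $\Ind_P^G \rho_L$ has finite length; each of its irreducible subquotients $W$ is admissible by Proposition \ref{pro:cusp}(iv), and Corollary \ref{cor:invariant1} together with admissibility implies that $\End_{L[G]}(W)$ sits inside $\End_L(W^J)$ for a suitable open compact $J$ with pro-order invertible in $C$, hence is finite-dimensional over $L$. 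The Decomposition Theorem \ref{thm:DT}(A) then forces every irreducible subquotient of $C^a \otimes_L W$ to be defined over a finite extension of $L$, and $\pi^a$ is one such subquotient.

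The main obstacle I anticipate is purely organizational, namely the field bookkeeping: Theorem \ref{thm:c} only descends $\chi \rho^a$ to a finite extension of the prime field $C_0$, so one must absorb this with $C$ via a compositum and simultaneously verify that the unramified twisting character $\chi$ itself is defined over a finite extension of $C$. The finite generation of $M/M^0$ makes this latter point painless, but both ingredients must be assembled before one can finally use the Decomposition Theorem to transport the descent back through parabolic induction.
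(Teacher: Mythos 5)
Your proof is correct. Part (a)---reduce to the cuspidal case, twist by an unramified character so the central character has finite order, apply Theorem \ref{thm:c}, then record that the twisting character itself is defined over a finite extension because $M/M^0$ is finitely generated---is essentially identical to the paper's steps a) and b1). Part (b) is where you diverge genuinely. The paper writes $\rho^a = C^a\otimes_{C'}\rho'$ with $\rho'$ absolutely irreducible over a finite $C'/C$, considers the lengths of $\Ind_P^G(C''\otimes_{C'}\rho')$ for finite extensions $C''/C'$, argues these form a bounded increasing sequence so stabilize at some $C''_1$, shows the irreducible subquotients over $C''_1$ stay irreducible under further finite base change, and then uses Lemma \ref{le:DT}(ii) to conclude they are absolutely irreducible and descend $\pi^a$ to $C''_1$. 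You instead take any descent $\rho_L$ over a finite $L/C$, use Proposition \ref{induction} to get a finite composition series for $\Ind_P^G\rho_L$ over $L$, observe each irreducible subquotient $W$ is admissible with $\End_{L[G]}(W)$ finite over $L$, and apply the Decomposition Theorem \ref{thm:DT}(A) to each $W$; since $C^a\otimes_L\Ind_P^G\rho_L$ is filtered by the $C^a\otimes_L W$, any simple subquotient, in particular $\pi^a$, is defined over a finite extension of $L$. This bypasses the length-stabilization argument completely and is shorter, because the Decomposition Theorem already packages the ``defined over a finite extension'' conclusion. The trade-off is small: the paper's stabilization approach simultaneously exhibits a single field $C''_1$ over which all subquotients are absolutely irreducible, while your argument delivers one finite extension per subquotient (though one could take a compositum, as there are finitely many). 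Either suffices for the proposition as stated.
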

\begin{proof} a) There exists an unramified $C^a$-character $\chi^a$ of $G$ such that the central character of $\pi^a\chi^a$ has finite order (Corollary \ref{cor:Zsharp}).   If $\pi ^a\chi ^a$ is defined over a finite extension of $C$, then $\pi^a$ has the same property as the values of $\chi^a$ generate a finite extension of $C$ as $G/G^0$ is finitely generated.  

b) There is a parabolic subgroup $P=MN$ of $G$ and an irreducible cuspidal $C$-representation $\rho^a$ of $M$ such that $\pi^a$ is a subquotient of   $\Ind_P^G \rho^a$. 
We show that any  irreducible subquotient of 
$\Ind_P^G \rho^a$ is defined over a finite extension of $C$.  

b1) If $P=M=G$, $\pi^a$ is cuspidal this is clear by a) and Theorem \ref{thm:c}.

b2) In general, $\rho^a$ descends to a finite extension of $C$ by b1). Applying  the decomposition theorem \ref{thm:DT} there exists a finite extension $C'/C$ in $C^a$ and an absolutely irreducible $C'$-representation $\rho'$ of $M$ such that $\rho^a = C^a\otimes_{C'} \rho'$ (Proposition \ref{pro:cusp}  (v)).  If $C''/C'$ is an extension in $C^a$, the  length  $\ell(C'')$ of $\Ind_P^G (C''\otimes_{C'}\rho')$ is finite bounded (Proposition \ref{induction} and \cite{HV19} Corollary I.2). We have  $\ell(C''_1)\leq \ell(C''_2)$ for any finite extensions $C''_1 \subset C''_2$ of $C'$ in $C^a$. As an increasing bounded sequence of integers stabilize, there exists a finite extension $C''_1/C'$ such that $\ell(C''_1)=\ell(C''_2)$ for all finite extensions $C''_2/C''_1$ in $C^a$. The irreducible subquotients of $\Ind_P^G (C_1''\otimes_{C'}\rho')$ are admissible and  remain irreducible by any finite scalar extension. By Lemma \ref{le:DT} (ii), we deduce that any irreducible subquotient of 
$\Ind_P^G \rho^a$ is defined over $C''_1$.
\end{proof}

We can now remove the conditions $\dim_C\End_{C[G]}(\pi)<\infty$ and $\pi^a$ absolutely irreducible  and defined over a finite extension of $C$   in (\cite{HV19} Theorem III.4) as $c\neq p$.
We get:
 
\begin{corollary}\label{cor:d}  The map sending $\pi$ to the set of irreducible subquotients of $C^a\otimes_C \pi$ induces a bijection  from the set of isomorphism classes of irreducible smooth $C$-representations $\pi$ of $G$  to the set of orbits under $\Aut_C(C^a)$ of isomorphism classes of  irreducible smooth $C^a$-representations $\pi^a$ of $G$.
\end{corollary}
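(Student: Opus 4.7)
The plan is to apply the general bijection stated immediately after Theorem \ref{thm:DT}, with $A = C[G]$: the map sending $V$ to the set of irreducible subquotients of $C^a\otimes_C V$ induces a bijection between isomorphism classes of simple $C[G]$-modules $V$ with $\End_{C[G]}(V)$ of finite $C$-dimension and $\Aut_C(C^a)$-orbits of absolutely simple $C^a[G]$-modules defined over a finite extension of $C$. It suffices to check that this bijection restricts to one between irreducible smooth $C$-representations of $G$ on one side and $\Aut_C(C^a)$-orbits of irreducible smooth $C^a$-representations of $G$ on the other.

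For the $C$-side, let $\pi$ be an irreducible smooth $C$-representation of $G$. Proposition \ref{pro:cusp}(iv) asserts that $\pi$ is admissible, so choosing a compact open subgroup $J\subset G$ of pro-order invertible in $C$ with $\pi^J\neq 0$, one has $\dim_C\pi^J<\infty$. Corollary \ref{cor:invariant1} then gives an isomorphism $\End_{C[G]}(\pi)\simeq \End_{H_C(G,J)}(\pi^J)$, and the target embeds in $\End_C(\pi^J)$, so $\End_{C[G]}(\pi)$ has finite $C$-dimension. Moreover $C^a\otimes_C \pi$ is smooth (smoothness is preserved by scalar extension), so its irreducible subquotients are smooth irreducible $C^a$-representations of $G$, forming a single $\Aut_C(C^a)$-orbit by Theorem \ref{thm:DT}(A).

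For the $C^a$-side, let $\pi^a$ be an irreducible smooth $C^a$-representation of $G$. Proposition \ref{pro:cusp}(v) gives absolute irreducibility, and Proposition \ref{prop:d} gives that $\pi^a$ is defined over a finite extension of $C$ in $C^a$. The general bijection then produces a simple $C[G]$-module $V$ with finite-dimensional commutant such that $\pi^a$ is $V$-isotypic as a $C[G]$-module. The main subtlety here, I expect, is verifying that $V$ is itself smooth: $V$-isotypy means that $\pi^a$ is a direct sum of copies of $V$ as a $C[G]$-module, so $V$ embeds as a $C[G]$-submodule of $\pi^a$; since $\pi^a$ is smooth as a $C^a$-representation it is smooth as a $C[G]$-module (the stabilizers in $G$ of a given vector are the same), and smoothness is inherited by subrepresentations. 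Injectivity of the map in the corollary is the uniqueness part of Theorem \ref{thm:DT}(B), and the preceding paragraph supplies surjectivity, which completes the argument.
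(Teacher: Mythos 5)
Your proof is correct and follows essentially the same route as the paper, which deduces the corollary from \cite{HV19} Theorem III.4 (stated with the hypotheses ``$\dim_C\End_{C[G]}(\pi)<\infty$'', ``$\pi^a$ absolutely irreducible'', ``$\pi^a$ defined over a finite extension of $C$'') by observing these conditions are automatic via Proposition~\ref{pro:cusp}(iv)--(v) and Proposition~\ref{prop:d}; you reconstruct that bijection directly from the remark after Theorem~\ref{thm:DT} and check the same three conditions, adding the (small but genuine) verification that the simple $C[G]$-module $V$ produced on the $C^a$-side is smooth.
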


Let us now apply those considerations to cuspidal types.  Let $\mathcal Z^a$ denote  the set of isomorphism classes of cuspidal irreducible $C^a$-representations of $G$  with central character of finite
order.   Similarly $ \mathcal Z_0^a $ for the field $C_0^a$. 

\begin{corollary}\label{cor:d2}

(i) The base change  from $C_0^a$ to $C^a$ yields a bijection 
from

$\{ \mathfrak X_0^a, \  \mathfrak X_0^a \ \text{ is a set  of cuspidal $C_0^a$-types in $G$ satisfying  $\mathcal Z_0^a$-exhaustion}\}$ 
onto   

 $ 
\{\mathfrak X^a, \  \mathfrak X^a \ \text{ is a set  of cuspidal $C^a$-types in $G$  satisfying $\mathcal Z^a$-exhaustion}\}$.

(ii) Let $ \mathfrak X_0^a$ as in (i) and  $\mathfrak X^a$ its base change. Then 

$ \mathfrak X^a$ satisfies unicity if and only if $ \mathfrak X_0^a$ does, 

 $ \mathfrak X^a$ is $\Aut(C^a)$-stable if and only if $ \mathfrak X_0^a$  is $\Aut(C_0^a)$-stable,  
 
 $ \mathfrak X^a$ satisfies $\Aut(C^a)$-unicity if and only if $ \mathfrak X_0^a$ satisfies $\Aut(C_0^a)$-unicity,  
 
 $ \mathfrak X^a$  is  stable under
twisting by unramified  characters of $G$ with finite order if and only if $ \mathfrak X_0^a$ does.

(iii) If $ \mathfrak X^a$ as in (i) satisfies unicity and 
stability under unramified  characters with finite order of $G$, then
  the set $ \mathfrak  Y^a$ of cuspidal $C^a$-types in $G$ obtained from $\mathfrak X^a$ by twisting
by all unramified characters of $G$,  satisfies exhaustion for  the isomorphism classes of all cuspidal $\pi\in \Irr_{C^a}(G)$,  and unicity; it satisfies $\Aut(C^a)$-stability,  resp.  $\Aut(C^a)$-unicity,  
if and only if $\mathfrak X^a$ does.
\end{corollary}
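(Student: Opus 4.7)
My plan is to base-change individual types via Corollary \ref{cor:bc}, using that absolutely irreducible representations admit a unique descent to $C_0^a$ (up to isomorphism). For part (i), a cuspidal $C_0^a$-type $(J, V_0)$ has $V_0$ finite-dimensional and absolutely irreducible by Proposition \ref{prop:cusptype}, so $V := C^a \otimes_{C_0^a} V_0$ is irreducible and $\ind_J^G V \simeq C^a \otimes_{C_0^a} \ind_J^G V_0$ inherits irreducibility; exhaustion makes $\ind_J^G V_0 \in \mathcal Z_0^a$, and Corollary \ref{cor:bc} places the base change in $\mathcal Z^a$. For the inverse, $(J, V) \in \mathfrak X^a$ has $\ind_J^G V \in \mathcal Z^a$, which descends via Corollary \ref{cor:bc} to some $\pi_0 = \ind_J^G V_0 \in \mathcal Z_0^a$ with $(J, V_0) \in \mathfrak X_0^a$ by $\mathcal Z_0^a$-exhaustion, and $V_0$ is a $C_0^a$-descent of $V$ (realized as the functions in $\ind_J^G V$ supported on $J$). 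Uniqueness of descent follows from the isomorphism $C^a \otimes_{C_0^a} \Hom_{C_0^a[J]}(V_0, V_0') \simeq \End_{C^a[J]}(V) = C^a$, which forces $V_0 \simeq V_0'$ by absolute irreducibility of both descents. Verifying that base change and descent compose to the identity on both sides gives the bijection.

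For part (ii), each property transfers transparently along this bijection. Unicity is immediate since $g \cdot (C^a \otimes_{C_0^a} V_0) \simeq C^a \otimes_{C_0^a} (g V_0)$. Stability under finite-order unramified characters is automatic since such characters take values in $C_0^a$ (roots of unity). For $\Aut$-stability and $\Aut$-unicity, the key compatibility is that restriction gives a surjection $\Aut(C^a) \to \Aut(C_0^a)$: restriction is well-defined because $C_0^a$ is the field of elements of $C^a$ algebraic over the prime field, and any $\sigma_0 \in \Aut(C_0^a)$ extends to $\Aut(C^a)$ by acting as the identity on a transcendence basis $B$ of $C^a/C_0^a$ and then to the algebraic closure $C^a$ of $C_0^a(B)$. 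Then $\sigma(C^a \otimes_{C_0^a} V_0) \simeq C^a \otimes_{C_0^a} (\sigma|_{C_0^a})(V_0)$, so the two $\Aut$-conditions correspond.

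For part (iii), let $\mathfrak Y^a := \{(J, \chi|_J V) : (J, V) \in \mathfrak X^a, \ \chi \text{ unramified } C^a\text{-character of } G\}$. Exhaustion: for any cuspidal $\pi \in \Irr_{C^a}(G)$, Corollary \ref{cor:Zsharp} provides an unramified $\chi$ with $\chi\pi \in \mathcal Z^a$, so $\chi\pi = \ind_J^G V$ for some $(J, V) \in \mathfrak X^a$ by exhaustion, and $\pi = \ind_J^G(\chi^{-1}|_J V)$ with $(J, \chi^{-1}|_J V) \in \mathfrak Y^a$. For unicity, if $(J, \chi_1|_J V)$ and $(J', \chi_2|_{J'} V')$ have isomorphic inductions, set $\eta := \chi_1^{-1}\chi_2$; then $\ind_J^G V \simeq \ind_{J'}^G(\eta|_{J'} V')$ has both sides in $\mathcal Z^a$, forcing $\eta|_Z$ to be a quotient of two finite-order central characters, hence of finite order, and then $\eta$ itself has finite order by Remark \ref{re:Zsharp}. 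Stability of $\mathfrak X^a$ under finite-order unramified characters places $(J', \eta|_{J'} V') \in \mathfrak X^a$, so unicity of $\mathfrak X^a$ yields $g \in G$ conjugating $(J, V)$ to $(J', \eta|_{J'} V')$, and a direct check shows that the same $g$ conjugates $(J, \chi_1|_J V)$ to $(J', \chi_2|_{J'} V')$. The $\Aut(C^a)$-stability and $\Aut(C^a)$-unicity equivalences reduce similarly: in the forward direction $\sigma(\chi)$ remains unramified, while in the converse one uses Remark \ref{re:Zsharp} once more to force the twisting character a priori arising from $\sigma$-stability of $\mathfrak Y^a$ to have finite order, so that stability of $\mathfrak X^a$ under finite-order twists absorbs it.

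The main obstacle I anticipate is the unicity step in (iii): one must cleanly isolate the intertwining unramified character $\eta$, use the finite-order central-character constraint together with Remark \ref{re:Zsharp} to deduce that $\eta$ itself has finite order, and only then reduce to the assumed unicity and twist-stability of $\mathfrak X^a$. The remaining work is bookkeeping that transfers naturally through the bijection established in (i).
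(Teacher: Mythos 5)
Your proof takes essentially the same route as the paper's: part (i) rests on the unique descent of an absolutely irreducible finite-dimensional $C^a$-representation with finite-order central character to $C_0^a$ together with Corollary \ref{cor:bc}, part (ii) on the compatibility of base change with $G$-conjugation and the fact that $\Aut_{C_0^a}(C^a)$ acts trivially on base-changed objects (equivalently, restriction $\Aut(C^a)\to\Aut(C_0^a)$ is onto), and part (iii) on extracting a finite-order unramified intertwining character via central characters and Remark \ref{re:Zsharp}. The paper dispatches (iii) with the single remark that it ``is clear,'' so the extra detail you supply there—in particular isolating $\eta=\chi_1^{-1}\chi_2$ and invoking Remark \ref{re:Zsharp} before appealing to the unicity and finite-order twist-stability of $\mathfrak X^a$—is a welcome but faithful elaboration rather than a different argument.
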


\begin{proof} (i)
Base change from $C_0^a$ to $C^a$  of  a set $ \mathfrak X_0^a$ of cuspidal $C_0^a$-types in $G$ satisfying  $\mathcal Z_0^a$-exhaustion  yields a set $ \mathfrak X^a$ of cuspidal $C^a$-types in $G$ that satisfies $\mathcal Z^a$-exhaustion because of
the theorem. That works also in the reverse direction. Let $\mathfrak X^a$ be a set of cuspidal $C^a$-types in $G$ satisfying $\mathcal Z^a$-exhaustion and 
  $(J, V)\in \mathfrak X^a$.  Because $\ind_J^GV$ has central character of finite order, so has $V$, and as
remarked above $V$ is  the base change to $C^a$ of a $C_0^a$-representation $V^a_0$ of $J$, then
$\ind_J^G V$ is the base change to $C^a$ of $\ind_J^GV^a_0$, and consequently $(J, V^a_0)$ is a cuspidal $C_0^a$-type in $G$.
Note that $(J,V^a_0)$ is uniquely determined by $(J, V)$. If we let $ \mathfrak X_0^a$  be the set of types obtained from $ \mathfrak X^a$
in that manner, then $ \mathfrak X_0^a$ satisfies $\mathcal Z_0^a$-exhaustion. If  we base change again to $C^a$ we get back the set $\mathfrak X^a$ of $C^a$-types.

(ii) Because base changing types from $C_0^a$ to $C^a$  is compatible with $G$-conjugation, $  \mathfrak X^a$ satisfies unicity if and only if $ \mathfrak X_0^a$ does.

Note also that $\mathcal Z^a$ is $\Aut(C^a)$-stable and $\mathcal  Z_0^a$ is $\Aut(C_0^a)$-stable. All isomorphism classes of $C^a$-representations
obtained by base change from $C_0^a$ are obviously $\Aut_ {C_0^a}(C^a)$-invariant, so that $\Aut_{C_0^a}(C^a)$ acts trivially on
$\mathcal Z^a$ and $ \mathfrak X^a$, and the action of $\Aut(C^a)$ on $\mathcal Z^a$ and $ \mathfrak X^a$ factorizes through the quotient $\Aut(C_0^a)$.
The other properties of (ii)   are clear.

(iii) is clear.
 \end{proof}

\subsection{Level $0$ cuspidal types} \label{ss:3.3}  In this subsection, to conform to current usage in the relevant literature we write now the types $(J, \lambda)$ instead of $(J,V)$. 
 It is natural to conjecture that all irreducible cuspidal $C$-representations of $G$ are compactly induced from a cuspidal 
$C$-type $(J, \lambda)$ because all explicit examples have that form.

\bigskip  Of course, when $G $ has semisimple rank $0$  \footnote{The rank of $G$ is the $F$-rank of $\underline G$} , then $G/Z$ is compact, and all irreducible smooth $C$-representations
are cuspidal of finite dimension, so the conjecture is trivially true with $J=G$. Even so, it is interesting to have inducing types $(J,\lambda)$ where $J\neq G$. The example of the multiplicative group  of  a finite dimensional central division algebra over $F$ is examined in \cite{Z92}, \cite{Br98}.
   
   When $G$ has semisimple rank $1$, M. Weissman recently proved  the conjecture, using that the  building $\mathcal B$ of $G^{ad}$  is a tree.   
   If $\pi$ is an irreducible cuspidal $C$-representation of $G$, there is a  $C$-type $(J,\lambda)$ of   $G$  such that  $\pi\simeq \ind_J^G \lambda$ where $J$ is either the $G$-stabilizer   of a vertex   or  of of an edge, of $\mathcal B$.  
See \cite{W19} Corollary 2.6 when $C=\mathbb C$ and the note following it for general $C$.
 
  The other known cases assume $C$ algebraically closed (initially $C=\mathbb C$) and also make assumptions on $G$ or on the representations considered. But those cases include more precise information, in the guise of an explicit list of $C$-types   inducing to those representations. Very often, that list satisfies exhaustion for the kind  of representations considered,  and unicity. We will verify 
  stability by the group  of automorphisms of $C$  and  apply Theorem
\ref{thm:cusptype} to extend the result to a general coefficient field $C$, no longer assumed algebraically closed.

\bigskip   The case of level $0$ representations requires no assumption on $G$ and does not assume $C$ algebraically closed.  For any point $x $ in the  Bruhat-Tits  building $\mathcal B $ of the adjoint group $G^{ad}$,  we denote by 
  $G_x$   the $G$-stabilizer  of $x$,
 $G_{x.0} $ the  parahoric subgroup fixing $x$ and  $G_{x,0+} $ the pro-$p$ unipotent radical of $G_{x.0}$. The $G$-normalizer of $G_{x,0}$ is   $G_{x}$ {\color{red} if $x$ is a vertex} (\cite{Y01} Lemma 3.3 (i)) and 
 $G_{x,0}/G_{x,0+}$ is the group of  points of a connected reductive group over   the residue field $k_F$.  When $x$ is a vertex, it is known that  $G_{x}$  determines $x$ (the proof uses that two vertices $x\neq y$ are contained in the same apartment,  there is an affine root containing  $x$ and not  $y$ implying that  the corresponding unipotent subgroup of $G$ does not have the same intersection with $G_x$ and with $G_y$); this implies that $G_x$ is its own $G$-normalizer (as $gG_xg^{-1}=G_{gx}$  for $g\in G$).

  A   $C$-representation $ \pi $ of $G$ has {\it level $0$} if $\pi =\sum_x\pi^{G_{x,0+}}$, where $x$ runs through the vertices in $\mathcal B$, in particular  it is smooth.
For $\pi$ irreducible, this means that $\pi^{G_{x,0+}}\neq 0$ for some vertex  $x\in \mathcal B$. The category of level $0$ $C$-representations  of $G$ is a direct factor of $\Mod_C(G)$ and the parabolic induction respects level $0$.

Let  $\mathcal Z (0)$ denote the set of  isomorphism classes of level $0$ irreducible   cuspidal $C$-representations of $G$. Clearly,   $\mathcal Z (0)$  is  stable under  $\Aut(C)$.

 \begin{lemma}\label{le:0} Let $x, y \in \mathcal B$ be  two different  vertices and $\lambda  \in \Irr_C(G_x)$. If   $G_{x,0+}$ acts trivially  on $\lambda$ and $\lambda$ is  cuspidal as a representation of $G_{x,0}/G_{x,0+}$, then  $G_x\cap G_{y,0+} $ has no non-zero fixed vector in $\lambda$. 
\end{lemma}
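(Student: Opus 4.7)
The strategy is to reduce to the standard fact that a cuspidal representation of a finite reductive group has no non-zero vector fixed by the unipotent radical of any proper parabolic subgroup.

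First, since $G_{x,0}\cap G_{y,0+}$ is a subgroup of $G_x\cap G_{y,0+}$, one has $\lambda^{G_x\cap G_{y,0+}}\subseteq \lambda^{G_{x,0}\cap G_{y,0+}}$, so it suffices to show $\lambda^{G_{x,0}\cap G_{y,0+}}=0$. Because $\lambda$ is trivial on $G_{x,0+}$, its restriction to $G_{x,0}$ descends to a representation of the finite reductive group $\overline M:=G_{x,0}/G_{x,0+}$, which is cuspidal by hypothesis.

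Next I compute the image of $G_{x,0}\cap G_{y,0+}$ in $\overline M$ by Bruhat--Tits theory. Pick an apartment $\mathcal A$ of $\mathcal B$ containing both $x$ and $y$, corresponding to a maximal $F$-split torus $T$. For each relative root $a$, let $U_a$ denote the root subgroup with its affine filtration $(U_{a,r})_r$, so that for any $z\in \mathcal A$ one has $G_{z,0}\cap U_a=U_{a,-a(z)}$ and $G_{z,0+}\cap U_a=U_{a,(-a(z))+}$. Comparing the conditions at $x$ and at $y$: for each root $a$, the intersection $G_{x,0}\cap G_{y,0+}\cap U_a$ has image equal to the root subgroup $\overline U_a\subset \overline M$ when $a(y)>a(x)$, and trivial image otherwise, while the torus intersection $T^0\cap T^{0+}=T^{0+}$ maps trivially to $\overline M$. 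Thus the image of $G_{x,0}\cap G_{y,0+}$ in $\overline M$ is contained in the subgroup $U$ generated by the $\overline U_a$ with $a(y)>a(x)$; this is precisely the unipotent radical of the parabolic subgroup of $\overline M$ associated to the direction $v:=y-x$ in the apartment. Since $x\neq y$ and the apartment sits inside the building of the \emph{adjoint} group $G^{\mathrm{ad}}$, the roots span the dual of the apartment, so some root is strictly negative on $v$ and this parabolic is proper.

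Finally, the cuspidality of $\lambda$ as a representation of $\overline M$ gives $\lambda^U=0$, and since the image of $G_{x,0}\cap G_{y,0+}$ lies in $U$ we conclude $\lambda^{G_{x,0}\cap G_{y,0+}}=0$, hence $\lambda^{G_x\cap G_{y,0+}}=0$.

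The main obstacle is the middle step, namely the bookkeeping of affine root filtrations at the two vertices and the verification that the image in $\overline M$ lands in the unipotent radical of a \emph{proper} parabolic. The properness is what forces the use of the adjoint building: in $\mathcal B(G^{\mathrm{ad}})$ the non-zero direction $v=y-x$ cannot be annihilated by every root, so at least one root subgroup contributes non-trivially to $U$. For split groups the computation is direct; in general one relies on the standard description of parahoric subgroups in terms of affine root subgroups together with the pro-$p$ radical of the torus.
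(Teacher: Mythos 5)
Your overall strategy is the same as the paper's: reduce to the finite reductive quotient $\overline M=G_{x,0}/G_{x,0+}$ and identify the image of $G_{x,0}\cap G_{y,0+}$ there as the unipotent radical of a proper parabolic, then invoke cuspidality. The paper cites \cite{V00}, Lemma 5.2 for precisely this identification (the image of $G_{x,0}\cap G_y$ is a proper parabolic of $\overline M$ with unipotent radical the image of $G_{x,0}\cap G_{y,0+}$), whereas you re-derive it from affine root bookkeeping. So the route is the same, with a proof-from-scratch substituting for the cited lemma.

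However, there is a genuine logical error in your last step. You conclude that the image $I$ of $G_{x,0}\cap G_{y,0+}$ in $\overline M$ is \emph{contained in} $U$, and then write ``since the image lies in $U$ we conclude $\lambda^{G_{x,0}\cap G_{y,0+}}=0$.'' That inference is backwards: $I\subseteq U$ gives $\lambda^I\supseteq\lambda^U$, so $\lambda^U=0$ tells you nothing about $\lambda^I$. What you actually need is $I\supseteq U$, which gives $\lambda^I\subseteq\lambda^U=0$. Fortunately, $I\supseteq U$ is the easy direction and is exactly what your root-by-root computation establishes: for each $a$ with $a(y)>a(x)$, the image of $G_{x,0}\cap G_{y,0+}\cap U_a$ is $\overline U_a$, hence $I$ contains all these $\overline U_a$ and so contains the group $U$ they generate. (The reverse containment $I\subseteq U$ would require an Iwahori-type product decomposition of $G_{x,0}\cap G_{y,0+}$, which you do not establish — but you do not need it.) With the containment corrected to $I\supseteq U$, and keeping your properness argument (in the adjoint building a nonzero direction $y-x$ cannot be killed by every root), the proof goes through.
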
 
\begin{proof}By (\cite{V00}, lemma 5.2), 
the image of $G_{x,0} \cap G_y $ in $G_{x,0} /G_{x,0+}$ is a parabolic subgroup with unipotent radical the image of
 $G_{x,0} \cap G_{y,0+}$;  because $x$ and $y$ are two disctinct  vertices, that parabolic is not 
$G_{x ,0}/G_{x,0+}$ 
and the result comes from the  cuspidal assumption.
\end{proof}
     
 \begin{proposition}\label{prop:iso}   
 If   $G_{x,0+}$ acts trivially  on $\lambda  \in \Irr_C(G_x)$ and $\lambda$ is  cuspidal as a representation of $G_{x,0}/G_{x,0+}$, 
 then  the space of vectors fixed by $G_{x,0+}$ in $\ind_{G_x}^G \lambda$ is made out of the functions
with support in $G_x$; in particular it affords the representation $\lambda$ of $G_x$.
 \end{proposition}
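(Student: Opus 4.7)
The plan is to analyse the space of $G_{x,0+}$-fixed vectors in $\ind_{G_x}^G \lambda$ via a Mackey-type double-coset decomposition, and then rule out all but the trivial double coset using Lemma \ref{le:0}. Concretely, a vector in $\ind_{G_x}^G \lambda$ is a function $f : G \to \lambda$, compactly supported modulo $G_x$, satisfying $f(hg) = \lambda(h) f(g)$ for $h \in G_x$; it is $G_{x,0+}$-fixed for the right regular action iff $f(gu) = f(g)$ for all $u \in G_{x,0+}$. For a fixed $g \in G$, compatibility of the two equivariance conditions forces $f(g) \in \lambda^{G_x \cap g G_{x,0+} g^{-1}}$, and conversely any such vector extends uniquely to a function on $G_x g G_{x,0+}$ satisfying both equivariances. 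Hence as a vector space the $G_{x,0+}$-fixed subspace splits along the double cosets $G_x \backslash G / G_{x,0+}$, each contributing $\lambda^{G_x \cap g G_{x,0+} g^{-1}}$.

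Next I would use the equivariance $g G_{x,0+} g^{-1} = G_{gx,0+}$, a standard property of the pro-$p$ unipotent radicals of parahorics under $G$-conjugation, together with the fact that the $G$-action on $\mathcal{B}$ permutes vertices. If $g \notin G_x$, then $y := gx$ is a vertex distinct from $x$, and Lemma \ref{le:0} gives $\lambda^{G_x \cap G_{y,0+}} = 0$, so such a double coset contributes nothing. For $g \in G_x$ we have $G_x g G_{x,0+} = G_x$, since $G_{x,0+} \subset G_{x,0} \subset G_x$; so a single double coset survives, namely $G_x$ itself.

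The surviving contribution is the space of functions supported on $G_x$, which is canonically $G_x$-equivariantly isomorphic to $\lambda$ via $f \mapsto f(1)$. The $G_{x,0+}$-invariance imposes no additional constraint, because $G_{x,0+}$ acts trivially on $\lambda$ by hypothesis. I do not foresee any substantive obstacle: the argument is essentially the Mackey double-coset formula combined with the vanishing provided by Lemma \ref{le:0}, the only external input being the familiar conjugation equivariance of the Moy--Prasad pro-unipotent radicals.
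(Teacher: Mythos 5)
Your proof is correct and follows essentially the same route as the paper: both decompose the $G_{x,0+}$-fixed subspace along the double cosets $G_x\backslash G/G_{x,0+}$, identify the contribution of a nontrivial coset $G_x g G_{x,0+}$ (via the Mackey-type isomorphism \eqref{eq:compactind3}/\eqref{eq:compactind4}) with a fixed-vector space of the form $\lambda^{G_x \cap G_{y,0+}}$ for a vertex $y\neq x$, and then invoke Lemma~\ref{le:0} to kill it. The only cosmetic difference is the choice of conjugation direction ($y=gx$ in your write-up versus the paper's $x=gy$), which does not affect the argument.
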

\begin{proof}  Put $J=G_x, J^0=G_{x,0}, J^1=G_{x,0+}$. As in section  \ref{ss:2.4},
the restriction  of $\ind_J^G \lambda$ to $J^1$ splits as a direct sum $\oplus_{JgJ^1 }\ind_J^{JgJ^1} \lambda$ over the double cosets $JgJ^1$  of the subspaces $\ind_J^{JgJ^1} \lambda$ consisting of functions with support in $JgJ^1$. 
 The subspace of functions with support in $J$, as a representation of $J$,  is isomorphic to $\lambda$ and $\lambda$ is trivial on $J^1$.
  It is enough to show that for $g\in G\setminus  J$, a 
function in $\ind_J^G \lambda $ with support in  $Jg J^1 $  and  right invariant under $J^1$   is $0$.  
Putting $ x=gy,$ this follows from  Lemma \ref{le:0} as  $y \neq x$ for  $g\in G\setminus  G_x$ (and the isomorphisms  \eqref{eq:compactind3} and \eqref{eq:compactind4}).
\end{proof}

Proposition \ref{prop:iso2} will be a  generalization of  this proposition. 

\begin{corollary} \label{cor:0}  If   $G_{x,0+}$ acts trivially  on $\lambda \in \Irr_C(G_x)$ and $\lambda$ is  cuspidal as a representation of $G_{x,0}/G_{x,0+}$, then $ \ind_{G_x}^G \lambda$ is irreducible and $\End_{C[G]}(\ind_{G_x}^G \lambda)=\End_{C[G_x]}(\lambda)$.
\end{corollary}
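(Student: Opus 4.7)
The plan is the following. Write $\pi := \ind_{G_x}^G \lambda$ and $J := G_{x,0+}$, and identify $\lambda$ with its canonical image in $\pi$ as the subspace of functions supported on $G_x$; by Proposition~\ref{prop:iso} this image coincides with $\pi^J$, and tautologically $\pi$ is generated as a $G$-module by $\lambda$. The precise description $\pi^J = \lambda$ from Proposition~\ref{prop:iso} will be leveraged twice: once directly, to produce a nonzero $J$-invariant inside any nonzero subrepresentation, and once through its core input Lemma~\ref{le:0}, to kill the off-diagonal intertwiners in the endomorphism algebra computation.

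For irreducibility, I would take a nonzero subrepresentation $W \subset \pi$ and pick $0 \neq w \in W$. Since $w$ has compact support modulo $G_x$, one writes $w = \sum_{i=1}^n h_i \cdot v_i$ with $v_i \in \lambda \setminus \{0\}$ and $h_1, \ldots, h_n$ in pairwise distinct left $G_x$-cosets of $G$, so that $h_i \cdot v_i$ has support $G_x h_i^{-1}$. Using $G$-stability of $W$, $h_1^{-1} \cdot w$ lies in $W$; replacing $w$ by $h_1^{-1} \cdot w$, I may assume $h_1 = 1$ while $h_i \notin G_x$ for $i \geq 2$. Then I apply the idempotent $e_J$, which exists since $J$ is pro-$p$ and $c \neq p$. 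As $J$ acts trivially on $\lambda$, one has $e_J v_1 = v_1$. For $i \geq 2$, the vector $e_J(h_i \cdot v_i)$ is simultaneously supported on $G_x h_i^{-1} J$ (from averaging $J$-translates) and on $G_x$ (by Proposition~\ref{prop:iso}, since it lies in $\pi^J$); but $J \subset G_x$ and $h_i \notin G_x$ force $G_x \cap G_x h_i^{-1} J = \emptyset$, so $e_J(h_i \cdot v_i) = 0$. Therefore $e_J w = v_1$ is a nonzero element of $W \cap \lambda$, and irreducibility of $\lambda$ as a $C[G_x]$-module yields $\lambda \subset W$; since $\pi = C[G] \cdot \lambda$, we conclude $W = \pi$.

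For the endomorphism algebra, Frobenius reciprocity together with the decomposition \eqref{eq:compactind2} and the isomorphisms \eqref{eq:compactind3}--\eqref{eq:compactind4} gives
\[
\End_{C[G]}(\pi) \;\simeq\; \bigoplus_{g \in G_x \backslash G / G_x} \Hom_{C[G_x \cap g^{-1} G_x g]}\bigl(\lambda,\, {}^g \lambda\bigr),
\]
with the trivial double coset contributing $\End_{C[G_x]}(\lambda)$ via the canonical inclusion \eqref{eq:compactind}. For $g \notin G_x$, a $g$-intertwiner $\Phi$ satisfies $\Phi \circ \lambda(h) = \lambda(g h g^{-1}) \circ \Phi$ for $h \in G_x \cap g^{-1} G_x g$; for $h \in H := G_{g^{-1}x,0+} \cap G_x$ one has $g h g^{-1} \in G_{x,0+}$, hence $\lambda(g h g^{-1}) = 1$ and $\Phi \circ \lambda(h) = \Phi$. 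Averaging over $H$, whose pro-order is invertible in $C$, yields $\Phi \circ e_H = \Phi$; but $e_H$ has image $\lambda^H$, which vanishes by Lemma~\ref{le:0} applied to the distinct vertices $x$ and $y := g^{-1} x$. Hence $\Phi = 0$, only the trivial coset contributes, and $\End_{C[G]}(\pi) = \End_{C[G_x]}(\lambda)$.

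The main technical obstacle lies in the irreducibility step: a routine $J$-invariants argument yields only that $W^J \in \{0, \lambda\}$ and does not \emph{a priori} exclude $W^J = 0$ for a nonzero subrepresentation $W$. The decisive ingredient is the \emph{exact} support description of $\pi^J$ in Proposition~\ref{prop:iso}, which together with the translation by $h_1^{-1}$ inside $W$ forces the off-diagonal averages to vanish by a disjoint-support computation, producing the required nonzero element of $W \cap \lambda$. The intertwiner part of the argument, by contrast, is a clean averaging trick over the pro-$p$ group $H$ once Lemma~\ref{le:0} has been invoked.
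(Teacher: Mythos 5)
Your proof is correct. For the endomorphism ring, your argument coincides in substance with the paper's: both show that the off-diagonal terms $\Hom_{C[G_x\cap g^{-1}G_xg]}(\lambda,{}^g\lambda)$ vanish by averaging over a pro-$p$ group and invoking Lemma~\ref{le:0} (the paper averages on the \emph{target} side to show the image of $\Phi$ lies in $({}^g\lambda)^{G_y\cap G_{x,0+}}=0$, while you average on the \emph{source} side to show $\Phi$ factors through $\lambda^{G_x\cap G_{y,0+}}=0$; both are clean). For irreducibility, however, you take a genuinely different route. The paper argues by contradiction using Frobenius reciprocity in both directions (for $\ind$ and for $\Ind$, via $\ind_J^G\lambda\subset\Ind_J^G\lambda$) together with semisimplicity of the restriction to $J^1$ and the observation from Proposition~\ref{prop:iso} that $\lambda$ occurs only once as a $J$-subquotient of $\pi$: any proper nonzero subrepresentation $W$ would have $\lambda$ as a $J$-quotient while $\pi/W$ has $\lambda$ as a $J$-subrepresentation, contradicting that uniqueness. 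Your proof instead proves the stronger assertion that any nonzero $W$ actually contains $\lambda$, by translating a nonzero $w\in W$ so its component on the identity coset is nonzero and then killing the remaining terms under $e_J$ by a disjoint-support computation that uses Proposition~\ref{prop:iso} in the same essential way. Your approach is more hands-on and arguably more transparent (it avoids the double-Frobenius packaging and the subquotient-multiplicity count), at the cost of the explicit translation-and-decomposition bookkeeping; the paper's is slicker and generalizes more readily to the positive-level setting of Corollary~\ref{cor:2}, which reuses the same Frobenius argument verbatim. One cosmetic point: the direct-sum decomposition of $\End_{C[G]}(\pi)$ over double cosets is a priori only sandwiched between a direct sum and a direct product, as noted after \eqref{eq:compactind2}; it is legitimate here because $\lambda$ is finite-dimensional (or, more simply, because you show all off-diagonal terms vanish anyway), but it would be worth flagging.
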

\begin{proof} As before, put $J=G_x, J^0=G_{x,0}, J^1=G_{x,0+}$.
 A quotient of $\ind_J^G\lambda$ contains a representation of $J$ isomorphic to  $\lambda$ by Frobenius 
reciprocity for compact induction $\ind_J^G$, and  a subrepresentation of $\ind_J^G\lambda$ as a representation of $J$ has a quotient isomorphic to   $\lambda$, by
Frobenius reciprocity for smooth induction $\Ind_J^G $ and the inclusion  of  $\ind_J^G\lambda$ in $\Ind_J^G \lambda$.
But the restriction of $\ind_J^G\lambda$  to  the pro-$p$ group $J^1$ is semi-simple, and by the proposition  $\ind_J^G\lambda$, as a representation of $J$,   contains $\lambda$ as a subquotient only once. Hence $\ind_J^G\lambda$ is irreducible.
Similarly one infers that $\End_{C[G]}(\ind_J^G\lambda)=\End_{C[J]}(\lambda)$. Indeed, as in section  \ref{ss:2.4}, this means  that $\Hom_{C[J\cap g^{-1} J g]}(\lambda,{}^g \lambda)=0$ for all  double cosets $JgJ\neq J$. Putting $x=gy$ we have $g^{-1} J g=G_y$ with $x\neq y$ when  $JgJ\neq J$. In this case $G_y \cap G_{x,0+}$ has no non-zero fixed vector in $ {}^g \lambda $  by Lemma \ref{le:0},  but any vector in $\lambda$ is fixed by $G_y \cap G_{x,0+}$. \end{proof}

When  $C$ is algebraically closed, the   irreducibility of  $\ind_{G_x}^G \lambda$ is proved in \cite{V00}  with another proof - the result for $C=\mathbb C$ goes back to \cite{M99} and \cite{MP96}.  

\begin{corollary}  \label{cor:1} Assume that $ \ind_{G_x}^G \lambda\simeq \ind_{G_y}^G \mu $ for a vertex
  $y \in \mathcal B$  and $\mu\in \Irr_C(G_y)$ and that  $\lambda$   and $\mu$ as representations of $G_{x,0}$ and $G_{y,0}$  are the inflations of   cuspidal representations of   $ G_{x,0}/G_{x,0+}$ and  $ G_{y,0}/G_{y,0+}$ respectively. Then  $y=gx$ and $\mu = \lambda ^g$ for some $g\in G$.\end{corollary}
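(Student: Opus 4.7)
The plan is to compute $\pi^{G_{x,0+}}$ for $\pi\simeq \ind_{G_x}^G\lambda\simeq \ind_{G_y}^G\mu$ in two ways and compare them. On the one hand, Proposition \ref{prop:iso} applied to $\ind_{G_x}^G\lambda$ realizes $\pi^{G_{x,0+}}$ as the subspace of functions supported in $G_x$, with $G_x$ acting through $\lambda$; in particular $\pi^{G_{x,0+}}\neq 0$.

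On the other hand, I will use the decomposition \eqref{eq:compactind2} of $\Res_{G_{x,0+}}^G\ind_{G_y}^G\mu$ over double cosets $G_y g G_{x,0+}$, and combine \eqref{eq:compactind3}--\eqref{eq:compactind4} (with the trivial representation of $G_{x,0+}$) to identify the $G_{x,0+}$-invariants in each summand with the space $\mu^{G_y\cap gG_{x,0+}g^{-1}}$. Since $gG_{x,0+}g^{-1}=G_{gx,0+}$ and $gx$ is a vertex of $\mathcal B$, Lemma \ref{le:0} applied to $\mu$ with the pair of vertices $y$ and $gx$ shows this space vanishes whenever $gx\neq y$.

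The non-vanishing of $\pi^{G_{x,0+}}$ therefore forces the existence of some $g_0\in G$ with $g_0x=y$, proving the first assertion. The set $\{g\in G:gx=y\}$ equals $g_0G_x=G_yg_0$, which (using $G_{x,0+}\subseteq G_x$) coincides with the single double coset $G_yg_0G_{x,0+}$. Hence exactly one summand contributes, namely the subspace of functions supported in $g_0G_x$, which is visibly $G_x$-stable under right translation. Evaluation at $g_0$ identifies this subspace with $\mu$ as a vector space, and unwinding the $G_x$-action together with triviality of $G_{y,0+}$ on $\mu$ shows that $h\in G_x$ acts as $\mu(g_0hg_0^{-1})$; that is, as $\mu^{g_0^{-1}}$ in the notation of \eqref{eq:compactind4}. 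Comparing with the first description yields $\lambda\simeq\mu^{g_0^{-1}}$ as $G_x$-representations, equivalently $\mu\simeq\lambda^{g_0}$ as $G_y$-representations, which is the claim with $g=g_0$.

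The main point to watch is the conjugation convention in the final step: isolating the unique contributing double coset is an immediate consequence of Lemma \ref{le:0}, and the Mackey-type decomposition of \S\ref{ss:2.4} is routine, but one must keep careful track of which side each conjugation acts on so that the identification comes out as $\mu\simeq\lambda^{g}$ in the stated direction rather than its inverse. No deeper obstacle is expected beyond this bookkeeping.
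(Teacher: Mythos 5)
Your argument is correct and follows essentially the same route as the paper: compute the $G_{x,0+}$-invariants of $\pi$ via Proposition \ref{prop:iso} on one side and a Mackey decomposition plus Lemma \ref{le:0} on the other. The paper's version is terser, splitting into cases $y\in Gx$ (conjugate to $y=x$ and apply Proposition \ref{prop:iso} directly) versus $y\notin Gx$ (invariants of $\ind_{G_x}^G\lambda$ under $G_{y,0+}$ vanish by Lemma \ref{le:0}, a contradiction), whereas you extract both the orbit statement and the identification $\mu\simeq\lambda^{g_0}$ from a single invariants computation; the tools and the logical content are the same.
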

\begin{proof}
  If $y=gx$ for
some $g \in G$, we may conjugate $(G_y, \mu)$ to reduce to $y=x$ in which case the proposition implies 
 $\mu \simeq \lambda$.
  If $y $ is not of the form $gx$, then by the reasoning of the proposition, $G_{y,0+} \cap   G_x$  fixes no non-zero vector 
in $\lambda$, which yields a contradiction.
This argument rose out of conversations with R. Deseine.
\end{proof}

  \begin{definition} \label{def:level0}   A  level $0$ cuspidal  $C$-type in $G$ is  a pair $(J,\lambda)$ where  $J=G_x$ for some vertex $x$ of $\mathcal B$, and $\lambda$  is the isomorphism class of an  irreducible $C$-representation of $J$  trivial on $G_{x,0+}$
 and cuspidal as a representation of $G_{x,0}/G_{x,0+}$. If moreover  $\lambda$  is supercuspidal as a representation of $G_{x,0}/G_{x,0+}$, then we say that   $(J,\lambda)$ is supercuspidal. \end{definition}

 By Corollary \ref{cor:0},  a  level $0$ cuspidal  $C$-type $(J,\lambda)$ in $G$ is a cuspidal $C$-type (Definition  \ref{def:ctype}) since $\ind_J^G\lambda$ is irreducible.

   \begin{lemma}\label{le:scalar0}  Let  $x$ be a vertex of $\mathcal B$,  $\lambda\in \Irr_C(G_x)$  and  $\pi \in \Irr_C(G)$. Let $C'/C$ be a field extension, $\lambda' \in \Irr_{C'}(G_x)$   a subquotient of $C'\otimes_C\lambda$, and   $\pi'\in \Irr_{C'}(G)$   a subquotient of $C'\otimes_C \pi$.  

(i) $\pi$ has level $0$   if and only if $\pi'$ has level $0$.
 
(ii)  $(G_x,\lambda)$ is a  level $0$ cuspidal (resp. supercuspidal)  $C$-type in $G$ if and only if  $(G_x,\lambda')$ is a  level $0$ cuspidal (resp. supercuspidal)   $C^a$-type in $G$.  
 \end{lemma}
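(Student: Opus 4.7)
Write $U = G_{x,0+}$, a pro-$p$ subgroup normal in $G_x$ (as $G_x$ normalizes the parahoric $G_{x,0}$ and hence its pro-$p$ radical). Since $c\neq p$, the functor $V\mapsto V^U$ on $\Mod_C(G_x)$ is exact and commutes with scalar extension: $(C'\otimes_C V)^U = C'\otimes_C V^U$. For (i), if $\pi$ has level $0$, then $\pi^U\neq 0$ for some vertex $x$, so $(C'\otimes_C\pi)^U\neq 0$; the category of level $0$ representations is a direct factor of $\Mod_{C'}(G)$, hence stable under subquotients, so $\pi'$ has level $0$. Conversely, if $\pi'^U\neq 0$, exactness of $U$-invariants exhibits $\pi'^U$ as a subquotient of $(C'\otimes_C\pi)^U = C'\otimes_C\pi^U$, forcing $\pi^U\neq 0$.

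For (ii) there are three conditions to match: irreducibility (assumed on both sides), triviality of the $U$-action, and cuspidality (resp.\ supercuspidality) of the restriction to $G_{x,0}$ as a representation of $\bar G := G_{x,0}/U$. Triviality of $U$ on $\lambda$ passes to any subquotient of $C'\otimes_C\lambda$; conversely, if $U$ acts trivially on $\lambda'$, then $\lambda'$ is a non-zero subquotient of $(C'\otimes_C\lambda)^U = C'\otimes_C\lambda^U$, forcing $\lambda^U\neq 0$, and since $U$ is normal in $G_x$, $\lambda^U$ is a $G_x$-subrepresentation of the irreducible $\lambda$, so $\lambda^U=\lambda$. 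For cuspidality (assuming $U$-triviality on both sides), the condition $\tau_N=0$ for every unipotent radical $N$ of a proper parabolic of $\bar G$ is linear: $(-)_N$ is exact (since $N$ is a finite $p$-group and $c\neq p$) and commutes with scalar extension. So $\lambda|_{G_{x,0}}$ cuspidal implies $(C'\otimes_C\lambda)|_{G_{x,0}}$ cuspidal, whence the subquotient $\lambda'|_{G_{x,0}}$ is cuspidal.

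For the converse, suppose $\lambda'|_{G_{x,0}}$ is cuspidal. Fix an algebraic closure $\Omega$ of $C'$ and let $C^a$ be the algebraic closure of $C$ in $\Omega$. Pick any irreducible subquotient $\lambda^a$ of $\Omega\otimes_{C'}\lambda'$; by the forward direction applied to the extension $\Omega/C'$, $\lambda^a|_{G_{x,0}}$ is cuspidal. But $\lambda^a$ is also an irreducible subquotient of $\Omega\otimes_C\lambda$, and by Theorem \ref{thm:DT} applied to the admissible $\lambda$, the irreducible subquotients of $C^a\otimes_C\lambda$ form a single $\Aut_C(C^a)$-orbit of absolutely irreducible representations, which extends to an orbit of irreducible $\Omega$-subquotients of $\Omega\otimes_C\lambda$. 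Since an $\Aut_C(C^a)$-twist preserves $\bar G$-cuspidality, every such subquotient has cuspidal restriction to $G_{x,0}$; therefore $(\Omega\otimes_C\lambda|_{G_{x,0}})_N=0$ for all proper $N$, and by faithful flatness $(\lambda|_{G_{x,0}})_N=0$, giving cuspidality of $\lambda|_{G_{x,0}}$. Supercuspidality is handled by the same two-step template: the forward direction follows because parabolic induction commutes with scalar extension and $C'\otimes_C\tau$ is $\tau$-isotypic as a $C$-representation (by the decomposition theorem), adapting the argument of Proposition \ref{prop:c} to the finite reductive group $\bar G$; the converse follows from the $\Aut_C(C^a)$-orbit argument above, as supercuspidality is preserved under $\sigma$-twist.

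The main obstacle is the converse direction for cuspidality: because $\lambda|_{G_{x,0}}$ need not be irreducible, one cannot simply transport cuspidality from a single irreducible $C'$-subquotient $\lambda'$ of $C'\otimes_C\lambda$ back to $\lambda$ by direct descent. The resolution combines the linear characterization by Jacquet modules (which transfers freely across scalar extensions) with the $\Aut_C(C^a)$-conjugacy of all irreducible subquotients of $C^a\otimes_C\lambda$ supplied by the decomposition theorem.
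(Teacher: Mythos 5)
Your proof is correct in substance but considerably longer than necessary, and the supercuspidal converse is under-argued. The observation that drives the paper's whole proof --- which you mention in passing near the end but never systematically exploit --- is that upon restriction of scalars to $C$, the subquotient $\lambda'$ is a direct sum of copies of $\lambda$ (and likewise $\pi'$ of $\pi$): indeed $C'\otimes_C\lambda$ is $\lambda$-isotypic over $C$, and a subquotient of an isotypic semisimple $C[G_x]$-module is again isotypic. This single fact gives (i) at once, gives both directions of the $G_{x,0+}$-triviality transfer in one line, and gives the cuspidality transfer by the one-line remark that $(\lambda'|_{G_{x,0}})_N$, computed over $C$, is a direct sum of copies of $(\lambda|_{G_{x,0}})_N$, so one vanishes iff the other does. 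The (super)cuspidality equivalence in (ii) then reduces, on the finite reductive group $G_{x,0}/G_{x,0+}$, to Proposition \ref{prop:c} applied to the irreducible constituents of $\lambda|_{G_{x,0}}$ and $\lambda'|_{G_{x,0}}$ (all $G_x$-conjugate by Clifford theory), and the paper records that \S\ref{ss:c} holds for finite reductive groups. By contrast, your converse argument for cuspidality via an algebraic closure $\Omega\supset C'$, the $\Aut_C(C^a)$-orbit supplied by Theorem \ref{thm:DT}, and faithful flatness is a valid but noticeably heavier detour to the same conclusion.

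The one place your write-up does not hold together as stated is the supercuspidal converse. You claim it ``follows from the $\Aut_C(C^a)$-orbit argument above,'' but that argument proved the vanishing of $(-)_N$, a linear condition, whereas supercuspidality of $\lambda|_{G_{x,0}}$ is a property of its irreducible constituents and does not follow from the same Jacquet-module computation. To close the gap, argue as the paper implicitly does: fix an irreducible constituent $\mu$ of $\lambda|_{G_{x,0}}$, choose an irreducible constituent $\nu$ of $\lambda'|_{G_{x,0}}$ that is a subquotient of $C'\otimes_C\mu$ (one exists, since $\lambda'|_{G_{x,0}}\subset C'\otimes_C\lambda|_{G_{x,0}}=\bigoplus_i C'\otimes_C\mu_i$ and the $\mu_i$ are all $G_x$-conjugate), and apply Proposition \ref{prop:c} to $\mu$ and $\nu$ directly, in both directions. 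The ``main obstacle'' you identify at the end is not really an obstacle once the $\lambda$-isotypicity of $\lambda'$ over $C$ is put to work.
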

 
\begin{proof} 
(i) $\pi^{G_{x,0+}}\neq 0$  if and only   if $(\pi')^{G_{x,0+}}\neq 0$ (\cite{HV19}, III.1).  
 
 (ii)  As $C$-representations,  $\lambda'$  is a direct sum of 
 representations isomorphic to $\lambda $ (because $C'\otimes_C \lambda$ is). So
 $\lambda$   is  trivial on $G_{x,0+}$  if and only if $\lambda'$  does.  
  In  \S \ref{ss:c} which is valid for finite reductive groups, we  saw that $\pi$    is cuspidal  if and only if $\pi'$  is cuspidal, similarly for supercuspidal (Proposition \ref{prop:c}).  We deduce that    $\lambda$ is the inflation of a cuspidal  (resp. supercuspidal) representation of $G_{x,0}/G_{x,0+}$ 
if and only if $\lambda'$  does.  
   \end{proof}

\begin{theorem} \label{thm:level0} The set $\mathfrak X(0)$  of level $0$ cuspidal  $C$-types in $G$ satisfies intertwining, 
unicity, $\mathcal Z (0)$-exhaustion, and  $ \Aut (C)$-stability.  
\end{theorem}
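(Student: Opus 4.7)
The plan is to dispatch the first three properties using results already established in this section, and then reduce $\mathcal Z(0)$-exhaustion to the algebraically closed case via the base change machinery developed in Sections \ref{s:1} and \ref{ss:3.2}.

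First, I will observe that intertwining is exactly the content of Corollary \ref{cor:0}: for any $(G_x,\lambda) \in \mathfrak X(0)$ that corollary gives $\End_{C[G]}(\ind_{G_x}^G \lambda) = \End_{C[G_x]}(\lambda)$. Unicity is immediate from Corollary \ref{cor:1}. For $\Aut(C)$-stability, I take $(G_x,\lambda) \in \mathfrak X(0)$ and $\sigma \in \Aut(C)$: the open subgroup $G_x$ is unchanged, triviality of $\sigma(\lambda)$ on $G_{x,0+}$ follows from that of $\lambda$, and since scalar restriction along $\sigma$ commutes with parabolic induction and preserves formation of subquotients, the finite reductive analog of Proposition \ref{prop:c} (which the paper notes is valid in that setting) shows that $\sigma$ preserves cuspidality of a representation of the finite reductive quotient $G_{x,0}/G_{x,0+}$.

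For $\mathcal Z(0)$-exhaustion, the key step, I take $\pi \in \mathcal Z(0)$ and fix an irreducible subquotient $\pi^a$ of $C^a \otimes_C \pi$. By Lemma \ref{le:scalar0}(i) and Proposition \ref{prop:c}, $\pi^a$ is a level $0$ irreducible cuspidal $C^a$-representation. I then invoke the algebraically closed case, proved for $C^a = \mathbb C$ by \cite{M99,MP96} and extended to algebraically closed $C^a$ of any characteristic $\neq p$ in \cite{V00}: there exist a vertex $x \in \mathcal B$ and a level $0$ cuspidal $C^a$-type $(G_x,\lambda^a)$ with $\pi^a \simeq \ind_{G_x}^G \lambda^a$. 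Applying Theorem \ref{thm:DT} as in Section \ref{ss:2.6}, I obtain a unique isomorphism class $\lambda$ of irreducible $C$-representations of $G_x$ such that $\lambda^a$ is a subquotient of $C^a \otimes_C \lambda$; Lemma \ref{le:scalar0}(ii) then yields $(G_x,\lambda) \in \mathfrak X(0)$. The induced representation $\ind_{G_x}^G \lambda$ is irreducible by Corollary \ref{cor:0}, and since $\pi^a$ is an irreducible subquotient both of $C^a \otimes_C \pi$ and of $C^a \otimes_C \ind_{G_x}^G \lambda = \ind_{G_x}^G(C^a \otimes_C \lambda)$, Corollary \ref{cor:d} forces $\pi \simeq \ind_{G_x}^G \lambda$. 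The main obstacle in this argument is the input from the algebraically closed case, which rests on the Moy--Prasad theory producing a vertex $x$ with a cuspidal component in $\pi^{G_{x,0+}}$; everything downstream is formal once that input and the decomposition theorem are available.
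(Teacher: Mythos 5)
Your proof is correct and follows essentially the same route as the paper: intertwining and unicity come from Corollaries \ref{cor:0} and \ref{cor:1}, $\Aut(C)$-stability from the observation that field automorphisms preserve cuspidality of representations of the finite reductive quotient $G_{x,0}/G_{x,0+}$ (though the cleanest way to see this is that the $\sigma$-twist commutes with taking $N$-coinvariants, rather than via Proposition \ref{prop:c}, which is phrased for an extension $C'/C$ rather than an automorphism), and $\mathcal Z(0)$-exhaustion by descent from the algebraically closed case. For the exhaustion step the paper simply invokes Theorem \ref{thm:cusptype} (observing that $J=G_x$ is its own $G$-normalizer so the normalizer-replacement step is trivial), while you unpack that theorem explicitly through Theorem \ref{thm:DT}, Lemma \ref{le:scalar0}, Corollary \ref{cor:0} and Corollary \ref{cor:d}; these are the same ingredients, and your version makes the mechanism of the reduction more transparent.
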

 \begin{proof}   The set $\mathfrak X(0)$  satisfies intertwining by Corollary \ref{cor:0} and unicity by Corollary \ref{cor:1}; it is   $ \Aut (C)$-stable as cuspidality is preserved under the action of   $ \Aut (C)$ on $C$-representations of a finite reductive  group.   

When $C$ is algebraically closed,  $\mathcal Z (0)$-exhaustion (and unicity) is   in \cite{M99} and \cite{MP96} when $C=\mathbb C$ and the arguments of \cite{MP96}  carry over to $C$; exhaustivity  was implicit in \cite{V00}, and is established by Fintzen at the end of \cite{F2} (the hypothesis on  $G$ of \cite{F2}  plays no r\^ole for level $0$ representations). 

When  $C$ is not algebraically closed, $\mathcal Z (0)$-exhaustion  follows from
$\mathcal Z (0)$-exhaustion over $C^a$ by  Theorem \ref{thm:cusptype}   noting that the group $J=G_x$ is its  own $G$-normalizer.
\end{proof}

\begin{corollary} \label{cor:level0}  Any irreducible cuspidal $C$-representation $\pi$ of $G$ of level $0$ is compactly induced from a  level $0$  cuspidal $C$-type $(J,\lambda)$ in $G$ unique modulo $G$-conjugation; it satisfies intertwining $\End_{C[G]} \pi \simeq \End_{C[J]}\lambda$.
\end{corollary}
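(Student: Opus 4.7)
The plan is to deduce the corollary as a direct unpacking of the four properties—intertwining, unicity, $\mathcal Z(0)$-exhaustion, and $\Aut(C)$-stability—established for the set $\mathfrak X(0)$ of level $0$ cuspidal $C$-types in Theorem \ref{thm:level0}. No new work is required; one only needs to translate each property into a statement about a given irreducible cuspidal $C$-representation $\pi$ of level $0$.

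First, I would invoke $\mathcal Z(0)$-exhaustion: by definition of the notion (Definition \ref{def:type3}), since the isomorphism class of $\pi$ belongs to $\mathcal Z(0)$, there exists $(J,\lambda)\in\mathfrak X(0)$ with $\pi\simeq\ind_J^G\lambda$. This gives the existence statement. Next, I would invoke unicity: if $(J,\lambda)$ and $(J',\lambda')$ are two elements of $\mathfrak X(0)$ with $\ind_J^G\lambda\simeq\ind_{J'}^G\lambda'\simeq\pi$, then there is $g\in G$ conjugating the first pair to the second; this gives uniqueness modulo $G$-conjugation. Finally, I would invoke the intertwining property: by Definition \ref{def:type2}, the canonical map $\End_{C[J]}\lambda\to\End_{C[G]}(\ind_J^G\lambda)$ from \eqref{eq:compactind} is an isomorphism, which translates to the asserted isomorphism $\End_{C[G]}\pi\simeq\End_{C[J]}\lambda$.

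Since all three ingredients are already supplied by Theorem \ref{thm:level0}, there is no real obstacle; the only subtlety is to note that $\Aut(C)$-stability plays no direct role in the corollary itself (it was used in the proof of the theorem, via the reduction from $C^a$ through Theorem \ref{thm:cusptype}, to derive $\mathcal Z(0)$-exhaustion over a non-algebraically-closed field). Thus the proof is a one-line deduction: apply Theorem \ref{thm:level0} to the given $\pi$ and extract successively the exhaustion, unicity, and intertwining assertions.
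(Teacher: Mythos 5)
Your proposal is correct and matches the paper's (implicit) argument exactly: the corollary is a direct unwinding of the exhaustion, unicity, and intertwining properties established for $\mathfrak X(0)$ in Theorem~\ref{thm:level0}, with $\Aut(C)$-stability having already been consumed in the proof of the theorem. Nothing further is required.
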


\section{Supercuspidality in level $0$}\label{s:4}
 
 Let  $(J,\lambda)$ be a level $0$ cuspidal $C$-type of $G$ inducing a level $0$ cuspidal irreducible representation  $\pi =\ind_J^G \lambda$ of $G$.  
Our goal is to  prove:

 \begin{theorem}\label{thm:sct0}  $(J,\lambda)$ is supercuspidal if and only if    $\pi $  is supercuspidal. 
\end{theorem}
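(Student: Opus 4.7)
The plan is to test supercuspidality on both sides via injective hulls, following the strategy sketched in the introduction. Fix a torsion-free subgroup $Z^\sharp$ of $Z$ with $Z = Z^\sharp Z^0$, and an irreducible $C$-representation $\omega$ of $Z^\sharp$ through which both $\lambda$ and $\pi$ act. In the full subcategories $\Mod_{C,\omega}(J)$ and $\Mod_{C,\omega}(G)$ of smooth $C$-representations on which $Z^\sharp$ acts via $\omega$-multiples, denote by $I_{\lambda,\omega}$ and $I_{\pi,\omega}$ injective hulls of $\lambda$ and $\pi$ respectively. Two criteria will pilot the argument: \emph{(a)} since level $0$ representations of $G$ satisfy the second adjunction (theorem of Dat), $\pi$ is supercuspidal if and only if $I_{\pi,\omega}$ is cuspidal; \emph{(b)} for the finite reductive group $\mathcal G = J^0/J^1$, with $J^0 = G_{x,0}$ and $J^1 = G_{x,0+}$, an irreducible $C$-representation of $\mathcal G$ is supercuspidal if and only if its injective hull in $\Mod_C(\mathcal G)$ is cuspidal.

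For the forward direction, suppose $(J,\lambda)$ is supercuspidal, meaning that $\lambda$, viewed as a representation of $\mathcal G$ through $J^0 \to \mathcal G$, is supercuspidal. Criterion \emph{(b)} provides a cuspidal injective hull of $\lambda$ as a $\mathcal G$-module; inflating back to $J^0$ (trivially on $J^1$) and Clifford-extending along the finitely generated abelian quotient $J/J^0 Z^\sharp$ yields $I_{\lambda,\omega}$, still cuspidal when viewed on $J$. The key step is to show that $\ind_J^G I_{\lambda,\omega}$ is cuspidal on $G$ and realises $I_{\pi,\omega}$. Cuspidality follows because the smooth coefficients of $\ind_J^G I_{\lambda,\omega}$ remain $Z$-compactly supported (Proposition \ref{prop:Zcompact2}); injectivity in $\Mod_{C,\omega}(G)$ comes from Frobenius reciprocity combined with exactness of $\Res_J^G$, and the socle is $\pi$ thanks to the decomposition \eqref{eq:compactind2}. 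Hence $I_{\pi,\omega}$ is cuspidal and $\pi$ is supercuspidal by \emph{(a)}.

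For the converse, assume $\pi$ is supercuspidal, so $I_{\pi,\omega}$ is cuspidal by \emph{(a)}. Adjunction embeds $I_{\lambda,\omega}$ as a $J$-subrepresentation of $\Res_J^G I_{\pi,\omega}$. Restriction from $G$ to $J$ preserves cuspidality: a non-vanishing Jacquet module at a parabolic of $\mathcal G$ of a $J$-subrepresentation of $I_{\pi,\omega}$ would lift to a non-vanishing Jacquet module at the corresponding parabolic of $G$, contradicting cuspidality of $I_{\pi,\omega}$. Hence $I_{\lambda,\omega}$, viewed as a $\mathcal G$-representation through $J^0 \to \mathcal G$ (note that $J^1$ is pro-$p$ and $c \neq p$, so $J^1$-invariants are exact), is cuspidal; by \emph{(b)} every irreducible subquotient, in particular the socle $\lambda$, is supercuspidal. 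Thus $(J,\lambda)$ is supercuspidal.

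The hard part will be proving that $\ind_J^G I_{\lambda,\omega}$ is genuinely an injective hull of $\pi$ in $\Mod_{C,\omega}(G)$: compact induction does not in general preserve injectivity, so one must use cuspidality to place $I_{\lambda,\omega}$ in a block where $\ind_J^G$ and $\Ind_J^G$ coincide (cf.\ Remark \ref{re:compactind}\,b)), and then leverage exactness of $(-)_N$ (which holds since $c \neq p$) to upgrade the adjoint isomorphism into the required Hom-exactness upstairs. A secondary delicate point is criterion \emph{(b)} for $\mathcal G$ in the possibly modular setting where $c$ divides $|\mathcal G|$; this rests on the Harish-Chandra isolation of the block containing a supercuspidal irreducible $C$-representation.
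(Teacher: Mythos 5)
Your proposal adopts the same global strategy as the paper: test supercuspidality of $\lambda$ via Hiss's criterion on the finite reductive quotient $\mathcal G=J^0/J^1$, test supercuspidality of $\pi$ via the second adjunction for level $0$ (Dat), and show the two tests agree by transporting injective hulls through compact induction. The forward direction is correctly sketched and the acknowledged ``hard part'' is exactly where the paper does real work: it shows that for every irreducible subquotient $\mu$ of $I_{\lambda,\omega}$ one has $\ind_J^G\mu=\Ind_J^G\mu$ via the level-$0$ structure of $\ind_J^G\mu$ (Proposition~\ref{prop:iso}), and then by d\'evissage concludes $\ind_J^GI_{\lambda,\omega}=\Ind_J^GI_{\lambda,\omega}$, hence injective. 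Your plan to invoke Remark~\ref{re:compactind}\,b) and Proposition~\ref{prop:Zcompact2} is the right move, but those need the admissibility/cuspidality of $\ind_J^G I_{\lambda,\omega}$ as an input, which is precisely what requires the reduction to irreducible subquotients.

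The genuine gap is in your converse. You assert that a nonzero Jacquet module of a $J$-subrepresentation of $I_{\pi,\omega}^{J^1}$ at a proper parabolic of $\mathcal G$ ``would lift'' to a nonzero Jacquet module of $I_{\pi,\omega}$ at a proper parabolic of $G$. This is a form of the Moy--Prasad compatibility between $\mathcal G$-Jacquet functors and $G$-Jacquet functors on $J^1$-invariants, and it is not proved anywhere in the paper; the paper deliberately sidesteps it by a different route: every irreducible subquotient $\tau$ of $I_{\pi,\omega}$ is a cuspidal level-$0$ representation, hence equals $\ind_{G_y}^G\mu$ for a level-$0$ cuspidal type (Corollary~\ref{cor:level0}), and one then reads off (Proposition~\ref{prop:iso}, Corollary~\ref{cor:1}) that $\tau^{J^1}$ is either zero or the cuspidal $\mu$; a short lemma then upgrades this to $I_{\pi,\omega}^{J^1}$ itself being cuspidal. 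If you insist on the Jacquet-lifting route, you must prove the compatibility statement. A second, shared gap: applying criterion (b) to conclude $\lambda$ supercuspidal from $I_{\lambda,\omega}$ cuspidal requires relating the injective hull of $\lambda$ in $\Mod_C(J,\omega)$ to injective hulls of irreducibles of $\mathcal G$ in $\Mod_C(\mathcal G)$. This is the content of Propositions~\ref{prop:Ilambda} and~\ref{prop:H0}, which use the symmetry of the Hecke algebra of $J/\Ker\theta$ and a Clifford-theory restriction argument to show that $I_{\lambda,\omega}|_{J^0}$ is a sum of conjugates of the finite-group injective hull $I_\rho$; without that, Hiss's criterion does not directly apply to $I_{\lambda,\omega}$.
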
  

The equivalence  will be a  consequence of  Theorems \ref{thm:sc01} (for only if) and \ref{thm:sc0} (for if)  below in  \S\ref{ss:5}. We use injective hulls as our main tool.

\subsection{Injectives in the category of representations with a fixed action of the center}\label{ss:11}  Only in this subsection, $C$ is a field of any  characteristic. We fix  a closed subgroup  $Z^\flat$ of the center $Z$.  The abelian  group $Z^\flat$ is almost finitely generated (Definition \ref{def:almostfinite}).
   Let $\omega$ be an irreducible smooth $C$-representation of $Z^\flat$. The dimension of $\omega$  is  finite   (Proposition \ref{prop:finitedim} applied to $V=\omega, G=Z=Z^\flat$).  The  dimension  of $\omega$ is $1$ if and only if  $\omega$  is absolutely irreducible if and only if 
   $\End_{C[Z^\flat]}\omega=C$.  
   
   \begin{remark}\label{re:C'} The $C$-algebra $C[Z^\flat]$ acts on $\omega$ via a quotient field $C'$ which is a finite extension of $C$, and   $\End_{C[Z^\flat]}\omega=C'$ (as $Z^\flat$ is abelian).  
\end{remark}
   
   For  $\tau \in \Mod_C(G)$,  the    $\omega$-isotypic part  $\tau_\omega$   of $\tau$ is the sum of the subrepresentations of $\tau|_{Z^\flat}$ isomorphic to $\omega$. Because $Z^\flat$ is central in $G$, $\tau_\omega$ is a subrepresentation of $\tau$.
The representation $\tau$ is called $\omega$-isotypic if  $\tau=\tau_\omega$.  Any irreducible $C$-representation $\pi$ of $G$ is  $\omega$-isotypic for some $\omega\in \Irr_C(Z^\flat)$.   Let  $\Mod_C(G,\omega) $ denote the category   of $\omega$-isotypic representations in $\Mod_C(G)$ and  $\Irr_C(G,\omega)=\Irr_C(G)\cap  \Mod_R(G,\omega)$.     For  a  parabolic subgroup $P=MN$  of $G$,  the parabolic induction $\Ind_P^G$ and its adjoints give functors between $\Mod_C(G,\omega)$ and   $\Mod_C(M,\omega)$.

  \begin{lemma}\label{le:injomega} $\Mod_C(G)$ and  $\Mod_C(G,\omega) $ are Grothendieck  abelian categories.
  \end{lemma}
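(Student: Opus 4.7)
The plan is to reduce the claim to two ingredients: that $\Mod_C(G)$ itself is Grothendieck, and that $\Mod_C(G,\omega)$ is a suitably stable subcategory of it with an explicit generator.

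First, for $\Mod_C(G)$, I would invoke the result of \cite{V13} already cited in Remark~\ref{re:scKn}: the category is abelian, has all small colimits computed at the level of underlying $C$-vector spaces, filtered colimits are exact, and $\bigoplus_{n} C[K_n\backslash G]$ is a generator for any decreasing sequence $(K_n)$ of open compact subgroups with trivial intersection. Nothing new needs to be done in this case.

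For $\Mod_C(G,\omega)$, I would first exploit the fact that $\omega$ is a simple $C[Z^\flat]$-module: a smooth $C$-representation $V$ of $Z^\flat$ is $\omega$-isotypic if and only if it is a (possibly infinite) direct sum of copies of $\omega$ as a $C[Z^\flat]$-module. From this it follows at once that the property of being $\omega$-isotypic is preserved under taking subrepresentations, quotient representations, arbitrary direct sums, and filtered colimits (the last because any vector in a filtered colimit comes from a single term). Hence $\Mod_C(G,\omega)$ is an abelian full subcategory of $\Mod_C(G)$ in which small colimits exist and agree with those in $\Mod_C(G)$; AB5 is then inherited.

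The only real content is the construction of a generator. For each open compact subgroup $K\subset G$ with $K\cap Z^\flat\subset \ker\omega$, the central subgroup $Z^\flat$ commutes with $K$, so $\omega$ extends to a smooth representation $\tilde\omega_K$ of $Z^\flat K$ on which $K$ acts trivially; set
\[
P_K \;=\; \ind_{Z^\flat K}^G \tilde\omega_K,
\]
which lies in $\Mod_C(G,\omega)$ since $Z^\flat$ is central. Taking a cofinal sequence of such $K_n$ I would form $P=\bigoplus_n P_{K_n}$. To verify that $P$ is a generator, take $0\ne v\in \pi\in \Mod_C(G,\omega)$ and choose $K$ small enough that $K$ fixes $v$ and $K\cap Z^\flat\subset\ker\omega$. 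The cyclic $C[Z^\flat]$-submodule $W=C[Z^\flat]v\subset\pi$ is $\omega$-isotypic, hence of the form $\omega^{\oplus I}$; but a cyclic module over the commutative ring $C[Z^\flat]$ has commutative endomorphism ring, whereas $\End_{C[Z^\flat]}(\omega^{\oplus I})=M_{|I|}(C')$ with $C'=\End_{C[Z^\flat]}\omega$, forcing $|I|=1$, so $W\cong\omega$. Since $K$ fixes $v$ and centralizes $Z^\flat$, $K$ acts trivially on $W$, and $W\cong\tilde\omega_K$ as $C[Z^\flat K]$-modules. Frobenius reciprocity for compact induction then supplies a $C[G]$-map $P_K\to\pi$ whose image contains $v$, finishing the proof.

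The only step requiring care is the last one: identifying the cyclic $\omega$-isotypic subrepresentation generated by a vector with a single copy of $\omega$, and then recognising it as the $C[Z^\flat K]$-restriction of $\tilde\omega_K$ so as to apply Frobenius reciprocity; once this is done the verification is routine.
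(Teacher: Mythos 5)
Your argument is correct, and it is the natural elaboration of what the paper leaves implicit. The paper disposes of the lemma in one sentence ("the proof for $\Mod_C(G)$ (\cite{V13} Lemma 3.2) extends"), where the cited proof exhibits the generator $\bigoplus_n C[K_n\backslash G]$; your construction of $P=\bigoplus_n \ind_{Z^\flat K_n}^G\tilde\omega_{K_n}$ is precisely the $\omega$-twisted analogue, and your verification that $\Mod_C(G,\omega)$ is a full abelian subcategory closed under the relevant colimits in $\Mod_C(G)$ is what makes AB5 transfer. The one place you could streamline is the identification $W=C[Z^\flat]v\cong\omega$: since $\pi$ is $\omega$-isotypic, $C[Z^\flat]$ acts on all of $\pi$ through its quotient field $C'=\End_{C[Z^\flat]}(\omega)$ (Remark \ref{r:DT2}, Remark \ref{re:C'}), so $W=C'v$ is a cyclic $C'$-vector space, hence one-dimensional over $C'$ and thus a single copy of $\omega$; this bypasses the matrix-ring comparison, though your version via commutativity of $\End$ of a cyclic module over the commutative ring $C[Z^\flat]$ is equally valid. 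All other steps — the openness of $\ker\omega$ allowing $K\cap Z^\flat\subset\ker\omega$, the extension $\tilde\omega_K$, $P_K\in\Mod_C(G,\omega)$ by centrality of $Z^\flat$, and Frobenius reciprocity for $\ind$ as a left adjoint — are sound.
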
 
 \begin{proof} The proof for  $\Mod_C(G)$ (\cite{V13} Lemma 3.2)  extends to  $\Mod_C(G,\omega) $.   \end{proof}
Recall that a Grothendieck category admits sufficiently many injectives  (any object embeds in an injective object)  and that every object has an injective hull (an essential extension which is injective) 
(\cite{L99} \S 3D).  

\begin{notation} For $\tau\in  \Mod_C(G, \omega)$, we denote by  $I_\tau$  an injective hull of $\tau$   in  $ \Mod_C(G)$ and  by $I_{\tau,\omega}$  an injective hull   in $ \Mod_C(G, \omega)$.
  \end{notation}

    \begin{lemma}\label{le:inomega} Let  $\tau\in  \Mod_C(G, \omega)$. The $\omega$-isotypic part of an injective hull   of $\tau$ in 
$  \Mod_C(G)$  is an injective hull     of  $\tau$ in $\Mod_C(G,\omega) $.
  \end{lemma}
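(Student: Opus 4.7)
The plan is to verify directly that $J := (I_\tau)_\omega$ satisfies the three defining properties of an injective hull of $\tau$ in $\Mod_C(G,\omega)$: it contains $\tau$, it is injective in that subcategory, and it is an essential extension of $\tau$.

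First, since $\tau$ itself is $\omega$-isotypic and $\tau \hookrightarrow I_\tau$, the image of $\tau$ inside $I_\tau$ is $\omega$-isotypic and hence lies inside $(I_\tau)_\omega = J$. So $\tau \subset J$.

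Next, I would show injectivity of $J$ inside $\Mod_C(G,\omega)$ directly. Given a monomorphism $V \hookrightarrow W$ in $\Mod_C(G,\omega)$ and a morphism $f \colon V \to J$, I compose with the inclusion $J \hookrightarrow I_\tau$ and use the injectivity of $I_\tau$ in $\Mod_C(G)$ to extend this to a $G$-morphism $\widetilde f \colon W \to I_\tau$. Because $W$ is $\omega$-isotypic, so is $\widetilde f(W)$, and therefore $\widetilde f(W) \subset (I_\tau)_\omega = J$. Thus $\widetilde f$ is a morphism $W \to J$ extending $f$, which proves injectivity of $J$ in $\Mod_C(G,\omega)$. (Equivalently, the inclusion $\Mod_C(G,\omega) \hookrightarrow \Mod_C(G)$ is fully faithful and exact, with right adjoint the functor $(-)_\omega$; being right adjoint to an exact functor, $(-)_\omega$ preserves injectives.)

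Finally, for essentiality, let $U \subset J$ be a nonzero subrepresentation in $\Mod_C(G,\omega)$. Then $U$ is also a nonzero subrepresentation of $I_\tau$ in $\Mod_C(G)$, and since $I_\tau$ is an essential extension of $\tau$ in $\Mod_C(G)$, we conclude $U \cap \tau \neq 0$. Hence $J$ is essential over $\tau$ in $\Mod_C(G,\omega)$. Combining the three properties gives the claim. There is no real obstacle here; the only point to note is the trick in the injectivity argument of factoring through the ambient injective $I_\tau$ and then automatically landing in the $\omega$-isotypic part because the source is $\omega$-isotypic.
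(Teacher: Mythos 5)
Your proof is correct and follows essentially the same route as the paper: both hinge on the observation that $\operatorname{Hom}_{C[G]}(V,\pi)=\operatorname{Hom}_{C[G]}(V,\pi_\omega)$ for $\omega$-isotypic $V$ (so $(-)_\omega$ preserves injectives), combined with the fact that essentiality over $\tau$ in $I_\tau$ passes to the subobject $(I_\tau)_\omega$. The paper packages the essentiality step as showing a complementary summand of $I_{\tau,\omega}$ inside $(I_\tau)_\omega$ must vanish, while you verify essentiality directly, but these are the same argument.
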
 
  \begin{proof}  If  $\pi \in \Mod_C(G)$ is injective, its  $\omega$-isotypic part  $\pi_\omega$   is injective in  $\Mod_C(G, \omega)$,  as $\Hom_{C[G]}(\tau, \pi)=\Hom_{C[G]}(\tau, \pi_\omega)$ for any $\tau \in \Mod_C(G,\omega)$.
  As a consequence   $I_{\tau, \omega}$  is isomorphic to a direct summand of the $\omega$-isotypic part of $I_\tau$.
  A supplement $ I'$  has a trivial intersection with $\tau$. 
   As  $I_\tau$ is an essential extension of $\tau$ containing $ I' $ we have  $I'=0$, hence the result. \end{proof}

\subsection{Supercuspidality and injective hulls} Let us revert to our running hypothesis that  $C$ has characteristic different from $p$. Supercuspidality can be seen on the injective hull; this was proved by Hiss for  finite reductive groups (\cite{Hiss96} Proposition2.3):
 \begin{lemma} \label{le:Hiss} When $G$ is a finite reductive group in characteristic $p$, an irreducible $C$-representation $\pi$ of $G$ is supercuspidal if and only if an injective hull $I_\pi$ of $\pi \in \Mod_C(G)$ is cuspidal.
 \end{lemma}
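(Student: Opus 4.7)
The plan is to exploit the two adjunctions of the parabolic induction functor together with the essential extension property of the injective hull. Since $G$ is finite and $c\neq p$, for every parabolic $P=MN$ of $G$ the functor $\Ind_P^G$ admits the $N$-coinvariants $(-)_N$ as left adjoint and the $N$-invariants $(-)^N$ as right adjoint, and the idempotent $e\in C[N]$ recalled at the beginning of Section~\ref{s:3} identifies these two functors; I will use this identification and the two Hom-adjunctions throughout without further mention.

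For the direction ``$I_\pi$ cuspidal implies $\pi$ supercuspidal'', I will argue by contraposition. Suppose $\pi$ is a subquotient of $\Ind_P^G\rho$ for some proper $P=MN$ and $\rho\in\Mod_C(M)$, say $\pi\simeq W_2/W_1$ with $W_1\subseteq W_2\subseteq \Ind_P^G\rho$. Then $\pi\hookrightarrow\Ind_P^G\rho/W_1$, and injectivity of $I_\pi$ in $\Mod_C(G)$ extends the embedding $\pi\hookrightarrow I_\pi$ to a morphism $\Ind_P^G\rho/W_1\to I_\pi$, which is non-zero since its restriction to $\pi$ is. Composing with the projection produces a non-zero map $\Ind_P^G\rho\to I_\pi$, and the right adjunction converts it into a non-zero $\rho\to(I_\pi)^N\simeq(I_\pi)_N$, contradicting the assumption that $(I_\pi)_N=0$ for every proper $P$.

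For the converse, I will again argue by contraposition. Assume $\pi$ supercuspidal and suppose, for contradiction, that $(I_\pi)_N\neq 0$ for some proper $P=MN$. Setting $\sigma:=(I_\pi)^N\simeq(I_\pi)_N$, the right adjunction applied to the identity $\sigma\to\sigma$ supplies a non-zero $C[G]$-morphism $\Ind_P^G\sigma\to I_\pi$; let $Y$ be its image, a non-zero subrepresentation of $I_\pi$. Because $I_\pi$ is an essential extension of $\pi$ we have $Y\cap\pi\neq 0$, hence $\pi\subseteq Y$ by irreducibility of $\pi$. Thus $\pi$ is a subquotient of $\Ind_P^G\sigma$ with $P$ proper, contradicting supercuspidality.

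The argument is essentially formal once the two adjunctions and the essential extension property of the injective hull are in place, and I foresee no genuine obstacle: the only delicate point is that the two implications use the \emph{two different} adjunctions (the right adjunction to convert a map out of $\Ind_P^G\rho$ into one landing in $(-)^N$, and the right adjunction again to realise any non-zero element of $(I_\pi)^N$ as coming from an induced representation), together with the essentiality of $\pi\hookrightarrow I_\pi$ which links the isomorphism classes of subrepresentations of $I_\pi$ back to $\pi$ itself.
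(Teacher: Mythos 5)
Your proof is correct and follows essentially the same route as the paper's (which imitates Hiss's original argument): the crux is the equivalence ``$\pi$ is a subquotient of $\tau$ iff $\Hom_{C[G]}(\tau,I_\pi)\neq 0$'', established in one direction by injectivity of $I_\pi$ and in the other by essentiality of $\pi\subset I_\pi$, combined with the adjunction $\Hom_{C[G]}(\Ind_P^G\rho,I_\pi)\simeq\Hom_{C[M]}(\rho,(I_\pi)^N)$ and the identification $(-)^N\simeq(-)_N$ valid for finite reductive groups when $c\neq p$. The paper packages this as the single biconditional \eqref{eq:scinj} and a chain of equivalences, whereas you unfold it into the two contrapositives, but the ingredients and their roles are identical.
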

Hiss formulates this result in terms of projective cover, but in that case $I_\pi$ is a projective cover.
We imitate the proof of Hiss to show:

     \begin{proposition}\label{prop:scI} Let  $\pi \in \Irr_C(G, \omega) $.  Then   $\pi$ is supercuspidal if and only if $I_\pi$ is right cuspidal.
        If the second adjunction holds for $(G,C)$ or if $\pi$ has level $0$, then  $\pi$ is supercuspidal if and only if $I_{\pi, \omega}$ is  cuspidal.
       \end{proposition}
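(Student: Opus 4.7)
The plan is to prove the two equivalences separately. For the first, unconditional one---that $\pi$ is supercuspidal if and only if $I_\pi$ is right cuspidal---I argue as follows. If $\pi$ is not supercuspidal, there exist a proper parabolic $P=MN$, some $\rho \in \Mod_C(M)$, and a subrepresentation $W \subset \Ind_P^G \rho$ admitting $\pi$ as a quotient. Injectivity of $I_\pi$ extends the composite $W \twoheadrightarrow \pi \hookrightarrow I_\pi$ to a non-zero morphism $\Ind_P^G\rho \to I_\pi$, so $\Hom_{C[G]}(\Ind_P^G\rho, I_\pi) \neq 0$ and $I_\pi$ is not right cuspidal. Conversely, a non-zero morphism $f \colon \Ind_P^G\rho \to I_\pi$ has non-zero image, which, being a non-zero subrepresentation of the essential extension $I_\pi$ of the irreducible $\pi$, must contain $\pi$; hence $\pi$ is a subquotient of $\Ind_P^G\rho$, so is not supercuspidal.

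For the second equivalence I use Lemma \ref{le:inomega} to identify $I_{\pi,\omega}$ with the $\omega$-isotypic part $(I_\pi)_\omega$ of $I_\pi$. Under either hypothesis, cuspidality and right cuspidality coincide via the formula $R_P^G \simeq \delta_P(-)_N$ of the second adjunction---either globally, or (by Dat's theorem) on the level $0$ block of $\Mod_C(G,\omega)$, which is a direct factor and contains $I_{\pi,\omega}$ when $\pi$ has level $0$. The forward direction then reads: since $\pi$ is supercuspidal, the first equivalence gives $I_\pi$ right cuspidal, hence $I_\pi$ cuspidal by this matching; exactness of $(-)_N$ together with $I_{\pi,\omega} \hookrightarrow I_\pi$ yields $(I_{\pi,\omega})_N \hookrightarrow (I_\pi)_N = 0$, so $I_{\pi,\omega}$ is cuspidal.

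For the converse, suppose $I_{\pi,\omega}$ is cuspidal; by the same equivalence it is also right cuspidal. If $\pi$ were not supercuspidal, Remark \ref{re:scirr}---whose hypothesis is precisely what we have assumed---would produce an irreducible $\rho \in \Irr_C(M)$ for a proper Levi $M$ with $\pi$ a subquotient of $\Ind_P^G\rho$. By Schur's lemma $\rho$ is $\omega'$-isotypic for some $\omega'$, and since $\pi$ itself is $\omega$-isotypic, matching the $Z^\flat$-actions forces $\omega' \simeq \omega$, so $\Ind_P^G\rho$ already lives in $\Mod_C(G,\omega)$. The injectivity of $I_{\pi,\omega}$ inside $\Mod_C(G,\omega)$ then supplies a non-zero morphism $\Ind_P^G\rho \to I_{\pi,\omega}$ exactly as in the first equivalence, contradicting right cuspidality.

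The main obstacle is the level $0$ case: the argument requires cuspidality and right cuspidality to coincide for $I_{\pi,\omega}$, which comes from second adjunction, yet that adjunction is only available on the level $0$ block via Dat's theorem. Making this precise requires checking that $I_{\pi,\omega}$ stays in the level $0$ subcategory whenever $\pi$ does---which follows because level $0$ is a direct factor of $\Mod_C(G)$ and the decomposition is compatible with the $\omega$-isotypic one---and then invoking Dat's theorem on the resulting block.
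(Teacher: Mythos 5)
Your proof is correct and follows essentially the same strategy as the paper. The first, unconditional equivalence is identical: both rely on the observation that $\pi$ is a subquotient of $\tau$ if and only if $\Hom_{C[G]}(\tau, I_\pi)\neq 0$, combined with the adjunction $(\Ind_P^G, R_P^G)$. The converse of the second equivalence is also the same: invoke Remark~\ref{re:scirr} to restrict to irreducible $\rho$, match $Z^\flat$-isotypy, and use injectivity of $I_{\pi,\omega}$ in $\Mod_C(G,\omega)$.

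The one place you diverge is the forward direction of the second equivalence. The paper passes right cuspidality from $I_\pi$ to the subobject $I_{\pi,\omega}$ directly, using left exactness of $R_P^G$, and only applies the second adjunction to $I_{\pi,\omega}$ at the very end. You instead apply the second adjunction to $I_\pi$ itself to convert right cuspidality into cuspidality (vanishing of $(I_\pi)_N$), and then use exactness of $(-)_N$ to restrict. This is equivalent, but it silently requires knowing that $I_\pi$ (not merely $I_{\pi,\omega}$) sits in the level $0$ block of $\Mod_C(G)$ in the level $0$ case, while your closing paragraph only verifies this for $I_{\pi,\omega}$. The stronger fact is true---since level $0$ is a direct factor and $I_\pi$ is an essential extension of $\pi$, it cannot have a nonzero component outside the level $0$ block---but you should state it explicitly to close the loop; or, more economically, adopt the paper's order of operations, which only needs the second adjunction for the smaller object $I_{\pi,\omega}$.
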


 \begin{proof}    By Definition  \ref{def:sc},  
   $\pi$ is supercuspidal if and only if
  $\pi$  is not a subquotient of $\Ind_P^G\rho$. 
  for all  proper parabolic subgroups $P=MN$ of $G$ and $\rho\in \Mod_C(M)$.
  We have
 \begin{equation}\label{eq:scinj} \text{ $\pi\in \Irr_C(G )$ is  a subquotient of $\tau \in  \Mod_C(G)$ if and only if $ \Hom _{C[G]}(\tau, I_\pi) \neq 0 $.}
 \end{equation}  
Indeed,  if $f\in \Hom_{C[G]}(\tau, I_\pi)$  is  non-zero   then  $\pi \subset f(\tau)$ hence $\pi$ is a subquotient of $\tau$;  conversely if $\pi$ is a subquotient ot $\tau$, then $\pi$ is  a subrepresentation of  a quotient $\tau'$ of $\tau$. The inclusion of $\pi$ in $I_\pi$ extends to a $R[G]$-map $\tau'  \to I_\pi$ inflating to a $R[G]$-map $\tau\to I_\pi$. 

      By adjunction,  $\pi$ is supercuspidal  if and only if 
 $R^G_P(I_\pi)=0$ for all proper parabolic subgroups $P $ of $G$, which means by definition that  $I_\pi$ is right cuspidal.  Right cuspidality passes to subrepresentations because   $R^G_P$   is left exact, so  $I_\pi$   right cuspidal implies 
  $I_{\pi, \omega} $  right cuspidal.

Conversely,  $I_{\pi, \omega} $  right cuspidal implies  $\pi$ is supercuspidal  when   the second adjunction holds  for $(G,C)$ or $\pi$ has level $0$ because in this case    $\pi$ is supercuspidal if and only if
  $\pi$  is not a subquotient of $\Ind_P^G\rho$ 
  for all  proper parabolic subgroups $P=MN$ of $G$ and  $\rho \in \Irr_C(M)$  (Remark   \ref{re:scirr}). As $\rho \in \Irr_C(M)$ is $\omega_\rho$-isotypic for some $\omega_\rho \in \Irr_C(Z^b)$, 
 the representation $\Ind_P^G\rho$ is also $\omega_\rho$-isotypic.   If $\pi$ is a subquotient of $\Ind_P^G\rho$ with $\rho \in \Irr_C(M)$,  then $\omega=\omega_\rho$, and  $\Hom_{C[G]}(\Ind_P^G\rho, I_\pi)= \Hom_{C[G]}(\Ind_P^G\rho, I_{\pi, \omega})$.
   By adjunction,  we deduce that $\pi$ is supercuspidal if and only if $I_{\pi, \omega}$ is  right cuspidal. Our assumption implies that $I_{\pi, \omega}$ right cuspidal  is equivalent to
   $I_{\pi, \omega}$ cuspidal  (Remark \ref {re:rightcusp}, recalling that level zero representations form a direct factor in $Mod_C(G)$).  \end{proof}  
   
  Recall that a functor between abelian categories having an exact  left adjoint   respects injectives, similarly a functor having an exact  right  adjoint respects projectives (\cite{HS} II.10).

\begin{example}  \label{ss:injpro} a) $\Ind_P^G$ and $R^G_P$  respect injectives (the left adjoint functors  $(-)_N$  and $\Ind_P^G$ are exact),
 and  $(-)_N$ respects projectives (its right adjoint  $\Ind_P^G$ is exact).

b)   Let   $C\to C'$ a field homomorphism. The scalar extension $C'\otimes_C-:\Mod_C(G)\to \Mod_{C'}(G)$  respects projectives  and  the restriction (right adjoint of the extension) respects injectives (they are both exact).

c)  Let $J$ be  an open subgroup of $G$ containing $Z$ and $\omega\in \Irr_C(Z^b)$. 
The restriction $\Res_{J}^{G}:\Mod_C(G, \omega)\to \Mod_C(J, \omega)$ has a right  adjoint  the smooth  induction $\Ind_J^{G} $ and a left adjoint the compact induction  $\ind_{J}^{G}$ (\cite{V96} I.5.7; to see that $\ind_J^G$ and $\Ind_J^G$ preserve $\omega$-isotypic representations,
 use Remark \ref{re:C'}).  The three functors are exact (\cite{V96} I.5.9, I.5.10). The restriction $\Res_{J}^{G}$ respects injectives and projectives,  the compact induction  $\ind_{J}^{G}$ respects projectives and  the smooth induction $\Ind_{J}^{G}$ respects injectives.   

  Let $J^1$ be a normal subgroup of $J$ such that $\omega$  inflates a representation $\omega^0$ of $Z^\flat / (Z^\flat \cap J^1)$.  The  $J^1$-invariant functor $\Mod_C(J,\omega)\to \Mod_C(J/J^1,\omega^0)$ is  right adjoint to 
 the inflation. The inflation  is exact,  preserves injectives, and    identifies $\Mod_C(J/J^1,\omega^0)$ to the  category  $\Mod_C(J,\omega)^{J^1}$ of representations in $ \Mod_C(J,\omega)$  trivial on $J^1$. 

If moreover the pro-order of $J^1$ is invertible in $C$,  the  $J^1$-invariant functor is exact and $\Mod_C(J,\omega)^{J^1}$ is a direct factor of $ \Mod_C(J,\omega)$. So the inflation and the $J^1$-invariant preserve injectives  and projectives, and the inflation preserves injective hulls.

\end{example}

\subsection{Supercuspidality and types}\label{ss:5} In the remaining of Section \ref{s:4} we assume that $Z^\flat=Z^\sharp$.  
    Let  $(J,\lambda)$ be a level $0$ cuspidal $C$-type of $G$ where $\lambda$ is $\omega$-isotypic for $\omega\in \Irr_C(Z^\sharp)$, $I_{\lambda, \omega}$ an injective hull of $\lambda$ in $\Mod_C(J,\omega)$, and $J^0$ and $J^1$ as in  the proof of Proposition \ref{prop:iso}. Put $\pi=\ind_J^G\lambda$.
   
    Since $J^1$ is a pro-$p$ normal subgroup of $J$,  $I_{\lambda, \omega}$ is trivial on $J^1$ and   $\ind_J^GI_{\lambda, \omega}$ is injective in $\Mod_C(G,\omega)$  (Example \ref{ss:injpro} c).  Since $\ind_J^GI_{\lambda, \omega}$ contains $\pi$, 
      the injective hull $I_{\pi,\omega}$ of $\pi$ in $\Mod_C(G,\omega)$ is a direct factor of     $\ind_J^GI_{\lambda, \omega}$.
The compact induction $\ind_J^G$ induces an injective inclusion preserving map from the lattice   of  subrepresentations of $I_{\lambda, \omega}$ to the lattice of subrepresentations of  $\ind_J^GI_{\lambda, \omega}$. 

 \begin{proposition} \label{prop:Ilambda} $I_{\lambda,\omega}$ is finite dimensional, projective, indecomposable with socle and  cosocle  isomorphic to $\lambda$.
  
   $(J,\lambda)$ is supercuspidal if and only if  $I_{\lambda,\omega}$ is cuspidal as a representation of $J^0/J^1$.
 \end{proposition}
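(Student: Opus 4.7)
My plan is to reduce the problem to a finite-dimensional group algebra, read off the structural claim from the theory of symmetric algebras, and then analyze the restriction to $\bar J^0 := J^0/J^1$ by combining Clifford theory with Hiss's lemma (Lemma \ref{le:Hiss}).

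First, since $c \neq p$, the pro-$p$ normal subgroup $J^1$ has pro-order invertible in $C$, so by Example \ref{ss:injpro}(c) the subcategory of $\Mod_C(J,\omega)$ of representations trivial on $J^1$ is a direct factor, and $I_{\lambda,\omega}$ lies in it. Because $Z^\sharp$ is torsion-free while $J^1$ is pro-$p$, and $J/Z$ and $Z/Z^\sharp$ are compact while $J^1$ is open, the group $\bar J := J/(J^1 Z^\sharp)$ is finite. Letting $E$ denote the finite extension of $C$ through which $C[Z^\sharp]$ acts on $\omega$ (Remark \ref{re:C'}), the relevant subcategory is equivalent to $\Mod_E(\bar J)$, the category of modules over the finite-dimensional symmetric $E$-algebra $E[\bar J]$. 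Over such an algebra, the injective hull of a simple module is finite-dimensional, indecomposable, coincides with the projective cover, and has simple socle equal to its simple cosocle; transporting across the equivalence yields the first assertion for $I_{\lambda,\omega}$ and $\lambda$.

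For the cuspidality equivalence, note that $\bar J^0 := J^0/J^1$ is normal in $\bar J$ (since $J^0 \lhd J$ when $x$ is a vertex, cf.\ \S\ref{ss:3.3}). As $E[\bar J]$ is free as an $E[\bar J^0]$-module, restriction preserves injectives, so $I_{\lambda,\omega}|_{\bar J^0}$ is injective over the symmetric algebra $E[\bar J^0]$ and decomposes as a direct sum of indecomposable injectives, each being the injective hull of its simple socle. By Clifford theory, $\lambda|_{\bar J^0}$ is a direct sum of mutually $\bar J$-conjugate simple modules $\mu_1,\ldots,\mu_r$. The key claim to establish is that every simple $\bar J^0$-submodule $\nu$ of $I_{\lambda,\omega}$ is isomorphic to some $\mu_j$: indeed $\sum_{g\in\bar J} g\nu$ is a nonzero $\bar J$-submodule of the essential extension $\lambda \hookrightarrow I_{\lambda,\omega}$, hence contains $\lambda$, and matching $\bar J^0$-constituents (which on the left side are $\bar J$-conjugates of $\nu$) with those of $\lambda|_{\bar J^0}$ forces $\nu \cong \mu_j$.

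It then follows that $I_{\lambda,\omega}|_{\bar J^0}$ is a direct sum of injective hulls of the $\mu_i$ in $\Mod_E(\bar J^0)$. Applying Hiss's lemma to each $\mu_i$ — using Proposition \ref{prop:c} to pass between $E$-linear and $C$-linear supercuspidality when needed — gives $\mu_i$ supercuspidal iff its injective hull is cuspidal; and since $\bar J$-conjugation preserves supercuspidality the $\mu_i$ are simultaneously (super)cuspidal, so $I_{\lambda,\omega}|_{\bar J^0}$ is cuspidal iff every $\mu_i$ is supercuspidal iff $(J,\lambda)$ is supercuspidal (Definition \ref{def:level0}). The principal obstacle I anticipate is a careful proof of the Clifford-theoretic claim above, which ultimately rests on the essential-extension property of $\lambda \subseteq I_{\lambda,\omega}$ paired with the $\bar J^0$-semisimplicity of $\sum_{g\in\bar J} g\nu$.
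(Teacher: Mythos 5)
Your strategy mirrors the paper's quite closely: both reduce the first assertion to the theory of symmetric finite-dimensional algebras via a Morita-type equivalence, and both obtain the second assertion by restricting the injective hull to $J^0/J^1$ and applying Hiss's lemma (Lemma \ref{le:Hiss}). There are, however, two places where your argument is either imprecise or has a gap that the paper fills with care.

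First, the identification of the subcategory of $\Mod_C(J,\omega)$ trivial on $J^1$ with $\Mod_E(\bar J)$ for $\bar J = J/(J^1Z^\sharp)$ is not correct as stated. Setting $H=J/J^1$ and $Y=Z^\sharp$, the paper shows that the category is Morita-equivalent to modules over $\mathcal H = \End_{C[H]}(\ind_Y^H\omega)$, which is the $E$-valued convolution algebra of functions $f:H\to E$ with $f(yh)=\omega(y)f(h)$ --- a $\omega$-\emph{twisted} group algebra of $\bar J$ over $E$. It coincides with $E[\bar J]$ only when $\omega$ extends to a character of $H$. Your conclusions survive because twisted group algebras of finite groups are still symmetric (this is precisely the content of the paper's Lemma just before Proposition \ref{prop:H}), and the cocycle restricts trivially on $\bar J^0 = J^0/J^1 \hookrightarrow \bar J$ since $J^0/J^1 \cap Z^\sharp = 1$; but as written, the claimed equivalence with the honest group algebra $E[\bar J]$ is false in general and needs repair.

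Second, your route to the cuspidality equivalence is a genuine and arguably cleaner variant: you restrict $I_{\lambda,\omega}$ directly to $\bar J^0$ and use the essential-extension property of $\lambda\subset I_{\lambda,\omega}$ with Clifford theory to pin down the socle constituents, whereas the paper restricts in two steps (first to $J^0Z^\sharp/J^1$ via Mackey, then to $J^0/J^1$ via the equivalence $\Mod_C(H^0Y,\omega)\simeq\Mod_E(H^0)$). The Clifford step you give is correct: $\sum_{g\in\bar J}g\nu$ is $\bar J^0$-semisimple with constituents the conjugates of $\nu$, and essentiality forces it to contain $\lambda$. What you gloss over is the final $E$-versus-$C$ bookkeeping: the constituents $\mu_i$ of $\lambda|_{\bar J^0}$ and their injective hulls are naturally $E$-modules in your setup, yet supercuspidality of $(J,\lambda)$ in Definition \ref{def:level0} is a statement about the $C$-linear restriction $\lambda|_{J^0}$. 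Citing Proposition \ref{prop:c} gives you that supercuspidality of irreducibles is insensitive to the field, but it does \emph{not} tell you that the $E$-linear injective hull of $\mu_i$, viewed as a $C$-module, is a direct sum of copies of the $C$-linear injective hull of a $C$-constituent. That comparison of injective hulls across the scalar extension $E/C$ is exactly the content of part (b) in the paper's proof of Proposition \ref{prop:H0} (using that scalar extension preserves projectives together with Krull--Remak--Schmidt), and your argument needs that step or an equivalent one to close.
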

 
 The proof will be given  in \S \ref{ss:6}.  Let us assume the proposition and prove:

\begin{theorem} \label{thm:sc01}  
Assume   $(J, \lambda)$  supercuspidal. Then $\pi=\ind_J^G\lambda $  is supercuspidal.  

Moreover $\ind_{J}^{G}I_{\lambda, \omega}$
is an injective hull  $I_{\pi,\omega}$ of $\pi $ in $\Mod_C(G, \omega)$.  It is   cuspidal projective indecomposable  with socle and  cosocle  isomorphic to $\pi$.   The lattices of subrepresentations  of $I_{\lambda, \omega}$  and  of  $I_{\pi,\omega}$
  are isomorphic  by the map    $W \mapsto \ind_J^GW$ (which is equal to $\Ind_J^GW$), with inverse
  $V\mapsto V^{J^1}$.
     \end{theorem}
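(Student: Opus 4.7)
The plan is to show that $\ind_J^G I_{\lambda,\omega}$ is itself the injective hull $I_{\pi,\omega}$, that it is cuspidal, projective, and indecomposable with simple socle and cosocle both isomorphic to $\pi$, and then to derive supercuspidality of $\pi$ from Proposition \ref{prop:scI}. Granted Proposition \ref{prop:Ilambda} and the observations already preceding the theorem (in particular, injectivity of $\ind_J^G I_{\lambda,\omega}$ in $\Mod_C(G,\omega)$ and the injective, inclusion-preserving map on lattices of subrepresentations), what remains is to upgrade that lattice map to a bijection with inverse $V\mapsto V^{J^1}$; everything else will follow formally.

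The pivot is a generalization of Proposition \ref{prop:iso}: for every $J$-subrepresentation $W\subset I_{\lambda,\omega}$ one has $(\ind_J^G W)^{J^1}=W$, identified with the subspace of functions supported in $J$. The proof of Proposition \ref{prop:iso} applies verbatim, because its only input is that the inducing representation be cuspidal as a representation of $J^0/J^1$ (Lemma \ref{le:0} does not require irreducibility); and by the supercuspidality assumption together with Proposition \ref{prop:Ilambda}, $I_{\lambda,\omega}$ is cuspidal on $J^0/J^1$, hence so is any of its $J$-subrepresentations. Taking $W=I_{\lambda,\omega}$ in particular yields $(\ind_J^G I_{\lambda,\omega})^{J^1}=I_{\lambda,\omega}$.

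Now let $V\subset \ind_J^G I_{\lambda,\omega}$ be any $G$-subrepresentation. Then $V^{J^1}$ is a $J$-subrepresentation of $I_{\lambda,\omega}$, the adjunction inclusion $V^{J^1}\hookrightarrow V$ produces a $G$-map $\ind_J^G V^{J^1}\to V$, and composing it with $V\hookrightarrow \ind_J^G I_{\lambda,\omega}$ factors as $\ind_J^G(V^{J^1}\hookrightarrow I_{\lambda,\omega})$, which is injective by exactness of $\ind_J^G$; hence $\ind_J^G V^{J^1}\to V$ is injective. It is also surjective, because the level-$0$ representation $\ind_J^G I_{\lambda,\omega}$ is generated as a $G$-representation by its various $gJ^1 g^{-1}$-invariants, so $V$ is generated by the translates $gV^{J^1}$ of $V^{J^1}$, which is precisely the image of $\ind_J^G V^{J^1}$. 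Combined with the identity of the previous paragraph, this gives the desired lattice bijection.

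Under this bijection, socle corresponds to socle and cosocle to cosocle, so by Proposition \ref{prop:Ilambda} both are $\ind_J^G\lambda=\pi$; simplicity of the socle forces the injective object $\ind_J^G I_{\lambda,\omega}$ to be indecomposable, hence equal to $I_{\pi,\omega}$. Projectivity transfers from $I_{\lambda,\omega}$ (Proposition \ref{prop:Ilambda}) since $\ind_J^G$ preserves projectives (Example \ref{ss:injpro}(c)). For cuspidality, each composition factor of $I_{\lambda,\omega}$ is a level-$0$ cuspidal $C$-type in $G$ (trivial on $J^1$, cuspidal on $J^0/J^1$), so its compact induction to $G$ is an irreducible cuspidal representation by Proposition \ref{prop:cusptype}(iii); exactness of $\ind_J^G$ and of the Jacquet functors $(-)_N$, together with the finite length of $I_{\lambda,\omega}$, then force $(\ind_J^G I_{\lambda,\omega})_N=0$ for every proper parabolic $P=MN$. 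Finally, Proposition \ref{prop:scI} applied to the now cuspidal injective hull $I_{\pi,\omega}$ (valid because $\pi$ has level $0$) yields the supercuspidality of $\pi$. The chief technical point is the generalization of the fixed-vector computation of Proposition \ref{prop:iso} from the irreducible $\lambda$ to the reducible $I_{\lambda,\omega}$; once that is in hand, everything else is a formal consequence of exactness of compact induction and of the structure of $I_{\lambda,\omega}$ recorded in Proposition \ref{prop:Ilambda}.
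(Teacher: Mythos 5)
Your proof is correct and follows the same line as the paper's argument: reduce everything to the cuspidality of $I_{\lambda,\omega}$ over $J^0/J^1$ (Proposition \ref{prop:Ilambda}), show that compact induction sets up a lattice isomorphism with inverse $(-)^{J^1}$, and then read off socle, cosocle, indecomposability, projectivity, cuspidality, and finally supercuspidality via Proposition \ref{prop:scI}. The one technical difference is how the lattice bijection is established. The paper induces on the length of $I_{\lambda,\omega}$, deducing from Proposition \ref{prop:iso} (applied to each irreducible subquotient $\mu$) that $\ind_J^G W$ is cuspidal and $(\ind_J^G W)^{J^1}=W$ for every subquotient $W$; you instead observe that the argument of Proposition \ref{prop:iso} and Lemma \ref{le:0} uses only cuspidality of the inducing representation over $J^0/J^1$, not its irreducibility, so it applies directly to the possibly reducible $W$. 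Both routes are valid and essentially equivalent.

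One step should be tightened. In your surjectivity argument you pass from ``$\ind_J^G I_{\lambda,\omega}$ is generated by its $gJ^1g^{-1}$-invariants'' to ``$V$ is generated by the translates $gV^{J^1}$.'' That is not a formal implication: a subrepresentation of a representation generated by its $K$-invariants need not itself be generated by its $K$-invariants. What makes it work here is that the functor $(-)^{J^1}$ is exact (pro-$p$ group, $c\neq p$) and faithful on subquotients of $\ind_J^G I_{\lambda,\omega}$, since every irreducible subquotient is of the form $\ind_J^G\mu$ with $(\ind_J^G\mu)^{J^1}=\mu\neq0$ by Proposition \ref{prop:iso}. With this in hand, $\ind_J^G V^{J^1}\subset V$ induces $(\ind_J^G V^{J^1})^{J^1}=V^{J^1}$, so $(V/\ind_J^G V^{J^1})^{J^1}=0$ by exactness, and faithfulness forces $V=\ind_J^G V^{J^1}$. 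Alternatively, your appeal to level $0$ can be made precise by invoking the vanishing $V^{G_{y,0+}}=0$ for vertices $y$ not in the $G$-orbit of $x$ (implicit in the proof of Corollary \ref{cor:1}), so that the general level-$0$ generation of $V$ by $\sum_y V^{G_{y,0+}}$ reduces to the $G$-orbit of $x$. Either fix closes the gap.
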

 
\begin{proof} By Proposition \ref{prop:Ilambda}, $I_{\lambda, \omega}$ has finite length and is cuspidal as a representation of $J^0/J^1$.  Any irreducible subquotient $\mu$ of $I_{\lambda,\omega}$ is   cuspidal as a representation of $J^0/J^1$. By Proposition \ref{prop:iso},   $\ind_J^G\mu=\Ind_J^G\mu$  is cuspidal and $\mu=(\ind_J^G\mu)^{J^1}$. By induction on the length, this is also true for any subquotient of  $I_{\lambda,\omega}$. This gives the last assertion of the theorem, and that $\ind_J^GI_{\lambda,\omega}$ has finite length,  is cuspidal and is projective (since
$\ind_J^G$ preserves projectives and $\ind_J^GI_{\lambda,\omega}= \Ind_J^GI_{\lambda,\omega}$). As the socle and the cosocle of $I_{\lambda,\omega}$ are both isomorphic to $\lambda$,   the socle and the cosocle of $\ind_J^GI_{\lambda,\omega}$ are  both isomorphic to $\pi$. As $I_{\lambda,\omega}$ is indecomposable, $\ind_J^GI_{\lambda,\omega}$ is indecomposable and is an injective hull  $I_{\pi,\omega}$ of $\pi$ in $\Mod_C(G,\omega)$. Since $I_{\pi,\omega}$ is cuspidal,    $\pi$ is supercuspidal (Proposition \ref{prop:scI}).
 \end{proof} 
 
In the reverse direction:
  \begin{theorem} \label{thm:sc0}  Assume  $\pi=\ind_J^G\lambda$  supercuspidal. Then $(J,\lambda)$  is  supercuspidal and $I_{\pi,\omega}^{J^1}$  is an injective hull $I_{\lambda, \omega}$ of  $\lambda$ in $\Mod_C(J,\omega)$. \end{theorem}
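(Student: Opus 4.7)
The plan is to show that $V := (I_{\pi,\omega})^{J^1}$ is an injective hull of $\lambda$ in $\Mod_C(J,\omega)$ and is cuspidal as a representation of $J^0/J^1$; Proposition \ref{prop:Ilambda} will then give that $(J,\lambda)$ is supercuspidal, after which Theorem \ref{thm:sc01} yields the identification $V = I_{\lambda,\omega}$. To begin, $\pi$ has level $0$ since $\lambda$ does, so Proposition \ref{prop:scI} applies, and the supercuspidality of $\pi$ gives that $I_{\pi,\omega}$ is cuspidal as a $G$-representation.

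First I would verify that $V$ is injective in $\Mod_C(J,\omega)$ and contains $\lambda$. By Example \ref{ss:injpro} c), the restriction $\Res_J^G$ preserves injectives (its left adjoint $\ind_J^G$ being exact), and the $J^1$-invariant functor preserves injectives as right adjoint to the exact inflation from $\Mod_C(J/J^1,\omega^0)$ (using that $J^1$ is pro-$p$ and $c\neq p$); combining these, $V$ is injective. Proposition \ref{prop:iso} identifies $\pi^{J^1}$ with $\lambda$, so $\lambda\hookrightarrow V$. Since $V$ is injective in the Grothendieck category $\Mod_C(J,\omega)$, the inclusion $\lambda\hookrightarrow V$ extends via injectivity of $V$ to an injection $I_{\lambda,\omega}\hookrightarrow V$, which splits off as a direct summand.

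The critical step is to show that $V$ is cuspidal as a representation of the finite reductive group $J^0/J^1$. This uses the generalization of Proposition \ref{prop:iso} announced in the text (Proposition \ref{prop:iso2}) together with the Moy--Prasad/Bruhat--Tits correspondence between proper parabolic subgroups $P_0 = M_0 N_0$ of $J^0/J^1$ and proper $F$-rational parabolic subgroups $P=MN$ of $G$ at the vertex $x$: the preimage of $N_0$ in $J^0$ is $(N\cap J^0)\cdot J^1$, and for any smooth $C$-representation $W$ of $G$ the coinvariants $(W^{J^1})_{N_0}$ can be read off the Jacquet module $W_N$. Applied to $W = I_{\pi,\omega}$, whose Jacquet modules vanish by cuspidality, this forces $V = (I_{\pi,\omega})^{J^1}$ to be cuspidal as a $J^0/J^1$-representation. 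Since the Jacquet functor is exact when $c\neq p$, cuspidality passes to subrepresentations, so the direct summand $I_{\lambda,\omega}$ of $V$ is also cuspidal, and Proposition \ref{prop:Ilambda} gives that $(J,\lambda)$ is supercuspidal.

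Finally, Theorem \ref{thm:sc01} applied to the now-supercuspidal $(J,\lambda)$ gives that $\ind_J^G I_{\lambda,\omega}$ is an injective hull of $\pi$ in $\Mod_C(G,\omega)$, which by uniqueness must equal $I_{\pi,\omega}$. The lattice isomorphism of Theorem \ref{thm:sc01}(iv), whose inverse is $W\mapsto W^{J^1}$, then forces $(I_{\pi,\omega})^{J^1} = I_{\lambda,\omega}$, completing the proof. The main obstacle will be the cuspidality assertion in the third paragraph: matching proper parabolic subgroups of the finite reductive quotient $J^0/J^1$ with $F$-rational parabolics of $G$ via the Moy--Prasad filtration, and translating vanishing of Jacquet modules on the $G$-side into vanishing of Harish-Chandra coinvariants on the $J^0/J^1$-side, essentially the content of Proposition \ref{prop:iso2}.
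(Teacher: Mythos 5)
Your setup and conclusion agree with the paper: using Proposition \ref{prop:scI} (valid here since $\pi$ has level $0$) to get cuspidality of $I_{\pi,\omega}$, Example \ref{ss:injpro} c) to see that $V=(I_{\pi,\omega})^{J^1}$ is injective in $\Mod_C(J,\omega)$, Proposition \ref{prop:iso} to produce the embedding $\lambda\hookrightarrow V$ and hence a splitting $I_{\lambda,\omega}\hookrightarrow V$, and then Proposition \ref{prop:Ilambda} plus Theorem \ref{thm:sc01} with uniqueness of injective hulls to close the loop. All of that is sound.

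The gap is in the critical third paragraph, and it is not a bookkeeping issue but a missing argument. You assert that for any smooth $C$-representation $W$ of $G$ the finite Jacquet module $(W^{J^1})_{N_0}$ ``can be read off'' the Jacquet module $W_N$ for a corresponding $F$-rational parabolic $P=MN$, and you attribute this to Proposition \ref{prop:iso2}. That proposition says nothing of the kind: it identifies the $\theta$-isotypic component of $\ind_J^G\lambda$ with $\lambda$ for a positive-level type and is a Mackey-theoretic statement about one compactly induced representation, not a comparison between the $N_0$-coinvariants of parahoric invariants and the Jacquet functor on $\Mod_C(G)$. A compatibility between $(\,\cdot\,^{G_{x,0+}})_{N_0}$ and $(\,\cdot\,)_N$ does exist in the literature (Morris, Moy--Prasad) in various forms, but it is nontrivial, requires hypotheses you have not checked (typically admissibility and a careful matching of a parahoric in $M$), and is not established anywhere in this paper. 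As written, the key cuspidality assertion is unproven.

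The paper's own proof avoids this entirely and is worth internalizing: since $I_{\pi,\omega}$ is cuspidal \emph{and level $0$} (the level-$0$ factor is a direct factor of $\Mod_C(G)$), every irreducible subquotient $\tau$ of $I_{\pi,\omega}$ is an irreducible cuspidal level-$0$ representation, hence by the exhaustion result (Corollary \ref{cor:level0}) is $\ind_{G_y}^G\mu$ for some level-$0$ cuspidal type $(G_y,\mu)$. One then computes $\tau^{J^1}$ directly: if $G_y$ is not $G$-conjugate to $J$ then $\tau^{J^1}=0$ (by the argument in Corollary \ref{cor:1}, i.e.\ Lemma \ref{le:0}); if it is, one may assume $G_y=J$ and $\tau^{J^1}=\mu$ is cuspidal by Proposition \ref{prop:iso}. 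The inline lemma in the paper's proof (an elementary averaging argument) then propagates cuspidality from the irreducible subquotients to $V$. This is the step you should substitute for your parahoric--Jacquet compatibility claim; with it, the rest of your write-up goes through unchanged.

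One small point: Theorem \ref{thm:sc01} is not stated with numbered items, so the reference ``Theorem \ref{thm:sc01}(iv)'' should just be to the lattice-isomorphism assertion in its last sentence.
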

 
\begin{proof} Since $\pi$ has level $0$ and is supercuspidal,  $I_{\pi,\omega}$ is cuspidal and has level $0$ (Proposition \ref{prop:scI} and its proof). Let $\tau$ be an irreducible subquotient of $I_{\pi,\omega}$. Then $\tau$ is cuspidal of level $0$, and  induced from a level $0$ cuspidal type $(G_y, \mu)$ (Corollary \ref{cor:level0}). If $G_y$ is not conjugate to $J$ in $G$ then $\tau^{J^1}=0$ (proof of Corollary \ref{cor:1}). If $G_y$ is  conjugate to $J$ we may take $G_y=J$ and then $\tau^{J^1}=\mu$ (Proposition \ref{prop:iso}). We deduce that $I_{\pi,\omega}^{J^1}$ is cuspidal  as a representation of $J^0/J^1$, by the following lemma.

\begin{lemma} Let $\tau\in \Mod_C(G)$. If $\rho^{J^1}$ is cuspidal or $0$ as a representation of $J^0/J^1$ for  each irreducible subquotient $\rho$ of $\tau$, then the same is true for 
 $\tau$.
\end{lemma}
\begin{proof} Let  $P=MN$ a proper parabolic subgroup of $J^0/J^1$. Assume that  there exists $f\in \tau^{J^1}$ such that  the average $f_N$ of $f$ along $N$ is not $0$.  Let $\rho$ be an  irreducible quotient  of  the subrepresentation of $\tau$ generated by $f_N$. The  image  of $f_N$ in $\rho$  is not $0$ and is fixed by $J^1$. Hence   $\rho^{J^1}$ is not $0$ and $\rho$ is not cuspidal as a representation of $J^0/J^1$, a contradiction proving the lemma.
 \end{proof}

As $I_{\pi,\omega}^{J^1}\in \Mod_C(J, \omega) $ is injective (Example \ref{ss:injpro} c) and contains $\lambda=\pi^{J^1}$ (Proposition \ref{prop:iso}), $I_{\lambda, \omega}$ is a direct factor of $I_{\pi,\omega}^{J^1}$.  As $I_{\pi,\omega}^{J^1}$ is cuspidal as a representation of $J^0/J^1$, the same is true for  $I_{\lambda, \omega}$ hence $(J,\lambda)$ is supercuspidal  (Proposition \ref{prop:scI}). By Theorem \ref{thm:sc01}, $\ind_J^G I_{\lambda, \omega}$ is an injective hull of $\pi$ in $\Mod_C(G, \omega) $ and  $(\ind_J^G I_{\lambda, \omega})^{I^1} =I_{\lambda, \omega}$.
 That proves the theorem.
\end{proof}

\subsection{Proof of Proposition \ref{prop:Ilambda}}\label{ss:6} 

With the notations of  \S\ref{ss:5} we  put  $H=J/J^1$. As $Z^\sharp \cap J^1$ is trivial, $Z^\sharp$ identifies with a subgroup $Y$ of $H$,  $\omega $ with $\zeta\in \Irr_C(Y)$, $\lambda$ inflates $\tau \in  \Mod_C(H,\zeta)$ and  $I_{\lambda, \omega} $ inflates 
 an injective hull  $I_{\tau, \zeta}$ of $\tau$ in  $\Mod_C(H,\zeta)$.

\bigskip In general,  let $H$ be a group with a central subgroup $Y$ of finite index,  $\zeta\in \Irr_C(Y)$, 
 $\tau\in \Irr_C(H,\zeta)$ and  $I_{\tau, \zeta}$ an injective hull  of $\tau$ in  $\Mod_C(H,\zeta)$.

 By adjunction,  $\ind_Y^H\zeta$ is a generator of  the abelian category $\Mod_C(H,\zeta)$, or equivalently, the functor $\Hom(\ind_Y^H\zeta, -)$ is faithful in  $\Mod_C(H,\zeta)$ (\cite{KS06} Proposition 5.2.4).  As $Y$ has finite index in $H$,   $\ind_Y^H\zeta$ is projective in  $\Mod_C(H,\zeta)$. By Morita theory, the category $\Mod_C(H,\zeta)$ is equivalent to the category of right modules over the $C$-algebra 
$\End_{C[H]}(\ind_Y^H\zeta).$ The algebra $C[Y]$ acts on $\zeta$ via a quotient field $C'$ which is a finite extension of $C$ and $\End_{C[Y]} \zeta= C'$ (Remark \ref{re:C'}). The convolution $C$-algebra $\mathcal H $ of functions $f:H\to C'$ such that $f(yh)=\zeta(y) f(h)$ for $y\in Y, h\in H$,  is isomorphic to $\End_{C[H]}(\ind_Y^H\zeta)$.
    
Recall (\cite{L99} (16.54)) that  a  finite dimensional $C$-algebra $A$ is called symmetric, if there exists  a linear map $\lambda $ on $A$  satisfying 
    $\lambda(ab)=\lambda(ba)$ for $a,b\in A$, and $\Ker (\lambda)$ does not contain a non-zero right  ideal of $A$.  

\begin{lemma} $\mathcal H$ is a symmetric $C$-algebra.
\end{lemma}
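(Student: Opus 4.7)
The plan is to write down an explicit trace on $\mathcal{H}$. Identify $\mathcal{H}$ with the $C$-algebra of functions $f : H \to C'$ satisfying $f(yh) = \zeta(y) f(h)$ for $y \in Y$ and $h \in H$, with convolution
\[
(f_1 * f_2)(h) = \sum_{h' \in Y \backslash H} f_1(h')\, f_2(h'^{-1} h),
\]
a finite sum (since $[H:Y]<\infty$) that is well-defined because $Y$ is central in $H$. Since $C'$ is a non-zero finite-dimensional $C$-vector space, fix any non-zero $C$-linear form $\mu : C' \to C$ and set $\lambda(f) = \mu(f(1))$.

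To check the trace property $\lambda(f_1*f_2)=\lambda(f_2*f_1)$, evaluate at $1$ and apply the substitution $h'\mapsto h'^{-1}$, a bijection of $Y\backslash H$ (using centrality of $Y$): this takes $(f_2 * f_1)(1) = \sum_{h'} f_2(h') f_1(h'^{-1})$ to $\sum_{h'} f_2(h'^{-1}) f_1(h')$, which by commutativity of the field $C'$ equals $(f_1 * f_2)(1)$; applying $\mu$ gives the desired equality.

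For non-degeneracy, take $f \in \mathcal{H}$ non-zero and pick $h_0 \in H$ with $f(h_0) \neq 0$. Let $g \in \mathcal{H}$ be supported on the single coset $Yh_0^{-1}$ with $g(h_0^{-1}) = c \in C'$ to be chosen. Only the term with $h' \in Yh_0$ contributes to $(f*g)(1)$, giving $(f*g)(1) = f(h_0)\,c$. Since $f(h_0)$ is a unit of the field $C'$, the assignment $c \mapsto f(h_0)\,c$ is a $C$-linear bijection of $C'$, so we may choose $c$ with $\mu(f(h_0)\,c)\neq 0$. Hence $\lambda(f*g) \neq 0$, and $\ker\lambda$ contains no non-zero right ideal.

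There is no real difficulty; the one pitfall to avoid is trying to use the trace $\mathrm{tr}_{C'/C}$ as $\mu$, which would fail when $C'/C$ is inseparable. Any non-zero $C$-linear form does the job, because the field $C'$ has no non-zero proper right ideals, so the restriction of $\lambda$ to the subalgebra $C' \cdot e_1$ (where $e_1(y)=\zeta(y)$, $e_1|_{H\setminus Y}=0$) is automatically a Frobenius form on $C'$.
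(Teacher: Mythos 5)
Your proof is correct and follows essentially the same approach as the paper: define $\lambda(f)=\mu(f(1))$ for a linear form $\mu:C'\to C$, check the trace identity by evaluating convolutions at $1$, and show non-degeneracy by multiplying a given nonzero $f$ by a function supported on a single coset. The only difference is cosmetic: the paper insists on $\phi(1)\neq 0$ so that it can arrange $(f*e_{g,d})(1)=1$ exactly, whereas you observe that any nonzero $\mu$ suffices since $c\mapsto f(h_0)c$ is a $C$-linear bijection of $C'$ — a small but genuine cleanup, and your closing remark correctly explains why this must work (the restriction of $\lambda$ to a copy of $C'$ is automatically a Frobenius form on a field).
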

\begin{proof} We choose a  $C$-linear map $\phi:C'\to C$ with $
\phi(1)\neq 0$ and we consider the linear map $f \mapsto \lambda (f)=\phi (f(1) )$ on $\mathcal H$.  
  We have  $\lambda(f*f')=\lambda(f'*f)$ for $f,f'\in \mathcal H$ because 
 $(f*f')(1)=(f'*f)(1)$. 
 For $g\in G$ and $d\in C'$,  $d\neq 0$,  let $e_{g,d}\in \mathcal H$ with support $gZ$ such that $e_{g,d}(g)=d$.  
  For any non-zero $f\in \mathcal H$,  there exists $g\in G$  and $d\in D$,  $d\neq 0$,  such that $ (f*e_{g,d} )(1)=1$. This shows that $\Ker(\lambda)$ does not contain a  non-zero right  ideal of $\mathcal H$.
\end{proof}

\begin{proposition} \label{prop:H}  $I_{\tau, \zeta}$   is finite dimensional, projective, indecomposable with $\tau$ as its socle and cosocle.
\end{proposition}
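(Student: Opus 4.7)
The plan is to reduce the statement, via a Morita equivalence, to standard properties of the injective hulls of simple modules over a symmetric finite-dimensional algebra.

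First I would show that $P := \ind_Y^H \zeta$ is a finitely generated projective generator of $\Mod_C(H,\zeta)$. It is projective because $\ind_Y^H$ is left adjoint to the exact restriction functor $\Res_Y^H$; it is finitely generated, indeed finite-dimensional over $C$, because $[H:Y]<\infty$ and $\dim_C \zeta <\infty$; and it is a generator because for any $V\in \Mod_C(H,\zeta)$ the restriction $V|_Y$ is $\zeta$-isotypic, hence a direct sum of copies of $\zeta$, and the counit of the adjunction produces a surjection from a direct sum of copies of $P$ onto $V$. Hence $\mathcal H=\End_{C[H]}(P)$ is a finite-dimensional $C$-algebra and Morita theory gives an equivalence of abelian categories $\Hom_{C[H]}(P,-)\colon \Mod_C(H,\zeta)\xrightarrow{\sim}\text{right }\mathcal H\text{-modules}$, which preserves all categorical notions, including injective hulls, socles, cosocles, indecomposability, projectivity, and (since $P$ itself is finite-dimensional over $C$) finite-dimensionality.

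Under this equivalence $\tau$ corresponds to a simple right $\mathcal H$-module $M$, and $I_{\tau,\zeta}$ to an injective hull $I_M$ of $M$. By the preceding lemma $\mathcal H$ is symmetric, hence in particular Frobenius and self-injective, with trivial Nakayama permutation. Standard Frobenius/symmetric algebra theory then yields: for each simple $\mathcal H$-module $S$, the indecomposable projective cover $P_S$ is also injective, is finite-dimensional, indecomposable, and satisfies $\Soc(P_S)\cong S\cong P_S/\rad(P_S)$; being an indecomposable injective module with simple socle $S$ it is the injective hull of $S$. Applied to $S=M$, this gives that $I_M$ is finite-dimensional, projective, indecomposable, with socle and cosocle both isomorphic to $M$. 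Transporting back through Morita yields the proposition for $I_{\tau,\zeta}$.

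The main step is really the verification that $P$ is a projective generator so that Morita applies, together with the identification of $\mathcal H$ with the convolution algebra of the previous lemma. Once these are in place the conclusion is a formal consequence of the theory of symmetric algebras, so I do not expect any serious obstacle.
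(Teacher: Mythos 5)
Your proposal is correct and follows essentially the same route as the paper: pass to $\mathcal H$-modules via the Morita equivalence coming from the projective generator $\ind_Y^H\zeta$, then invoke the fact that over the symmetric finite-dimensional algebra $\mathcal H$ the projective cover of a simple module is also its injective hull, finite-dimensional, indecomposable, with that simple as socle and cosocle (the paper cites Lam, Corollary 16.64, where you appeal to the triviality of the Nakayama permutation — the same fact).
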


\begin{proof} As   $\mathcal H$ is a symmetric $C$-algebra, any simple $\mathcal H$-module $\rho$ has a finite dimensional projective cover which is also an injective hull (\cite{L99} Corollary 16.64); it is consequently indecomposable with $\rho$ as its socle and cosocle. The proposition follows by Morita equivalence.
\end{proof}

We consider now only   the example   $(H,Y,\zeta,\tau, I_{\tau, \zeta})$ given in the beginning of this subsection \S\ref{ss:6}.  Then the first assertion of the proposition  \ref{prop:Ilambda} follows from  Proposition \ref{prop:H}. Put $H^0=J^0/J^1$ with the notations of  \S\ref{ss:5}.  Then $H^0$ is a finite normal subgroup of $H$ such that $Y\cap H^0$ is trivial.  Let $\rho $ be an irreducible quotient  of $\tau|_{H^0}$ and $I_\rho$  an injective hull of $\rho$ in  $\Mod_C(H ^0)$.

 \begin{proposition} \label{prop:H0} The restriction of $I_{\tau,\zeta}$ to $H^0$ is a sum of $H$-conjugates of $I_\rho$.
 \end{proposition}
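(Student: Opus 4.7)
The plan is to combine a Mackey decomposition with a socle/essentiality argument. First, since $Y$ has finite index in $H$, the representation $\ind_Y^H\zeta$ is a projective generator of $\Mod_C(H,\zeta)$, so the projectivity of $I_{\tau,\zeta}$ established in Proposition \ref{prop:H} shows that $I_{\tau,\zeta}$ is a direct summand of $(\ind_Y^H\zeta)^n$ for some $n\geq 1$. The Mackey formula, using that $Y$ is central in $H$ and $Y\cap H^0=\{1\}$, gives an isomorphism of $C[H^0]$-modules
\[
\ind_Y^H\zeta|_{H^0}\;\cong\;\bigoplus_{x\in H^0\backslash H/Y}\ind_{1}^{H^0}({}^x\zeta)\;\cong\;C[H^0]^{\,[H:H^0Y]\cdot\dim_C\zeta},
\]
which is a free $C[H^0]$-module. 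Hence $I_{\tau,\zeta}|_{H^0}$ is a direct summand of a finitely generated free $C[H^0]$-module, so is projective; since $C[H^0]$ is a symmetric finite-dimensional algebra, finitely generated projective and finitely generated injective $C[H^0]$-modules coincide, and $I_{\tau,\zeta}|_{H^0}$ decomposes as $\bigoplus_\sigma I_\sigma^{\,n_\sigma}$, where $\sigma$ ranges over simple $H^0$-modules and the indecomposable injectives are exactly the $I_\sigma$.

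Next I would identify the $\sigma$'s with $n_\sigma>0$ as the $H$-conjugates of $\rho$. By Clifford's theorem applied to the irreducible $H$-module $\tau$ and the normal subgroup $H^0$, the restriction $\tau|_{H^0}$ is semisimple and its simple constituents form a single $H$-orbit, namely the orbit of $\rho$ (being an irreducible quotient and, by semisimplicity, an irreducible submodule, $\rho$ is one such constituent). Now let $\sigma$ be a simple $H^0$-submodule of $I_{\tau,\zeta}$. The $H$-subrepresentation $H\sigma=\sum_{h\in H}h\sigma$ lies in $\Mod_C(H,\zeta)$ and is non-zero, so by essentiality of the injective hull $I_{\tau,\zeta}$ over its socle $\tau$ in $\Mod_C(H,\zeta)$ we deduce $\tau\subseteq H\sigma$. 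Consequently $\tau|_{H^0}\subseteq H\sigma|_{H^0}$, and the latter is a sum of the irreducible $H^0$-submodules ${}^h\sigma$, whose isomorphism classes all lie in the $H$-orbit of $\sigma$. Comparing with the Clifford description of $\tau|_{H^0}$, $\rho$ must therefore be an $H$-conjugate of $\sigma$.

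To conclude, for each $h\in H$, conjugation by $h$ is an auto-equivalence of $\Mod_C(H^0)$ sending injective hulls to injective hulls, so $I_{{}^h\rho}\cong{}^h I_\rho$. Combining this with the decomposition of the first paragraph realizes $I_{\tau,\zeta}|_{H^0}$ as a direct sum of $H$-conjugates of $I_\rho$, as required. I expect the main obstacle to be the essentiality step in the second paragraph, which depends critically on the fact (from Proposition \ref{prop:H}) that $\tau$ is the socle of $I_{\tau,\zeta}$ in $\Mod_C(H,\zeta)$; without this input the identification of the $\sigma$'s would break down.
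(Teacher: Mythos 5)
Your proof is correct, but it takes a genuinely different route from the paper's. The paper proceeds in two stages through the intermediate subgroup $H^0Y$: it first shows $I_{\tau,\zeta}$ is a direct factor of $\ind_{H^0Y}^H I_{\rho',\zeta}$ (where $\rho'$ is an irreducible quotient of $\tau|_{H^0Y}$), applies Mackey to $\ind_{H^0Y}^H$ to restrict to $H^0$, and then handles the second restriction from $H^0Y$ to $H^0$ via the $H$-equivariant equivalence $V^0\mapsto V^0\otimes_{C'}\zeta:\Mod_{C'}(H^0)\to\Mod_C(H^0Y,\zeta)$, finishing with a Krull--Remak--Schmidt and projective-cover argument over the extension field $C'$. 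You instead restrict in one step, exhibiting $I_{\tau,\zeta}$ as a summand of a power of $\ind_Y^H\zeta$ (the projective generator of $\Mod_C(H,\zeta)$) and using Mackey to see that $\ind_Y^H\zeta|_{H^0}$ is a free $C[H^0]$-module; this shows the restriction is projective, hence injective, and then you identify the indecomposable summands through an essentiality argument combined with Clifford theory. What the paper's route buys is slightly finer information — the intermediate restriction to $H^0Y$ is explicitly a sum of conjugates of the single module $I_{\rho',\zeta}$, with the passage to $H^0$ handled cleanly through the $C'$-equivalence that absorbs the possible higher dimensionality of $\zeta$ when $C$ is not algebraically closed. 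What your approach buys is economy: it avoids the detour through $H^0Y$, the extension field $C'$, and the equivalence of categories, replacing them with the standard facts that $C[H^0]$ is a self-injective algebra and that the $H$-socle of $I_{\tau,\zeta}$ is $\tau$ (from Proposition \ref{prop:H}), so that any simple $H^0$-submodule $\sigma$ must have its $H$-orbit contain that of $\rho$ and therefore coincide with it. Both arguments depend essentially on the projectivity of $I_{\tau,\zeta}$ from Proposition \ref{prop:H}; you are right to flag the socle/essentiality step as the load-bearing one.
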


\begin{proof} a) We restrict first to $H^0Y$. Let $\rho'\in \Irr_C(H^0Y)$ be a quotient of   $\tau|_{H^0Y}$ and $I_{\rho',\zeta}$  an injective hull of $\rho'$ in  $\Mod_C(H ^0Y, \zeta)$. By Proposition  \ref{prop:H},  $I_{\rho',\zeta}$ is indecomposable, projective with  $\rho'$ as its socle and cosocle. By Mackey formula, the restriction of  $\ind_{H ^0Y}^HI_{\rho',\zeta}$ to $H^0$ is a finite direct sum of $H$-conjugates of $I_{\rho',\zeta}$. The representation  $\ind_{H ^0Y}^HI_{\rho',\zeta}$
is  injective in $\Mod_C(H,\zeta)$ and contains $\ind_{H^0Y}^H \rho'$. By adjunction $\tau \subset \ind_{H^0Y}^H \rho'$ hence $I_{\tau, \zeta}$ is a direct factor of $\ind_{H ^0Y}^HI_{\rho',\zeta}$. Therefore  the restriction of $I_{\tau,\zeta}$ to $H^0Y$ is a direct sum of $H$-conjugates of $I_{\rho', \zeta}$.  

  We consider now  the  restriction  of  $I_{\rho', \zeta} $  to $H^0$.
  The functor  $$V^0\mapsto V^0\otimes_{C'} \zeta: \Mod_{C'}(H^0) \to \Mod_C(H^0Y, \zeta)$$ is an equivalence of categories which is $H$-equivariant. Write $\rho'=\rho'^0\otimes_{C'} \zeta$ with 
$\rho'^0 \in \Irr_{C'}(H^0) $ and  $I^0$ for an injective envelope of $\rho'^0$ in $\Mod_{C'}(H^0)$. Then $I_{\rho', \zeta}\simeq I^0\otimes_{C'} \zeta$, so   $I_{\rho', \zeta} |_{H^0}$   is isomorphic to   $I^0$ seen as a $C$-representation  by Krull-Remak-Schmidt's theorem.  

b) We assume, as we may, that $\rho$ is a quotient of $\rho'|_{H^0}$ and we show that, seen as a $C$-representation,  $I^0$ is a direct sum of copies of $I_\rho$.

Seen as a $C$-representation $\rho'^0$  is equal to $\rho$ by our assumption.
 Extending scalars from $C'$ to $C$  preserves projectives (Example 
\ref{ss:injpro} b)); as $I_\rho$ is projective in $\Mod_C(H^0)$, so is $C'\otimes_{C} I_\rho$ in $\Mod_{C'}(H^0)$.  But $I^0$ is a projective cover of $\rho'^0$ in $\Mod_{C'}(H^0)$,     so $I^0$ is a direct factor of $C'\otimes_{C} I_\rho$. As a $C$-representation, $C'\otimes_{C} I_\rho$ is  a direct sum of copies of $I_\rho$, so is $I^0$ by Krull-Remak-Schmidt's theorem.
    \end{proof}

 From that proposition,
$I_\rho$  is cuspidal if and only if  $I_{\tau,\zeta}|_{H^0}$  is cuspidal, or equivalently,  $I_{\lambda, \omega}$ is cuspidal as a representation of $J^0/J^1$.
The second assertion of Proposition  \ref{prop:Ilambda} follows, as  $\lambda$ is supercuspidal if and only if  $I_\rho$  is cuspidal.

 \section{Positive level cuspidal types} \label{s:pos}
  
We use the notations  of section \ref{s:3}. 

 \subsection{Positive level}\label{ss:3.4} An irreducible  smooth $C$-representation  of $G$,  or a  cuspidal $C$-type  in $G$ (Definition \ref{def:ctype}) which is not of level $0$  (Definition \ref {def:level0}),  is said to be of positive level.
  The  known cases of   cuspidal $C$-types require special assumptions on $G$, but give types for all positive level  irreducible cuspidal $C$-representations. Pioneer investigations were done in the 1970's  by G\'erardin  and Howe, but  the main results originate either from Bushnell-Kutzko's approach \cite{BK93} or from J.-K.Yu's construction \cite{Y01}.

\bigskip In both approaches, positive level  cuspidal $C$-types $(J,\lambda)$ are constructed  - when  $C$ is algebraically closed -  via an explicit but intricate procedure.  Actually, there is a general procedure based on facts established in each case. We now explain that procedure for application   to $\Aut (C)$-stability, and  also to supercuspidality  in \S \ref{ss:scpositive}. We shall give the references  in  each case below in  \S \ref{ss:3.5} and \S \ref{ss:3.6}.  

We assume that $C$ is algebraically closed, only at the very end do we generalize to non algebraically closed field.

\bigskip  The group $J$ possesses a filtration by normal open subgroups $J\supset J^0 \supset J^1 \supset H^1$ where $J^0/J^1$ is  the group of points of a connected reductive group over the residue field $k_F$ of $F$, $J^1$ is a pro-$p$ group, and $J^1/H^1$ is a finite $\mathbb F_p$-vector space. 
 
The starting datum  for the  representation $\lambda\in \Irr_C(J)$ is  a {\it very special} smooth $C$ character $\theta$  of $H^1$ satisfying:

{\bf (i)} The $G$-normalizer of $\theta$ is  $J$, 

{\bf (ii)} The  $G$-intertwining  of $\theta$  is $JG'J$ where $G'\subset G$ is the group of  $F'$-points of a connected reductive group defined over a finite extension $F'$ of $F$, and there is a vertex $y$ in the Bruhat-Tits building $\mathcal B (G'^{ad})$ of the  adjoint group $G'^{ad}$ of $G'$ such that 
$G'\cap J= G'_y$,  $G'\cap J^0= G'_{y,0}, \ G'\cap J^1= G'_{y,0+}$; in particular, the inclusion $G'\subset G$ induces isomorphisms $J/J^1\simeq G'_y/G'_{y,0+}$ and $ J^0/J^1\simeq G'_{y,0}/G'_{y,0+}$.

{\bf (iii)} There is a unique representation $\eta=\eta_\theta\in \Irr_C(J^1)$ restricting on $H^1$ to a multiple of $\theta$.

{\bf (iv)}  The  $G$-intertwining  of $\eta$ is the   $G$-intertwining of $\theta$,  and  for  any $g\in G$ intertwining $\eta$ the $C$-dimension of  the space $I(g,\eta)$ of $g$-intertwiners of $\eta$  is $1$. 

{\bf (v)} There are  {\it preferred} extensions of $\eta$ to irreducible $C$-representations $\kappa$ of $J^0$; preferred means that  the   $G$-intertwining  of the restriction of $\kappa$ to a    pro-$p$ Sylow subgroup of $J^0$ contains 
  the  $G$-intertwining    of $\theta$.

{\bf (vi) }  $\lambda$  is
any irreducible $C$-representation  of $J$ of restriction  to $J^0$    of the form
  $ \rho\otimes _C \kappa$  where $\kappa$ is  a preferred extension of $\eta$ to $J^0$, and $\rho$  is a $C$-representation of $J^0$ trivial on $J^1$
  inflated from a  cuspidal $C$-representation $\rho^0$  of the finite reductive group $J^0/J^1$.

\begin{remark}\label{rem:vii}  Sometimes one knows that  
 ``intertwining implies conjugacy''   in the sense that two very special characters appearing in the same  irreducible  cuspidal $C$-representation of $G$ are in fact $G$-conjugate.
  Note that if intertwining implies conjugacy and   $\pi$ is  induced from a cuspidal type $(J',\lambda')$  which contains a very special character $\theta'$, then if  $\pi$ contains the very special character $\theta$, the characters $\theta$ and $\theta'$ are $G$-conjugate, so we may assume $J=J', \theta=\theta'$ and $\pi$ is induced from $(J,\lambda')$ which is a cuspidal type as described in (i) to (vi).
  
  Sometimes one knows a weaker condition:

{\bf  (vii)}  If an irreducible  cuspidal $C$-representation $\pi$ of $G$ contains a very special character $\theta$, then it is induced from a cuspidal type $(J,\lambda)$ as described in (i) to (vi).

\end{remark}

The level $0$ case (Definition \ref {def:level0}) enters that framework, if we decide that $H^1=J^1=G_{x,0+}$ and that $\theta$ is the trivial character of $G_{x,0+}$, so that $\eta=\theta$, $J=G_x $ is  the $G$-normalizer of $\theta$; the trivial character of $J^0=G_{x,0}$ is a preferred extension  extending to $J$. With these definitions, a level $0$ cuspidal $C$-type $(J,\lambda) $ of $G$ satisfies the properties (i) to (vii).

 \bigskip   For later use, it is worth elaborating on conditions (v) and (vi).
 What  means intertwining was recalled in \S \ref{ss:2.4} before Remark \ref{re:compactind}.
    There are usually several preferred extensions $\kappa$ of $\eta$ to $J^0$. If $\kappa$ is one and $\chi:J^0\to C^*$ a character trivial on $J^1$ of order prime to $p$, then $\chi \kappa$ is also a preferred extension, because $\chi$ is trivial on the pro-$p$ Sylow subgroups of $J^0$. There is a converse.
    
      \begin{lemma}\label{le:kappa} If $\kappa$ is a preferred extension  of $\eta$ to $J^0$, the other preferred extensions  of $\eta$ to $J^0$ are $\chi \kappa$ where  $\chi:J^0\to C^*$ is a character trivial on $J^1$ and of order prime to $p$.
  \end{lemma}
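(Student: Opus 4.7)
By Schur's lemma and Clifford theory, since $C$ is algebraically closed and $\eta$ is (absolutely) irreducible, any two irreducible extensions $\kappa,\kappa'$ of $\eta$ to $J^0$ differ by a uniquely determined character $\chi:J^0/J^1\to C^{\ast}$: concretely, $\kappa(g)^{-1}\kappa'(g)$ commutes with $\eta(n)$ for every $n\in J^1$ (since $\kappa,\kappa'$ agree on $J^1$ and $J^1\triangleleft J^0$), hence equals a scalar $\chi(g)\in C^{\ast}$; one verifies that $\chi$ is a group homomorphism trivial on $J^1$. So the task reduces to identifying for which $\chi$ the twist $\chi\kappa$ is still preferred.

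Fix a pro-$p$ Sylow $P\subset J^0$; it necessarily contains $J^1$, and $(\chi\kappa)|_P=(\chi|_P)\cdot(\kappa|_P)$. If $\chi$ has order prime to $p$, then $\chi|_P=1$ (as $P/J^1$ is a $p$-group), so $(\chi\kappa)|_P=\kappa|_P$ has the same $G$-intertwining as $\kappa|_P$, and $\chi\kappa$ is preferred whenever $\kappa$ is. Conversely, if $\chi|_P=1$, then $\chi$ (being a class function) also vanishes on every $J^0$-conjugate of $P$, hence on the normal subgroup $N\triangleleft J^0/J^1$ they generate; by the Steinberg--Tits structure theory for connected reductive groups in characteristic $p$, the quotient $(J^0/J^1)/N$ is an abelian group of order prime to $p$, so $\chi$ itself has order prime to $p$.

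It remains to show that $\chi|_P\neq 1$ forces $\chi\kappa$ not to be preferred. By property (iv), the space of $g$-intertwiners of $\eta$ is one-dimensional for $g\in JG'J=I_G(\theta)$; since any $g$-intertwiner of $\kappa|_P$ restricts to one of $\eta=\kappa|_{J^1}$, the same one-dimensional upper bound holds for $\kappa|_P$. For $\kappa$ preferred and $g\in G'$, let $\Phi_g$ span the $g$-intertwining of $\kappa|_P$; a direct calculation from $\kappa(k)\Phi_g=\Phi_g\kappa(g^{-1}kg)$ shows that $\Phi_g$ is a $g$-intertwiner of $(\chi\kappa)|_P$ if and only if $\chi(k)=\chi(g^{-1}kg)$ for every $k\in P\cap gPg^{-1}$. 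Taking $g$ in the preimage inside $G'_{y,0}$ of a maximal $k_F$-torus $\mathbf T$ of the finite reductive group $G'_{y,0}/G'_{y,0+}\simeq J^0/J^1$ (identification supplied by (ii)), such $g$ normalises $P$, so the condition becomes $\mathbf T$-invariance of the character $\chi|_{P/J^1}$ on the unipotent radical $U=P/J^1$; the standard weight decomposition of $U^{\mathrm{ab}}$ as a sum of simple-root spaces on which $\mathbf T$ acts non-trivially shows that every $\mathbf T$-invariant character of $U$ is trivial. Hence $\chi|_P=1$, contradicting the hypothesis, and $\chi\kappa$ is not preferred.

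The main obstacle is this last step: combining the one-dimensionality of $g$-intertwining from (iv) with the conjugation action of $G'$ (via its maximal $k_F$-torus in the finite reductive quotient) to rule out non-trivial $\chi|_P$. Auxiliary care may be needed for small residue fields where the weight-space argument degenerates, but in those cases one argues separately using the cited constructions.
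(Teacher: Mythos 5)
The opening reductions are fine: any extension of $\eta$ to $J^0$ differs from $\kappa$ by a character $\chi$ of $J^0$ trivial on $J^1$, and the paper's own paragraph before the Lemma already gives the easy direction ($\chi$ of order prime to $p$ is trivial on every pro-$p$ Sylow $P$, so $\chi\kappa$ is preferred). Your invocation of Steinberg--Tits structure theory for the equivalence "$\chi|_P=1 \iff \chi$ has order prime to $p$" is a detour (this is elementary: $\chi$ factors through the finite group $J^0/J^1$, and its kernel contains $P/J^1$ iff the image has order prime to $p$), but it is not wrong.

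The decisive step, however, is fatally flawed. You reduce the question to the condition $\chi(k)=\chi(g^{-1}kg)$ for $k\in P\cap gPg^{-1}$, and then choose $g\in G'_{y,0}$, i.e. inside $J^0$ (indeed $G'\cap J^0=G'_{y,0}$ by property (ii)). But $\chi$ is a character of $J^0$ with values in the abelian group $C^*$, hence a class function on $J^0$: for any $g\in J^0$, the identity $\chi(g^{-1}kg)=\chi(k)$ holds automatically. So the constraint you extract is vacuous, and no weight-space argument can rescue it — you are not constraining $\chi$ at all. (Even setting this aside, the weight-space argument you invoke degenerates exactly when it is needed: by the Remark following the Lemma, the only cases where a character of order divisible by $p$ exists are small-residue-field ones like $\SL_2(\mathbb F_2)$ and $\SL_2(\mathbb F_3)$, which you explicitly wave off as requiring "auxiliary care.") The paper's proof avoids this trap by taking a \emph{non-compact} element $t'\in T'$ satisfying $t'^{-1}(U'\cap G'_{y,0})t'\subset G'_{y,0+}\subset J^1$, which is available by property (ii). Conjugation by $t'$ leaves $J^0$, so $\chi$ is not a class function with respect to it; instead, $\chi(t'^{-1}xt')=1$ follows from triviality of $\chi$ on $J^1$, which is a hypothesis, not an automatic identity. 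Comparing the two intertwining relations for $\kappa$ and $\chi\kappa$ with the same $t'$-intertwiner $\Phi\neq 0$ (unique up to scalar by (iv)) then forces $\chi(x)=1$ for all $x\in U'\cap G'_{y,0}$, hence $\chi|_P=1$. This mechanism — pushing the pro-$p$ part into $J^1$ via a shrinking torus element — is essential, uniform in $q$, and absent from your argument.
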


\begin{proof} An   arbitrary  extension of $\eta\in \Irr_C(J^1)$ to $J^0$ has the form $\chi \kappa$ where $\chi:J^0\to C^*$ a character trivial on $J^1$, which we can identify with a character of the finite reductive group $G'_{y,0}/G'_{0,y +}$ for $y$ in (ii).   If the order of $\chi$ is divisible by $p$, $\chi$ is not trivial on the pro-$p$ Sylow subgroups of $J^0$, and we show that condition (ii) implies that $\chi \kappa$ is  not a preferred extension of $\eta$.
 Indeed, the vertex  $y$ of $\mathcal B (G'^{ad})$ lies  in the apartment associated to  a maximal split subtorus $T'\subset G'$ 
  in   $\mathcal B(G'^{ad})$. Let $B'\subset G'$  be a minimal parabolic subgroup  containing $T'$ with unipotent radical $U'$. 
Then $(U'\cap G'_{y,0})/(U'\cap G'_{y,0+}) $ is a pro-$p$ Sylow subgroup of $G'_{y,0}/G'_{y,0+}$,  $S=(U'\cap G'_{y,0})G'_{y,0+}$ is a pro-$p$ Sylow subgroup of $G'_{y,0}$, and  $SJ^1$ is a pro-$p$ Sylow subgroup of $J^0$. 
 By condition (ii), $J^1\cap G'= G'_{y,0+}$ so  $SJ^1\cap G'=S$. 
 There exists  $t'\in T'$ such that $ {t'}^{-1}(U' \cap G'_{y,0})t' \subset G'_{y,0+}$. 
 By condition (ii), $t'$ intertwines  $\theta$.
      
These properties imply  that,  for any  preferred extension  $\kappa_1$ of $\eta$, each $ t' $-intertwiner  $\Phi$ of $\theta$ is also  a $ t' $-intertwiner of the restriction of $\kappa_1$  to $S$.
 Indeed,   $\Phi$ is a $t'$-intertwiner of the restriction of $\kappa_1$ to the pro-$p$ Sylow subgroup $ SJ^1$ of $J^0$. Since 
  $SJ^1\cap G' =S$,  $\Phi$ is also a $t'$-intertwiner of the restriction of $\kappa_1$ to $ S$,  so  $\Phi \kappa_1(t'^{-1}xt')=\kappa_1(x) \Phi$ for all $x\in t' S t'^{-1} \cap S$.   In particular,    for $x\in  (U'\cap G'_{y,0})\subset  t' S t'^{-1} \cap S$.

  If $\chi \kappa$ is   a preferred extension,  as $\chi$ is trivial on $t'^{-1}(U'\cap G'_{y,0}) t' \subset  J^1$, we deduce  $\Phi \kappa (t'^{-1}xt')=\chi (x) \kappa (x) \Phi$   for all  $x  \in (U'\cap G'_{y,0})$; as $\kappa$ is a preferred extension, we have also $\Phi \kappa (t'^{-1}xt')= \kappa (x) \Phi$ hence  $\Phi = \chi (x) \Phi$ for all  $x  \in (U'\cap G'_{y,0})$.
       If the order of $\chi$ is divisible by $p$, then $\chi$ is not trivial on $U'\cap G'_{y,0}$; we get  $\Phi=0$, a contradiction because $t'$ interwines $\theta$, which shows that $\chi \kappa$ cannot be a preferred extension.
 \end{proof}
\begin{remark}   Most of the time all characters of  $ G'_{y,0}/G'_{y,0 +}$ have order prime to $p$ (see the list in Digne-Michel \cite{DM91}), but there are some exceptions, for example  a non-trivial complex character of  $SL(2, \mathbb F_2)$  has order $2$  (the signature,  after identifying $SL(2, \mathbb F_2)$  with  the group of  permutations on $3$ elements), and  the two non-trivial complex characters of $SL(2, \mathbb F_3)$  have order $3$.
 \end{remark}

 \begin{remark}\label{re:vi} Applying Clifford theory  to $J$ and its normal subgroup $ J^0$,  the restriction   of $\lambda$ to $J^0$ is semi-simple,  by the condition (vi) its  irreducible components are the $J$-conjugates of $\tau \otimes_C\kappa$  where 
 $\tau\in \Irr_C(J^0)$ is trivial on $J^1$,  and choosing one  $\lambda$ is obtained from the $\tau \otimes_C\kappa$-isotypic component $\lambda_\tau$ of $\lambda|_{J^0}$, by induction   to $J$  from the $J$-stabilizer $J_\tau$ of the isomorphism class of $\tau \otimes_C\kappa$.

    Let $j\in J$. The $j$-conjugate $\kappa^j $ of $\kappa$ is again a preferred extension of $\eta$, because
  $J$ normalizes $J^0$ hence permutes its pro-$p$ Sylow subgroups, and $J$ normalizes $\theta$ and $\eta$ by the conditions (ii) and  (iv).   By Lemma \ref{le:kappa},  $\kappa^j=\chi \kappa$ where $\chi:J^0\to C^*$ is a character trivial on $J^1$ of order prime to $p$. The $j$-conjugate of $\tau \otimes_C\kappa$  is $ \tau^j\otimes_C\kappa^j= \chi \tau^j\otimes_C\kappa$, and 
$ \chi \tau^j $ is, as $\rho$, an irreducible representation of $J^0$ trivial on $J^1$ and cuspidal as a representation of $J^0/J^1$. We conclude that (vi) is independent of the choice of the preferred extension, that all irreducible components of $\lambda|_{J^0}$ have the form prescribed in (vi) with the same $\kappa$. The condition on $\lambda|_{J^0}$ in  (vi)  is equivalent to:
 
  $\lambda|_{J^0}= \rho \otimes_C \kappa$  where $\kappa$ is  a preferred extension of $\eta$ to $J^0$, and $\rho$  is a $C$-representation of $J^0$
  inflated from a cuspidal $C$-representation $\lambda^0$ of  $J^0/J^1$.
  
 \end{remark}

Very often the construction of preferred extensions gives one which is normalized by $J$. Often too, there is an extension $\tilde \kappa $ of $\kappa$ to $J$ and  then $\lambda= \tilde \rho  \otimes_C \tilde \kappa $ where $\tilde \rho $ is an irreducible  representation of $J$ trivial on $J^1$ containing $\rho$ on restriction to $J^0$ as in the level $0$ case.

\bigskip We generalise now Proposition \ref{prop:iso} for level $0$ cuspidal $C$-types to  level $>0$ cuspidal $C$-types as above

 \begin{proposition}\label{prop:iso2}   In the setting  described with the  conditions  (i) to (vi), 
   the space of vectors in $\ind_J^G \lambda$ transforming by $\theta$ under right translation by $H^1$ is made out of the functions
with support in $J$; in particular it affords the representation $\lambda$ of $J$.
\end{proposition}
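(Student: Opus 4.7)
The plan is to mimic the strategy of Proposition \ref{prop:iso}, replacing the role of the trivial character of $J^1=G_{x,0+}$ there with the pair $(H^1,\theta)$ here. First I would decompose
$$\Res_{H^1}^G\ind_J^G\lambda\ =\ \bigoplus_{JgH^1\in J\backslash G/H^1}\ind_J^{JgH^1}\lambda,$$
and note that the summand for the trivial double coset $J$ is a copy of $\lambda$, so it suffices to check two things: that $\lambda|_{H^1}$ is $\theta$-isotypic (so that this summand lies entirely in the $\theta$-isotypic part), and that for $g\notin J$ the space $\ind_J^{JgH^1}\lambda$ contains no non-zero function transforming by $\theta$ under $H^1$.

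The first assertion is immediate from (vi) together with Remark \ref{re:vi} and condition (iii): indeed $\lambda|_{J^0}\simeq \rho\otimes_C\kappa$, with $\rho$ trivial on $J^1\supseteq H^1$ and $\kappa|_{J^1}=\eta$, so $\lambda|_{H^1}$ is a multiple of $\eta|_{H^1}$, itself a multiple of $\theta$. For the second assertion, by the isomorphisms \eqref{eq:compactind3}--\eqref{eq:compactind4} the $\theta$-component of the summand is
$$S_g\ :=\ \{\,v\in V_\lambda\mid \lambda(ghg^{-1})v=\theta(h)v\ \text{for all}\ h\in H^1\cap g^{-1}Jg\,\},$$
so I must show $S_g=0$ when $g\notin J$. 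Assume for contradiction that $0\neq v\in S_g$.

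Restricting the defining identity of $S_g$ to $h\in H^1\cap g^{-1}H^1g$ (which is contained in $J$) and using that $\lambda|_{H^1}$ acts by the scalar $\theta$, one gets $\theta(ghg^{-1})=\theta(h)$ on $H^1\cap g^{-1}H^1g$; thus $g$ intertwines $\theta$, hence $g\in JG'J$ by condition (ii). After multiplying $g$ on either side by elements of $J$ (which normalize $\theta$ by (i) and conjugate $\lambda$ to an isomorphic representation) we may assume $g=g'\in G'$, and then $g'\notin G'\cap J=G'_y$ by (ii), so $g'y\neq y$ in $\mathcal{B}(G'^{ad})$. The same identity for $h\in H^1$ with $ghg^{-1}\in J^1$ shows that $v$ determines a $g'$-intertwiner of $\eta$, which by the uniqueness part of (iv) must coincide (up to scalar) with the canonical one predicted by the fact that $g'\in G'$ intertwines $\eta$.

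The crux, and the hardest step, is to push this intertwining information from the level of $\eta$ and $\lambda$ down to the finite reductive quotient $J^0/J^1\simeq G'_{y,0}/G'_{y,0+}$. Using the decomposition $\lambda|_{J^0}\simeq \rho\otimes_C\kappa$ of (vi) together with the fact that $\kappa$ is a preferred extension (v), the quotient of the $v$-intertwiner by the unique $g'$-intertwiner of $\kappa|_{J^1}$ produces a non-zero $g'$-intertwiner of the inflation of $\rho^0$ viewed inside the finite reductive quotient of $G'$; since $\rho^0$ is cuspidal and $g'y\neq y$, Lemma \ref{le:0} applied to $(G',G'_y,\rho^0)$ gives the desired contradiction. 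The main obstacle is precisely this descent: it requires carefully identifying how the preferred extension $\kappa$ transfers $g'$-intertwiners and checking that the resulting object on the finite group side is a genuine intertwiner of the cuspidal piece $\rho^0$, for which conditions (iv) and (v) together with the intersection identities $G'\cap J^0=G'_{y,0}$, $G'\cap J^1=G'_{y,0+}$ of (ii) are essential.
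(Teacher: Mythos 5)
Your proof follows the same route as the paper's: decompose $\Res_{H^1}^G\ind_J^G\lambda$ over double cosets, check $\theta$-isotypy of $\lambda|_{H^1}$ via (vi), reduce to $g\in G'$ using intertwining of $\theta$ and the normalizer condition (i), then use the tensor decomposition $\lambda|_{J^0}\simeq\rho\otimes\kappa$ with the preferred-extension property (v) and the one-dimensionality of $\eta$-intertwiners (iv) to descend to an intertwiner of $\rho$ in the level-$0$ picture, and conclude with Lemma~\ref{le:0}. The paper compresses the descent step you flag as the crux by citing Bushnell--Kutzko \cite{BK93} Proposition~5.3.2, which is exactly the mechanism you describe for cancelling the $\kappa$-factor.
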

\begin{proof} The restriction  of $\ind_J^G \lambda$ to $H^1$ splits as a direct sum $\oplus_{Jg H^1 }\ind_J^{JgH^1} \lambda$. The subspace of functions with support in $J$, as a representation of $J$,  is isomorphic to $\lambda$. By (vi), the restriction of    $\lambda$ to $H^1$ is  $\theta$-isotypic.
  It is enough to show that for $g\in G\setminus  J$, a 
function in $\ind_J^G \lambda $ with support in  $Jg H^1$  and  transforming by $\theta$ under right translation by $H^1$,   is $0$. 
 Saying that there exists a non-zero function in $\ind_J^G \lambda $ with support in  $Jg H^1$  transforming by $\theta$ under $H^1$ is saying that $g$ interwines $\lambda$ with $\theta$; since $\lambda|_{H^1}$ is $\theta$-isotypic, if $g$ interwines $\lambda$ with $\theta$ then  $g$ interwines $\theta$ so belongs to $JG'J$ by  (ii); because $J$ normalizes $\theta$ we can assume $g\in G'$. Since $g$ interwines $\lambda$ with $\theta$, it intertwines $\lambda|_{J^0}$ with $\theta$; it also intertwines $\kappa$  as $g^{-1}Jg\cap H^1$ is a pro-$p$ group and 
  $\kappa$ is a preferred extension  by (v).
  Reasoning as in (\cite{BK93} Proposition 5.3.2), we see that $g$ intertwines $\rho$ with the trivial representation of $J^1$. Restricting to $G'\cap J^0=G'_{y,0}$, we get that $g$ intertwines $\rho$, seen as a  representation of $G'_{y,0}$, with  the trivial representation $1$ of $G'_{y,0+}$. We are now in a level $0$ situation.  So $g\in G'$  must satisfy $\Hom_{G'_{y,0} \cap g^{-1}G'_{y,0+}g}(\rho, 1)\neq 0$, which means that the  irreducible  cuspidal  representation $\rho $ of $G'_{y,0}$ has a non-zero vector fixed by $G'_{y,0} \cap G'_{g^{-1}y,0+}$. By Lemma \ref{le:0}, $g'$   has to be in the $G'$-stabilizer $G'_{y}$ of $y$, hence in $J$.
 \end{proof}
 \begin{corollary} \label{cor:2}  The conditions (i) to (vi) imply that $\ind_J^G\lambda$ is irreducible and $\End_{C[G]}(\pi)=\End_{C[J]}(\lambda)$. \end{corollary}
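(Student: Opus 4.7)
The strategy is to adapt the proof of Corollary \ref{cor:0}, replacing the functor of $J^1$-invariants and Proposition \ref{prop:iso} by the functor of $\theta$-isotypic vectors under $H^1$ and Proposition \ref{prop:iso2}. Since $H^1\subset J^1$ is pro-$p$ and $c\neq p$, every smooth $C$-representation of $H^1$ is semisimple, and the $\theta$-isotypic part functor $(-)_\theta$ on $\Mod_C(H^1)$ is exact. By condition (iii), $\eta|_{H^1}$ is a multiple of $\theta$, and by condition (vi) the restriction of $\lambda$ to $J^0$ has the form $\rho\otimes_C\kappa$ with $\kappa$ extending $\eta$ and $\rho$ trivial on $J^1\supset H^1$; hence $\lambda|_{H^1}$ is $\theta$-isotypic.

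For irreducibility, set $\pi=\ind_J^G\lambda$ and suppose $W\subset\pi$ is a nonzero proper subrepresentation with quotient $Q=\pi/W$. Frobenius reciprocity for compact induction yields a nonzero $C[J]$-map $\lambda\to Q|_J$; irreducibility of $\lambda$ embeds $\lambda$ into $Q|_J$. Frobenius reciprocity for smooth induction, applied to the chain $W\subset\pi\subset\Ind_J^G\lambda$, produces a nonzero $C[J]$-map $W|_J\to\lambda$, so $\lambda$ is a $J$-quotient of $W$. Both $W$ and $Q$ therefore contain $\lambda|_{H^1}$ as a subquotient, and since $\lambda|_{H^1}$ is $\theta$-isotypic, both $W_\theta$ and $Q_\theta$ are nonzero. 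Exactness of $(-)_\theta$ gives a short exact sequence $0\to W_\theta\to\pi_\theta\to Q_\theta\to 0$. By Proposition \ref{prop:iso2}, $\pi_\theta$ is the subspace of functions in $\ind_J^G\lambda$ with support in $J$, which is isomorphic to $\lambda$ as a $J$-representation and in particular $J$-irreducible. This forces $W_\theta=0$ or $Q_\theta=0$, a contradiction.

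For the intertwining assertion, the embedding \eqref{eq:compactind} combined with the Mackey decomposition \eqref{eq:compactind2} and the identification \eqref{eq:compactind3} reduce the equality $\End_{C[G]}(\pi)=\End_{C[J]}(\lambda)$ to showing that $\Hom_{C[J\cap g^{-1}Jg]}(\lambda,{}^g\lambda)=0$ for every $g\in G\setminus J$. A nonzero element of this Hom space corresponds via \eqref{eq:compactind3} to a nonzero $C[J]$-map $\phi\colon\lambda\to\ind_J^{JgJ}\lambda\subset\pi$. Since $\lambda|_{H^1}$ is $\theta$-isotypic, so is $\phi(\lambda)$; hence $\phi(\lambda)\subset\pi_\theta$. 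By Proposition \ref{prop:iso2}, $\pi_\theta$ consists of functions supported on $J$, whereas $\phi(\lambda)$ consists of functions supported on $JgJ$, which is disjoint from $J$ when $g\notin J$. Therefore $\phi=0$, as required.

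The only non-routine step is unwinding conditions (iii) and (vi) to confirm that $\lambda|_{H^1}$ is $\theta$-isotypic; once that is in hand, the exactness of $(-)_\theta$ and the identification $\pi_\theta\cong\lambda$ furnished by Proposition \ref{prop:iso2} let the argument proceed exactly in parallel with the level-$0$ case treated in Corollary \ref{cor:0}.
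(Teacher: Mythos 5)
Your proof is correct and follows essentially the same route as the paper, which simply instructs the reader to ``apply the proposition as in Corollary \ref{cor:0}'': you fill in the details by replacing the $J^1$-invariants functor with the $H^1$-$\theta$-isotypic part functor and replacing Proposition \ref{prop:iso} with Proposition \ref{prop:iso2}. Your handling of the intertwining step is slightly streamlined, appealing to Proposition \ref{prop:iso2} directly (the image of any nonzero $J$-map $\lambda\to\ind_J^{JgJ}\lambda$ would land in $\pi_\theta$, hence be supported on $J$) rather than re-running the underlying geometric Lemma \ref{le:0} argument as the level-$0$ Corollary \ref{cor:0} does, but this is only a cosmetic repackaging of the same idea.
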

\begin{proof} Apply the proposition  as in  Corollary \ref{cor:0}.
\end{proof}

 \begin{proposition} \label{prop:cnirr} The conditions (i) to (v) imply that $\ind_{J^0}^G (\rho \otimes_C \kappa)$ is cuspidal when 
  $\kappa$ is  a preferred extension of $\eta$ to $J^0$, and $\rho$  is a $C$-representation of $J^0$ trivial on $J^1$
  inflated from a  cuspidal $C$-representation $\rho^0$  of the finite reductive group $J^0/J^1$.
\end{proposition}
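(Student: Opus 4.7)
My plan is to reduce to Corollary \ref{cor:2} and Proposition \ref{prop:cusptype}(iii) via transitivity of compact induction. Write $\pi := \ind_{J^0}^G(\rho \otimes \kappa) = \ind_J^G(W)$ where $W := \ind_{J^0}^J(\rho \otimes \kappa)$. The key step is to show that every irreducible $J$-subquotient $\lambda$ of the smooth $J$-representation $W$ satisfies the six conditions (i)--(vi) of \S \ref{ss:3.4} with the same data $\theta, \eta, \kappa, J, J^0, J^1, H^1, G', y$. Since (i)--(v) depend only on these data and not on $\lambda$, they are automatic; only (vi) requires attention for the particular $\lambda$.

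To verify (vi) for $\lambda$, I would invoke Clifford theory for the normal inclusion $J^0 \triangleleft J$. The restriction $\lambda|_{J^0}$ is semisimple, with each irreducible constituent a $J$-conjugate of $\rho \otimes \kappa$. Since $J$ normalizes $\theta$ by (i), and hence $\eta$ by the uniqueness statement in (iii), each $J$-conjugate $\kappa^j$ is another preferred extension of $\eta$; Lemma \ref{le:kappa} then gives $\kappa^j = \chi_j\,\kappa$ for some character $\chi_j$ of $J^0/J^1$ of order coprime to $p$. Consequently $(\rho \otimes \kappa)^j = (\rho^j \chi_j) \otimes \kappa$, and $\lambda|_{J^0} = \rho' \otimes \kappa$, where $\rho' := \bigoplus_k \rho^{j_k}\chi_{j_k}$ is a direct sum of cuspidal irreducibles of $J^0/J^1$ and hence itself cuspidal as a representation of $J^0/J^1$. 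By the reformulation in Remark \ref{re:vi}, this is exactly the form required by (vi), so $(J, \lambda)$ is a cuspidal $C$-type in $G$. Corollary \ref{cor:2} now shows $\ind_J^G \lambda$ is irreducible, and Proposition \ref{prop:cusptype}(iii) shows it is cuspidal.

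Cuspidality of $\pi = \ind_J^G W$ itself should then follow from the formal properties of Jacquet functors: $(-)_N$ is exact (since $c \neq p$) and commutes with filtered colimits (being left adjoint to parabolic induction), so cuspidality is preserved under subobjects, extensions, and directed unions. It thus suffices to show that every vector of $\pi$ lies in some finitely-generated subrepresentation all of whose composition factors come from cuspidal types as above. The main obstacle is exactly this last point: the $J$-representation $W$ may have infinite length because $J/J^0$ contains the possibly infinite lattice $G'_y/G'_{y,0}$ appearing in condition (ii). The resolution exploits that the $J$-orbit of the isomorphism class of $\rho \otimes \kappa$ under $J^0$-conjugation is finite (the finite reductive group $J^0/J^1$ admits only finitely many isomorphism classes of cuspidal irreducibles, and only finitely many twists $\chi_j$ occur), so only finitely many Clifford types appear in any finitely-generated $J$-subrepresentation of $W$, reducing the global cuspidality verification to the finite-length case already handled above.
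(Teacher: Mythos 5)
The first half of your argument is sound and essentially matches the implicit reasoning in the paper: via Clifford theory for $J^0\triangleleft J$, Remark~\ref{re:vi}, and Lemma~\ref{le:kappa}, every irreducible $J$-subquotient $\lambda$ of $W=\ind_{J^0}^J(\rho\otimes_C\kappa)$ satisfies condition (vi), so $(J,\lambda)$ is a cuspidal $C$-type and $\ind_J^G\lambda$ is irreducible cuspidal by Corollary~\ref{cor:2} and Proposition~\ref{prop:cusptype}(iii). The gap is in your final paragraph. Knowing that only finitely many $J^0$-isomorphism classes occur among the constituents of a finitely generated $J$-subrepresentation of $W$ does \emph{not} bound its length as a $J$-representation: take $J=\mathbb Z$, $J^0=\{0\}$, $V=C$; then $W=C[\mathbb Z]$ is cyclic, has a single ``Clifford type'' over $J^0$, yet has infinitely many pairwise non-isomorphic simple $J$-subquotients and no composition series. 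So your assertion that finite generation ``reduces to the finite-length case'' is unjustified---and the obstruction you correctly identify (the infinite lattice $G'_y/G'_{y,0}$ inside $J/J^0$) is precisely why.

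The paper resolves this differently. It introduces $Z^\sharp\subset Z$ with $Z^\sharp\cap J^0$ trivial and $J/J^0Z^\sharp$ finite, inflates $V=\rho\otimes_C\kappa$ to $V^\sharp$ on $J^0Z^\sharp$, and observes that $\ind_{J^0Z^\sharp}^G V^\sharp$ has finite length with cuspidal composition factors $\ind_J^G\lambda$, hence is cuspidal. This is only a \emph{quotient} of $\ind_{J^0}^GV$, and since $(-)_N$ is right exact, cuspidality of a quotient alone says nothing about the source; so the paper switches to the vector-by-vector criterion of Remark~\ref{re:Nv}: given $f\in\ind_{J^0}^GV$ and a proper $P=MN$, cuspidality of the quotient yields a compact open $N_f\subset N$ with $e_{N_f}(f^\sharp)=0$ for $f^\sharp=\sum_{z\in Z^\sharp}zf$, and then the fact that $J^0Z^\sharp\cap N$ is trivial allows one to conclude $e_{N_f}(f)=0$ directly. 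A directed-colimit argument alone cannot replace this step, since you have not produced (and it is unclear one can produce) an exhaustive filtration of $\ind_{J^0}^GV$ by finite-length, or even cuspidal, subrepresentations.
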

 \begin{proof} Put $V= \rho \otimes_C \kappa$ and $V^\sharp$ for the inflation of $V$ of $J^0Z^\sharp$ (as $J^0\cap  Z^\sharp$ is trivial).
 The representation $\ind_{J^0Z^\sharp}^G V^\sharp$ has finite length and its irreducible subquotients are of the form  $\ind_{J}^G\lambda$ where $\lambda$ satisfies the condition (vi), and  $\ind_{J}^G\lambda$ is irreducible (Corollary \ref{cor:2}) and cuspidal (a coefficient of $\lambda$ is a coefficient of $\ind_{J}^G\lambda$ with $Z$-compact support).
  Hence  $\ind_{J^0Z^\sharp}^G V^\sharp$ is cuspidal.
 
To show the cuspidality of  $\ind_{J^0}^GV$, we use the criterion (Remark \ref{re:Nv}):  for  any $f\in \ind_{J^0}^GV$ and any arbitrary proper parabolic subgroup $P=MN$ of $G$, there exists  a compact  open subgroup $N_f$ of $N$ such that $e_{N_f}(f)=0$ where $e_{N_f}$ is the projection on the $N_f$-invariants.

We have $\ind_{J^0}^GV= \ind_{J^0Z^\sharp}^G (V \otimes_C C[Z^\sharp])$ and the linear map $C[Z^\sharp])\to C$ sending $z\in Z^\sharp$ to $1$, induces a surjective $C[G]$-map  $\ind_{J^0Z^\sharp}^G (V \otimes_C C[Z^\sharp])\to \ind_{J^0}^GV^\sharp$ sending $f$ to  $f^\sharp=\sum_{z\in Z^\sharp}zf$ as $ (zf)(g)= z (f(g))$ for $g\in G$.   By the criterium of cuspidality recalled above, there exists a compact  open subgroup $N_{f}$ of $N$ such that $e_{N_f }(f^\sharp)=0$.  As the  intersection $J^0Z^\sharp\cap N$ is trivial we have also $e_{N_f}(f ) =0$. 
  \end{proof} 
  
  \begin{corollary}  \label{cor:cnirr}The conditions (i) to (v) imply that $\ind_{J}^G \lambda$ is cuspidal when $\lambda$ satisfies the condition (vi) without  the irreducibility.
  \end{corollary}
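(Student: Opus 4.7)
The plan is to reduce the statement to Proposition~\ref{prop:cnirr} by purely formal manipulations: transitivity of compact induction, a canonical surjection onto $\lambda$, and the fact that cuspidality passes to quotients when $c\neq p$.

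First, under the weakened form of (vi), $\lambda|_{J^0}$ is literally of the form $\rho\otimes_C\kappa$ with $\rho$ and $\kappa$ as prescribed, even though $\lambda$ itself is no longer assumed irreducible. Proposition~\ref{prop:cnirr} therefore applies directly and tells us that $\ind_{J^0}^G(\lambda|_{J^0})$ is a cuspidal smooth $C$-representation of $G$.

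Second, transitivity of compact induction gives the isomorphism $\ind_{J^0}^G(\lambda|_{J^0})\simeq \ind_J^G\bigl(\ind_{J^0}^J(\lambda|_{J^0})\bigr)$. Viewing $\ind_{J^0}^J(\lambda|_{J^0})$ as $C[J]\otimes_{C[J^0]}\lambda|_{J^0}$, the assignment $j\otimes v\mapsto \lambda(j)v$ is a well-defined $C[J]$-equivariant surjection onto $\lambda$; under Frobenius reciprocity for compact induction, this is the morphism corresponding to the identity on $\lambda|_{J^0}$. Applying the exact functor $\ind_J^G$ produces a $C[G]$-surjection $\ind_{J^0}^G(\lambda|_{J^0})\onto\ind_J^G\lambda$.

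Finally, for every proper parabolic subgroup $P=MN$ of $G$ the Jacquet functor $(-)_N$ is exact (as $c\neq p$), so cuspidality is inherited by quotients, and $\ind_J^G\lambda$ is cuspidal. There is essentially no real obstacle to overcome here: once Proposition~\ref{prop:cnirr} is available, the corollary is immediate by functoriality. The only point worth checking is that the canonical surjection $\ind_{J^0}^J(\lambda|_{J^0})\onto\lambda$ is available without any irreducibility assumption on $\lambda$, and this is clear from the tensor-product description of compact induction.
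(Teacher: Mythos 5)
Your proof is correct, and it is the dual of the one in the paper. Both arguments feed Proposition~\ref{prop:cnirr} the same observation, namely that $\lambda|_{J^0}=\rho\otimes_C\kappa$ is of the required shape even when $\lambda$ is not irreducible, and both then relate $\ind_{J^0}^G(\lambda|_{J^0})$ to $\ind_J^G\lambda$ via transitivity and exactness of compact induction. The paper asserts an embedding $\lambda\into\ind_{J^0}^J(\lambda|_{J^0})$ and concludes using that a subrepresentation of a cuspidal representation is cuspidal; you instead use the counit surjection $\ind_{J^0}^J(\lambda|_{J^0})\onto\lambda$ coming from the adjunction $\ind_{J^0}^J\dashv\Res_{J^0}^J$, apply $\ind_J^G$, and conclude using that a quotient of a cuspidal representation is cuspidal (right exactness of $(-)_N$). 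Your direction is actually the safer one: the counit surjection is available unconditionally, whereas the natural injection $v\mapsto(j\mapsto\lambda(j)v)$ provided by adjunction lands in the full smooth induction $\Ind_{J^0}^J(\lambda|_{J^0})$ and its image has support equal to all of $J$; since $J^0$ is compact while $J$ contains the in general noncompact centre $Z$, the quotient $J/J^0$ is infinite, $\ind_{J^0}^J$ and $\Ind_{J^0}^J$ differ, and the claimed embedding into the compactly supported induction needs a justification that your surjection does not. In short, you reach the same conclusion from the same key inputs by a formally dual manipulation that avoids a support issue implicit in the paper's formulation.
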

  \begin{proof}  By adjunction $\lambda$ embeds in $\ind_{J^0}^J(\lambda|_{J^0})$ and by exactness   $\ind_{J}^G \lambda$ embeds in  $\ind_{J^0}^G(\lambda|_{J^0})$.
  By  Proposition \ref{prop:cnirr}, $\ind_{J^0}^G(\lambda|_{J^0})$ is cuspidal. A subrepresentation of a cuspidal representation is cuspidal hence $\ind_{J}^G \lambda$ is cuspidal.
  \end{proof}
 
 In that setting, $\Aut(C)$-stability  can be established as follows. Let $(J,\lambda)$ be a   cuspidal $C$-type as above and let $\sigma\in \Aut(C)$. To prove that 
 $(J, \sigma(\lambda))$ is also a  cuspidal $C$-type as above, there are two issues.
 
 (a) If $\theta$ is a  {\it very special} character, then $\sigma(\theta)$ is also  {\it very special}. 
  
 If (a) is true, certainly $\sigma(\eta)$ is an irreducible representation of $J^1$  restricting on $H^1$ to a multiple of $\sigma(\theta)$, and we need:
 
 (b) if $\kappa$ is a {\it preferred} extension to $J^0$ of $\eta$, then $\sigma (\kappa)$ is a  {\it preferred} extension to $J^0$ of  $\sigma(\eta)$.
 
Clearly  $\sigma(\rho \otimes_C \kappa )\simeq \sigma(\rho)\otimes_C \sigma(\kappa)$ and $\sigma(\rho)$ is, as $\rho$, an irreducible $C$-representation of  $J^0$ trivial on $J^1$  with  restriction to $J^0$ inflated from a cuspidal $C$-representation $\sigma(\lambda^0)$ of $J^0/J^1$. If (a) and (b) are true, $(J, \sigma(\lambda))$ is a   cuspidal $C$-type as desired. Assuming (a),  let us prove (b): indeed $\sigma(\kappa)$ extends $\sigma(\eta)$ and since $\sigma$ preserves intertwining, (b) comes from condition (v). Our task in the examples of  \S \ref{ss:3.5} and \S \ref{ss:3.6} below will be to verify property (a).
 
 Note that  underlying the construction of the {\it very special} characters $\theta$   is the choice
 of an additive character $\psi:F\to C^*$, assumed to be trivial on  $P_F$ but not on $O_F$.    
Applying $\sigma\in \Aut(C)$  transforms $\psi$ to the character 
 $\psi^{\xi}:x\mapsto \psi(\xi x)$  for some $\xi=\xi_\sigma\in O_F^*$. Actually, if $a$ is the characteristic of $F$, then   $\xi_\sigma$ is in $\mathbb Z_p^*\subset O_F^*$ if $a=0$ and in    $  \mathbb F_p  ^*\subset O_F^*$   if $a=p$. In fact, as we shall see, changing $\psi$ to  $\psi^{\xi}$
 for $\xi \in O_F^*$ does not change the set of types constructed inducing cuspidal irreducible representations, and property (a) holds.

 \subsection{Supercuspidality and types}\label{ss:scpositive}

Let $(J,\lambda)$  be a cuspidal $C$-type in $G$ as in  \S \ref{ss:3.4}  satisfying the properties (i) to (vi).  Put $\pi= \ind_J^G\lambda$.  
 
  We are in the situation where $C$ is algebraically closed, $\lambda|_{J^1}$ is $\eta$-isotypic for a representation  $\eta\in \Irr_C(J^1)$ which is normalized by $J$ and extends to a  representation $\kappa\in  \Irr_C(J^0)$; moreover   for any $h\in J$, the conjugate of $\kappa$ by $h$ is isomorphic to $\chi_h \kappa$ where $\chi_h$ is  a $C$-character of $J^0$ trivial on $J^1$ of order prime to $p$.   We   choose a preferred extension $\kappa$ of $\eta$ (Remark \ref{re:vi}, Lemma \ref{le:kappa}). 
 We have  
\begin{equation}\label{eq:rJ0}\text{$\lambda|_{J^0}=\rho \otimes_C \kappa$ where 
 $\rho$   is   cuspidal as a representation  of $J^0/J^1$.}
 \end{equation}  The definition of $\rho$  depends on the choice of the preferred extension $\kappa$ of $\eta$. 
The other preferred extensions have the form $\chi \kappa$ where $\chi$ is a character of $J^0$ trivial on $J^1$ of order prime to $p$ by the discussion in  \S \ref{ss:3.4} before Lemma \ref{le:kappa}, so that $ \rho \otimes_C\kappa =\chi^{-1}  \rho \otimes_C\chi \kappa$. Therefore, another choice of $\kappa$ gives
 $\rho$  twisted by a character of order prime to $p$. By Clifford's theory, 
$\lambda|_{J^0} $   is semi-simple of finite length. The irreducible components are $J$-conjugate of the form $ \sigma \otimes \kappa$ where $\sigma$ is  an irreducible  component of $\rho$. Let  $I_{\rho}$  be the injective hull of $\rho$  in $\Mod_C(J^0)$. The following properties are equivalent:

(i)  Some irreducible component of $\rho$    is   supercuspidal as a representation  of $J^0/J^1$. 

 (ii)  $I_{\rho}$  is cuspidal 
 as a representation of $J^0/J^1$ (Lemma \ref{le:Hiss}).
 
 \begin{definition}\label{def:scpositive}   $(J,\lambda)$ is called supercuspidal if 
 the properties   (i), (ii) are satisfied.
  \end{definition}

The definition  does not depend on the choice of the preferred extension $\kappa$ in  of $\eta$.  
 In level $0$ where 
 $\eta$ and $\kappa$ are trivial,  it  coincides with Definition \ref{def:level0}, and 
$(J,\lambda)$ is supercuspidal if and only if   $\pi $  is supercuspidal (Theorem \ref{thm:sct0}). In positive level, our goal is to   prove:
 
  \begin{theorem}\label{thm:sct0positive}  If $(J,\lambda)$ is supercuspidal then   $\pi $  is supercuspidal. The converse is true if $(G,C)$ satisfies the second adjunction  and $(J,\lambda) $ satisfies the property (vii) of \S \ref{ss:3.4}.
\end{theorem}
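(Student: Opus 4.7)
The plan is to mirror the level $0$ argument, i.e.\ the proofs of Theorems \ref{thm:sc01} and \ref{thm:sc0}, with Proposition \ref{prop:iso2} and Corollary \ref{cor:cnirr} in the roles of Proposition \ref{prop:iso} and its consequences. The key preliminary is a positive-level analog of Proposition \ref{prop:Ilambda}: I claim that $I_{\lambda,\omega}$, the injective hull of $\lambda$ in $\Mod_C(J,\omega)$, is finite dimensional, projective, indecomposable with socle and cosocle isomorphic to $\lambda$, and each of its irreducible subquotients $\mu$ satisfies $\mu|_{J^0}=\rho_\mu\otimes_C\kappa$ with $\rho_\mu$ inflated from an irreducible representation of $J^0/J^1$; moreover $(J,\lambda)$ is supercuspidal (Definition \ref{def:scpositive}) if and only if each such $\rho_\mu$ is inflated from a cuspidal representation of $J^0/J^1$. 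Because $J^1$ is pro-$p$ of pro-order invertible in $C$, the subcategory of representations in $\Mod_C(J,\omega)$ with $\eta$-isotypic restriction to $J^1$ is a direct factor; Clifford theory on the $J$-stabilizer of $\kappa$ identifies this subcategory with a (possibly twisted) version of $\Mod_C(J/J^1)$, reducing the assertion to the level $0$ statements of Propositions \ref{prop:H} and \ref{prop:H0}.

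\emph{Forward direction.} Assume $(J,\lambda)$ supercuspidal. Every irreducible subquotient $\mu$ of $I_{\lambda,\omega}$ then satisfies the hypotheses of Corollary \ref{cor:cnirr}, so $\ind_J^G\mu$ is cuspidal; by induction on the finite length of $I_{\lambda,\omega}$, the representation $\ind_J^G I_{\lambda,\omega}$ is cuspidal of finite length. Being cuspidal it is admissible and $Z$-compact (Propositions \ref{pro:cusp}(iv) and \ref{prop:Zcompact}), so Remark \ref{re:compactind}b gives $\ind_J^G I_{\lambda,\omega}=\Ind_J^G I_{\lambda,\omega}$, which is injective in $\Mod_C(G,\omega)$ by Example \ref{ss:injpro}c. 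Proposition \ref{prop:iso2} identifies the $\theta$-isotypic component of $(\ind_J^G\mu)|_{H^1}$ with $\mu$; consequently $\ind_J^G$ induces an isomorphism between the lattices of subrepresentations of $I_{\lambda,\omega}$ and of $\ind_J^G I_{\lambda,\omega}$, and the latter is indecomposable with socle $\pi=\ind_J^G\lambda$, hence an injective hull $I_{\pi,\omega}$ of $\pi$. Cuspidality together with admissibility gives right cuspidality (Proposition \ref{pro:cusp}(iii)), and the argument of Proposition \ref{prop:scI} then yields $\pi$ supercuspidal.

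\emph{Converse direction.} Assume $\pi$ supercuspidal, the second adjunction for $(G,C)$, and property (vii). By Proposition \ref{prop:scI}, $I_{\pi,\omega}$ is cuspidal. Consider the functor $V\mapsto V^{(H^1,\theta)}$ extracting the $\theta$-isotypic component of $V|_{H^1}$; because $H^1$ is pro-$p$ of pro-order invertible in $C$ this functor is exact, and because $J$ normalizes $\theta$ by (i) it carries $\Mod_C(G,\omega)$ into $\Mod_C(J,\omega)$. The subcategory of $V\in\Mod_C(J,\omega)$ with $\theta$-isotypic restriction to $H^1$ is a direct factor, so this functor preserves injectives. For each irreducible subquotient $\tau$ of $I_{\pi,\omega}$, either $\tau$ does not contain $\theta$ and $\tau^{(H^1,\theta)}=0$, or property (vii) realizes $\tau$ as $\ind_J^G\lambda_\tau$ with $\lambda_\tau$ satisfying (i)--(vi) relative to the same $\theta$, and Proposition \ref{prop:iso2} gives $\tau^{(H^1,\theta)}=\lambda_\tau$. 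Therefore $W:=I_{\pi,\omega}^{(H^1,\theta)}$, viewed in $\Mod_C(J,\omega)$, is a successive extension of such $\lambda_\tau$, is injective (restriction preserves injectives by Example \ref{ss:injpro}c), and contains $\lambda$; hence $I_{\lambda,\omega}$ is a direct factor of $W$. Every irreducible subquotient of $I_{\lambda,\omega}$ is thus one of the $\lambda_\tau$, whose $\rho_{\lambda_\tau}$ is inflated from a cuspidal representation of $J^0/J^1$ by property (vi). The preliminary analog of Proposition \ref{prop:Ilambda} then yields $(J,\lambda)$ supercuspidal.

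The main technical obstacle is the positive-level analog of Proposition \ref{prop:Ilambda}: the interplay of the preferred extension $\kappa$ with the injective hull is delicate when $\kappa$ does not extend all the way to $J$, and one must go through Clifford theory relative to the $J$-stabilizer of $\kappa$, generalising the symmetric-algebra analysis of \S\ref{ss:6} to a twisted group algebra. Once that analog is in place, both directions follow in close parallel with the level $0$ arguments of \S\ref{ss:5}.
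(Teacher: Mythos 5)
Your converse direction is essentially correct and matches the paper's route: use Proposition~\ref{prop:scI} to get $I_{\pi,\omega}$ cuspidal, apply a $\theta$-isotypic extraction functor, invoke~(vii) and Proposition~\ref{prop:iso2} to identify the subquotients, and conclude via the positive-level analog of Proposition~\ref{prop:Ilambda}, which is exactly what the paper proves as Proposition~\ref{prop:Ilambdapositive} (via the symmetric-algebra machinery of~\S\ref{ss:6} applied to $H=J/\Ker\theta$ and the functor $e_\kappa$).

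There is, however, a genuine gap in your forward direction. You conclude that $I_{\pi,\omega}=\ind_J^G I_{\lambda,\omega}$ is cuspidal, admissible, hence right cuspidal, and then write that ``the argument of Proposition~\ref{prop:scI} then yields $\pi$ supercuspidal.'' But the unconditional equivalence in Proposition~\ref{prop:scI} is $\pi$ supercuspidal $\Leftrightarrow$ $I_\pi$ right cuspidal, where $I_\pi$ is the injective hull in the \emph{full} category $\Mod_C(G)$, not $\Mod_C(G,\omega)$. You only control $I_{\pi,\omega}=(I_\pi)_\omega$, and right cuspidality of the $\omega$-isotypic part does not imply right cuspidality of $I_\pi$. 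The passage from $I_{\pi,\omega}$ (right) cuspidal to $\pi$ supercuspidal is precisely the second statement of Proposition~\ref{prop:scI}, and that step requires the second adjunction or level~$0$ (Remark~\ref{re:scirr}), so your argument would only establish the forward implication under the second adjunction hypothesis, contradicting the theorem's claim that it holds unconditionally. The paper itself notes this obstruction and proves the unconditional forward direction by a different device: form $V=\ind_{J^0}^J I_{\lambda|_{J^0}}$, observe that $\ind_J^G V=\ind_{J^0}^G I_{\lambda|_{J^0}}$ is projective with quotient $\pi$, show it is cuspidal via Proposition~\ref{prop:cnirr} (its restriction to $J^0$ is a sum of twists of $J$-conjugates of $I_\rho\otimes_C\kappa$ with $I_\rho$ cuspidal when $(J,\lambda)$ is supercuspidal), and conclude with Lemma~\ref{ex:sc} that an irreducible quotient of a projective cuspidal representation is supercuspidal. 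This sidesteps the injective-hull/second-adjunction issue entirely, so you should replace your forward argument by this projective-cover route.
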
  

The proof is parallel to the previous ones in level $0$ if $(G,C)$ satisfies the second adjunction, with one extra complication coming from $\eta $, and a slight simplification due to the fact that $C$ being algebraically closed,   $\lambda $ and $\pi$ are  $\omega$-isotypic for some $\omega\in \Irr_C(Z^\sharp)$ of dimension $1$, which  can be considered as a $C$-character of $Z^\sharp$.

\bigskip The category  $\Mod_C(J^0, \eta)$  of $C$-representations of $J^0$ which are $\eta$-isotypic on restriction to $J^1$,  is a  direct factor of $\Mod_C(J^0)$. When  $\eta=1_{J^1}$ is  trivial, $\Mod_C(J^0, 1_{J^1})$ is equivalent to 
 $\Mod_C(J^0/J^1)$. We have similar results when $J^0$ is replaced by a subgroup of $G$ containing $J^1$ as a normal subgroup.
 Since $\kappa$ is an extension of $\eta$ to $J^0$, the functor 
 \begin{equation} \label{eq:ka} W \mapsto W\otimes_C\kappa: \Mod_C(J^0, 1_{J^1}) \to \Mod_C(J^0, \eta)\end{equation} is an equivalence (depending on the choice of $\kappa$),  a reverse equivalence being given by $  \Hom_{C[J^1]}(\kappa, -)$ with the natural action of $J^0$.  
The  functor $V\mapsto V_\eta : \Mod_C(G )\to \Mod_C(J, \eta)$ sending  a representation to its $\eta$-isotypic part on restriction to $J^1$, with the natural action of $J$, is exact and respects injectives and projectives. And also the functor
$$e_\kappa:\Mod_C(J , \omega)\to  \Mod_C(J^0, 1_{J^1}) \quad V\mapsto   \Hom_{J^1}(\kappa, V_\eta),$$ 
where $\omega$ is the $C$-character of $Z^\sharp$ such that $\lambda$ is $\omega$-isotypic. In level $0$ where $\eta $ and $\kappa $  are  trivial, $V_\eta=e_{\kappa}(V)=V^{J^1}$.  We have $\pi_\eta= \lambda $ (Proposition \ref{prop:iso2}) and $e_\kappa(\pi)=\rho$ (formula \eqref{eq:rJ0}).   Let $I_{\lambda,\omega}$ be an injective hull of $\lambda$ in $\Mod_C(J,\omega)$.    
  
 \begin{proposition} \label{prop:Ilambdapositive} $I_{\lambda,\omega}$ is finite dimensional, projective, indecomposable with socle and  cosocle  isomorphic to $\lambda$.
  
  $(J,\lambda)$ is supercuspidal if and only if $e_\kappa(I_{\lambda,\omega})$ is cuspidal as a representation of $J^0/J^1$.
 \end{proposition}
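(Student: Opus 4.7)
The plan is to reduce both assertions to the level-$0$ analogues, Propositions \ref{prop:H} and \ref{prop:H0}, by using the equivalence of categories $W\mapsto W\otimes_C\kappa$ of \eqref{eq:ka} to strip the factor $\eta$ on $J^1$. Let $\mathcal C_J$ denote the full subcategory of $\Mod_C(J,\omega)$ consisting of representations whose restriction to $J^1$ is $\eta$-isotypic. Since $J^1$ is pro-$p$ of pro-order invertible in $C$ and $\eta\in\Irr_C(J^1)$ has finite $C$-dimension, the $\eta$-isotypic projector is a well-defined idempotent, so $\mathcal C_J$ is a direct factor of $\Mod_C(J,\omega)$, and $I_{\lambda,\omega}$ also serves as an injective hull of $\lambda$ in $\mathcal C_J$.

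For the first assertion, I will exhibit a projective generator of $\mathcal C_J$ whose $J$-endomorphism ring is a finite-dimensional symmetric $C$-algebra, then invoke Morita theory together with \cite{L99} Corollary 16.64 to conclude. Since $Z^\sharp$ is discrete torsion-free while $J^1$ is pro-$p$, the intersection $Z^\sharp\cap J^1$ is trivial; moreover $J/Z^\sharp J^1$ is finite, because $J/Z^\sharp$ is compact and $J^1$ is open in $J$. Set $\tilde P=\ind_{Z^\sharp J^1}^J(\omega\boxtimes\eta)$. Using that $Z^\sharp J^1$ is normal in $J$ and that $J$ normalizes $\eta$ by conditions (i), (iv), Mackey shows $\tilde P|_{J^1}$ is $\eta$-isotypic, so $\tilde P\in\mathcal C_J$; Frobenius reciprocity then exhibits $\tilde P$ as a projective generator of $\mathcal C_J$, and $\mathcal H=\End_{C[J]}(\tilde P)^{\op}$ is a finite-dimensional algebra of suitably equivariant functions $f\colon J\to\End_C(\omega\boxtimes\eta)$ supported on the finite coset space $J/Z^\sharp J^1$. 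The trace $f\mapsto\tr_{\omega\boxtimes\eta}(f(1))$ makes $\mathcal H$ symmetric by the same reasoning as in the lemma preceding Proposition \ref{prop:H}, and the first assertion follows by transport of structure.

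For the second assertion, apply the exact functor $e_\kappa$ to $I_{\lambda,\omega}|_{J^0}\in\Mod_C(J^0,\eta)$. By Clifford theory applied to $J^0\lhd J$, the irreducible $J^0$-subrepresentations of $\lambda|_{J^0}$ form a single $J$-orbit, each of the form $\sigma\otimes_C\kappa$ with $\sigma$ an irreducible cuspidal representation of $J^0/J^1$ appearing in $\rho$. Mimicking the proof of Proposition \ref{prop:H0}, a Mackey-adjunction argument shows that $I_{\lambda,\omega}|_{J^0}$ is a finite direct sum of $J$-conjugates of injective hulls of the $\sigma\otimes_C\kappa$ taken in $\Mod_C(J^0,\eta)$, and these correspond via $e_\kappa$ to injective hulls $I_\sigma$ in $\Mod_C(J^0/J^1)$; hence $e_\kappa(I_{\lambda,\omega})$ is a finite direct sum of $J$-conjugates of $I_\sigma$. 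Since $J$ normalizes both $J^0$ and $J^1$, conjugation by $J$ permutes parabolic subgroups of $J^0/J^1$ and therefore preserves cuspidality, so $e_\kappa(I_{\lambda,\omega})$ is cuspidal as a representation of $J^0/J^1$ if and only if $I_\sigma$ is, which by Hiss' Lemma \ref{le:Hiss} is equivalent to $\sigma$ being supercuspidal, i.e.\ to $(J,\lambda)$ being supercuspidal per Definition \ref{def:scpositive}.

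The main obstacle I anticipate is the careful bookkeeping of the $J$-action in the last paragraph: the functor $e_\kappa$ is only defined at the $J^0$-level, while the preferred extension satisfies $\kappa^j=\chi_j\kappa$ for $j\in J$ by Remark \ref{re:vi}, with $\chi_j$ a character of $J^0/J^1$ of order prime to $p$. One must therefore verify that the $J$-permutation of the irreducible constituents $\sigma\otimes_C\kappa$ of $\lambda|_{J^0}$ translates, through $e_\kappa$, into a $J$-permutation of the constituents $\sigma$ of $\rho$ that still preserves cuspidality; this is the essence of the level-$0$ analysis in the proof of Proposition \ref{prop:H0}, and it transports faithfully thanks to the fact that the twists $\chi_j$, having order prime to $p$, are trivial on pro-$p$ Sylows and hence cannot affect cuspidality as a property of the finite reductive group $J^0/J^1$.
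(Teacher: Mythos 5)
Your proof is correct, but for the first assertion it takes a genuinely different route from the paper's. The paper passes to the discrete quotient $H=J/\Ker\theta$ (observing that $\Ker\theta$ is an open normal pro-$p$ subgroup of $J$ on which $\eta$, hence $\lambda$ and $I_{\lambda,\omega}$, acts trivially, so injective hulls inflate along $J\to H$), so that $Z^\sharp$ becomes a \emph{central} subgroup of finite index and Proposition~\ref{prop:H} applies as a black box. You instead stay at the level of $J$, cut down to the $\eta$-isotypic direct factor $\mathcal C_J$, and use the projective generator $\ind_{Z^\sharp J^1}^J(\omega\boxtimes\eta)$, whose endomorphism algebra is a Hecke algebra of $\End_C(\omega\boxtimes\eta)$-valued functions rather than $C'$-valued ones as in the lemma before Proposition~\ref{prop:H}. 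This is valid, but ``by the same reasoning'' hides a small extra step in the nondegeneracy of the trace form: for $0\neq f\in\mathcal H$ with $f(j_0)\neq0$, you need $f(j_0)$ to be \emph{invertible}, so that the function $g$ supported on $Z^\sharp J^1 j_0^{-1}$ with $g(j_0^{-1})=f(j_0)^{-1}$ lies in $\mathcal H$ and gives $\tr((f * g)(1))=\dim(\omega\boxtimes\eta)\neq0$. The invertibility holds because $f(j_0)$ is a $j_0$-intertwiner of $\eta$ and $\eta^{j_0}\simeq\eta$ (since $J$ normalizes $\theta$, hence $\eta$, by (i) and (iii)), so Schur's lemma applies — equivalently one invokes the one-dimensionality in (iv) — and $\dim(\omega\boxtimes\eta)$ is a power of $p$, invertible in $C$. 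In short, you redo a noncommutative variant of the paper's symmetric-algebra lemma rather than reducing to its scalar case by a quotient; the paper's route is shorter, while yours works directly in the category that governs the rest of the argument. For the second assertion your argument matches the paper's (apply Proposition~\ref{prop:H0} to $H^0=J^0/\Ker\theta$, then unwind $e_\kappa$), and you correctly flag and resolve the point that $e_\kappa$ does not naively commute with $J$-conjugation: $\kappa^j=\chi_j\kappa$ introduces twists by characters of $J^0/J^1$ of order prime to $p$, which do not affect cuspidality.
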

 
 That  corresponds to Proposition \ref{prop:Ilambda} in level $0$.
 \begin{proof}   The  kernel $\Ker \theta $ of the very special character $\theta$ of $H^1$ is a normal open pro-$p$ subgroup of $J$ and $\eta$ is trivial on $\Ker \theta $, by the properties (i) and (iii)  in \S \ref{ss:3.4}.
 We put $H=J/\Ker \theta $. As $Z^\sharp\cap \Ker \theta $ is trivial, $Z^\sharp$ identifies with a subgroup $Y$ of $H$, $\omega$ with $\zeta\in \Irr_C(Y)$,  $\lambda$ inflates $\tau \in \Mod_C(H,\zeta)$ and  $I_{\lambda, \omega} $ inflates 
 an injective hull  $I_{\tau, \zeta}$ of $\tau$ in  $\Mod_C(H,\zeta)$ (see the example \ref{ss:injpro} c)). The first assertion of the proposition follows from Proposition \ref{prop:H} applied to $(H,Y,\zeta, \tau, I_{\tau, \zeta})$. 
 By the equivalence \eqref{eq:ka}, $ \rho \otimes_C\kappa$ where  $\rho=e_{\kappa}(\pi)$ is an irreducible quotient of $\lambda|_{J^0}$.   By Proposition  \ref{prop:H0} applied to  $H^0=J^0/\Ker \theta$, the restriction of $I_{\lambda,\omega}$ to $J^0$ is a sum of $J$-conjugates of $ I_\rho \otimes_C\kappa$ where 
 $I_\rho$ is an  injective hull of $\rho $ in $\Mod_C(J^0, 1_{J^1})$. A $J$-conjugate of 
$I_\rho \otimes_C\kappa$   is of the form $\chi I_\rho'\otimes_C\kappa $ for a $J$-conjugate $I_\rho' $ of $ I_\rho$ and a character $\chi$ of $J^0$ trivial on $J^1$ (Remark \ref{re:vi}). Hence
 $e_\kappa(I_{\lambda,\omega})$ is a sum of $\chi  I'_\rho$. It is cuspidal as a representation of $J^0/J^1$ if and only if $I_\rho$ if and only if   $(J,\lambda)$ is supercuspidal (Definition \ref{def:scpositive}).
  \end{proof}

\begin{theorem} \label{thm:sc01positive}  
1) Assume   $(J, \lambda)$  supercuspidal. Then $\pi=\ind_J^G\lambda $  is supercuspidal.  

Moreover $\ind_{J}^{G}I_{\lambda, \omega}$
is an injective hull  $I_{\pi,\omega}$ of $\pi $ in $\Mod_C(G, \omega)$.  It is   cuspidal projective indecomposable  with socle and  cosocle  isomorphic to $\pi$.   The lattices of subrepresentations  of $I_{\lambda, \omega}$  and  of  $I_{\pi,\omega}$
  are isomorphic  by the map    $W \mapsto \ind_J^GW$ (which is equal to $\Ind_J^G\nu$), with inverse
  $V\mapsto V_\eta $.
  
  2) Assume $\pi$ supercuspidal.  Then $(J, \lambda)$ is  supercuspidal if $(G,C)$ satisfies the second adjunction  and $(J,\lambda) $ satisfies the property (vii) of \S \ref{ss:3.4}.
     \end{theorem}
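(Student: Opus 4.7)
The plan is to follow the level $0$ template of Theorems \ref{thm:sc01} and \ref{thm:sc0}, replacing the invariants functor $(-)^{J^1}$ by $e_\kappa(V) = \Hom_{C[J^1]}(\kappa, V_\eta)$. The crucial input is Proposition \ref{prop:Ilambdapositive}: $I_{\lambda,\omega}$ is finite-dimensional, projective and indecomposable with socle and cosocle $\lambda$, and $(J,\lambda)$ is supercuspidal exactly when $e_\kappa(I_{\lambda,\omega})$ is cuspidal as a representation of $J^0/J^1$. We use throughout that $J^1$ has pro-order invertible in $C$, so $(-)_\eta$ is an exact splitting projector and $e_\kappa$ is exact, together with the isomorphism $V_\eta \simeq e_\kappa(V)\otimes_C\kappa$ of $J^0$-representations and the identification $V_\eta = V^{H^1,\theta}$ as $J$-representations, immediate from property (iii) of \S\ref{ss:3.4}.

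For part 1), assume $(J,\lambda)$ supercuspidal. Any irreducible subquotient $\mu$ of $I_{\lambda,\omega}$ is $\omega$-isotypic on $Z^\sharp$ and $\eta$-isotypic on $J^1$, with $\mu|_{J^0}=\rho_\mu\otimes_C\kappa$ for a subquotient $\rho_\mu$ of the cuspidal $J^0/J^1$-representation $e_\kappa(I_{\lambda,\omega})$; thus $\mu$ satisfies (i) to (vi) of \S\ref{ss:3.4}. By Corollary \ref{cor:2} each $\ind_J^G\mu$ is irreducible, and by Corollary \ref{cor:cnirr} together with Proposition \ref{pro:cusp} it is cuspidal and admissible. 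Since cuspidality and admissibility are preserved under extensions (using exactness of $(-)_N$ for $c\neq p$), induction on the length of $I_{\lambda,\omega}$ yields that $\ind_J^G I_{\lambda,\omega}$ is of finite length, cuspidal and admissible; Remark \ref{re:compactind} b) then gives $\ind_J^G I_{\lambda,\omega}=\Ind_J^G I_{\lambda,\omega}$, which is simultaneously projective (via $\ind_J^G$) and injective (via $\Ind_J^G$) in $\Mod_C(G,\omega)$ by Example \ref{ss:injpro} c). Exactness of $\ind_J^G$ and of $(-)_\eta$ combined with Proposition \ref{prop:iso2} identifies the two lattices of subrepresentations via $W\mapsto\ind_J^GW$ and $V\mapsto V_\eta$; in particular the socle is $\pi$, so $\ind_J^G I_{\lambda,\omega}$ is an essential extension of $\pi$ and hence the injective hull $I_{\pi,\omega}$. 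Finally $\pi$ is an irreducible quotient of the projective cuspidal representation $I_{\pi,\omega}$, whence Lemma \ref{ex:sc} makes $\pi$ supercuspidal.

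For part 2), assume $\pi$ supercuspidal; the second adjunction and Proposition \ref{prop:scI} give that $I_{\pi,\omega}$ is cuspidal, hence every irreducible subquotient $\tau$ of $I_{\pi,\omega}$ is cuspidal. If $\tau_\eta\neq 0$ then $\tau$ contains $\theta$, and property (vii) presents $\tau\cong\ind_J^G\nu$ for some $(J,\nu)$ satisfying (i) to (vi); Proposition \ref{prop:iso2} then gives $\tau_\eta=\nu$ and so $e_\kappa(\tau)=\rho_\nu$ is cuspidal as a representation of $J^0/J^1$. If $\tau_\eta=0$ then $e_\kappa(\tau)=0$. The verbatim analog of the lemma in the proof of Theorem \ref{thm:sc0}, carried out inside $I_{\pi,\omega}$ by generating a finitely generated $G$-subrepresentation from the image of a non-zero $N$-invariant element $u\in e_\kappa(I_{\pi,\omega})$ and extracting via Zorn an irreducible quotient in which the image of $u$ survives, then yields that $e_\kappa(I_{\pi,\omega})$ is cuspidal as a representation of $J^0/J^1$. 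Since $(I_{\pi,\omega})_\eta$ is injective in $\Mod_C(J,\omega)$ (restriction and $(-)_\eta$ preserve injectives) and contains $\pi_\eta=\lambda$, the hull $I_{\lambda,\omega}$ is a direct factor of $(I_{\pi,\omega})_\eta$, and so $e_\kappa(I_{\lambda,\omega})$ is a direct factor of the cuspidal $e_\kappa(I_{\pi,\omega})$, hence cuspidal. Proposition \ref{prop:Ilambdapositive} concludes that $(J,\lambda)$ is supercuspidal. The main obstacle is precisely this extension of the level $0$ invariants lemma: irreducible $J^0/J^1$-subquotients of $e_\kappa(I_{\pi,\omega})$ need not come from irreducible $G$-subquotients of $I_{\pi,\omega}$, which forces the Zorn-style argument to be carried out inside $I_{\pi,\omega}$, with the cyclicity of the subrepresentation generated by the $N$-invariant element ensuring survival in the chosen irreducible quotient.
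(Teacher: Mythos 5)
Your overall plan for the ``moreover'' clause of part 1) (exhibit $\ind_J^G I_{\lambda,\omega}=\Ind_J^G I_{\lambda,\omega}$ as a finite-length, cuspidal, injective, indecomposable object with socle and cosocle $\pi$, and identify the lattices via $W\mapsto \ind_J^G W$ and $V\mapsto V_\eta$) parallels the paper's, which simply cites the proof of Theorem~\ref{thm:sc01}. Your part 2) also matches the paper's method: cuspidality of every irreducible subquotient of $I_{\pi,\omega}$, property (vii) in lieu of Corollary~\ref{cor:level0}, the adapted lemma, and the direct-factor argument inside $(I_{\pi,\omega})_\eta$. These parts are fine.

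There is a genuine gap in the final step of part 1). You deduce that $\pi$ is supercuspidal by declaring that $I_{\pi,\omega}$ is a projective cuspidal object with $\pi$ as quotient and invoking Lemma~\ref{ex:sc}. But $I_{\pi,\omega}=\ind_J^G I_{\lambda,\omega}$ is only projective in $\Mod_C(G,\omega)$, not in $\Mod_C(G)$: the inclusion $\Mod_C(G,\omega)\hookrightarrow\Mod_C(G)$ does not preserve projectives, because its right adjoint $(-)_\omega$ is only left exact (for instance, on a two-dimensional $Z^\sharp$-module with a single Jordan block for eigenvalue $\omega$, the $\omega$-isotypic part is the one-dimensional kernel and does not surject onto the irreducible quotient). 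The proof of Lemma~\ref{ex:sc} requires projectivity in $\Mod_C(G)$: one must lift $V\twoheadrightarrow \pi$ along $W\twoheadrightarrow\pi$ for an arbitrary subrepresentation $W\subset\Ind_P^G\rho$ with $\rho\in\Mod_C(M)$, and $W$ need not lie in $\Mod_C(G,\omega)$. Indeed, without the second adjunction one cannot restrict $\rho$ to irreducible (hence $\omega$-isotypic) representations (Remark~\ref{re:scirr}, Proposition~\ref{prop:scI}) --- this is exactly why Proposition~\ref{prop:scI} cannot be used either.

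The paper's actual argument for this point produces a projective object in the full category $\Mod_C(G)$. It sets $V=\ind_{J^0}^J I_{\lambda|_{J^0}}$ where $I_{\lambda|_{J^0}}$ is the injective hull of $\lambda|_{J^0}$ in $\Mod_C(J^0)$; since $J^0$ is compact, $I_{\lambda|_{J^0}}$ is also projective there (symmetric-algebra considerations of \S\ref{ss:6}), and $\ind_{J^0}^G$ preserves projectives, so $\ind_J^G V=\ind_{J^0}^G I_{\lambda|_{J^0}}$ is projective in $\Mod_C(G)$. It has $\pi$ as a quotient, and Proposition~\ref{prop:cnirr} combined with the decomposition of $V|_{J^0}$ into twisted conjugates $\chi I'_\rho\otimes\kappa$ (Remark~\ref{re:vi}) shows it is cuspidal when $(J,\lambda)$ is supercuspidal. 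Then Lemma~\ref{ex:sc} applies legitimately. You should replace your last step with this construction.
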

     
  That   is a stronger form of  Theorem \ref{thm:sct0} which 
corresponds to Theorems \ref{thm:sc01} and  \ref{thm:sc0}  in level $0$.
        
\begin{proof} 
 The proof of second part of 1)  is the same as in Theorem \ref{thm:sc01}. So $I_{\pi,\omega}$ is cuspidal of finite length, hence is admissible and consequently right cuspidal (Proposition \ref{pro:cusp}).
 When the second adjunction holds true,  the proposition \ref{prop:scI} implies that 
 $\pi$ is supercuspidal. The proof of 2) which assumes the second adjunction follows   the same method as for Theorem \ref{thm:sc0}  replacing Corollary \ref{cor:level0} by the property (vii).

We show now  that $(J,\lambda)$ supercuspidal implies $\pi$ supercuspidal without assuming the second adjunction.  
We recall  the injective hull $I_{\lambda|_{J^0}}$ of $\lambda|_{J^0}$ in $\Mod_C(J^0)$ and  we consider the representation  $V= \ind_{J^0}^JI_{\lambda|_{J^0}}$ of $J$.  The restriction of $V$ to $J^0$ is $\tau \otimes_C \kappa$ where $\tau$ is a  finite sum of $ \chi I_\rho' $ where 
$\chi$ is a character of $J^0 $ trivial on $J^1$ and $I_\rho'$ a $J$-conjugate of $I_\rho$ (Remark \ref{re:vi}).  The representation   $\ind_J^G V= \ind_{J^0}^G I_{\lambda|_{J^0}}$ of $G$  is projective with quotient $\pi=\ind_J^G\lambda$, as $I_{\lambda|_{J^0}}$  is projective of quotient $\lambda|_{J^0}$ and
  $\lambda$ is a quotient of $V=\ind_{J^0}^J\lambda|_{J^0}$ by adjunction. If $(J,\lambda)$ is supercuspidal, then $I_\rho$ is cuspidal as a representation of $J^0/J^1$, hence also  $\tau$, and Proposition \ref{prop:cnirr} tells us that $\ind_J^G V$  is cuspidal - so   $\pi$ is the quotient of the  projective cuspidal representation $\ind_J^GV$, hence is supercuspidal  (Lemma \ref{ex:sc}).
    \end{proof}

  In the setting  of  \S \ref{ss:3.4}, the field $C$ is algebraically closed. 
 When $C$ is not algebraically closed,   
 a cuspidal $C^a$-type $(J,\lambda^a)$ of  $G$ defines  by restriction to $C$ a cuspidal $C$-type $(J,\lambda)$ of $G$ such that 
  $\lambda^a$  seen as a $C$-representation is $\lambda$-isotypic.

\begin{definition}\label{def:scpositivenac}  A cuspidal $C$-type $(J,\lambda)$ of  $G$ 
arising by restriction to $C$  of    a cuspidal $C^a$-type $(J,\lambda^a)$   of  $G$    in the setting of  \S \ref{ss:3.4}  with the properties (i) to (vi) satisfied, 
 is called supercuspidal if $(J,\lambda^a)$ is supercuspidal (Definition \ref{def:scpositive}).
\end{definition}

That definition ensures via Theorem \ref{thm:sct0} and Proposition \ref{prop:c} that    $\pi =\ind_J^G\lambda$  is supercuspidal if  $(J,\lambda)$ is supercuspidal, and that  the converse is true if $(G,C)$ satisfies the second adjunction  and $(J,\lambda^a) $ satisfies also the property (vii) of \S \ref{ss:3.4}.

\begin{remark} The definition  is compatible in level $0$ with Definition \ref{def:sc}  which does not suppose $C$ algebraically closed (Lemma \ref{le:scalar0}). 

The definition   does not depend on the choice of $\lambda^a$ because another irreducible component is a conjugate $\sigma(\lambda^a)$ of $\lambda^a$ by some $\sigma\in \Aut_C(C^a)$. We have $\lambda^a|_{J^0}=\rho^a\otimes \kappa^a$,  $\sigma( \rho^a \otimes_C\kappa^a)=\sigma( \rho^a) \otimes_C\sigma(\kappa^a)$, 
$\sigma(\kappa^a)$ is a preferred extension of $\sigma(\eta^a)$, and an irreducible component of $\sigma( \rho^a)$ is supercuspidal if and only if an irreducible component of $ \rho^a $ is.
 \end{remark}

\subsection{Types \`a la Bushnell-Kutzko}\label{ss:3.5} Let us review the types constructed with the techniques of Bushnell-Kutzko.   The reader needs familiarity with the references, as we only indicate why properties (i) to (vii) are true and how to  establish $\Aut(C)$-stability.  

\subsubsection{$GL(N, F)$} \label{sss:GL}We start with $GL (N, F)$ and $C=\mathbb C$ treated in \cite{BK93}.

The basic concepts are those of simple stratum and simple character; the maximal simple characters in \cite{BK93} are the {\it very special} characters here. 

We let $V$ be an $F$-vector space of dimension $N$ (e.g. $V=F^N$) so that $G=\Aut_F(V)$ is isomorphic to $GL(N,F)$. 
A {\it simple stratum} $(\mathfrak A, n,r,\beta)$ is made out of 
an hereditary $O_F$-order  $\mathfrak A$ in $\End_F(V)$, integers $n\geq r  \geq 0$, and an element $\beta\in G$ normalizing $\mathfrak A$ such that $E=F(\beta)$ is a field and  satisfying the conditions of (\cite{BK93},1.5.5). Those conditions involve only the conjugation of $\beta$ on $\End_F(V)$ and on 
 $\mathfrak A$, so it is straightforward that for $a\in O_F^*$,  $(\mathfrak A, n,r,a\beta)$
is again a simple stratum, 
moreover the groups $J, J^0, J^1,H^1 $ attached to the two strata are the same (\cite{BK93}, 3.1.14) : we write $J^0, J^1, H^1$ for Bushnell-Kutzko's $J(\beta,\mathfrak A), J^1(\beta,\mathfrak A), H^1(\beta,\mathfrak A)$, $G'= B^*$ where $B$ is the centralizer of $\beta$ in $\End_F(V)$.   The  $O_E$-hereditary order $\mathfrak  B= \mathfrak  A \cap B$ in $B$ corresponds to a point   in the  Bruhat-Tits building $\mathcal B(G')$ of $G'$. 
Write $y$  for the image   of this  point   in   $\mathcal B(G'^{ad})$. 
We have  (\cite{D09} 7.1 p.313)  $G'_{y,0}= \mathfrak B ^*$ and $G'_y$ is  the normalizer of $\mathfrak B$ in $B$. 
We put $J=G'_y J^1$ and we have $J^0=G_{y,0}J^1 $ (\cite{BK93}, 3.1.15)  
 so that the inclusion $B\subset \End_F(V)$ induces an isomorphism $G'_y/ G'_{y,0}\to J/J^1$, as demanded by property (ii). 

To the simple stratum $(\mathfrak A, n,r,\beta)$ is attached the set  $\mathfrak C(\mathfrak A, r,\beta, \psi) $ of {\it simple characters} (\cite{BK93},3.2.1 and 3.2.3) -- we add the underlying character $\psi$ in the notation  of  \cite{BK93}.

 % $\mathfrak C(\mathfrak A, n,r,\beta, \psi) $ of {\it simple characters} - we put the underlying character $\psi$ in the notation -
Following the definitions, one gets that $\mathfrak C(\mathfrak A,  r,\beta,\psi )=\mathfrak C(\mathfrak A,  r,a\beta, \psi)  $ for $a\in O_F^*$, and that for $\sigma \in \Aut(C)$, the map  $\theta\mapsto \sigma(\theta)$ yields a bijection 
$\mathfrak C(\mathfrak A,  r,\beta, \psi )\to \mathfrak C(\mathfrak A,  r,\beta, \sigma(\psi))= \mathfrak C(\mathfrak A,  r,\xi_{\sigma}\beta, \pi)$. In particular property (a) of \S \ref{ss:3.4} is satisfied.
Only $r=0$ is used in the sequel, so we suppress it from the notation. The simple characters occuring in the cuspidal representations are the {\it maximal} ones, meaning that $\mathfrak B$ is a maximal  $O_E$-order in $B$ (\cite{BK93} 6.2.1, \cite{BH13} Corollary 1), corresponding to the case where $y$  is a vertex to $\mathfrak B (G'^{ad})$.
   
If $\theta\in \mathfrak C(\mathfrak A,   \beta,\psi )$ is a simple character,  its $G$-normalizer is $J$ (\cite{BK93} 3.3.17) and 
  its  $G$-intertwining   is $JG'J$ (\cite{BK93} 3.3.2). The non-degenerate alternating bilinear form on $J^1/H^1$ is in (\cite{BK93} 3.4.1), the  existence and uniqueness of $\eta$ are in  (\cite{BK93} 5.1.1) and the $G$-intertwining 
 of $\eta$ is  in (\cite{BK93} 5.1.8). The conditions  (i), (ii), (iii), (iv)   of  \S \ref{ss:3.4} are satisfied.

There are $\beta$-extensions of $\eta$ to $J^0$  (\cite{BK93} 5.2.1). A $\beta$-extension is an extension which is intertwined by $G'$, or equivalently by $J^0G'J^0$, or equivalently with the same $G$-intertwining than $\eta$ because $J^0G'J^0= JG'J$. In particular it is a preferred extension and is normalized by $J$, giving property (v) of \S \ref{ss:3.4}. For a maximal simple character $\theta$, $J/J^0$ is cyclic, so a $\beta$-extension even extends to $J$. In any case if $\kappa$ is a $\beta$-extension and $\sigma \in \Aut (C)$, then $\sigma(\kappa)$ is also a $\beta$-extension.

The cuspidal types  $(J, \lambda)$  of Bushnell-Kutzko (\cite{BK93} 6.2) are obtained by the procedure of  \ref{ss:3.4}, property (v), starting from a maximal simple character $\theta$ and a $\beta$-extension $\kappa$ of $\eta=\eta_\theta$; in fact they  are such that $\lambda=\rho \otimes_C \tilde \kappa$ where  $\tilde \kappa$  is  an extension of $\kappa$ to $J$ and $\rho$ is a  representation of 
$J=G'_yJ^1$ trivial on $J^1=G'_{y,0+}$ with restriction of $J^0$ inflated from an irreducible cuspidal representation of $J^0/J_1\simeq G'_{y,0}/ G'_{y,0+}$.
The discussion in \ref{ss:3.4} shows that if one uses instead any preferred extension in lieu of $\kappa$, we get the same set of cuspidal types.  

Following the procedure indicated after Corollary \ref{cor:2}, we deduce that the set of types thus obtained satisfies $\Aut(C)$-stability.
 
  Exhaustion and unicity  for the set of cuspidal types obtained  by varying the maximal simple characters, and including level $0$, are given by (\cite{BK93} 8.4.1).
  Finally, intertwining implies conjugacy is true for maximal simple characters
\cite{BH13}, giving property (vii).

The second adjointness holds for $(G,C)$ \cite{D09}.

\subsubsection{$GL(m,D)$}  The case of inner forms of $GL_N$ (of course it includes  the split case, but uses \cite{BK93} as a basis) is due to Minguez, S\'echerre and Stevens \cite{SS08}, \cite{MS14}. In their setting, $D$ is a  central division algebra over $F$ of finite reduced degree $d$,  $V$ is a right $D$-vector space of finite dimension $m$, and $G=\Aut_D(V)$ is an inner form of $GL_N(F)$, $N=md$. When $m=1, G=D^*$ has semisimple rank $0$.

Cuspidal complex types were known before
 (\cite{Z92}, \cite{Br98}). Minguez, S\'echerre and Stevens, for a general algebraically closed field $C$ of characteristic $c\neq p$ construct a set of "cuspidal simple $C$-types", using simple strata and simple characters for non-level $0$ types, and they show  exhaustion and unicity (\cite{MS14} Theorem 3.11). Let us now give detail enough to verify  $\Aut(C)$-stability and properties (i) to (vii)  of \S  \ref{ss:3.4}.

 There is a notion (\cite{S04} Definition  2.3) of simple stratum  $(\mathfrak A, n,r,\beta)$ made out of 
an hereditary $O_D$-order  $\mathfrak A$ in $\End_D(V)$,  where $O_D$ is the ring of integers of $D$,  corresponding to a chain $\Lambda$ of $O_D$-lattices in $V$. We write indifferently  $\mathfrak A$ or $\Lambda$ in the notation of the simple stratum. To such a stratum is associated the centralizer $B$ of $\beta$ in $\End_D(V)$ and open subgroups $J, J^0,J^1,H^1$ all normal in $J$, see (\cite{S04} formula (65)) for $J^0,J^1,H^1$ whereas S\'echerre writes $J$ for $J^0$.
We write $J$ for the group written  $(\mathcal K (\mathcal A) \cap B)J^0$ in \cite{S04}; the normality property is Proposition 3.43 there, which also says that $J^1/H^1$ is a finite $p$-group. 
The chain $\Lambda$ is stable under $E^*$ where $E=F(\beta)$ and defines a point $y$ in $\mathcal B(G'^{ad})$ where $G'=B^*$. 
We have $J=G'_y J^1$ and $J^0=G'_{y ,0}J^1$. To get cuspidal types we have to restrict ot maximal simple strata (\cite{MS14} Proposition 3.6) which means that $y$ is a vertex. 
 
As in  \ref{sss:GL} we restrict to $r=0$ and suppress it from the notation. To a simple stratum  $(\Lambda, n , \beta)$ in $G$ is  attached a set $  \mathfrak C(\Lambda,   \beta,\psi )$ of  simple characters $\theta:H^1\to C^*$ (\cite{S04} Definition   3.45), obtained by a restriction process from simple characters constructed in \cite{BK93}. Following the definition, it is straightforward that for $a\in O_F^*$
 $(\Lambda, n , a\beta)$ is again a simple stratum with the same attached groups and 
$  \mathfrak C(\Lambda,   \beta,\psi ^a)=   \mathfrak C(\Lambda,  a \beta,\psi )$.
As in  \ref{sss:GL}, we verify that $\sigma \in \Aut(C)$ induces a bijection $\theta\mapsto \sigma(\theta): \mathfrak C(\Lambda,   \beta,\psi)\to  \mathfrak C(\Lambda,   \beta,\sigma(\psi))= \mathfrak C(\Lambda,  \xi_\sigma \beta,\psi)$.
The $G$-normalizer  of $\theta\in  \mathfrak C(\Lambda,   \beta,\psi )$ is   $J$  and its $G$-intertwining  is $JG'J=J^1G'J^1$  (\cite{S04} Theorem 3.50 and Rem. 3.51)   giving properties (i) and (ii). 

Existence and uniqueness of $\eta=\eta_\theta$  come from  (\cite{S04} Theorem 3.52) yielding property (iii). The intertwining property (iv)
of $\eta$ is (\cite{S05} Proposition 2.10).  

An extension of $\eta$ to $J^0$ is a $\beta$-extension if it is normalized by $B^*=G'$(\cite{S05}  \S 2.4). As the $G$-centralizers of $\beta$ and $\xi_\sigma \beta$   coincide for any $\sigma\in \Aut(C)$ we get $\sigma$-stability for the $\beta$-extensions. 
 The $\beta$-extensions are  preferred extensions in our sense (property (v)). As in \ref{sss:GL}, the other preferred   extensions are obtained by twisting by a character of order prime to $p$, and can equally be used to construct the cuspidal types.
 
 If $\kappa$ is a $\beta$-extension and $\rho$ as in property (vi), one can form $(J^0, \rho\otimes_C \kappa)$ and consider the $G$-normalizer $\tilde J$ of $(J^0, \rho\otimes_C \kappa)$ which is included in $J$  (as  $\rho\otimes_C \kappa$  is $\theta$-isotypic). The ``extended  maximal  cuspidal simple types''  of \cite{MS14} are the $(\tilde J, \tilde \lambda)$ where 
$\tilde \lambda$ is any extension of $ \rho\otimes_C \kappa$ to $\tilde J$ (such extensions exist as $J/J^0$ is cyclic (as before)). For such a pair $\ind_{\tilde J}^G\tilde \lambda$ is irreducible, and it is for that set of pairs  $(\tilde J, \tilde \lambda)$, including the level $0$ ones, that  (\cite{MS14} Theorem 3.11) gives exhaustion and unicity. From Proposition \ref{prop:II.55}, we deduce that the set of pairs $(J,\lambda)$ where $\lambda=\ind_{\tilde J}^J \tilde \lambda$, also satisfies exhaustion and unicity, and property (vi) is valid  by Clifford's theory. That set of cuspidal types is verified to be $\Aut(C)$-stable as in  \ref{sss:GL}, starting from the analysis above of the action of  $\Aut(C)$ on simple characters and $\beta$-extensions.

Finally, we mention that property (vii) comes from (\cite{MS14} Lemma 3.9 and 3.10).

\subsubsection{$SL_N$} Next we turn to $SL_N$ treated in \cite{BK94} for complex representations, and  extended  recently to positive characteristic coefficients by Cui \cite{C19}, \cite{C20}. She also treats Levi subgroups of $SL_N$. 
To keep with her notation, we let $M$ be a Levi subgroup of $GL_N$, and add the exponent $'$ to indicate the intersection with $G'=SL_N$.

To get cuspidal  simple  types for $M'$, one starts from such types for $M$; as $M$ is a product of $GL_{r_i}$, one can take those obtained in \ref{sss:GL}. If $(J,\lambda)$ is a cuspidal  simple type for $M$, one defines (\cite{C19} 3.44) its projective normalizer $\tilde J$; it contains $J$ as a finite index subgroup; the induced representation $\tilde \lambda=\ind_J^{\tilde J}\lambda$ is irreducible, its restriction to $\tilde J'$ is semisimple. Let $\mu$ be any   irreducible component of  $\tilde \lambda|_{\tilde J'}$ and $H=N_{M'}(\mu)$ its $M'$-normalizer. In fact, $H$ is the $M'$-intertwining  of $\mu$ and any irreducible representation   $\upsilon $ of $H$ containing $\mu$ on restriction to $\tilde J'$, induces irreducibly to a cuspidal irreducible representation $\ind_H^{M'}(\upsilon)$ of $M'$; moreover each cuspidal irreducible representation of $M'$ has this form for some choice of $(J,\lambda)$ and  $\upsilon$. A pair  $(H,\upsilon)$ obtained in this way is a cuspidal type  in $M'$, and the set  $\mathfrak X'$   of  such types satisfies exhaustion and is stable under conjugation by $M'$ (\cite{C19} Theorem 3.5.1); unicity is obtained for $\mathbb C$  in (\cite{BK94}, 5.3 Theorem),  and in general in
(\cite{C19} 3.5.6). 

Let us verify that $\mathfrak X'$ is $\Aut(C)$-stable. Start with a  cuspidal simple type $(J,\lambda)$ in $M$ and choose $\mu$ and  $\upsilon$ as above. Let $\sigma\in \Aut( C)$.  By (\cite{C19} 3.44),  the projective
normalizer $\tilde J$ of $(J, \lambda)$ is the same for 
$\sigma(\lambda) $ and clearly $\widetilde{\sigma(\lambda)}=\sigma(\tilde \lambda) $.  Then $\sigma(\mu)$ is  an irreducible component  of $\sigma(\tilde \lambda)|_{\tilde J'}$,  and $N_{M'}(\mu)=N_{M'}(\sigma(\mu))$; furthermore  $\sigma(\upsilon)$ is an irreducible representation of $H=N_{M'}(\sigma(\mu))$ containing $\sigma(\mu)$ on restriction to $\tilde J'$.  This shows that
  $(H,\sigma(\upsilon))$ belongs to $\mathfrak X'$, as desired.

\begin{remark}  That case of $SL_N$ does not immediately conform to the
common pattern described before. That question needs further study.\end{remark}

\subsubsection{Classical groups} \label{sss:classical} The case of classical groups, for any $C$  but only when {\it $p$ is odd} is due to Kurinczuk  and Stevens \cite{KS} (for $C=\mathbb C$ \cite{St08}). In this context,  $F/F_0$ is an extension of degree $1$ or $2$, $V$ is a finite dimensional $F$-vector space, $\epsilon\in \{1, -1\}$ and $h$ is a non-degenerate $\epsilon$-hermitian form on $V$ with respect to $F_0$.  The group  $G^+=\{ g\in \Aut_F(V) : h(gv,gw)=h(v,w) \ \text{ for all } v,w\in V\}$ is the group of   $F_0$-points of a unitary, symplectic or orthogonal group $\underline G^+$ and $U(V,h)$  the $F_0$-points of the connected component $\underline G$ of 
$\underline G^+$. In the unitary and symplectic case  $U(V,h)=G^+$, in the orthogonal case $F=F_0$ and $  \epsilon=1$, $U(V,h)$ is the special orthogonal group. One needs  {\it semisimple} strata $(\Lambda, n, \beta)$ in $ \End_F(V)$ where $\Lambda$ this time is a sequence of $O_F$-lattices in $V$  (\cite{St08} Definition  2.4, again only $r=0$ is used and suppressed  from the notation) and the corresponding sets  $\mathfrak C(\Lambda, \beta, \psi)$ of semisimple characters  in $\Aut_F(V)$ (\cite{St08} \S3.1). 
 
Now assume that $\psi=\psi_0 \circ \tr_{F/F_0}$ for some character $\psi_0:F_0\to C^*$, and write $x\mapsto \overline x $
for the involution on $ \End_F(V)$ associated to $h$,   $\iota$ for the involution $ x \mapsto  {\overline x}^{-1} $ on  $ \Aut_F(V)$, and $\Lambda^\flat$  is the lattice sequence in $ \End_F(V)$ dual to
$\Lambda$ with respect to $h$.
Then $\iota$ induces a bijection $\theta \mapsto \theta \circ \iota: \mathfrak C(\Lambda,  \beta, \psi) \to 
\mathfrak C(\Lambda^\flat,  - \overline \beta, \psi)$.  Clearly $\sigma (\theta \circ  \iota) = \sigma (\theta) \circ  \iota $ for $\sigma \in \Aut(C) $ and $\theta \in 
 \mathfrak C(\Lambda,  \beta, \psi)$.
When the stratum $(\Lambda, n, 
\beta)$ is self-dual (that is when $\Lambda^\flat$ is $\Lambda$ up to a translation
in indices, and $-\overline \beta=\beta$), the subgroups of  $ \Aut_F(V)$ attached to that stratum by Bushnell-Kutzko are invariant under $\iota$, 
and intersecting them with $G$ gives  subgroups $H^1= H^1(\beta, \mathfrak A)\cap G,  J^1= J^1(\beta, \mathfrak A)\cap G, J=J^0(\beta, \mathfrak A)\cap G$.  Then $J/J^1$ is the group of points of a possibly non-connected reductive group over $k_F$ and we define $J^0$ of the subgroup of $J$ such that $J^0/J^1$ is the connected component of $J/J^1$.
The set $ \mathfrak C(\Lambda,  \beta, \psi)$ of    semisimple   characters of $G$ is obtained by restricting to $H^1$ the $\iota$-invariant  semisimple   characters  of $ \Aut_F(V)$ corresponding to $(\Lambda,  \beta, \psi)$. It is clear that the semisimple  characters of $G$ satisfy:  for $a \in O_F^*$,  $\mathfrak C(\Lambda, \beta,\psi ^a)=\mathfrak C(\Lambda, a\beta, \psi) $  and   $\sigma \in \Aut(C)$ induces a  bijection $\theta\mapsto \sigma(\theta):  
\mathfrak C(\Lambda, \beta, \psi )\to \mathfrak C(\Lambda, \beta, \sigma(\psi))= \mathfrak C(\Lambda, \xi_{\sigma}\beta, \pi)$. 
Our very special characters are those semi-simple characters satisfying some maximality condition, and a procedure parallel
to the previous ones gives a set of cuspidal $C$-types in $G$ (\cite{KS} Theorem A (i)). 

Let us  verify   properties (i) to (vii).
First property (ii) is a special case of (\cite{KS} Theorem 3.10), and property (i) is an easy consequence.
Property (iii) is  (\cite{KS} Theorem 2.6) while (iv) is a special case of (\cite{KS}  Theorem 4.1).
In their \S 5, Kurinczuk and Stevens define $\beta$-extensions of $\eta=\eta_\theta $   for a semisimple
character $\theta$ in $\mathfrak C(\Lambda, \beta, \psi)$. A $\beta$-extension in \cite{KS} is a representation
of $J^+=J(\Lambda, \beta) \cap G^+$, and   for property (v) we need to verify that its restriction to $J^0$ (which \cite{KS} also calls a $\beta$-extension)
deserves to be called a preferred extension, at least when the maximality condition is satisfied.
  This comes from (\cite{St08} \S 4.1). Indeed the very special characters (that is the semisimple characters
occurring in cuspidal representations) are those attached to a skew semisimple stratum such that the 
associated order in $B$ is maximal. In that case Theorem 4.1 in \cite{St08} defines $\beta$-extensions to $J^+$,
and their construction and Corollary 3.11 in \cite{St08} show that they are exactly our preferred extensions.
(Note that \cite{St08} works over the complex numbers, but the constructions of  \S 3 and \S 4 are valid over our
field $C$).
Now the cuspidal types of \cite{KS} have the form $\rho \otimes \kappa$, where $\kappa$ is a $\beta$-extension of $\eta$
to $J$ and $\rho$ an irreducible representation of $J$ with restriction to $J^0$ inflated from a cuspidal representation
of $J^0/J^1$, and condition (vi) is satisfied. Property (vii) comes along the proof of exhaustion in (\cite{KS},
see the proof of Theorem 11.2). (See \cite{KSS} for general results about "intertwining implies conjugacy"
for semisimple characters).
Adding as before the level $0$ cuspidal $C$-types, one gets
the set of cuspidal simple $C$-types in $G$, which satisfies exhaustion (\cite{KS}, TheoremA (ii)) and unicity (\cite{KSS} Main Theorem). Using the action of $\Aut(C)$ on semisimple characters analysed above, verifying $\Aut(C)$-stability for cuspidal simple types
follows as before.

The second adjointness holds for $(G,C)$ \cite{D09}.

\subsubsection{Quaternionic form} Finally the case of  a quaternionic form $G$ of a classical group   for odd $p$ is obtained by Skodlerak  \cite{Sk17}   for $C=\mathbb C$,  \(cite{Sk20} Theorem1.1) in the modular case. That case is a mix of the previous two and Skodlerak constucts a set of $\mathbb C$-types satisfying irreducibility, exhaustion and unicity  (\cite{Sk20} Theorem 1.1). The procedure to define semisimple characters is the same as for classical groups but starting with $\Aut_DV$ where $V$ is a right vector space of dimension $m$ over a central quaternion division $F$-algebra $D$ equipped with an anti-involution $d\mapsto \overline d$ (it is necessarily of the first kind), and a non-degenerate $\epsilon$-hermitian form $h$ on $V$ with $\epsilon\in \{1,-1\}$. 
The group $\underline G$ is the group of isometries of $h$; it is connected reductive, indeed over a quadratic unramified extension
of $F$ it gives a unitary group (\cite{Sk17}, Proposition 2.2). Starting with a semisimple stratum $(\Lambda, n, \beta) \in \End_D(V)$ (again
$r=0$ is omitted), one defines  dual stratum $(\Lambda^\flat, n , -\overline \beta) $(\cite{Sk17}  Definition   4.1) as in \ref{sss:classical}, and, for a self-dual stratum,
the set $ \mathfrak C(\Lambda, \beta, \psi )$ of semisimple characters in $G$. The intertwining of semisimple characters in $G$  computed in (\cite{Sk17} 4.4)    is the same in the modular case.
For the very special semisimple characters $ \theta$ (that is those giving rise to a cuspidal $\mathbb C$-type), it has
indeed the form $JG'J$ (\cite{Sk17}, proof of Proposition 4.3), which gives property (ii), and (i) follows.
Properties (iii) and (iv) come from (\cite{Sk17} Lemma 4.2 and Proposition 4.3 ). We have already said
why (v) is true, and (vi) comes from (\cite{Sk17} Definition   6.2 ), (vii) from (\cite{Sk17} Theorem 8.1).
The action of $\Aut(C)$ follows the same pattern as \ref{sss:classical}, and adding the level $0$
$C$-types one gets the set of simple cuspidal $\mathbb C$-types in $G$, which satisfies $\Aut(\mathbb C)$-stability. 
 
 \subsection{Yu types}\label{ss:3.6}  We now turn to the representations constructed by Yu  when  $G$ is  a connected reductive group   which splits over a tamely ramified field extension
of $F$ \cite{Y01}. We refer  to the papers of Fintzen \cite{F1},\cite{F2}, \cite{F3} because she corrects an error in the proof of irreducibility in \cite{Y01}, and also because she proves  in \cite{F2} that, when $p$ does not divide the order of the absolute Weyl group of $G$, and for any algebraically closed field $C$ with $c\neq p$, the set of $C$-types constructed by Yu satisfies irreducibility and exhaustion. Hakim and Murnaghan (\cite{HM08}, Theorem 6.3), go a long way towards proving unicity when $C=\mathbb C$, but their result is not expressed in terms of a list of types; the translation and the extension to   $C$ algebraically closed of characteristic $c\neq p$    is done in the  Ph.D. thesis of R. Deseine  \cite{De21}. 

 \subsubsection{} We follow the account and notation of (\cite{F2}, 2.1,2.4, 5.1).  The input for the construction comprises a sequence $G=G_1 \supset G_2 \supsetneq   \ldots\supsetneq G_{n+1}$ of twisted Levi subgroups of $G$ splitting over a tamely ramified extension of $F$ and such that $Z(G_{n+1})/Z(G)$ is anisotropic, a sequence 
$r_1>  \ldots >r_n>0$ of real numbers ($n=0$  is allowed and gives the level $0$ cuspidal representations of $G$), and an element  $x$ in the extended building $\mathcal B(G_{n+1})\subset \mathcal B(G)$ with image $[x]\in \mathcal B(G^{ad}_{n+1})$ a vertex. 
On the representation side, the input consists of:

-   an irreducible representation $\rho$ of $(G_{n+1})_{[x]}$ trivial on  $(G_{n+1})_{x,0+} $ of restriction    to the parahoric subgroup $(G_{n+1})_{x,0} \subset G_{n+1}$  inflated  from a cuspidal representation of the finite connected reductive  group  $(G_{n+1})_{x,0}/ (G_{n+1})_{x,0+}  $.

-   if $n>0$,  a sequence of characters $\varphi_i$ of $G_{i+1}$, assumed  of depth $r_i$ with respect to $x$ (meaning  trivial on $(G_{i+1})_{x,r_i+}$ and not trivial on $(G_{i+1})_{x,r_i}$),  and $G_i$-generic with respect to $x$ (in the sense of \cite{Y01} \S9, p.59, a condition on $\varphi_i$ restricted to $(G_{i+1})_{x,r_i}$)   if $G_i\neq G_{i+1}$.

 \begin{remark}Recall that 
 the Bruhat-Tits  building $\mathcal B(G )$ of $G$ is the direct product of   the  Bruhat-Tits  building $\mathcal B(G^{ad} )$ of the adjoint group $G^{ad}$, by a real affine space.  For any point $x\in \mathcal B(G )$  we denote by 
 $[x]$ its projection in  $\mathcal B(G^{ad} )$, and by $G_x$ and $G_{[x]}$ the $G$-stabilizers of $x$ and $[x]$.  The  parahoric subgroup
 $G_{x,0} $ of $G$  fixing $x$ and  its pro-$p$ unipotent radical depend only on $[x]$ and we put  $G_{[x],0}=  G_{x,0}, G_{[x],0+}=  G_{x,0+}$. 
   \end{remark}

When $n>0$, we call  $((G)_{i})_{1\leq i \leq n+1}, (r_i)_{1\leq i \leq n}, x, (\varphi_i)_{1\leq i \leq n}, \lambda_0)$ a Yu datum.
The associated cuspidal $C$-type of $G$ 
 is   $(J, \lambda=   \lambda_0\otimes_C  \kappa  )$,   where 
\footnote { $(G_i)_{x,r_i,r_i/2} \subset G_i$ is the open compact subgroup denoted,  $(G_{i+1},G_i)(F)_{x,r_i,r_i/2}$ in \cite{Y01} p.585-586.  As that last notation underlies, the group depends on both $G_i $ and $G_{i+1}$.

  $(G_{n+1})_{[x]}\subset G_{n+1}$ is the $G_{n+1}$-stabiliser of the image $[x]$ of $x$ in the  building of $G_{n+1}^{ad}$; it normalizes $G_{x,r_i,r_i/2}$ and $G_{x,r_i,(r_i/2)+}$; it is an open subgroup containing the center $Z(G_{n+1})$ of  $G_{n+1}$
  and $(G_{n+1})_{[x]}/Z(G_{n+1})$ is compact.
 
%{\blue   $\lambda_0$   is  an irreducible  subquotient of  $\ind_{Z(G_{n+1})_{x,0}}^{(G_{n+1})_{[x]}} \omega \tilde \tau$ where $\omega : Z(G_{n+1})\to C^*$ is  the central character of $\lambda_0$  and $\tilde \tau$ is   the inflation    to  $(G_{n+1})_{x,0}$ of an irreducible cuspidal $C$-representation $\tau$ of $ (G_{n+1})_{x,0}/ (G_{n+1})_{x,0+}$. The restriction of $\lambda_0$ to $(G_{n+1})_{x,0}$  is semi-simple of finite length with irreducible components $(G_{n+1})_{[x]}$-conjugate to   $\tilde \tau$.}
 
 The second equality in (\ref{eq:tildeK}) follows from  $(G_i)_{x,r_i/2}=(G_i)_{x,r_i,r_i/2}(G_{i+1})_{x,r_i/2}$. 

We have also $(G_i)_{x,(r_i/2)+}=(G_i)_{x,r_i,(r_i/2)+}(G_{i+1})_{x,(r_i/2)+}$.

}:
  \begin{equation}\label{eq:tildeK}J =(G_1)_{x, r_1/2}\ldots (G_n)_{x, r_n/2}(G_{n+1})_{[x]}= (G_1)_{x,r_1, r_1/2}\ldots (G_n)_{x, r_n, r_n/2}(G_{n+1})_{[x]},
  \end{equation}
 and  $\lambda_0$  is the representation of $J$ trivial on $(G_1)_{x, r_1/2}\ldots (G_n)_{x, r_n/2}(G_{n+1})_{x,0+}$ inflating 
the representation $ \rho$ of $(G_{n+1})_{[x]}$. In  (\cite{F2}, \S 2.4), $J$ is denoted by $\tilde K$ and  $\lambda_0$ is still denoted  by $\rho$.
 To describe the representation  $  \kappa $   of $\tilde K$ we introduce more notations following (\cite{F2} 2.5). For $1\leq i \leq n$, 
there exists  a  unique $C$-character 
$$\hat \varphi_i: (G_{n+1})_{[x]}(G_{i+1})_{x,0}G_{x,(r_i/2)+} \to C^*$$ given  on $(G_{n+1})_{[x]}(G_{i+1})_{x,0}$ by the restriction of $\varphi_i$, and on $G_{x,(r_i/2)+}$ factorizing through a   natural homomorphism from $G_{x,(r_i/2)+}/G_{x,r_i+} $ to $(G_{i+1})_{x,(r_i/2)+}/(G_{i+1})_{x,r_i+}$ on which it is induced by $\varphi_i$. That homomorphism is described in (\cite{F3}, \S2.5 after second bullet), after (\cite{Y01}, \S4).  Let  $\mu$ denote the group of $p$-roots of $1$ in $C$.
 By (\cite{Y01}, Proposition 11.4), $V_i=G_{x,r_i,r_i/2}/ G_{x,r_i,(r_i/2)+}$ admits the symplectic form 
$$(x,y)\mapsto \langle x, y \rangle_{\hat \varphi_i}= \hat \varphi_i (xyx^{-1}y^{-1}) : V_i\times V_i\to \mu,$$ and   a canonical special isomorphism 
$$j_{\hat \varphi_i}:  G_{x,r_i,r_i/2}/ (G_{x,r_i,(r_i/2)+} \cap \Ker \hat{\varphi_i})\to V^\sharp_{\hat \varphi_i},$$
where $V^\sharp_{\hat \varphi_i}$ is  the finite Heisenberg $p$-group with underlying set $V_i\times \mu$ and law given by $(v,\epsilon)(v',\epsilon')=(v+v', \epsilon+\epsilon'+(1/2) \langle v , v'\rangle_{\hat \varphi_i})$ (we use an additive notation for both $V_i$ and $\mu$). The special isomorphism $j_{\hat \varphi_i}$ identifies the 
centres 
 $G_{x,r_i,(r_i/2)+}/ (G_{x,r_i,(r_i/2)+} \cap \Ker \hat{\varphi_i}) $ and $\mu$ of the two groups.
  The  conjugation action of $(G_{n+1})_{[x]}$ on $G_{x,r_i,r_i/2}/G_{x,r_i,(r_i/2)+}$ preserves $ \hat \varphi_i$ so gives a group morphism $(G_{n+1})_{[x]}\to \Sp(V_i,  \langle \ , \  \rangle_{\hat \varphi_i})$ which is  trivial on $(G_{n+1})_{x,0+}$, and  with $j_{\hat\varphi_i}$, gives a group morphism $$\tilde j_{\hat \varphi_i}:(G_{n+1})_{[x]}(G_{x,r_i,r_i/2}/ (G_{x,r_i,(r_i/2)+} \cap \Ker \hat{\varphi_i}))\to \Sp(V_i,  \langle \ , \  \rangle_{\hat \varphi_i})\ltimes V^\sharp_{\hat \varphi_i}.$$
The Heisenberg $C$-representation $(\eta_i, V_{\eta_i})$ of $V^\sharp_{\hat \varphi_i}$  with
restriction to $\mu$  a multiple of the character given by the inclusion of $\mu$  into $C^* $, extends canonically  to an irreducible representation $\omega_i$ of $ \Sp(V_i,  \langle \ , \  \rangle_{\hat \varphi_i})V^\sharp_{\hat \varphi_i}$  (the Weil representation \cite{G} Theorem 2.4), hence  a representation  $\omega_i \circ \tilde j_{\hat \varphi_i}$    of $(G_{n+1})_{[x]}(G_{x,r_i,r_i/2}/ (G_{x,r_i,(r_i/2)+} \cap \Ker \hat{\varphi_i}))$ on $V_{\eta_i}$, which inflates to an action  of  $(G_{n+1})_{[x]}G_{x,r_i,r_i/2}$ on $V_{\eta_i}$.
 
 There is a unique representation $\kappa$ of   $J$  on the tensor product $\otimes _{i=1}^n V_{\eta_i}$  such that $(G_{n+1})_{[x]}$ acts on  $V_{\eta_i}$ as above for $1\leq i \leq n$, and $(G_i)_{x,r_i,(r_i/2)}$ acts by $\eta_i$ on $V_{\eta_i}$  
 and  by multiplication by the character $\hat \varphi_j|_{(G_i)_{x,r_i,(r_i/2)}}$ on $V_{\eta_j}$ for  $1\leq i\neq j \leq n$  \footnote{The group $(G_{n+1})_{[x]}(G_{i+1})_{x,0}G_{x,(r_i/2)+}$ contains $(G_i)_{x,r_i,r_i/2}$ for $1\leq i \leq n$. }.

 \subsubsection{} 
To the above  data we attach  groups $H^1\subset J^1\subset J^0  \subset J$  and representations $\theta, \eta, 
 \kappa$ satisfy the properties (i) to (vii) of our setting of \S\ref{ss:3.4} as follows:

\bigskip  $J$ is the group \eqref{eq:tildeK}.
 
 Replacing $(G_{n+1})_{[x]}$ with $(G_{n+1})_{x,0} $ in $J$ we get \begin{equation}\label{eq:J0}J^0=(G_1)_{x, r_1/2}\ldots (G_n)_{x, r_n/2}(G_{n+1})_{x,0}= (G_1)_{x, r_1,r_1/2}\ldots (G_n)_{x, r_n,r_n/2}(G_{n+1})_{x,0}.
\end{equation}  Replacing  $(G_{n+1})_{x,0}$ with $(G_{n+1})_{x,0+} $ in $J^0$  we get   
  \begin{equation}\label{eq:J1} J^1=(G_1)_{x, r_1/2}\ldots (G_n)_{x, r_n/2}(G_{n+1})_{x,0+}=(G_1)_{x, r_1, r_1/2}\ldots (G_n)_{x, r_n,  r_n/2}(G_{n+1})_{x,0+},\end{equation}
    The quotient $J^0/J^1 \simeq (G_{n+1})_{x,0} / (G_{n+1})_{x,0+}$ is the finite connected reductive quotient of the parahoric subgroup $(G_{n+1})_{x,0} $ of $G_{n+1}$.
Replacing   $r_1/2, \ldots, r_n/2$ by $ (r_1/2)+,\ldots ,  (r_n/2)+$  for $i=1,\ldots,n$ in $J^1$, we get  
 \begin{equation}\label{eq:H1} H^1=(G_1)_{x, (r_1/2)+}\ldots (G_n)_{x, (r_n/2)+}(G_{n+1})_{x,0+}=(G_1)_{x,r_1, (r_1/2)+}\ldots (G_n)_{x, r_n, (r_n/2)+}(G_{n+1})_{x,0+}.\end{equation} 

    $\theta$  is the unique character  of $H^1$ trivial on $(G_{n+1})_{x,0+}$, and 
equal to  $\hat \varphi_i$ on $(G_i)_{x, r_i, (r_i/2)+}$ for $1\leq i \leq n$. 

 $\eta=\eta_\theta$ is the  unique   representation of $J^1$ on $\otimes_{i=1}^n V_{\eta_i}$ trivial on $(G_{n+1})_{x,0+}$, where  $(G_i)_{x,r_i,(r_i/2)}$ acts by $\eta_i$ on $V_{\eta_i}$ 
 and  by multiplication by the character $\hat \varphi_j|_{(G_i)_{x,r_i,(r_i/2)}}$ on $V_{\eta_j}$ for  $1\leq i\neq j \leq n$.   
 
A {\it preferred} extension  of $\eta$ is $\kappa$.

\bigskip Let us say why properties (i) to (vii) are true. The intertwining of $\theta$ was determined by Yu (\cite{Y01}  Theorem 9.4) giving (i) and (ii). Properties (iii) and (iv) come from (\cite{Y01}  Theorem 11.5
and  Proposition 12.3)
(note that Yu works over complex numbers, but his reasoning for properties (i) to (iv)
is valid here (see \cite{F1})). The fact that the construction above  gives a preferred
extension is essentially due to Fintzen; it is somewhat hidden in (\cite{F1} proof of 
Lemma 3.5), it would much more space to give detail, so we omit them.
Thus we have (v) and (vi). Finally (vii) comes in the proof of exhaustion by Fintzen (\cite{F1}
proof of Theorem 7.1). Once again giving detail would take us much more space.

Dat proved the second adjointness holds for $(G,C)$ \cite{D09}, \cite{D20}. Deseine \cite{De21} proves unicity for the types constructed above.
   
\subsubsection{}  We  verify now that  the list of Yu types is $\Aut(C)$-stable. Let $\sigma\in \Aut(C)$.  We   show that  if   $((G)_{i})_{1\leq i \leq n+1}, x, (\varphi_i)_{1\leq i \leq n}, \rho)$ is  a Yu datum  of associated  type $(\tilde K,\lambda)$,  then $((G)_{i})_{1\leq i \leq n+1}, x, (\sigma(\varphi_i))_{1\leq i \leq n}, \sigma(\rho))$ is a Yu datum of associated type
 $(\tilde K, \sigma(\lambda))$.
 
 As in the other cases,  $\sigma(\rho)$ is trivial on  $(G_{n+1})_{x,0+} $ and restricts on $(G_{n+1})_{x,0}$ to a cuspidal representation of $ (G_{n+1})_{x,0}/ (G_{n+1})_{x,0+}$, because $\rho$ does.
  
 We explain  now why $\sigma(\varphi_i)$ has depth $r_i$ and is $G_i$-generic (if $G_i\neq G_{i+1}$) with respect to $x$ because $\varphi_i$ does.  
 
 Underlying the notion of genericity, is the choice of an additive character $\psi:F\to C^*$  as before, giving an identification of  the group $\hat { \mathfrak g}$  of smooth characters of ${\mathfrak g}=\Lie(G)$,  with the dual  ${ \mathfrak g}^* = \Hom_F( {\mathfrak g},F)$. Explicitly,  each element $f\in \mathfrak g^*$ identifies with the smooth character $\phi_{\psi,f}(u)=\psi (f(u))$ of the additive group $ \mathfrak g $. For $r\in \mathbb R$, the orthogonal of ${\mathfrak g}_r$ is ${\mathfrak g}^*_{x,-r+}$, and that of  ${\mathfrak g}_{x,r+}$ is  ${\mathfrak g}^*_{x,-r}$. Our choice of $\psi$ yields an isomorphism $\iota: ( \mathfrak g_{x,r}/ \mathfrak g_{x,r+})^{\hat {}} \to  \mathfrak g^*_{x,-r}/ \mathfrak g^*_{x,-r+} $. 
Changing $\psi$ to $\psi^a$ for $a\in O_F^*$   multiplies $\iota$ by $a^{-1} $ as $\phi_{\psi,f}=\phi_{\psi^a, a^{-1}f}$.
For $r>0$, $G_{x,r}/G_{x,r+}$ identifies canonically with ${\mathfrak g}_{x,r}/{\mathfrak g}_{x,r+}$, because $G$ splits on a tamely ramified extension \cite{F0} Rem.3.2.4.  

Those considerations apply to $G_{i+1}$ as well, and changing $\psi$ to $\psi^a$ for $a\in O_F^*$ does not change  the  depth of the smooth characters of $G_{i+1}$ with respect to $x$  or the $G_i$-genericity of the elements of $\widehat{(G_{i+1})_{x,r_i}/(G_{i+1})_{x,r_i+} }$ (by \cite{Y01} \S9, p.59, that genericity is expressed  in terms of the element of ${\mathfrak g}_{x,-r_i}/{\mathfrak g}_{x,-r_i+}$ corresponding to it, and multiplication  by $a\in O_F^*$ does not affect it).  Consequently if $\varphi_i$ is of depth $r_i$, and $G_i$-generic (if $G_i\neq G_{i+1}$), with respect to $x$, then so is $\sigma(\varphi_i)$. 

We proved  that $((G)_{i})_{1\leq i \leq n+1}, x, (\sigma(\varphi_i))_{1\leq i \leq n}, \sigma(\rho))$ is a Yu datum. 
It remains to show that   $(\tilde K, \sigma(\lambda))$ is the associated type.
We have  $\sigma(\lambda)= \sigma(\kappa) \otimes \sigma(\tilde \rho)$, and clearly $\sigma ({\tilde \rho})=\tilde{\sigma(\rho)}$. We explain now why  $\sigma(\kappa)$ is the representation of $\tilde K$
   associated to $(\sigma(\varphi_i))_{1\leq i \leq n}$. 
  
It is clear that $\sigma (\hat{\varphi_i})=\widehat{\sigma (\varphi_i) } $.
We have an isomorphism $\tilde \sigma: V^\sharp_{\hat\varphi_i}\to V^\sharp_{\widehat{\sigma(\varphi_i)}}$ given by identity on $V_i$ and $x\mapsto \sigma(x)$  on $\mu$; it extends to an isomorphism $\tilde \sigma:  \Sp(V_i,  \langle \ , \  \rangle_{\hat \varphi_i}) V^\sharp_{\hat\varphi_i}\to  \Sp(V_i,  \langle \ , \  \rangle_{\widehat{\sigma(\varphi_i)}}) V^\sharp_{\widehat{\sigma(\varphi_i)}}$. 
One checks  from the construction (\cite{Y01} \S 11) that the special isomorphisms satisfy $j_{\widehat{\sigma (\phi_i)} } =\tilde \sigma \circ j_{\hat{\varphi_i}}$, and also  $\tilde j_{\widehat{\sigma (\phi_i)} } =\tilde \sigma \circ \tilde j_{\hat{\varphi_i}}$.

The  representation $\sigma(\eta_i, V_{\eta_i})$ of $V^\sharp_{\hat \varphi_i}$  is the Heisenberg representation of $V^\sharp_{\hat \varphi_i}$ where $\mu$ acts by multiplication by 
$\sigma(\hat \varphi_i )\circ j_{\hat \varphi_i}^{-1}$, and the associated Weil representation of  $ \Sp(V_i,  \langle \ , \  \rangle_{\hat \varphi_i})V^\sharp_{\hat \varphi_i}$   is 
 $\sigma(\omega_i)$.
Composing with $\tilde \sigma^{-1}$,  we get an action of $V^\sharp_{\widehat{ \sigma(\varphi_i)}}$ on  $\sigma( V_{\eta_i}) = C\otimes_{\sigma, C}  V_{\eta_i}$  which is the Heisenberg representation given by $ \widehat {\sigma(\varphi_i)} \circ  j_{ \widehat {\sigma(\varphi_i)}}^{-1}$ on $\mu$, and the associated Weil representation of  $ \Sp(V_i,  \langle \ , \  \rangle_{\widehat { \sigma(\varphi_i)}})V^\sharp_{\widehat{ \sigma(\varphi_i)}}$   is 
 $\sigma(\omega_i) $.
  Following the action of $\sigma $ through the rest of the construction of $\kappa$  is straightforward and we get that $\sigma(\kappa)$ is the representation of $\tilde K$ associated with $(\sigma(\varphi_i))_{1\leq i \leq n}$.


\begin{thebibliography}{22}
%\bibitem{AF92} F.W.Anderson, K.R.Fuller -- Rings and Categories of Modules. G.T.M.13, Springer-Verlag, Second Edition 1992.

%\bibitem{AB}
%P.~Abramenko, K.~Brown -- Buildings. Theory and applications. Graduate Texts in Mathematics, 248. Springer, New York (2008).

%\bibitem{Abeclassi} N. Abe -- On a classification of irreducible admissible modulo $p$ representations of a  $p$-adic split reductive group,  {Compositio Mathematica} {{149}} (12), p. \ 2139--2168 (2013).

%\bibitem{Abe} N. Abe -- Modulo p parabolic induction of pro-p-Iwahori Hecke algebra,  J. Reine Angew. Math., DOI:10.1515/crelle-2016-0043.

%\bibitem{Abe2} N. Abe -- Some calculations for modules of pro-$p$-Iwahori Hecke algebras. In progress 2016.

\bibitem{AHHV} N. Abe,  G. Henniart, F. Herzig, M.-F. Vign\'eras --  {A classification of admissible irreducible modulo $p$ representations of reductive $p$-adic groups}. J. Amer. Math. Soc. 30, p.495-559  (2017).

\bibitem{AHV19} N. Abe, G. Henniart, M.-F. Vign\'eras -- {Mod $p$ representations of reductive $p$-adic groups: functorial properties.}  Trans. Amer. Math. Soc. 371  p.8297-8337 (2019).

%\bibitem{AHV2} N. Abe, G. Henniart, M.-F. Vign\'eras -- {On pro-p-Iwahori invariants  of  R-representations of  p-adic groups.} Representation Theory 22 (2018), 119-159

%\bibitem{BF} A.~Bell, R.~Farnsteiner -- On the theory of Frobenius extensions and its application to Lie superalgebras,  {Transact.\ of A.M.S} { {335}}(1), p.~407--424 (1993).

\bibitem{B91}D.~J.~Benson -- Representations and  Cohomology I, Basic Representation Theory of Finite Groups and Associative    Algebras, Cambridge Studies in Advanced Mathematics, (1991). 

%\bibitem{B87} J.N. ~Bernstein -- Second adjointness theorem for representations of $p$-adic groups (1987).

%\bibitem{B92} J.N. ~Bernstein -- Note of lectures on representations of $p$-adic groups (1992).

%\bibitem{BD84}J.N. ~Bernstein, P. ~Deligne -- Le ``centre'' de Bernstein. In J.N. ~Bernstein, P. ~Deligne, D.Kazhdan, M.-F.Vigneras, Repr\'esentations des groupes r\'eductifs sur un corps local.  Travaux en cours, Hermann  (1984).

%\bibitem{BZ76}J. ~Bernstein, A. Zelevinski -- Representations of the group $GL(n,F)$ where $F$ is a non-archimedean local field,  Russian Math Surveys 31:3, p. 1-68  (1976).


%\bibitem{BZ77}J. ~Bernstein, A. Zelevinski -- Induced representations of $p$-adic groups I,  Ann. Sci. Ecole Norm. Sup. (4) 10  no. 4, p. 441--472  (1977).




%\bibitem{Bo}A.~ Borel -- Admissible representations of a semisimple group with vectors fixed under anIwahori subgroup, Invent. Math.  {35} p. 233--259 (1976).

 %\bibitem{Bki} N.~Bourbaki -- \'El\'ements de math\'ematiques.Fascicule XXVII. Alg\`ebre commutative.  Hermann, Paris (1961).
 
 \bibitem{Bki-A5} N.~Bourbaki --  Alg\`ebre, Chap. 4-7.  Masson (1981).


%\bibitem{Bki-A10} N.~Bourbaki --  \'El\'ements de math\'ematiques.   Alg\`ebre, Chap.\ 10. Alg\`ebre homologique. Springer, Berlin-Heidelberg  (2006).

\bibitem{Bki-A8} N.~Bourbaki --  \'El\'ements de math\'ematiques.   Alg\`ebre, Chap.\ 8.  Modules et anneaux semi-simples. Springer, Berlin-Heidelberg  (2012).

%\bibitem{Bki-LA} N.~Bourbaki -- Elements of Mathematics. Lie Groups and Lie Algebras, Chap.\ 4-6. Springer, Berlin-Heidelberg-New York (2002).


%\bibitem{Br03} C.~Breuil --    Sur quelques repr\'esentations modulaires et p-adiques de ${\rm GL}_2(\mathbb Q_p)$ I.  Compositio Math. 138,  p. 165--188 (2003).
    
    
%\bibitem{BP} C.~Breuil,  V.~Paskunas  -- Towards a modulo $p$ Langlands correspondence for GL(2).  Memoirs of Amer. Math. Soc,  216 (2012).

\bibitem{Br98} P.~Broussous --  Extension du formalisme de Bushnell et Kutzko au cas d'une alg\`ebre \`a division.   Proc. London Math. Soc. (3) 77, no. 2, 292--326 (1998).

\bibitem{BTI}F.~Bruhat, J.~Tits --  Groupes r\'eductifs sur un corps local. I. Donn\'ees radicielles valu\'ees,   {Publ.\ Math.\ IHES}  {41}, p.\  5--251  (1972).

%\bibitem{BTII}  F.~Bruhat, J.~Tits --Groupes r\'eductifs sur un corps local. II. Sch\'emas en groupes. Existence d'une donn\'ee radicielle valu\'ee,  {Publ.\ Math.\ IHES}  {60}, p.\ 5--184  (1984).

\bibitem{B90} C.J.~Bushnell  -- Induced representations of locally profinite groups, J. of Algebra 134, p.104--114 (1990).


\bibitem{BH06} C.J.~Bushnell,  G.~Henniart -- The local Langlands conjecture for $GL(2)$, Grundlehren der mathematischen Wissenschaten 335, Springer-Verlag (2006).

\bibitem{BH13} C.J.~Bushnell,  G.~Henniart -- Interwining of simple characters in $GL(n)$, IMRN vol.2013, p.3977--3987 (2013).


\bibitem{BK93} C.J.~Bushnell,  P.C.~Kutzko -- The admissible dual of $GL(N)$ via open compact subgroups, Annals of Math. Studies, Princeton Univeristy press (1993).


\bibitem{BK93'} C.J.~Bushnell,  P.C.~Kutzko -- The admissible dual of $SL(N)$, I. Annales scientifiques de l'E.N.S. 4e s\'erie, tome 26, no 2 (1993), p. 261--280.

\bibitem{BK94} C.J.~Bushnell,  P.C.~Kutzko -- The admissible dual of $SL(N)$, II. Proc. LM.S. {68}, p.317--378 (1994).  


\bibitem{BK98} C.J. Bushnell,  P.C. Kutzko -- Smooth representations of reductive p-adic groups~: structure theory via types. Proceedings of the London Mathematical Society,  {77}, p. 582--634  (1998).

 
%\bibitem{Cab}M.~Cabanes -- Extension groups for modular Hecke algebras,  {J.\ Fac.\ Sci.\ Univ.\ Tokyo}  {36}(2), p.\ 347--362 (1989).

%\bibitem{Cabanes} M.~Cabanes -- A criterion of complete reducibility and some applications, in Repr\'esentations lin\'eaires des groupes finis, (M. Cabanes ed.),  Ast\'erisque, {181-182}  p. 93--112 (1990).

%\bibitem{CE} M.~Cabanes, M.~Enguehard -- Representation Theory of Finite Reductive Groups. Cambridge Univ.\ Press (2004).

%\bibitem{Carter} R.~Carter -- Finite Groups of Lie Type. Wiley Interscience (1985).

%\bibitem{CL} R.W.~Carter,  G. Lusztig --  Modular representations of finite groups of Lie type,  Proc. London Math. Soc.   {32} p. 347--384 (1976).
 
%\bibitem{ChRo}   J.~Chuang, R.~Rouquier -- Derived equivalences for symmetric groups and ${\mathfrak{sl}}_2$-categorification, Ann. of Math.167, no. 1, p. 245--298 (2008).

 % \bibitem{Colmez}P.~Colmez -- Representations de $GL_2(\mathbb Q_p)$ et $(\varphi, \Gamma)$-modules. Ast\'erisque 330 pp. 281--509 (2010).

%\bibitem{Cartier}P.~Cartier -- Representations of $p$-adic groups: a survey. In Automorphic Forms,Representations, and $L$-Functions (eds. Borel, Casselmann).  {Proc.\ Symp.\ Pure Math.}\  {33} (1), p.\ 29--69. American Math.\ Soc.\ (1979).

%\bibitem{Co20}Br.~Conrad -- Completion of algebraic closure. Lecture notes Math 248-A.

\bibitem{C19}Peiyi~Cui -- Modulo $\ell$-representations of  $p$-adic groups $SL(N,F)$,	arXiv:1912.13473 (31/12/2019).

\bibitem{C20}Peiyi~Cui -- Modulo $\ell$-representations of $p$-adic groups $SL_n(F)$: maximal simple $k$-types,
arXiv:2012.07492



% \bibitem{D99} J.-Fr.~Dat --Types et inductions pour les reprŽsentations modulaires de groupes p-adiques. Ann. Scient. ƒc. Norm. Sup, 32 (1) ; 1-38 (1999).

% \bibitem{D05} J.-Fr.~Dat -- Nu-tempered representations of p-adic groups I : l-adic case. Duke Math. J. 126 (3) ; 397-469 (2005).
 
 \bibitem{D09} J.-Fr.~Dat -- Finitude pour les repr\'esentations lisses des groupes p-adiques.  J. Inst. Math. Jussieu, 8 (1) p.261--333 (2009).
 
  \bibitem{D18} J.-Fr.~Dat -- Simple subquotients of big parabolically induced representations of $p$-adic groups. J. Algebra 510 p.499-507 (2018).
  
  \bibitem{D20}  J.-Fr.~Dat -- email (2020/10/14).

    \bibitem{De21} R.~Deseine  -- Autour des repr\'esentations complexes et modulaires des groupes r\'eductifs $p$-adiques. Th\`ese Paris-Saclay 2021. 


\bibitem{DM91} F.~Digne, J.~Michel --  Representations of finite groups of Lie type, London Mathematical Society Student Text 21 (1991).
 
%\bibitem{Eisenbud} D.~Eisenbud -- Commutative Algebra with a View Toward Algebraic Geometry. Graduate Texts in Mathematics 150, Springer-Verlag (2008).
%\bibitem{Haines} T.~Haines -- \color{red}{compl\'eter}

%\bibitem{EH} M.~Emerton -- 

%\bibitem{Emerton} M.~Emerton -- Ordinary parts of admissible representations of reductive $p$-adic groups II. Asterisque 331, p. 383--438 (2010).

%\bibitem{HS} P.J.~Hilton, U.~Stammbach -- A course in homological algebra.  Graduate Texts in Mathematics 4, Springer-Verlag (1971).

\bibitem{F82} W.~Feit -- The representation theory of finite groups, North-Holland Math. Library, vol.25 (1982).
\bibitem{HV15} G.~Henniart, M.-F.~Vign\'eras -- The  Satake isomorphism modulo $p$ with weight.  J. f\"ur die reine und angewandte Mathematik, vol. 701, p. 33--75, (2015).
\bibitem{F0} J.~Fintzen -- On the Moy-Prasad filtration, arXiv:1511.00726v4.  To appear in  JEMS.
\bibitem{F1} J.~Fintzen -- Types for tame $p$-adic groups. Annals of Mathematics 193 no. 1 (2021), p. 303-346.
\bibitem{F2} J.~Fintzen -- Tame cuspidal representations in non-defining characteristics, arXiv:1905.06374.
\bibitem{F3} J.~Fintzen -- On the construction of tame cuspidal representations, arXiv:1908.09819.


\bibitem{G} P.~G\'erardin -- Weil representations associated to finite fields, J. Algebra 46 (1977), no. 1, p.54--101.

\bibitem{G82}  I.~Giorgiutti -- Groupes de Grothendieck-Introduction. Ann. Fac. Sci. Toulouse,  s\'erie 4, t.26, p.151--207 (1982)

\bibitem{HM08}J.~Hakim, F.~Murnaghan -- Distinguished tame supercuspidal representations, I.M.R.P. 2, p.5--166 (2008).


% \bibitem{H09} G.~Henniart --  Sur les repr\'esentations modulo $p$ de groupes r\'eductifs $p$-adiques, Contemporary Math Vol. 489, (2009).

%\bibitem{HV13}G.~Henniart, M.-F.~Vign\'eras --  A Satake isomorphism fpr representations mod $p$ of reductive groups over local fields.


\bibitem{HV19} G.~Henniart, M.-F.~Vign\'eras -- Representations of a $p$-adic group in characteristic $p$. For Joseph Bernstein. Proceedings of Symposia in Pure Mathematics, Volume 101, p.~171-210 (2019).
 

%\bibitem{HL15}G.~Henniart, B.~Lemaire -- La transform\'ee de Fourier pour les espaces tordus sur un groupe r\'eductif $p$-adique, hal-01312238 (2015).

%\bibitem{HVcomp} G.~Henniart, M.-F.~Vign\'eras -- Comparison of compact induction with parabolic induction. Special issue to the memory of J. Rogawski. Pacific Journal of Mathematics, vol 260, No 2,   p. 457--495, (2012).

%\bibitem{Her}  F.~Herzig -- The classification of admissible irreducible  modulo $p$ representations of a $p$-adic $GL_{n}$.   Invent. Math.  {186}  p. 373--434, (2011).


\bibitem{Hiss96} G.~Hiss  -- Supercuspidal Representations
of Finite Reductive Groups. J. of Algebra 184, p.839-851 (1996).

\bibitem{HS} P.J.~Hilton, U.~Stammbach  -- A course in homological algebra. Graduate Texts in Mathematics 4, Springer-Verlag (1971).


 %\bibitem{H} R.~Howe -- 

%\bibitem{IM}N. ~ Iwahori, H. ~Matsumoto -- On some Bruhat decomposition and the structure of the Hecke rings of p-adic Chevalley groups. Publ. Math., Inst. Hautes {\'E}tud. Sci.  {25} p. 5--48 (1965).

%\bibitem{James} G. James --  The irreducible representations of the finite general linear groups. Proc. London Math Soc 52 p. 236--268 (1986).


%\bibitem{K86} D. Kazhdan -- Representations of groups over close local fields. J. Analyse Math., 47:175Ð179, 1986.

\bibitem{KS06}M.~Kashiwara, P.~Shapira -- Categories and Sheaves. Grundlehren der mathematischen Wissenschaften, vol. 332, Springer Berlin-Heidelberg (2006). 

%\bibitem{Khovanov} M.~Khovanov -- Heisenberg algebra and a graphical calculus, Fundamenta Mathematicae 225,  p. 169--210 (2014).

\bibitem{KHV20} K.~Koziol, Fl.~Herzig, M.-F.~Vign\'eras (appendix by Sug-Woo Shin) -- On the existence of admissible supersingular representations pf $p$-adic reductive groups,  Forum of Mathematics, Sigma  8, e2, 73pp, (2020).

%\bibitem{K}R.~Kottwitz -- Isocrystals with additional structure II, Compositio Mathematica 109, p. 255--309 (1997).


%\bibitem{Koz}K.~Koziol -- Pro-$p$-Iwahori invariants for ${\rm SL}_2$ and $L$-packets of Hecke modules. International Mathematics Research Notices, no. 4, p. 1090--1125  (2016).


\bibitem{KS}R.~Kurinczuk,  S.Stevens -- Cuspidal $\ell$-modular representations of $p$-adic classical groups. J. Reine Angew. Math. Volume 2020 (764) p.23-69 (2020).

\bibitem{KSS}R.~Kurinczuk, D.~Skodlerack, S.Stevens --  Endo-parameters for  $p$-adic classical groups. Inventiones mathematicae volume 223, p. 597-723 (2021).



%\bibitem{Lam}T.Y.~Lam -- Lectures on Modules and Rings. Springer, Berlin-Heidelberg-New York (1999).

\bibitem{L99} T.Y.~Lam --  Lectures on Modules and Rings, Graduate Texts in Math. 189 (1999)

\bibitem{L96} E.~Landvogt --  A compactification of the Bruhat-Tits building. Lecture Notes 1619, Springer-Verlag (1996)


%\bibitem{Lam0}T.Y.~Lam -- A First Course in Noncommutative Rings. Springer, Berlin-Heidelberg-New York (1991).

%\bibitem{Lu}G.~Lusztig -- Affine Hecke algebras and their graded version,  J.\ AMS  {2}(3), p.~599--635 (1989).


%\bibitem{Le19}D.~Le -- On some nonadmissible smooth irreducible representations for $GL_2$, arXiv:1809.10247v2(2019).



%\bibitem{ML71}  S.~MacLane -- Categories for the working mathematician, G.T.M. Springer (1971).  

\bibitem{M20} A.~Mayeux -- Comparison of Bushnell-Kutzko and Yu's constructions of supercuspidal representations, arXiv:2001.06259 (2020).
 

\bibitem{MS14} A.~Minguez, V.~S\'echerre  -- Types modulo $\ell$ pour les formes int\'erieures de $GL_n$ sur un corps local non archim\'edien, Proc. L.M.S., p.1-69 (2014).

\bibitem{M99} L.~Morris -- Level $0$ $G$-types, Comp.Math.(118), p.135-157 (1999).

\bibitem{MP96} A.~Moy, G.~Prasad -- Jacquet functors and unrefined minimal $K$-types,  Comment.Math.Helv.   {71}, p.~98--121  (1996).

\bibitem{Mu19} G.~Muic -- On representations of reductive $p$-adic groups over $\mathbb Q$-algebras, arXiv:1905.01949v2.


%\bibitem{NT89} H. Nagao,  Y. Tsushima -- Representations of finite groups,  Academic Press, (1989).

%\bibitem{Oinv}R.~Ollivier --  Le functor des invariants sous l'action du pro-$p$ Iwahori de ${\rm GL}_2(\mathbb{Q}_p)$, J.\ f\"ur dir reine und angewandte Mathematik, { 635},  p.~149--185 (2009).

%\bibitem{Oparab} R.~Ollivier -- Parabolic Induction and Hecke modules in characteristic $p$ for $p$-adic  ${\rm GL}_n$.  ANT {4(6)} p. 701--742    (2010).

%\bibitem{Compa} R.~Ollivier --  Compatibility between Satake and Bernstein isomorphisms in characteristic $p$.  ANT 8(5) p. 1071--1111 (2014) .

%\bibitem{invsatake} R.~Ollivier -- An inverse Satake isomorphism in characteristic p.  Selecta Mathematica 21(3),  p. 727-761 (2015).

%\bibitem{Oppales} \color{red}{compl\'eter}

%\bibitem{tree}
%R.~Ollivier -- Mod p representations of p-adic GL(2) and coefficient systems on the tree.  Preprint \url{www.math.columbia.edu/~ollivier.}

%\bibitem{OSch} R.~Ollivier,  P.~Schneider -- Iwahori Hecke algebras are Gorenstein. J.\ Inst.\ Math.\ Jussieu 13(4), 753--809 (2014).

% \bibitem{OS} R.~Ollivier, V.~S\'echerre -- Modules universels en caract\'eristique naturelle pour un groupe r\'eductif fini.  Ann. Inst. Fourier 65, no. 1, p.  397--430 (2015). 

%\bibitem{OSepreprint} R.~Ollivier, V.~S\'echerre -- Modules universels de GL(3) sur un corps p-adique en caract\'eristique $p$. Preprint (2011). \url{www.math.ubc.ca/~ollivier} \def\paskunas{Pa$\check{\textrm s}$k$\bar {\textrm{u}}$nas}

\bibitem{OV}R.~Ollivier, M.-F.~Vign\'eras -- Parabolic induction modulo $p$. Selecta Mathematica, Volume 24, Issue 5, p.~3973-- 4039  (2018).


%\bibitem{P04} V.~Paskunas -- Coefficient systems and supersingular representations of ${\rm GL}_2(F)$, M\'em. Soc. Math. Fr. (NS) 99 (2004).

%\bibitem{P70} B.~Pareigis -- Categories and Functors. Pure and Applied Math. Vol.39, Academic Press (1970).

%\bibitem{Saw} H.\ Sawada -- Endomorphism rings of split $(B,N)$-pairs,  {Tokyo J.\ Math.}  {1}(1), p.~139--148 (1978).

%\bibitem{SS91} P.~Schneider, U.~Stuhler -- The cohomology of $p$-adic symmetric spaces.  Invent. Math.   {105},  no. 1, p. 47--122  (1991).

%\bibitem{SS}P.~Schneider, U.~ Stuhler -- Representation theory and sheaves on the Bruhat-Tits building,  {Publ.\ Math.\ IHES}  {85}, p.~97--191 (1997).

%\bibitem{R10} D~Renard --Repr\'esentations des groupes r\'eductifs $p$-adiques, Cours sp\'ecialis\'es 17, S.M.F. (2010).

%\bibitem{Ro04} A~Roche -- Notes on the Bernstein decomposition (2004).

\bibitem{Schm81} P.~Schmid -- On the Clifford Theory of Blocks of Characters, 
J. Algebra 73, p.~44--55 (1981).

\bibitem{S04} V.~S\'echerre -- Repr\'esentations lisses de $GL(m,D)$, I: caract\`eres simples, Bull.Soc.math.France 132, p.~327--396 (2004).

\bibitem{S05} V.~S\'echerre -- Repr\'esentations lisses de $GL(m,D)$, II: beta-extensions, Compos. Math.  141, p.~1531--1550 (2005).

 

\bibitem{SS08} V.~S\'echerre, S.~ Stevens -- Repr\'esentations lisses de $GL(m,D)$, IV: repr\'esentations supercuspidales, Journal I.M.J. {7}, p.~527--574 (2008).

\bibitem{Sk17} D.~Skodlerack -- Semisimple characters for inner forms II: quaternionic inner forms of classical groups, arXiv:1801.00265.

 
  \bibitem{Sk20} D.~Skodlerack -- Cuspidal irreducible complex or $\ell$-modular representations of quaternionic forms
of p-adic classical groups for odd p, arXiv:1907.02922 v2. 


%\bibitem{Serre} J-P.~Serre -- Cours d'arithm\'etique, {Presses Universitaires de France} (1970).

\bibitem{S71} J-P.~Serre -- Re[r\'esentations lin\'eaires des groupes finis. Hermann (1971).

%\bibitem{S73} J-P.~Serre -- Cohomologie Galoisienne, Lecture Notes in Math., vol.\ 5. Springer (1973).

%\bibitem{Sil} A. ~J.~ Silberger --  Isogeny restrictions of irreducible admissible representations are finite direct sums of irreducible admissible representations, {Proc. of the A.~M.~S.} {93} {No 2} p.~263--264 (1979).

\bibitem{St05} S.~ Stevens -- Semisimple characters for p-adic classical groups, Duke Math. J. 127(1),
p.~123--173 (2005).
 
\bibitem{St08} S.~ Stevens -- The 
 supercuspidal representations of $p$-adic classical groups, Invent. Math. {172}, p.~289--352   (2008).


 %\bibitem{Tin} N.~B.~Tinberg -- Modular representations of finite groups with unsaturated split $(B,N)$-pairs, Canadian J.\ Math. {32}(3), p.~714--733 (1980).

%\bibitem{Tit}J.~Tits -- Reductive groups over local fields. In Automorphic Forms,Representations, and $L$-Functions (eds. Borel, Casselmann).  {Proc.\ Symp.\ Pure Math.}\  {33} (1), p. 29--69. American Math. Soc. (1979).
%\bibitem{V94} M.-F. Vign\'eras -- Banal Characteristic for Reductive p-adic Groups, { J. of Number Theory} {47}, {Number 3}, p.378-397 (1994).


\bibitem{V96} M.-F. Vign\'eras -- Repr\'esentations $\ell$-modulaires d'un groupe r\'eductif fini $p$-adique avec $\ell \neq p$. Birkhauser Progress in math. 137 (1996).

%\bibitem{V97} M.-F. Vign\'eras -- Cohomology of sheaves on the building and $R$-representations''. {Inventiones} {127}, p.349-373 (1997).

\bibitem{V00} M.-F. Vign{\'e}ras -- Irreducible modular representations of a reductive p-adic group and simple modules for Hecke algebras, {European Congress in Mathematics Barcelona}.   Progress in Math. 201 p.~117-133 (2000).

%\bibitem{Selecta} M.-F. Vign\'eras --  Induced representations of reductive p-adic groups in characteristic $\ell \neq p$. Selecta Mathematica  {4} p. 549--623  (1998).

%\bibitem{VigGL2}M.-F. Vign\'eras -- Representations modulo p of the p-adic group GL(2, F ). Compositio Math. 140  pp. 333--358 (2004).
%\bibitem{VigSP}M.-F. Vign\'eras --   S\'erie principale modulo $p$ de groupes r\'eductifs $p$-adiques. GAFA Geom. funct. anal. vol. 17 p.  2090--2112 (2007).

%\bibitem{Vigprop} M.-F. Vign{\'e}ras --Pro-$p$ Iwahori Hecke ring and  supersingular $\overline{\mathbb F}_{p}$-representations.  {Math.\ Annalen}  {331}, p.\ 523--556 (2005). Erratum vol.\  {333}(3), p.\ 699-701.

%\bibitem{Vigdurham} M.-F. Vign{\'e}ras -- Repr\'esentations irr\'eductibles de $GL(2,F)$ modulo $p$, in $L$-functions and Galois representations, ed. Burns, Buzzard, Nekovar,   {LMS Lecture Notes} 320 (2007).

%\bibitem{Vign} M.-F. Vign{\'e}ras -- Alg\`ebres de Hecke affines g\'en\'eriques,  {Representation Theory} {10}, p.1--20 (2006). 

\bibitem{V13} M.-F. Vign{\'e}ras -- The right adjoint of the parabolic induction.    Arbeitstagung Bonn 2013.  Progress in Mathematics 319 p.~405-425 (2016).

%\bibitem{Vig1} M.-F. Vign{\'e}ras -- The pro-$p$ Iwahori Hecke algebra of a reductive $p$-adic group I,  {Compositio Mathematica}  {152}, p.\ 693--753 (2016).

%%\bibitem{Vig2} M.-F. Vign{\'e}ras -- The pro-$p$ Iwahori Hecke algebra of a reductive $p$-adic group II,  {Muenster J. of Math.} 7  p. 363--379 (2014).  

%\bibitem{Vig3} M.-F. Vign{\'e}ras -- The pro-$p$-Iwahori Hecke algebra of a reductive $p$-adic group III (spherical Hecke algebras and supersingular modules),  {Journal of the Institute of Mathematics of Jussieu} published online (2015).

%\bibitem{VigFunct} M.-F. Vign{\'e}ras -- The pro-$p$ Iwahori Hecke algebra of a reductive $p$-adic group IV (Levi subgroup and central extension). In preparation.

%\bibitem{Vig5} M.-F. Vign{\'e}ras -- The pro-$p$ Iwahori Hecke algebra of a reductive $p$-adic group V (parabolic induction),  {Pacific Journal of Math.}  {279}, p.\ 499--529 (2015).


%\bibitem{V16} M.-F. Vign\'eras -- The right adjoint of the parabolic induction. Hirzebruch Volume Proceedings Arbeitstagung 2013, Birkhauser Progress in Math. 319,p. 405-424 (2016)

%\bibitem{Wi98} J.S. Wilson -- Profinite groups, Oxford Science Publications (1998).

\bibitem{W19} M.H. Weissman  -- An induction theorem for groups acting on trees, {Representation theory} 23 p.~205--212 (2019).

\bibitem{Y01} J.K.Yu -- Construction of tame supercuspidal representations, J.A.M.S. {14}, p.~579--622 (2001).

%\bibitem{Zel} A. V. Zelevinsky  -- Induced representations of reductive p-adic groups. II. On irreducible representations of GL(n). {Ann. Sci. Ecole Norm. Sup.} (4), 13(2):165? 210, 1980.
\bibitem{Z92} E.-W. Zink -- Representation theory of local division algebras. J. Reine Angew. Math. 428, p.~1--44 (1992).


\end{thebibliography}
  \end{document}